\documentclass[a4paper,11pt]{article}
\usepackage[utf8]{inputenc}

\usepackage{boxedminipage}
\usepackage{amsfonts}
\usepackage{amsmath} 
\usepackage{amssymb}
\usepackage{graphicx}
\usepackage{amsthm}
\usepackage{t1enc}
\usepackage{subfig}
\usepackage{authblk} 
 \usepackage{mathabx}
 \usepackage{url}

\newtheorem{theorem}{Theorem}[section]
\newtheorem{lemma}[theorem]{Lemma}
\newtheorem{proposition}[theorem]{Proposition}
\newtheorem{corollary}[theorem]{Corollary}
\newtheorem{definition}[theorem]{Definition}

\newtheorem{fact}[theorem]{Fact}
\newtheorem{question}{Question}

\newcommand{\forceP}{\mathbb{P}}
\newcommand{\forceQ}{\mathbb{Q}}

\newcommand{\ZFC}{\mathsf{ZFC}}

\newcommand{\ZFP}{\mathsf{ZF}^-}

\newcommand{\MRP}{\mathsf{MRP}}

\newcommand{\MA}{\mathsf{MA_{\omega_1}}}

\newcommand{\PD}{\mathsf{PD}}

\def\undertilde#1{\mathord{\vtop{\ialign{##\crcr
$\hfil\displaystyle{#1}\hfil$\crcr\noalign{\kern1.5pt\nointerlineskip}
$\hfil\tilde{}\hfil$\crcr\noalign{\kern1.5pt}}}}}

\title{On a local variant of the 12th Delfino problem---the $\Sigma$-side} 
\author{ Stefan Hoffelner$^{1}$ and Sandra M\"uller$^{1}$ \footnote{The first author's research was funded in whole by the Austrian Science Fund (FWF) Grant-DOI 10.55776/P37228. The second author's research was funded in part by the Austrian Science Fund (FWF) [10.55776/Y1498, 10.55776/I6087]. For the purpose of open access, the authors have applied a CC BY public copyright license to any Author Accepted Manuscript version arising from this submission. }  }
\date{
    $^1$TU Wien \\
    \today
}

\begin{document}

\maketitle
\begin{abstract}
Assuming that $M_n$, the canonical inner model with $n$ Woodin cardinals,
exists, we force a model in which every $\boldsymbol{\Sigma}^1_{n+2}$ set is
Lebesgue measurable and has the Baire property, and in which
$\Sigma^1_{n+2+m}$-uniformization holds for every $m\in\omega$. Additionally,
this universe has a $\Delta^1_{n+3}$-definable wellorder of the reals. This
answers a question of S. D. Friedman and R. Schindler from
1999. In the case $n=1$, the construction also gives a model with one Woodin
cardinal in which all $\Sigma^1_3$ sets are measurable with respect to the
random, Cohen, Sacks and Miller notions of measurability, while a
$\Delta^1_4$-definable wellorder of the reals exists answering an instance of a question of S. D. Friedman and D. Schrittesser.
\end{abstract}
\section{Introduction}

The structural theory of the projective hierarchy exhibits a profound dichotomy between regularity properties---typically induced by large cardinals and determinacy hypotheses---and the existence of definable wellorders, which are traditionally derived from the fine structure of inner models. Under full Projective Determinacy ($\mathsf{PD}$), the projective sets exhibit complete regularity, precluding the existence of any projective wellorder of the reals. Conversely, in canonical inner models, definability and regularity are sharply demarcated. 

The primary motivation for this article is rooted in a version of Woodin's 12th Delfino problem, which in its original formulation asks whether $\mathsf{PD}$ is strictly equivalent to all its structural consequences for the projective hierarchy. 

A localized formulation of this question, first posed by S. Friedman and R. Schindler (see \cite{friedman2003universally} and \cite{schindler_delfino}) goes as follows: if we observe a behavior of the projective sets of reals which is the same as the one in some $M_{n+1}$, the canonical inner model with $n+1$ Woodin cardinals, is it true that the local hypothesis of projective determinacy, namely $\boldsymbol{\Pi}^1_{n+1}$-determinacy or even the weaker (if $n+1$ is odd) $\boldsymbol{\Delta}^1_{n+1}$-determinacy, must be true? Specifically:

\begin{question}
    Assume $n \ge 3$. Suppose that every $\boldsymbol{\Sigma}^1_{n+2}$ set is Lebesgue measurable and has the Baire property, and there is a $\Delta^1_{n+3}$-definable wellorder of the reals. Does $\boldsymbol{\Delta}^1_{n+1}$-determinacy hold?
\end{question}

For $n=1$ and $n=2$, S. Friedman used his substantial refinements of Jensen's coding the universe technique to answer it in the negative (see theorem 8.51 \cite{friedman2011fine}). The methods stall however after the fourth level due to considerable technical difficulties one inevitably faces when trying to apply coding arguments over more complicated inner models. In \cite{friedman2003universally} the authors were able to show that assuming $n$ many strong cardinals, there are universes where all $\boldsymbol{\Sigma}^1_{n+3}$-sets are Lebesgue measurable, have the Baire property and which allow a $\Delta^1_{n+5}$-definable wellorder of the reals. Their arguments use Jensen coding combined with collapsing strong cardinals which implies that the associated levels of the projective hierarchy become universally Baire due to a result of H. Woodin.

The main result of this work demonstrates that the answer to this localized Delfino question is unequivocally no, even if we additionally require the $\Sigma^1_{n+2+m}$-uniformization property for every $m\in \omega$. 

\begin{theorem}
    Assume that $M_n$, the canonical inner model with $n$ Woodin cardinals, exists. Then there is a generic extension of $M_n$ in which Martin's Axiom ($\MA$) holds, every $\boldsymbol{\Sigma}^1_{n+2}$ set is Lebesgue measurable and has the Baire property, $\Sigma^1_{n+2+m}$-uniformization holds for all $m \in \omega$, and the universe admits a $\Delta^1_{n+3}$-definable wellorder of the reals.
\end{theorem}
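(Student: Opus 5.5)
We work over $M_n$ and build an $\omega_1$-length, finite support (ccc) iteration that simultaneously forces $\MA$, codes a wellorder, and codes uniformizing functions. The coding tool will be almost disjoint coding relative to an $M_n$-definable, sufficiently generic family of $\omega_1$ many almost disjoint (or Suslin-tree-based) patterns. These patterns are chosen so that a real coding a pattern is recognizable in a $\Sigma^1_{n+3}$ way: it will be $\Sigma^1_{n+2}$-verifiable inside countable iterates of $M_n$, by a "for some countable $M_n$-like model containing the code" clause.

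\textbf{Step 1: measurability and Baire property.} In $M_n$ every $\boldsymbol{\Sigma}^1_{n+1}$ set is absolutely correct: $M_n$ is $\Sigma^1_{n+2}$-correct, and $\boldsymbol{\Pi}^1_n$-determinacy holds in the relevant sense. Once $\MA$ holds, a $\boldsymbol{\Sigma}^1_{n+2}$ set is the projection of a $\boldsymbol{\Pi}^1_{n+1}$ set. Since $\boldsymbol{\Sigma}^1_{n+1}$ sets are $\omega_1$-Suslin via the $M_n$-tree (Martin–Solovay style, derived from the Woodin cardinals of $M_n$), $\boldsymbol{\Sigma}^1_{n+2}$ sets become $\omega_1$-Suslin in the extension. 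Under $\MA$ with $2^{\aleph_0}>\aleph_1$, every $\omega_1$-Suslin set is Lebesgue measurable and has the Baire property, because unions of $\aleph_1$ Borel sets are measurable and have the Baire property. So in the final model we need $\MA$, $\mathfrak c=\aleph_2$, and the Suslin representation surviving. To get this, interleave all small ccc posets by bookkeeping, iterating to length $\omega_2$.

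\textbf{Step 2: wellorder and uniformization.} Fix the $M_n$-canonical wellorder of its reals and extend it along the iteration: each new real $x$ added at stage $\alpha$ is coded by forcing a real $r$ that almost-disjointly codes the pair ($x$, its rank) into the $\omega_1$-many patterns. Then "$x<y$" becomes "there is a real $r$ and a countable iterate of $M_n$ in which $r$ decodes the relevant comparison." This is $\Sigma^1_{n+3}$, and because the order is total, its complement is also $\Sigma^1_{n+3}$, giving $\Delta^1_{n+3}$. For uniformization at level $n+2+m$, at stages where the bookkeeping hands us a $\Sigma^1_{n+2+m}$ formula and a real parameter, we select the least witness in the wellorder. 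This gives $\Sigma^1_{n+2+m}$-uniformization via the standard "least in a good $\Delta^1_{n+3}$ wellorder" argument for $m\ge1$. For $m=0$ we additionally code witnesses, so that we do not rely only on the wellorder; this is a separate coding step whose consistency we must check against $\MA$. To keep the codes from being spuriously added by later generic posets, use the "stationary-kill" style patterns: codes are readable only in $L[\text{codes}]$-like iterates of $M_n$, and the $\MA$-posets are arranged not to create fake decoders.

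\textbf{Main obstacle.} The hardest step is upward absoluteness together with the \emph{non-existence of unwanted codes}. We must show that the $\Sigma^1_{n+3}$ definition does not become satisfied by new reals added by later iterands, particularly the arbitrary ccc posets used for $\MA$. The plan here is to use the fact that $M_n$ can compute $\Sigma^1_{n+2}$ truth via genericity iterations (Woodin's extender algebra). Any countable model witnessing a code can then be compared with $M_n$ and absorbed, which reduces "no fake codes" to a ccc preservation argument on the pattern family in $M_n$ itself.
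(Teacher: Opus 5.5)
There are genuine gaps, and the most decisive one is in your uniformization step. Addison's least-witness argument needs a \emph{good} wellorder: the bounded quantifier $\forall w<y$ is expressed by quantifying over one real enumerating $\{w:w<y\}$. So every initial segment must be countable, and the wellorder has length $\omega_1$. But $\mathsf{MA}_{\omega_1}$ implies $2^{\aleph_0}>\aleph_1$, and your Step 1 explicitly wants $\mathfrak c=\aleph_2$. Hence every wellorder of the reals in your final model has uncountable initial segments, and the ``standard'' argument gives nothing for $m\geq 1$.

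This is exactly why the paper does not derive uniformization from its $\Delta^1_{n+3}$ wellorder. It enumerates, for each $x$, the triples $(x,y^\xi,a_0^\xi)_{\xi<\omega_2}$. It uses the bijections $\pi_\alpha:(2^\omega)^\alpha\to 2^\omega$, available since $2^{\aleph_0}=2^{\aleph_1}=\aleph_2$ at every stage, to compress the $\aleph_1$ many failure witnesses below $\alpha$ into a single real. It then writes alternating coded/non-coded markers $M^{\mathrm{ev}}_m$, $M^{\mathrm{odd}}_m$ into fresh $\vec S$-blocks and verifies them quantifier by quantifier.

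Your $m=0$ plan fails for a complexity reason: whatever is read off from your codes is at best $\Sigma^1_{n+3}$, so coded witnesses can never yield a $\Sigma^1_{n+2}$ uniformization. The paper's base case uses no coding at all. By relativized capture, a $\Sigma^1_{n+2}(s)$ statement true in the extension has witnesses in $M_n(s)$, and $\Pi^1_{n+1}$ truth is computed correctly there. Taking the least witness in the good $\Sigma^1_{n+2}(s)$ mouse order on $\mathbb R\cap M_n(s)$ (which does have countable initial segments) gives the uniformization.

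Step 1 has a gap too. From $\boldsymbol\Sigma^1_{n+1}$ sets being $\omega_1$-Suslin you cannot infer the same for $\boldsymbol\Sigma^1_{n+2}=\exists^{\mathbb R}\boldsymbol\Pi^1_{n+1}$. That requires a tree for the complement, and the Martin--Solovay complement of $T_{n+1}$ lives on a large ordinal, not on $\omega_1$. The paper instead runs Hjorth's argument. $T_{n+1}\in M_n$ and its Martin--Solovay complement remain absolute complements in the small extensions, and $L[T_{n+1},a]$ has only $\aleph_1$ reals. So $\mathsf{MA}_{\omega_1}$ makes almost every real random (and comeagerly many reals Cohen) over it, and the Boolean value of $\varphi(\dot r,a)$ supplies the Borel approximation.

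Finally, your ``main obstacle'' paragraph names the right problem but provides no mechanism. The danger is not that decoding models compute $M_n|\omega_1$ incorrectly; the $\Pi^1_{n+1}$ class $\mathcal I_n$ settles that, and genericity iterations do not bear on it. The danger is that arbitrary ccc posets create the coded configurations themselves. $\mathsf{MA}_{\omega_1}$ gives every subset of $\omega_1$ an almost disjoint code relative to any $\aleph_1$-sized a.d.\ family, so your codes of $(x,\alpha)$ would exist for every real $x$ and every rank $\alpha$. It also specializes every Aronszajn tree.

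The paper's answer is the coding-over-codes preparation:
\begin{enumerate}
\item Caicedo--Veli\v{c}kovi\'c codes make $H(\omega_2)^{W_0}$, hence $\vec T$, definable in all ccc extensions.
\item Cohen-generated Suslin trees $\vec S$ are written into $\vec T$.
\item Codes are branch-versus-specialize patterns on $\vec S$-blocks, which are mutually exclusive and hence permanent once used.
\item The diagonal restriction $\nu\le\beta$ on the $\mathsf{MA}$ stages keeps the unused blocks independent Suslin trees until they are consumed.
\end{enumerate}
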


This theorem should be contextualized against the properties of the projective sets of reals in $M_{n+1}$. In $M_{n+1}$, the presence of $n+1$ Woodin cardinals yields $\boldsymbol{\Pi}^1_{n+1}$-determinacy by the Martin-Steel theorem (\cite{martin1989proof}), guaranteeing that all sets in $\boldsymbol{\Sigma}^1_{n+2}$ are Lebesgue measurable and possess the Baire property via unfolding games. Simultaneously, the fine structure of $M_{n+1}$ provides a good $\Sigma^1_{n+3}$-definable wellorder of the reals (see \cite{Steel2}), which in turn implies $\Sigma^1_{n+3+m}$-uniformization for every $m \in \omega$ by an old result of J. Addison (see \cite{Addison}). Thus, the universe we force over $M_n$ mimics the behaviour of the reals in $M_{n+1}$, but uses significantly less large cardinal strength. As in $M_n$, $\Delta^1_{n+1}$-determinacy fails \footnote{The failure of $\boldsymbol{\Delta}^1_{n+1}$-determinacy in $M_n$ can be seen via comparison. Woodin's inner model characterization establishes that $\boldsymbol{\Delta}^1_{n+1}$-determinacy is equivalent to the statement that for every real $x$, there exists a countable, iterable $x$-mouse with $n$ Woodin cardinals. If $\boldsymbol{\Delta}^1_{n+1}$-determinacy were to hold in $M_n$, then $M_n$ would contain a set sized, iterable mouse $N$ with $n$ Woodin cardinals (see \cite{muller2020mice}). Coiterating $N$ with $M_n$ yields a proper initial segment of an iterate of $M_n$ containing $n$ Woodin cardinals. However, by the minimality of $M_n$, no proper initial segment of $M_n$ (or its iterates) can contain $n$ Woodin cardinals. This contradiction demonstrates that no such mouse $N$ exists in $M_n$, and thus $\boldsymbol{\Delta}^1_{n+1}$-determinacy must fail in $M_n$.} the negative answer to the Friedman-Schindler question follows.

Our proof takes advantage of a result of G. Hjorth (\cite{Hjorth}, Corollary 2.4) who observed that in the presence of ``every real has a sharp'', $\MA$ implies that every $\boldsymbol{\Sigma}^1_3$-set is Lebesgue measurable and has the Baire property. Hjorth's result relies on the classical Martin-Solovay tree (\cite{martin1969basis}) denoted by $T_2$. Using the generalized versions $T_n$ of the Martin-Solovay tree, Hjorth's result can be pushed up to higher projective pointclasses, thus establishing a path for a proof of the theorem: working over $M_n$ as the ground model, the objective is to force $\mathsf{MA}_{\omega_1}$ while simultaneously introducing a $\Delta^1_{n+3}$-definable wellorder and forcing global tail $\Sigma$-uniformization. Then apply a generalization of Hjorth's argument to argue for Lebesgue measurability and the Baire property. To achieve the former, we adapt the coding techniques pioneered in \cite{BPFA_and_global_Sigma-uniformization}, which possess the necessary flexibility to accommodate the forcing of $\mathsf{MA}$. 

There are several technical obstacles in the suggested path of our proof. We list two of them. First there is considerable tension between forcing a projectively definable wellorder of the reals and the $\Sigma^1_{n+2+m}$-uniformization on the one hand and forcing
$\MA$ on the other. Indeed forcing for the first objective is in need of  a very tight control over certain projective predicates, whereas forcing for $\MA$ necessarily creates a substantial amount of generically added chaos. 
A second substantial technical obstacle arises because the methods in \cite{BPFA_and_global_Sigma-uniformization}  rely on $L$ as the ground model, which behaves considerably nicer when applying coding forcings than $M_n$, in particular in iterations of length $\omega_2$ which we must use. To bridge this gap, we implement and refine the forcing machinery introduced in \cite{NS_saturated_and_definable} and \cite{NS_saturated_and_definable_and_MA}. The main innovation here is to replace $M_n$ with an intermediate generic extension that exhibits greater robustness under the subsequent coding forcings required for the definable wellorder and the uniformization. Notably, this intermediate extension is itself constructed via coding forcings, meaning the entire architecture functions as a ``coding over codes'' construction, effectively circumventing the shortcomings of $M_n$.

We also record a byproduct of the $M_1$ case which connects the construction
with generalized notions of measurability for tree forcings. Fischer,
Friedman and Khomskii study, among others, the random, Cohen, Sacks, Miller,
Laver, Silver and Mathias notions of measurability, and Friedman--Schrittesser
ask whether one can preserve Woodin cardinals while arranging prescribed
combinations of projective $P$-measurability and failures of projective
$P$-measurability. The $M_1$ instance of our construction gives such a model
for $P\in\{B,C,S,M\}$, where $B$ denotes random/Lebesgue measurability,
$C$ denotes Cohen/Baire measurability, $S$ denotes Sacks/Marczewski
measurability, and $M$ denotes Miller measurability. This is not an additional
forcing argument: it follows from Hjorth's observation together with the
Brendle--L\"owe implications
\[
\Gamma(C)\Rightarrow \Gamma(M)
\quad\text{and}\quad
\Gamma(B)\Rightarrow \Gamma(S)
\]
for pointclasses $\Gamma$ closed under continuous preimages.

We
mention that this paper has a companion paper~\cite{LocalDelfinoPi}, which
treats the complementary $\Pi$-side direction.  There the same exact-placement
background is combined with adjacent $\Pi^1_{n+3}$-uniformization, using a
local coding predicate and a derivative hierarchy of allowable hybrid forcings.
Thus the two papers should be viewed as complementary: the present paper
develops the coding-over-codes construction needed for Martin's Axiom and the
global $\Sigma$-uniformization tail, while the companion paper develops a
separate local mechanism for the adjacent $\Pi$-uniformization theorem.

We end with a brief description of the organization of this article. In the first sections we introduce the tools we shall use in our proof. This includes several properties of $M_1$ and $M_n$, and three coding forcings which we will weave together for a suitable coding machinery. The main proof will be shown with $M_1$ as the ground model and we later rather extensively indicate how to implement the changes one has to make in order to lift the proof to $M_n$.

\section{Canonical inner models with Woodin cardinals and the trees $T_n$}\label{sec:canonical-inner-models}

\subsection{The canonical inner model with one Woodin cardinal}\label{subsec:m1-background}

We recall the fragment of inner model theory which will be used.  The ground model for the main construction is the canonical proper class mouse $M_1$, the minimal iterable proper class premouse with one Woodin cardinal. In this article, when we say $M_1$, or more generally $M_n$ for $n>1$, exists, we implicitly assume that $M_1^\#$ exists, the minimal active inner model with $1$ Woodin cardinal that is not $1$-small (see below), and, in this case, $M_1$ is the result of iterating the active measure of $M_1^\#$ out of the universe.  We use the notation and comparison conventions of Steel's outline of inner model theory, and the projective definability analysis of Steel's work on projectively well-ordered inner models; see \cite{Steel3,Steel2}.  In particular, every proper initial segment of $M_1$ is $1$-small and $\omega$-sound, and the order of construction gives a good $\Delta^1_3$ well-order of the reals of $M_1$.

A premouse $\mathcal M$ is \emph{$1$-small above $\eta$} if whenever $E$ is an extender on the $\mathcal M$-sequence and $\eta<\operatorname{crit}(E)$, the initial segment $\mathcal J^{\mathcal M}_{\operatorname{crit}(E)}$ has no Woodin cardinal above $\eta$.  We say that $\mathcal M$ is \emph{$1$-small} if it is $1$-small above $0$.

We also recall the weak iterability notion used by Steel at the first odd level.  Let $\mathcal T$ be an $\omega$-maximal putative iteration tree on $\mathcal M$, let $b$ be a maximal branch through $\mathcal T$, and let $\alpha$ be a countable ordinal.  The branch $b$ is \emph{$\alpha$-good} if whenever $\mathcal N$ is either $\mathcal M_b^{\mathcal T}$ itself, or the $\alpha$-th linear iterate of an initial segment $\mathcal P\trianglelefteq \mathcal M_b^{\mathcal T}$ by one extender on the $\mathcal P$-sequence and its images, then either $\mathcal N$ is well-founded or $\alpha\in\operatorname{wfp}(\mathcal N)$.  A countable premouse $\mathcal M$ is \emph{$\Pi^1_2$-iterable} if player II wins the corresponding one-round weak iteration game: player I plays a countable putative $\omega$-maximal tree together with a countable ordinal $\alpha$, and player II either accepts a last well-founded model or plays a maximal $\alpha$-good branch.  Steel proves that this iterability condition is $\Pi^1_2$ in the codes.  For $1$-small mice it is the $n=1$ instance of the general $\Pi_n$-iterability analysis; see \cite[Lemma~1.7]{Steel2}.

We now fix the local notation for limit length trees.

\begin{definition}\label{def:delta-common-part}
Let $\mathcal T$ be a $k$-maximal iteration tree of limit length on a premouse $\mathcal M$, where $k\leq\omega$.
\begin{enumerate}
   \item We set
   \[
      \delta(\mathcal T)=
      \sup\{\operatorname{lh}(E_\xi^{\mathcal T})\mid
             \xi+1<\operatorname{lh}(\mathcal T)\}.
   \]
   \item $\mathcal M(\mathcal T)$ denotes the common part of the models along $\mathcal T$ below $\delta(\mathcal T)$, i.e. the unique passive premouse $\mathcal P$ of height $\delta(\mathcal T)$ such that, for every extender $E_\xi^{\mathcal T}$ used in $\mathcal T$, $\mathcal P$ agrees with the corresponding model of the tree below $\operatorname{lh}(E_\xi^{\mathcal T})$.
\end{enumerate}
\end{definition}

We shall use the following form of Steel's branch uniqueness theorem, often called the zipper lemma.

\begin{theorem}[Zipper lemma]\label{thm:zipper-local}
Let $\mathcal T$ be a $k$-maximal iteration tree of limit length on a premouse $\mathcal M$, where $k\leq\omega$, and let $b,c$ be distinct cofinal branches through $\mathcal T$.  Put $\delta=\delta(\mathcal T)$.  Suppose that $A\subseteq\delta$ and that $A,\delta\in\operatorname{wfp}(\mathcal M_b^{\mathcal T})\cap\operatorname{wfp}(\mathcal M_c^{\mathcal T})$.  Then one of the two branch models satisfies
\[
   \exists\kappa<\delta\,
   (\kappa\text{ is }A\text{-strong up to }\delta).
\]
\end{theorem}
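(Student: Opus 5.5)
The plan is to run Steel's classical zipper argument, working inside the well-founded parts of the two branch models. Because $b\neq c$ are cofinal branches through a tree of limit length, they separate at some stage and then alternate cofinally in $\operatorname{lh}(\mathcal T)$. So we can pick increasing sequences $\alpha_0<\beta_0<\alpha_1<\beta_1<\dots$ that enumerate the tails of $b$ and $c$ beyond the separation point. Here $\alpha_i\in b$ and $\beta_i\in c$, and each is the $\mathcal T$-predecessor of the next element of its own branch.

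Write $E_\gamma$ for the extender applied along the branch right after node $\gamma$, and $\kappa_\gamma=\operatorname{crit}(E_\gamma)$. Two standard facts about normal ($k$-maximal) trees then give the interlocking pattern
\[
\kappa_{\alpha_i}<\operatorname{lh}(E_{\alpha_i})\leq \kappa_{\beta_i}<\operatorname{lh}(E_{\beta_i})\leq\kappa_{\alpha_{i+1}}<\cdots.
\]
The first fact is that lengths of extenders used increase with the index. The second is that an extender applied to an earlier model has critical point above the lengths of the extenders it jumps over. All of these lengths are cofinal in $\delta$, and the models $\mathcal M_b^{\mathcal T}$ and $\mathcal M_c^{\mathcal T}$ agree with $\mathcal M(\mathcal T)$ below $\delta$. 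The hypothesis $A,\delta\in\operatorname{wfp}$ of both branch models lets us compute $A$-strength inside either model correctly, and transfer it back along the branch embeddings.

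Next, suppose toward a contradiction that neither branch model satisfies $\exists\kappa<\delta$ ($\kappa$ is $A$-strong up to $\delta$). Look at the last model on each branch where this failure is witnessed with a parameter below the relevant critical point. For each $\kappa<\delta$ define $f_b(\kappa)$, computed in $\mathcal M_b^{\mathcal T}$, to be the least $\eta\le\delta$ such that $\kappa$ is not $A$-strong up to $\eta$; define $f_c$ in $\mathcal M_c^{\mathcal T}$ in the same way. Then $f_b\restriction\delta$ and $f_c\restriction\delta$ agree, because below $\delta$ both models see the same extenders from $\mathcal M(\mathcal T)$ and the same $A$. Pull $A$ and $\delta$ back along the branch embeddings $i^b_{\alpha_i,b}$, using that $\delta$ is in the range on a tail (the standard argument via the fact that the sup of critical points along each branch is $\delta$).

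The key step, and the one I expect to be the main obstacle, is showing that some extender $E_{\beta_i}$ witnesses $\kappa_{\alpha_{i+1}}$-type strength. The generators of $E_{\beta_i}$ lie in the interval $[\kappa_{\beta_i},\operatorname{lh}(E_{\beta_i}))$, and this interval overlaps the coherence region of $E_{\alpha_i}$. Since $\mathcal M_{\beta_i}$ agrees with $\mathcal M_{\alpha_{i+1}}$ up to $\operatorname{lh}(E_{\beta_i})$, the extender $E_{\alpha_i}$ is applied to a model seeing $E_{\beta_i}$. Compare the images of the failure witnesses in the two branches: using the agreement, the least failure point for $\kappa_{\alpha_i}$ computed along $b$ must be below $\operatorname{lh}(E_{\alpha_i})$, which leads to a strictly decreasing sequence of ordinals $f(\kappa_{\alpha_i})$-images, all of them in the well-founded parts. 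This contradicts well-foundedness of the part of the models containing $\delta$.

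To carry this out I would first handle the case where the degrees $k$ do not drop along a tail, then note that drops happen only finitely often on each branch, so the same argument applies past the last drop. The delicate bookkeeping is verifying that every ordinal in the descending sequence genuinely lies in $\operatorname{wfp}$; I would ensure this by bounding each one by the image of $\delta$, which is in $\operatorname{wfp}$ by hypothesis.
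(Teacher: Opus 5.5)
The paper gives no proof of this statement. It quotes it as Steel's branch uniqueness theorem, so I compare your sketch with the standard zipper argument. Against that argument your sketch has a genuine gap, and it starts with the configuration.

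\textbf{The configuration is reversed.} Your second ``standard fact'' is backwards. If $\gamma+1$ is the $b$-successor of $\xi$, normality gives $\nu(E^{\mathcal T}_\eta)\le\operatorname{crit}(E^{\mathcal T}_\gamma)<\nu(E^{\mathcal T}_\xi)$ for $\eta<\xi$. So the critical point lies \emph{below} the lengths of the extenders $E^{\mathcal T}_\eta$, $\xi\le\eta<\gamma$, that it jumps over. Take a correctly chosen zipper: with $E_{\alpha_i}=E^{\mathcal T}_{\gamma_i}$ and $E_{\beta_i}=E^{\mathcal T}_{\epsilon_i}$ in your notation, require $\alpha_i<\beta_i\le\gamma_i$ and $\beta_i<\alpha_{i+1}\le\epsilon_i$. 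The true pattern is then
\[
\kappa_{\alpha_i}<\kappa_{\beta_i}<\nu(E_{\alpha_i})\le\kappa_{\alpha_{i+1}}<\nu(E_{\beta_i})\le\kappa_{\beta_{i+1}}<\cdots .
\]
So the critical point on one branch lies strictly inside the generator interval of the current extender on the other branch. Your displayed non-overlapping pattern is what happens along a \emph{single} branch; under it, $b$ and $c$ would never interact. Your later remark that the intervals overlap contradicts your own display. Also, $b$ and $c$ need not alternate node by node, so you must pass to subsequences rather than enumerate the tails.

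\textbf{The key step points the wrong way.} The promised descending sequence has no mechanism behind it. Since $A=i_{\alpha_i,b}(\bar A)$ and the tail embedding from $\operatorname{succ}_b(\alpha_i)$ has critical point $\ge\nu(E_{\alpha_i})$, one gets $A\cap\nu(E_{\alpha_i})=i_{E_{\alpha_i}}(A\cap\kappa_{\alpha_i})\cap\nu(E_{\alpha_i})$. Thus $E_{\alpha_i}$ itself witnesses that $\kappa_{\alpha_i}$ is $A$-strong up to $\nu(E_{\alpha_i})$. The least failure point $f(\kappa_{\alpha_i})$ is therefore above $\nu(E_{\alpha_i})$, not below $\operatorname{lh}(E_{\alpha_i})$.

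\textbf{The missing idea.} Your ingredients $f$, $f_b=f_c$, and pulling $A,\delta$ back are right. What you need is to push up the failure point of the \emph{single} ordinal $\kappa=\kappa_{\alpha_0}$, switching branches at each step. Choose $\alpha_0,\beta_0$ so late that $A,\delta$ lie in the ranges of the branch embeddings.
\begin{enumerate}
\item Since $\kappa_{\beta_0}<\nu(E_{\alpha_0})$ and $i_{\beta_0,c}^{-1}(A)$ agrees with $A$ below $\kappa_{\beta_0}$, agreement of $\mathcal M_{\beta_0}$ with the $b$-side models shows $\bar f(\kappa)>\kappa_{\beta_0}$ in $\mathcal M_{\beta_0}$, where $\bar f=i_{\beta_0,c}^{-1}(f)$.
\item Applying $i_{\beta_0,c}$, which fixes $\kappa$ and sends $\bar f$ to $f$, gives $f(\kappa)>i_{\beta_0,c}(\kappa_{\beta_0})\ge\nu(E_{\beta_0})$.
\item Since $\kappa_{\alpha_1}<\nu(E_{\beta_0})$, the same argument on $b$ at $\alpha_1$ gives $f(\kappa)>\nu(E_{\alpha_1})$, and so on.
\end{enumerate}
By induction $f(\kappa)\ge\sup_i\nu(E_{\alpha_i})=\delta$, contradicting $f(\kappa)<\delta$. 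The hypothesis $A,\delta\in\operatorname{wfp}$ only makes $f$ and these ordinal comparisons meaningful. The contradiction is an upper bound being exceeded, not an infinite descent.
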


\begin{definition}\label{def:q-structures-local}
Let $\mathcal T$ be a $k$-maximal iteration tree of limit length on a premouse $\mathcal M$, where $k\leq\omega$, and let $b$ be a cofinal well-founded branch through $\mathcal T$.  The $\mathcal Q$-structure $\mathcal Q(b,\mathcal T)$ is the least initial segment of $\mathcal M_b^{\mathcal T}$, if it exists, which either sees that $\delta(\mathcal T)$ is not Woodin over the common part $\mathcal M(\mathcal T)$, or projects strictly below $\delta(\mathcal T)$ at an allowed finite degree.  More explicitly, $\mathcal Q(b,\mathcal T)=\mathcal J_\gamma^{\mathcal M_b^{\mathcal T}}$ for the least $\gamma$ such that either
\[
   \mathcal J_{\gamma+1}^{\mathcal M_b^{\mathcal T}}
      \models ``\delta(\mathcal T)\text{ is not Woodin}'',
\]
or, for some $m<\omega$ allowed by the degree of the tree,
\[
   \rho_{m+1}(\mathcal J_\gamma^{\mathcal M_b^{\mathcal T}})
      <\delta(\mathcal T).
\]
If no such $\gamma$ exists, then $\mathcal Q(b,\mathcal T)$ is undefined.
\end{definition}

The point of the preceding definition is that, in the $1$-small context, a $\mathcal Q$-structure determines at most one good cofinal branch.  If two distinct branches had the same relevant well-founded $\mathcal Q$-structure, the zipper lemma would produce strength below $\delta(\mathcal T)$, contradicting the initial segment which witnesses that $\delta(\mathcal T)$ is not Woodin.

In the next few statements, an \emph{ordinary premouse} means a premouse in Steel's usual premouse language, with no real parameter, predicate parameter, or base set added to the structure.  This is only a terminological convention distinguishing these premice from mice over a real or over another base; no additional fine-structural notion is being introduced.

\begin{lemma}\label{lem:local-comparison-m1}
Let $\mathcal M$ and $\mathcal N$ be countable ordinary premice.  Assume that both are $\omega$-sound and project to $\omega$, that $\mathcal M\triangleleft M_1$, and that $\mathcal N$ is $1$-small and $\Pi^1_2$-iterable.  Then the comparison of $\mathcal M$ with $\mathcal N$ is successful.  Consequently
\[
   \mathcal M\trianglelefteq\mathcal N
   \quad\text{or}\quad
   \mathcal N\trianglelefteq\mathcal M.
\]
\end{lemma}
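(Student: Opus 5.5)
We compare $\mathcal M$ and $\mathcal N$ in the usual way: at each stage we iterate away the least disagreement between the current models. We build $\omega$-maximal iteration trees $\mathcal T$ on $\mathcal M$ and $\mathcal U$ on $\mathcal N$ of countable length. The plan has three parts. First, show that both sides can always be continued at limit stages, so the comparison never gets stuck. Second, show that the comparison terminates at a countable stage. Third, use soundness and projecting to $\omega$ to conclude that no extenders were actually used, which gives the initial segment relation.

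On the $\mathcal M$-side, since $\mathcal M\triangleleft M_1$ and $M_1$ is fully iterable (it is obtained from $M_1^\#$, which is $\omega_1+1$-iterable via $\mathcal Q$-structure guided strategies), $\mathcal M$ inherits an iteration strategy. At a limit stage $\lambda$ this strategy picks the unique cofinal branch $b$ whose $\mathcal Q$-structure $\mathcal Q(b,\mathcal T\restriction\lambda)$ exists and is iterable. Such a $\mathcal Q$-structure exists because every proper initial segment of $M_1$ is $1$-small and $\mathcal M$ has no Woodin cardinals whose images could be $\delta(\mathcal T)$ without being killed.

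On the $\mathcal N$-side we only have $\Pi^1_2$-iterability, so at a limit stage we let player I play the tree built so far together with a large enough countable $\alpha$. Player II then supplies a maximal $\alpha$-good branch $c$. The key step, and the main obstacle, is to show that $c$ is actually cofinal and that its model is well-founded far enough for the comparison to continue. For this I would use $1$-smallness of $\mathcal N$: the common part $\mathcal M(\mathcal U)$ agrees with $\mathcal M(\mathcal T)$, because the two trees are built by comparison. The $\mathcal M$-side branch yields a $\mathcal Q$-structure $\mathcal Q$ over $\delta$ which is well-founded. Since $\mathcal N$ is $1$-small, $\delta$ cannot be Woodin in $\mathcal M^{\mathcal U}_c$ beyond a $1$-small level, so $c$ also has a $\mathcal Q$-structure. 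That $\mathcal Q$-structure lies inside the $\alpha$-well-founded part, once $\alpha$ exceeds the height of $\mathcal Q$ plus what is needed for the linear-iterate clause. We then compare the two $\mathcal Q$-structures: they are countable, sound and project to $\delta$, and the one from $\mathcal M$ is iterable above $\delta$. By a small inductive comparison above $\delta$, they are equal. The zipper lemma (Theorem~\ref{thm:zipper-local}) then rules out alternative branches, so the continuation is well-defined and well-founded. This is precisely Steel's argument in \cite[Lemma~1.7]{Steel2}, and I would cite it for the $\alpha$-goodness bookkeeping rather than redo it.

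For termination, the standard argument applies. If the comparison ran through $\omega_1$ stages, take a countable elementary substructure (or reflect) to get a limit $\lambda$ where both final branches have critical points at $\lambda$ and the extenders applied are compatible. This contradicts the comparison rule, because both sides used the same extender to resolve a disagreement. Some care is needed here, since the $\mathcal N$-side branches are only $\alpha$-good and not fully well-founded. However, the sup of the $\alpha$'s chosen along the way stays countable, so the argument still goes through in $V$.

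Finally, let $\mathcal M^*$ and $\mathcal N^*$ be the last models, with one an initial segment of the other. Because $\mathcal M$ and $\mathcal N$ are $\omega$-sound and project to $\omega$, the usual argument applies. If the side ending as the initial segment had dropped, or had used extenders, its final model would not be sound, or would project above $\omega$ with a nontrivial iteration map. This would contradict the fact that the final model is a sound initial segment of the other side. Hence no extenders were used on that side, which gives $\mathcal M\trianglelefteq\mathcal N$ or $\mathcal N\trianglelefteq\mathcal M$.
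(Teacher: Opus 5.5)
Your handling of limit stages and your final no-drop step are the same as in the paper. At limit stages you take the $\mathcal Q$-structure from the $\mathcal M$-side branch, ask the $\Pi^1_2$-strategy for an $\alpha$-good branch with $\alpha$ above its height, identify the two $\mathcal Q$-structures, and use the zipper lemma for uniqueness.

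\textbf{The gap is in the termination step.} The reflection argument you quote needs cofinal branches through \emph{both} trees at stage $\omega_1$. Only then does a countable $X\prec H_\theta$ give $\lambda=X\cap\omega_1$ lying on both branches, with extenders of critical point $\lambda$ applied along them that are compatible with the collapse map.

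On the $\mathcal M$-side this branch comes from the strategy inherited from $M_1$. But $\Pi^1_2$-iterability is a game in which player I plays only \emph{countable} trees. So nothing in your argument produces a cofinal branch through the $\omega_1$-length tree $\mathcal U$; your phrase ``both final branches'' presupposes exactly what is missing.

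Your proposed repair does not help either. In a comparison of length $\omega_1$ the $\mathcal Q$-structure heights, and hence the $\alpha$'s, are cofinal in $\omega_1$, so their sup is not countable. In any case the problem is the existence of a branch at stage $\omega_1$, not the well-foundedness of the models at countable stages.

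\textbf{The missing idea is a collapse-and-homogeneity argument, which is how the paper closes this step.}
\begin{enumerate}
\item Force with $\operatorname{Col}(\omega,\omega_1)$. There $\mathcal U$ is countable, and $\mathcal N$ is still $\Pi^1_2$-iterable, because this is a $\Pi^1_2$ property of its code (Shoenfield absoluteness).
\item The strategy therefore supplies a sufficiently good branch. By your own $\mathcal Q$-structure/zipper argument, it is the unique cofinal branch whose $\mathcal Q$-structure is the one given by the $\mathcal M$-side branch at $\omega_1$.
\item Hence the branch is definable in the extension from ground-model parameters, and homogeneity of the collapse puts it in $V$.
\end{enumerate}
Only with this branch in hand can the usual reflection argument be run to conclude that the comparison stops at a countable stage.
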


\begin{proof}
Run the usual coiteration by least disagreement, producing trees $\mathcal T$ on $\mathcal M$ and $\mathcal U$ on $\mathcal N$.  The $\mathcal M$-side is governed by the strategy inherited from $M_1$.  The $\mathcal N$-side is governed by the winning strategy witnessing $\Pi^1_2$-iterability.

There is no new issue at successor stages.  Consider a countable limit stage and suppose first that both sides have reached the same comparison height, so that $\delta(\mathcal T)=\delta(\mathcal U)$.  Let $b$ be the branch selected on the $\mathcal M$-side.  Since $\mathcal M\triangleleft M_1$ and $M_1$ is $1$-small below its Woodin, the branch model $\mathcal M_b^{\mathcal T}$ has a $\mathcal Q$-structure witnessing the relevant failure of Woodinness at this common $\delta$.  Write this $\mathcal Q$-structure as $L_\alpha(\mathcal M(\mathcal T))$, using the standard coding of the common part.

Apply the $\Pi^1_2$-iterability of $\mathcal N$ to the tree $\mathcal U$ with ordinal parameter $\alpha$.  Player II supplies a maximal $\alpha$-good branch $c$.  Because the two sides of the comparison agree below the common $\delta$, the initial segment $L_\alpha(\mathcal M(\mathcal T))$ is also the corresponding $\mathcal Q$-structure for $c$ on the $\mathcal N$-side, provided the branch model is well-founded past that structure.  If there were a second sufficiently good cofinal branch through $\mathcal U$, then the zipper lemma, applied with a code for the common $\mathcal Q$-structure as parameter $A$, would produce strength below $\delta(\mathcal U)$.  This contradicts the witness to the failure of Woodinness coded by the $\mathcal Q$-structure.  Hence the good branch through $\mathcal U$ is unique, and it is fully well-founded.

If one side has stopped using extenders, the same argument applies with the last model on the stopped side replacing the common branch model.  Its relevant initial segment supplies the $\mathcal Q$-structure which witnesses that the remaining comparison height is not Woodin, and therefore determines the unique well-founded branch on the other side.

It remains to rule out a comparison of length $\omega_1$.  Suppose that such a putative comparison existed.  Force with the L\'evy collapse $\operatorname{Col}(\omega,\omega_1)$ over the ambient model.  In the collapse extension, the putative tree becomes countable, and $\Pi^1_2$-iterability is preserved.  Thus there is a branch supplied by the $\Pi^1_2$ strategy.  By the uniqueness just proved, this branch is ordinal definable from the ground-model data of the comparison.  Homogeneity of the collapse therefore puts the branch back in the ground model, contradicting the assumption that the comparison had no branch at stage $\omega_1$.

Thus the comparison terminates below $\omega_1$.  The terminal models are linearly ordered by initial segment.  Since both premice are $\omega$-sound and project to $\omega$, the usual no-drop argument for the shorter side pulls the conclusion back to the original premice.  Hence $\mathcal M\trianglelefteq\mathcal N$ or $\mathcal N\trianglelefteq\mathcal M$.
\end{proof}

\paragraph{Preservation convention for the outer models used below.}\label{par:steel-preservation-convention}
We shall use Lemma~\ref{lem:local-comparison-m1} only in the forcing extensions which occur in this paper.  The preservation fact needed there is the following: if $\mathcal M\triangleleft M_1$ is countable and belongs to one of these extensions, then $\mathcal M$ remains $\Pi^1_2$-iterable there.  This is the realizability preservation supplied by Steel's weak iteration-game analysis for initial segments of $M_1$, applied to the proper, $\omega_1$-preserving forcing extensions used below.  We do not claim that the set of all real codes for $\Pi^1_2$-iterable premice is absolute between arbitrary outer models with the same $\omega_1$.

\begin{lemma}
\label{lem:definable-m1-initial-segments}
Let $M_1[G]$ be an $\omega_1$-preserving forcing extension of $M_1$ of the kind fixed in Paragraph~\ref{par:steel-preservation-convention}.  In $M_1[G]$ let
\[
\begin{split}
   \mathcal I=\bigl\{\mathcal N\mid {}&
      \mathcal N\text{ is a countable ordinary premouse, }\\
      &\mathcal N\text{ is }1\text{-small, }\mathcal N\text{ is }\omega\text{-sound, }\\
      &\rho_\omega(\mathcal N)=\omega,
        \text{ and }\mathcal N\text{ is }\Pi^1_2\text{-iterable}\bigr\}.
\end{split}
\]
Then $\mathcal I$ is $\Pi^1_2$-definable in the codes.  Moreover every member of $\mathcal I$ is of the form $\mathcal J_\eta^{M_1}$ for some $\eta<\omega_1$, and
\[
   \{\eta<\omega_1\mid \mathcal J_\eta^{M_1}\in\mathcal I\}
\]
is cofinal in $\omega_1$.
\end{lemma}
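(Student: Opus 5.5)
The lemma has three parts, which I will prove in order: the definability bound, the identification of members of $\mathcal I$ with levels of $M_1$, and cofinality.

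\emph{Definability.} Each defining clause of $\mathcal I$ is checked separately on a real coding a countable structure. Being a countable ordinary premouse, being $1$-small, being $\omega$-sound, and satisfying $\rho_\omega(\mathcal N)=\omega$ are all first-order properties of the coded structure, given that its well-founded part is handled correctly. Hence these clauses are arithmetical, or at worst $\Delta^1_1$, in the code. Well-foundedness of the coded model is $\Pi^1_1$ and is in any case subsumed by iterability. The iterability clause is $\Pi^1_2$ by Steel's analysis recalled above (\cite[Lemma~1.7]{Steel2}). A conjunction of $\Pi^1_2$ conditions is $\Pi^1_2$, so $\mathcal I$ is $\Pi^1_2$ in the codes. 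Since the definition is a fixed formula, it is evaluated in $M_1[G]$ with no appeal to absoluteness.

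\emph{Identification with initial segments of $M_1$.} Let $\mathcal N\in\mathcal I$ in $M_1[G]$.
\begin{enumerate}
\item Since $\omega_1$ is preserved, $\omega_1^{M_1[G]}=\omega_1^{M_1}$, so countable ordinals are the same in both models.
\item Choose $\eta<\omega_1$ with $\mathcal J_\eta^{M_1}$ sound, projecting to $\omega$, and of ordinal height above $\mathcal N\cap\mathrm{Ord}$. Such $\eta$ exist by the cofinality argument below.
\item By the preservation convention in Paragraph~\ref{par:steel-preservation-convention}, Lemma~\ref{lem:local-comparison-m1} applies inside $M_1[G]$ to $\mathcal M=\mathcal J_\eta^{M_1}$ and $\mathcal N$. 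It gives $\mathcal N\trianglelefteq\mathcal J_\eta^{M_1}$ or $\mathcal J_\eta^{M_1}\trianglelefteq\mathcal N$.
\item The second alternative is impossible for height reasons, so $\mathcal N\trianglelefteq\mathcal J_\eta^{M_1}\triangleleft M_1$.
\item Hence $\mathcal N=\mathcal J_{\eta'}^{M_1}$ for some $\eta'<\omega_1$.
\end{enumerate}
One point needs care here. Lemma~\ref{lem:local-comparison-m1} requires $\mathcal M\triangleleft M_1$ as an element of the ambient model. In $M_1[G]$ the levels $\mathcal J_\eta^{M_1}$ are still available, since $M_1$ is a definable class of $M_1[G]$ by ground-model definability, or simply because these countable objects lie in $M_1\subseteq M_1[G]$.

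\emph{Cofinality.} I would use the standard fine-structural fact that in $M_1$, as in $L$, every real appears in some level $\mathcal J_{\eta+1}^{M_1}$ that projects to $\omega$. More precisely, the levels $\mathcal J_\eta^{M_1}$ with $\eta<\omega_1^{M_1}$ and $\rho_\omega=\omega$ are unbounded in $\omega_1^{M_1}$, by condensation and acceptability of $M_1$ below its Woodin cardinal. Every proper initial segment of $M_1$ is $\omega$-sound and $1$-small, as recalled in Subsection~\ref{subsec:m1-background}. Each such $\mathcal J_\eta^{M_1}$ is therefore a countable, $1$-small, $\omega$-sound premouse projecting to $\omega$. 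It remains $\Pi^1_2$-iterable in $M_1[G]$ by the preservation convention, so it lies in $\mathcal I$. Since $\omega_1^{M_1}=\omega_1^{M_1[G]}$, these $\eta$ are cofinal in $\omega_1$ computed in $M_1[G]$.

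The main obstacle is the identification step: it depends entirely on the comparison Lemma~\ref{lem:local-comparison-m1} holding inside $M_1[G]$, and on the $\Pi^1_2$-iterability of initial segments of $M_1$ surviving the forcing. Both are packaged into the preservation convention, so the work reduces to checking that its hypotheses are met, namely $\omega_1$-preservation and the sort of forcing used.
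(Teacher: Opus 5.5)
Your proposal is correct and follows the paper's proof essentially step for step. The $\Pi^1_2$ bound comes from the arithmetic clauses plus Steel's $\Pi^1_2$-iterability analysis. The identification picks an $\omega$-sound level $\mathcal J_\eta^{M_1}$ projecting to $\omega$ with $\eta$ above $\operatorname{Ord}^{\mathcal N}$, then applies Lemma~\ref{lem:local-comparison-m1}, with the preservation convention supplying iterability in $M_1[G]$, and excludes $\mathcal J_\eta^{M_1}\trianglelefteq\mathcal N$ by height. Cofinality comes from the fine-structural supply of such levels together with the same convention; your extra remarks on well-foundedness and acceptability only spell out steps the paper leaves implicit.
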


\begin{proof}
The clauses saying that a real codes a countable ordinary premouse, that the premouse is $1$-small, that it is $\omega$-sound, and that $\rho_\omega=\omega$ are arithmetic, after fixing the usual coding of countable premice by reals.  By Steel's analysis of the weak iteration games, $\Pi^1_2$-iterability is a $\Pi^1_2$ condition in the code.  Hence the displayed definition of $\mathcal I$ is $\Pi^1_2$.

We next show that no nonstandard premouse enters $\mathcal I$.  Work in $M_1[G]$ and let $\mathcal N\in\mathcal I$.  Since $\omega_1$ is preserved, $\mathcal N$ has countable height below the true $\omega_1^{M_1}$.  Choose $\eta<\omega_1$ such that $\operatorname{Ord}^{\mathcal N}<\eta$ and such that $\mathcal J_\eta^{M_1}$ is $\omega$-sound and projects to $\omega$.  The initial segment $\mathcal J_\eta^{M_1}$ is an ordinary premouse and is realizable inside $M_1$; hence it is $\Pi^1_2$-iterable in the present extension by Paragraph~\ref{par:steel-preservation-convention}.  Applying Lemma~\ref{lem:local-comparison-m1} to $\mathcal J_\eta^{M_1}$ and $\mathcal N$, the alternative $\mathcal J_\eta^{M_1}\triangleleft\mathcal N$ is impossible by the choice of $\eta$.  Therefore $\mathcal N\trianglelefteq\mathcal J_\eta^{M_1}$, so $\mathcal N$ is itself an initial segment of $M_1$.

Finally, the fine structure of $M_1$ gives cofinally many $\eta<\omega_1$ such that $\mathcal J_\eta^{M_1}$ is $\omega$-sound and projects to $\omega$.  These levels are ordinary premice, $1$-small, and realizable, hence $\Pi^1_2$-iterable in the present extension by the same preservation convention.  Therefore they belong to $\mathcal I$, proving cofinality.
\end{proof}

\begin{definition}\label{def:recover-m1-omega1}
In any $\omega_1$-preserving forcing extension of $M_1$ we write
\[
   \mathcal N=M_1|\omega_1
\]
for the assertion that
\[
   \mathcal N=\bigcup\{\mathcal M\mid \mathcal M\in\mathcal I\},
\]
where $\mathcal I$ is the class from Lemma~\ref{lem:definable-m1-initial-segments}.
Equivalently, $x\in\mathcal N$ iff $x$ belongs to some countable ordinary premouse which is $1$-small, $\omega$-sound, $\Pi^1_2$-iterable, and projects to $\omega$.
\end{definition}

\begin{lemma}\label{lem:recover-m1-omega1}
Let $M_1[G]$ be an $\omega_1$-preserving forcing extension of $M_1$.  Then Definition~\ref{def:recover-m1-omega1} defines the true initial segment
\[
   M_1|\omega_1=\mathcal J^{M_1}_{\omega_1}.
\]
Moreover the definition is uniform in all such extensions.
\end{lemma}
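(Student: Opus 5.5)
The plan is to prove two inclusions between $\bigcup\mathcal I$ and $\mathcal J^{M_1}_{\omega_1}$, working inside an arbitrary $\omega_1$-preserving extension $M_1[G]$ of the kind fixed in Paragraph~\ref{par:steel-preservation-convention}. Both directions come from Lemma~\ref{lem:definable-m1-initial-segments}. Its first part says every $\mathcal N\in\mathcal I$ is of the form $\mathcal J^{M_1}_\eta$ for some $\eta<\omega_1$. Its second part says the set of such $\eta$ is cofinal in $\omega_1$.

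For $\bigcup\mathcal I\subseteq\mathcal J^{M_1}_{\omega_1}$, take $x\in\mathcal N\in\mathcal I$. By the lemma, $\mathcal N=\mathcal J^{M_1}_\eta$ for some $\eta<\omega_1$, and $\omega_1=\omega_1^{M_1}$ because $\omega_1$ is preserved. Hence $x\in\mathcal J^{M_1}_{\omega_1}$.

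For the reverse inclusion, take $x\in\mathcal J^{M_1}_{\omega_1}$, so $x\in\mathcal J^{M_1}_\xi$ for some $\xi<\omega_1$. By cofinality, pick $\eta>\xi$ with $\mathcal J^{M_1}_\eta\in\mathcal I$. Since the $\mathcal J$-hierarchy is increasing, $x\in\mathcal J^{M_1}_\eta\subseteq\bigcup\mathcal I$. Here one uses that the union is formed along the initial-segment order. By Lemma~\ref{lem:local-comparison-m1}, members of $\mathcal I$ are pairwise $\trianglelefteq$-comparable, so $\bigcup\mathcal I$ is again a premouse (the union of a $\trianglelefteq$-chain), and it equals $\mathcal J^{M_1}_{\omega_1}$ as a structure, not merely as a set.

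For uniformity, observe that the definition of $\mathcal I$ is a fixed $\Pi^1_2$ formula in the codes, with no parameters depending on $G$. So the same formula defines $M_1|\omega_1$ in every such extension.

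The only delicate point is hidden in Lemma~\ref{lem:definable-m1-initial-segments}: the preservation of $\Pi^1_2$-iterability of the levels $\mathcal J^{M_1}_\eta$ in $M_1[G]$, which is exactly what the preservation convention supplies. I would therefore state explicitly that the lemma is read for extensions of the kind fixed in that paragraph. Given that, the remaining argument is routine bookkeeping. The main point to check carefully is that the equation $\omega_1^{M_1[G]}=\omega_1^{M_1}$ is used in both directions.
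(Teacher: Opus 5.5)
Your proposal is correct and follows the paper's proof. Both inclusions come directly from the two halves of Lemma~\ref{lem:definable-m1-initial-segments}. Uniformity is the observation that the same $\Pi^1_2$ formula is used in every extension, with the caveat, which you also note, that the lemma is only available for extensions of the kind fixed in the preservation convention. One small simplification: you do not need Lemma~\ref{lem:local-comparison-m1} for the chain property, since once every member of $\mathcal I$ is some $\mathcal J^{M_1}_\eta$ they are trivially $\trianglelefteq$-comparable.
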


\begin{proof}
By Lemma~\ref{lem:definable-m1-initial-segments}, every member of $\mathcal I$ is an initial segment $\mathcal J^{M_1}_\eta$ with $\eta<\omega_1$.  Hence the union in Definition~\ref{def:recover-m1-omega1} is contained in $M_1|\omega_1$.  Conversely, the same lemma gives cofinally many $\eta<\omega_1$ with $\mathcal J_\eta^{M_1}\in\mathcal I$.  Their union is $\mathcal J^{M_1}_{\omega_1}$.  The same formula defining $\mathcal I$ is used in every such extension.  We do not assert that the same real codes belong to $\mathcal I$ in different extensions, since new reals may code new countable putative iteration trees.  Rather, applying Lemma~\ref{lem:definable-m1-initial-segments} inside the given extension identifies the union of the premice satisfying that formula with the true $\mathcal J^{M_1}_{\omega_1}$.
\end{proof}

\paragraph{Remark.}\label{rem:m1-omega1-not-absolute}
The notation in Definition~\ref{def:recover-m1-omega1} is a convention for outer models of $M_1$ which preserve $\omega_1$.  The same first-order-looking assertion, if evaluated inside an arbitrary transitive model, need not imply that the object obtained is the true $M_1|\omega_1$.  Later, whenever this definition is used inside countable auxiliary models, the relevant countable premouse is also required externally to belong to the class $\mathcal I$.

We shall need a canonical diamond sequence which is available from the same fine structure.  The argument is Jensen's proof of diamond in $L$, with Steel's condensation theorem for initial segments of $M_1$ replacing condensation for $L$.

\begin{theorem}\label{thm:steel-condensation-m1}
Let $\mathcal M\trianglelefteq M_1$ be an $\omega$-sound initial segment and let
\[
   \pi:\bar N\rightarrow\mathcal M
\]
be the inverse of the transitive collapse of a sufficiently elementary substructure of $\mathcal M$.  Suppose that the critical point of $\pi$ is the relevant standard projectum of $\bar N$.  Then either
\begin{enumerate}
   \item $\bar N\trianglelefteq\mathcal M$, or
   \item $\bar N$ is an initial segment of a degree-zero ultrapower of an initial segment of $\mathcal M$ by an extender on the $M_1$-sequence whose length is that projectum.
\end{enumerate}
In the hulls used in the diamond argument below, the second alternative is impossible.  Hence the transitive collapse is an initial segment of $M_1$.
\end{theorem}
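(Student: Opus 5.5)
This is Steel's condensation theorem specialised to $M_1$, so I would not re-derive it from scratch. The plan is to cite the general condensation lemma from the fine structure literature (Mitchell--Steel, Steel's outline, and its refinement in Schlutzenberg's and Steel's condensation analyses). I would then verify its hypotheses in our setting and rule out the second alternative for the hulls arising in the diamond construction.

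First, set up the general statement in the required form. Let $\mathcal M\trianglelefteq M_1$ be $\omega$-sound and let $\pi:\bar N\to\mathcal M$ be sufficiently elementary (fully elementary, or at least $r\Sigma_{k+1}$-elementary at the relevant degree). Put $\operatorname{crit}(\pi)=\rho$, where $\rho=\rho_\omega(\bar N)$ is the standard projectum.

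1. Note that $\bar N$ is $\omega$-sound above $\rho$. Since $\bar N$ is a pointwise image-closed hull, it is the hull of $\rho\cup\{p\}$ for the standard parameter; soundness then pulls back along $\pi$ via the usual solidity and universality of standard parameters for iterable premice.
2. Note that $\bar N$ is $\omega_1+1$-iterable, since iteration trees on $\bar N$ can be copied via $\pi$ to trees on $\mathcal M$, and $\mathcal M$ inherits iterability from $M_1$.
3. Compare $\bar N$ with $\mathcal M|\rho$ extended by the appropriate segment, i.e.\ with $\mathcal M\|\operatorname{lh}$ around $\rho$. Since both agree below $\rho$ and $\bar N$ projects to $\rho$ and is sound, the standard argument yields the dichotomy. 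Either $\bar N\trianglelefteq\mathcal M$, or $\rho$ is the index of an extender $E$ on the $\mathcal M$-sequence with $\bar N\trianglelefteq\operatorname{Ult}_0(\mathcal M|\rho,E)$.

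This step is the content of the cited theorem, and I would simply quote it.

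Second, rule out alternative 2 for the diamond hulls. In Jensen's argument the hulls are of the form $X\prec\mathcal J^{M_1}_{\omega_2}$ (or $\prec\mathcal J^{M_1}_{\beta}$ for some $\beta<\omega_2$), with $X$ countable and $X\cap\omega_1=\alpha$ an ordinal. So the critical point is $\alpha=\omega_1^{\bar N}$, which is a cardinal of $\bar N$. If alternative 2 held, $\alpha$ would be the length (index) of an extender $E$ on the $M_1$-sequence. But indices of extenders on the sequence of a premouse are never cardinals in the ultrapower containing $\bar N$. Moreover, $\operatorname{lh}(E)=\alpha$ forces $\alpha$ to be collapsed: $\operatorname{crit}(E)<\alpha$ and $\alpha$ is the successor of $\nu(E)$ in the ultrapower, at most. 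Yet $\alpha$ is $\omega_1$ of $\bar N$, so $\bar N\models$ ``$\alpha$ is the least uncountable cardinal''. Any extender on the sequence with critical point $<\alpha$ would have a countable critical point in $\bar N$, which is impossible, since critical points of extenders are measurable, hence uncountable, in the relevant models.

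Alternatively, and more simply for $M_1$: all extenders on the $M_1$-sequence with index below $\omega_1^{M_1}$ have critical points that are inaccessible in $M_1|\operatorname{lh}$. Such an index cannot be $\omega_1^{\bar N}$ when $\bar N$ agrees with $M_1$ below $\alpha$ and thinks $\alpha=\omega_1$. Hence $\bar N\trianglelefteq M_1$.

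The main obstacle is the first step's soundness and parameter-solidity bookkeeping. Ensuring that the hull really has $\operatorname{crit}(\pi)=\rho_\omega(\bar N)$, rather than a projectum above $\alpha$, requires choosing $\beta$ so that $\mathcal J^{M_1}_\beta$ projects to $\omega_1$, or working with the least level over which the relevant subset of $\alpha$ is definable, exactly as in the $L$ proof. I would handle this by quoting the condensation theorem in its version for $r\Sigma_{k+1}$ hulls and choosing the degree $k$ accordingly.
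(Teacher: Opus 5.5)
Your plan matches the paper's: cite Steel's condensation theorem, then exclude the ultrapower alternative because an extender indexed at $\alpha=\omega_1^{\bar N}$ clashes with $\bar N$'s $\omega_1$. Your decisive point is the right mechanism, and it is sharper than the paper's wording, which makes the index rather than the critical point inaccessible: $\kappa=\operatorname{crit}(E)<\alpha$ is inaccessible in $\operatorname{Ult}_0(M_1|\alpha,E)$, that ultrapower has the same subsets of $\kappa$ as $M_1|\alpha$, and it end-extends $\bar N$, so $\bar N$ cannot collapse $\kappa$.

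Two side remarks should be corrected. First, drop the claim that indices are never cardinals of the ultrapower, since with Mitchell--Steel indexing $\operatorname{lh}(E)=\nu(E)^{+}$ there. Second, your ``more simply'' variant also needs the end-extension fact: agreement with $M_1$ below $\alpha$ alone does not suffice, because $M_1$ itself has extenders indexed below $\omega_1^{M_1}$ whose critical points are countable in $M_1$.
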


\begin{proof}
This is the standard condensation theorem for the Steel $M_1$-construction; see \cite[Theorem~5.1]{Steel3}.  We only spell out why the ultrapower alternative does not occur in the present application.  The hulls below are chosen so that their collapse has projectum equal to its internal $\omega_1$.  If the second alternative held, there would be an extender on the $M_1$-sequence indexed exactly at this internal $\omega_1$.  The lower part below that index is the collapse of the corresponding lower part of the hull and sees the index as the successor cut reached by the construction.  An extender indexed there would make the index inaccessible in the relevant extender model.  This contradicts the fact that the collapsed lower part computes it as its $\omega_1$.  Therefore only the initial-segment alternative remains.
\end{proof}

\begin{lemma}\label{lem:m1-diamond}
There is a sequence
\[
   \vec D=\langle D_\alpha\mid \alpha<\omega_1\rangle\in M_1
\]
with $D_\alpha\subseteq\alpha$ for all $\alpha<\omega_1$ such that
\[
   M_1\models ``\vec D\text{ is a }\diamondsuit_{\omega_1}\text{-sequence}''.
\]
Moreover $\vec D$ is uniformly definable over $M_1|\omega_1$ from the canonical order of construction of $M_1$.
\end{lemma}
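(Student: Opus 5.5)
We follow Jensen's proof of $\diamondsuit$ in $L$ essentially verbatim, with the canonical order of construction $<_{M_1}$ of $M_1|\omega_1$ in place of $<_L$, and Theorem~\ref{thm:steel-condensation-m1} in place of Gödel condensation. By recursion on $\alpha<\omega_1$ we define $(D_\alpha,C_\alpha)$ as follows. If $\alpha$ is a limit ordinal and there is a pair $(D,C)$ with $D\subseteq\alpha$, $C\subseteq\alpha$ club in $\alpha$, and $D\cap\beta\neq D_\beta$ for all $\beta\in C$, we let $(D_\alpha,C_\alpha)$ be the $<_{M_1}$-least such pair. Otherwise we put $D_\alpha=C_\alpha=\emptyset$.

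Since $\alpha<\omega_1$, all subsets of $\alpha$ in $M_1$ lie in $M_1|\omega_1$, because $M_1$ is acceptable. The recursion is therefore carried out inside $M_1|\omega_1$, and $\vec D$ is uniformly definable over $M_1|\omega_1$ from $<_{M_1}$. This already yields the ``moreover'' clause.

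To verify $\diamondsuit$ in $M_1$, suppose toward a contradiction that the guessing fails. Take the $<_{M_1}$-least pair $(D,C)$ with $D\subseteq\omega_1$, $C$ club in $\omega_1$, and $D\cap\alpha\neq D_\alpha$ for all $\alpha\in C$. By acceptability we may choose a level $\mathcal M=\mathcal J^{M_1}_\theta$ with $\theta>\omega_1$, $\theta<\omega_2$, such that $\mathcal M$ is $\omega$-sound, projects to $\omega_1$, contains $\vec D$ and $(D,C)$, and satisfies enough of $\mathsf{ZF}^-$ for the least counterexample to be definable in $\mathcal M$.

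Next we build a continuous chain of countable elementary hulls $X_\nu\prec\mathcal M$ (taken $\Sigma_\omega$, or of the fine-structurally appropriate degree) and pick $\nu$ with $\alpha=X_\nu\cap\omega_1\in\omega_1$. Let $\pi:\bar N\to\mathcal M$ be the inverse of the transitive collapse. Then $\operatorname{crit}(\pi)=\alpha=\omega_1^{\bar N}$. By choosing the hull as the $\Sigma$-hull of $\alpha\cup\{p\}$, where $p$ is the standard parameter of $\mathcal M$, we can arrange that $\bar N$ is sound and projects to $\alpha$.

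Theorem~\ref{thm:steel-condensation-m1} then gives $\bar N\trianglelefteq M_1$, so $\bar N=\mathcal J^{M_1}_{\bar\theta}$ for some $\bar\theta<\omega_1$. Elementarity together with correctness of $<_{M_1}$ on initial segments shows that $\pi^{-1}(\vec D)=\vec D\restriction\alpha$, and that $\bar N$ computes $\pi^{-1}(D,C)=(D\cap\alpha,C\cap\alpha)$ as the $<_{M_1}$-least counterexample at $\alpha$. Since $\bar N\trianglelefteq M_1|\omega_1$, the definition of $(D_\alpha,C_\alpha)$ yields $D_\alpha=D\cap\alpha$. Moreover $\alpha\in C$, because $C\cap\alpha$ is unbounded in $\alpha$ and $C$ is closed. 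This contradicts the choice of $(D,C)$.

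The main obstacle is the condensation step. We must make sure that the hulls satisfy the hypotheses of Theorem~\ref{thm:steel-condensation-m1}: the critical point must equal the projectum of $\bar N$, and the ultrapower alternative must be excluded. Both are handled by taking $\mathcal M$ sound with $\rho_\omega(\mathcal M)=\omega_1$ and forming hulls that contain the standard parameter, as in the remark following that theorem.

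A secondary point is to check that ``least counterexample'' is computed absolutely between $\bar N$ and $M_1|\omega_1$. For this we use the fact that $<_{M_1}$ restricted to an initial segment is the order of that initial segment, so any earlier pair counted as a counterexample in $M_1$ below level $\bar\theta$ is also seen in $\bar N$.
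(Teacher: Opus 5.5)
Your proposal is correct and is essentially the paper's proof. It uses the same Jensen recursion, picking the $<_{M_1}$-least counterexample pair at each limit $\alpha$. It refutes a least global counterexample $(D,C)$ the same way: collapse a countable hull of an $\omega$-sound level of $M_1$, use Theorem~\ref{thm:steel-condensation-m1} to identify the collapse as an initial segment of $M_1$, and conclude $D_\alpha=D\cap\alpha$ with $\alpha\in C$.

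Your extra details make explicit what the paper leaves implicit: taking hulls of $\alpha\cup\{p\}$ so that $\operatorname{crit}(\pi)=\rho_\omega(\bar N)$, deriving $\alpha\in C$ from closedness, and using that $<_{M_1}$ end-extends so that $\bar N$ computes the true least counterexample. One small correction: the ultrapower alternative is ruled out not by soundness, but because the critical point of an extender indexed at $\alpha$ would be an uncountable cardinal of $\bar N$ below $\omega_1^{\bar N}=\alpha$.
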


\begin{proof}
Work in $M_1$.  Use the canonical well-order $<_{M_1}$ of the construction to define $\vec D$ recursively.  At a limit ordinal $\alpha<\omega_1$, suppose $\langle D_\beta\mid\beta<\alpha\rangle$ has been defined.  If there is a pair $(A,C)$ such that $A\subseteq\alpha$, $C\subseteq\alpha$ is club in $\alpha$, and
\[
   \forall\beta\in C\, (D_\beta\neq A\cap\beta),
\]
then let $(A_\alpha,C_\alpha)$ be the $<_{M_1}$-least such pair and set $D_\alpha=A_\alpha$.  If there is no such pair, set $D_\alpha=\emptyset$.  At successor stages we may again set $D_\alpha=\emptyset$.

The recursion is carried out over $M_1|\omega_1$.  Indeed, all objects considered at stage $\alpha$ are subsets of the countable ordinal $\alpha$ in $M_1$, and the order used to choose the least pair is the restriction of the canonical $M_1$ construction order.  Hence the resulting sequence is uniformly definable over $M_1|\omega_1$.

It remains to verify that $\vec D$ is a diamond sequence.  Suppose not.  Let $(A,C)$ be the $<_{M_1}$-least counterexample, so $A\subseteq\omega_1$, $C\subseteq\omega_1$ is club, and
\[
   \forall\alpha\in C\,(D_\alpha\neq A\cap\alpha).
\]
Choose an $\omega$-sound initial segment $\mathcal J^{M_1}_\theta$ containing $A$, $C$, and the sequence $\vec D$, and then take a countable elementary substructure
\[
   X\prec \mathcal J^{M_1}_\theta
\]
with $A,C,\vec D\in X$ and with $\alpha=X\cap\omega_1\in C$.  Let
\[
   \pi:\bar X\rightarrow X
\]
be the inverse of the transitive collapse.  By Theorem~\ref{thm:steel-condensation-m1}, $\bar X$ is an initial segment of $M_1$.  Consequently the recursive construction of $\vec D$ inside $\bar X$ is exactly the initial part
\[
   \langle D_\beta\mid\beta<\alpha\rangle.
\]
Moreover the collapse sends $A$ to $A\cap\alpha$ and $C$ to $C\cap\alpha$, and by elementarity $\bar X$ sees $(A\cap\alpha,C\cap\alpha)$ as the $<_{M_1}$-least witness that the previous sequence is not diamond on $\alpha$.  Therefore the recursion at stage $\alpha$ gives
\[
   D_\alpha=A\cap\alpha.
\]
Since $\alpha\in C$, this contradicts the choice of $(A,C)$.  Thus no counterexample exists, and $\vec D$ is a $\diamondsuit_{\omega_1}$-sequence in $M_1$.
\end{proof}

We fix once and for all the $<_{M_1}$-least sequence $\vec D$ satisfying Lemma~\ref{lem:m1-diamond}.  In later sections this sequence will be used to define a canonical $\omega_1$-sequence of independent Suslin trees over $M_1$.  The construction of those trees is postponed until the point where the coding machinery needs them.

\subsection{The canonical inner model with $n$-many Woodin cardinals}\label{subsec:mn-background}

We now record the higher-level analogue of Subsection~\ref{subsec:m1-background}.  Throughout this subsection $1\leq n<\omega$ is fixed, and $M_n$ denotes the canonical minimal proper class premouse with $n$ Woodin cardinals, in the sense of Steel's construction of tame mice with full background extenders.  Thus $M_0=L$, and for $n>0$ the model $M_n$ is obtained from the Steel background construction by stopping at the first failure of $n$-smallness, or as the limit of the $n$-small construction if no such failure occurs.  We shall use only the following standard consequences of Steel's analysis.

First, if there are $n$ Woodin cardinals, then $M_n$ exists and satisfies that there are $n$ Woodin cardinals.  Secondly, every proper initial segment of $M_n$ is $n$-small and $\omega$-sound.  Thirdly, the order of construction of $M_n$ gives a canonical good well-order of the reals of $M_n$, and Steel's projective analysis shows that this well-order is $\Delta^1_{n+2}$ over the reals of $M_n$; see \cite{Steel2,Steel3}.  The case $n=1$ is exactly the situation isolated in the previous subsection.

\begin{definition}\label{def:n-smallness-general}
Let $\mathcal M$ be a premouse and let $\eta<\operatorname{Ord}^{\mathcal M}$.  We say that $\mathcal M$ is \emph{$n$-small above $\eta$} if whenever $E$ is an extender on the $\mathcal M$-sequence and
\[
   \eta<\operatorname{crit}(E),
\]
then the initial segment $\mathcal J^{\mathcal M}_{\operatorname{crit}(E)}$ does not have $n$ Woodin cardinals above $\eta$.  We say that $\mathcal M$ is \emph{$n$-small} if it is $n$-small above $0$.
\end{definition}

We shall also use Steel's finite-level iterability condition.  The exact definition depends on the parity of $n$.  For even $n$, one uses the $n$-round weak iteration game in which the branches played by player II are required to be sufficiently definable over the trees played by player I.  For odd $n$, one adds the corresponding $\alpha$-goodness requirement at the last round.  In both cases the definition is arranged so that realizable mice satisfy it, and so that it is projectively simple in the codes.

\begin{definition}\label{def:pin-iterability}
Let $\mathcal M$ be a countable premouse and let $\eta<\operatorname{Ord}^{\mathcal M}$ be a cutpoint.  We say that $\mathcal M$ is \emph{$\Pi_n$-iterable above $\eta$} if player II has a winning strategy in Steel's game $\mathcal G(\mathcal M,\eta,n)$ for $\Pi_n$-iterability above $\eta$.  We say simply that $\mathcal M$ is \emph{$\Pi_n$-iterable} if it is $\Pi_n$-iterable above $0$.
\end{definition}

For later reference we isolate the three consequences of the definition which are used in this paper.

\begin{fact}\label{fact:steel-pin-iterability}
For each fixed $1\leq n<\omega$ the following hold.
\begin{enumerate}
   \item The relation
   \[
      ``x\text{ codes a countable premouse which is }\Pi_n\text{-iterable}''
   \]
   is $\boldsymbol{\Pi}^1_{n+1}$ in the real code $x$.
   \item If $\mathcal M$ is a countable initial segment of $M_n$, then $\mathcal M$ is $\Pi_n$-iterable in the Steel sense, in all outer models considered in this paper in which the relevant realizability argument is preserved.
   \item If $\mathcal M$ is countable, $n$-small and realizable, and $\mathcal N$ is countable, $n$-small and $\Pi_n$-iterable, then the Steel comparison of $\mathcal M$ and $\mathcal N$ is successful, provided the two mice have the same lower part.  In the applications below the lower part is empty, so this is the $\eta=0$ case.
\end{enumerate}
\end{fact}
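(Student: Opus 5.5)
This is a compilation of three standard facts from Steel's analysis in \cite{Steel2,Steel3}. I would verify each clause separately, following the pattern already used for $n=1$ in Lemmas~\ref{lem:local-comparison-m1} and \ref{lem:definable-m1-initial-segments}.

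\emph{Clause (1).} I would unfold the game $\mathcal G(\mathcal M,0,n)$. For a countable premouse coded by $x$, each round of the game consists of player I playing a countable putative tree, coded by a real, and player II answering with a countable branch, again coded by a real. The payoff condition for a single round is well-foundedness of the branch model, or, in the last round for odd $n$, $\alpha$-goodness. This is $\Pi^1_1$ in the codes, with one further real quantifier needed to express $\alpha$-goodness. A winning strategy for II in an $n$-round game is equivalent to an alternating string of real quantifiers $\forall\mathcal T_1\exists b_1\cdots$. The point is that, for $n$-small premice, Steel's definability restriction on II's branches (they must be $\mathcal Q$-structure guided) lets the existential branch quantifiers be absorbed in a bounded way. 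That is exactly his count yielding $\Pi^1_{n+1}$. For $n=1$ this gives the $\Pi^1_2$ bound used in Lemma~\ref{lem:definable-m1-initial-segments}. I would cite \cite[Lemma~1.7]{Steel2} for the exact quantifier count rather than redo it.

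\emph{Clause (2).} A countable $\mathcal M\trianglelefteq M_n$ is realizable: every iterate along a countable tree embeds into an iterate of $M_n$ via its iteration strategy. Player II then wins by choosing the branches whose models embed back into $M_n$, since these are well-founded and satisfy the goodness clause. In an outer model $V[G]$, the strategy restricted to countable trees is still guided by $\mathcal Q$-structures, which are $n$-small and realizable. Under the preservation convention of Paragraph~\ref{par:steel-preservation-convention}, applied to the forcings of the paper, this realizability survives, so II still wins. This is the clause the statement hedges, and I would not try to prove more than what that convention grants.

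\emph{Clause (3).} I would copy the proof of Lemma~\ref{lem:local-comparison-m1} with $n$-smallness in place of $1$-smallness. Successor steps are routine. At limit stages the $\mathcal M$-side branch supplies a $\mathcal Q$-structure, which exists because the tree is $n$-small above its common part below the relevant Woodin. On the $\mathcal N$-side, the $\Pi_n$-iterability game, played with the matching round and ordinal, provides a branch with the same $\mathcal Q$-structure. The zipper lemma (Theorem~\ref{thm:zipper-local}) gives uniqueness of that branch. Coiterations of length $\omega_1$ are excluded by the collapse-and-homogeneity argument. Since the lower parts agree (here $\eta=0$), soundness and $\rho_\omega=\omega$ pull the result back to $\mathcal M\trianglelefteq\mathcal N$ or $\mathcal N\trianglelefteq\mathcal M$.

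\emph{Main obstacle.} The hard part is the limit case in (3) when the $\mathcal Q$-structure itself contains Woodin cardinals, as happens for $n\ge2$. Then II's branch must come from a later round of the game, which is precisely what the $n$-round structure provides. I would handle this by induction on the number of Woodins remaining above $\delta(\mathcal T)$, using one round per level.
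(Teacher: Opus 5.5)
Your overall route is the paper's. The paper gives no argument for this Fact beyond attributing it to Steel~\cite{Steel2,Steel3}, and you do the same for (1) and (2). Your sketch of (3) is the coiteration of Lemma~\ref{lem:local-comparison-m1}, lifted by induction on $n$ as in Lemma~\ref{lem:local-comparison-mn}.

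\textbf{Main gap: coiterations of length $\omega_1$.} This is the one step that does not transfer from $n=1$. In Lemma~\ref{lem:local-comparison-m1}, the claim that $\Pi^1_2$-iterability persists into the $\operatorname{Col}(\omega,\omega_1)$-extension is Shoenfield upward absoluteness. For $n\geq 2$, by your own clause (1), ``$\mathcal N$ is $\Pi_n$-iterable'' is a $\Pi^1_{n+1}$ assertion. Such assertions are not upward absolute in general: ``every real is constructible'' is $\Pi^1_3$, true in $L$, and false after adding a Cohen real. The issue is real here, because the collapse adds new countable trees on $\mathcal N$, including the collapsed comparison tree itself, and player II must answer them. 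So the collapse-and-homogeneity step needs an extra input: $\boldsymbol{\Sigma}^1_{n+1}$ generic absoluteness between the model in which you compare and its $\operatorname{Col}(\omega,\omega_1)$-extension. In the models at hand this is available. Small generic extensions of $M_n$ keep its Woodin cardinals, and these give projective absoluteness up to $\boldsymbol{\Sigma}^1_{n+2}$ for forcing of size below the least Woodin, via Martin--Steel and Martin--Solovay (the mechanism of Subsection~\ref{subsec:tn-weak-homogeneity}). Alternatively, cite Steel's comparison theorem for termination, as the paper does in Lemma~\ref{lem:local-comparison-mn}.

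\textbf{Secondary inaccuracy: where $\mathcal Q$-structures come from.} They do not exist at limit stages because the trees are $n$-small. Suppose a premouse has a Woodin cardinal and does not project below it. A non-dropping branch can then send that cardinal to $\delta(\mathcal T)$. In that case no initial segment of the branch model sees $\delta(\mathcal T)$ fail to be Woodin, and none projects below it, so there is no $\mathcal Q$-structure. What actually produces $\mathcal Q$-structures is that $\mathcal M$ and $\mathcal N$ are sound and project to $\omega$. Non-dropping branch models then project to $\omega$, and dropping ones project below $\delta(\mathcal T)$. These are hypotheses of Lemma~\ref{lem:local-comparison-mn}, not of clause (3) as worded, and you use them again for the final pull-back. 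So your argument proves the version of (3) that the paper actually applies, not the clause in its stated generality.
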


The next lemma is the direct analogue of Lemma~\ref{lem:local-comparison-m1}.  We state it separately because it is the form in which it will be used later.

\begin{lemma}\label{lem:local-comparison-mn}
Let $\mathcal M$ and $\mathcal N$ be countable ordinary premice.  Assume that both are $\omega$-sound and project to $\omega$, that $\mathcal M\triangleleft M_n$, and that $\mathcal N$ is $n$-small and $\Pi_n$-iterable.  Then the comparison of $\mathcal M$ with $\mathcal N$ is successful.  Consequently
\[
   \mathcal M\trianglelefteq\mathcal N
   \quad\text{or}\quad
   \mathcal N\trianglelefteq\mathcal M .
\]
\end{lemma}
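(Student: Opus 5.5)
The plan is to run the proof of Lemma~\ref{lem:local-comparison-m1} again, now with Fact~\ref{fact:steel-pin-iterability} in place of the $n=1$ ingredients. We coiterate $\mathcal M$ and $\mathcal N$ by least disagreement, producing trees $\mathcal T$ on $\mathcal M$ and $\mathcal U$ on $\mathcal N$. On the $\mathcal M$-side, branches are chosen by the strategy inherited from $M_n$; this is available because $\mathcal M\triangleleft M_n$ is realizable. On the $\mathcal N$-side, branches come from player II's winning strategy in the game $\mathcal G(\mathcal N,0,n)$ witnessing $\Pi_n$-iterability. The lower part is empty, so we are in the $\eta=0$ case of Fact~\ref{fact:steel-pin-iterability}(3). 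In principle that clause already gives success of the comparison, and the linear ordering then follows.

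I would still spell out the argument, to match the $M_1$ case.

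\textbf{Successor stages.} These raise no new issue.

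\textbf{Limit stages with common $\delta(\mathcal T)=\delta(\mathcal U)$.}
\begin{enumerate}
\item Since $\mathcal M$ is $n$-small, the branch model $\mathcal M^{\mathcal T}_b$ has a $\mathcal Q$-structure $\mathcal Q(b,\mathcal T)$ in the sense of Definition~\ref{def:q-structures-local}. It lies above the common part and witnesses that $\delta$ is not Woodin.
\item We feed this to player II in the weak iteration game. For odd $n$, the last round comes with the ordinal parameter $\alpha$ given by the height of this $\mathcal Q$-structure. For even $n$, we use the definability requirement on the branches instead.
\item The resulting branch $c$ through $\mathcal U$ has the same $\mathcal Q$-structure.
\item Uniqueness follows from Theorem~\ref{thm:zipper-local}, applied with a code of the $\mathcal Q$-structure as the parameter $A$. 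A second good branch would make some $\kappa<\delta$ $A$-strong up to $\delta$, which contradicts the failure of Woodinness.
\end{enumerate}

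\textbf{Limit stages where one side has stopped.} This case is handled with the last model of that side, as before.

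\textbf{Ruling out length $\omega_1$.} Suppose the comparison reached length $\omega_1$. Collapse $\omega_1$ with $\operatorname{Col}(\omega,\omega_1)$. Then $\Pi_n$-iterability still holds, because it is $\boldsymbol\Pi^1_{n+1}$ (Fact~\ref{fact:steel-pin-iterability}(1)). Also, realizability of $\mathcal M$ is preserved (Fact~\ref{fact:steel-pin-iterability}(2)). The branch is unique, so it is ordinal definable from the ground-model comparison. Homogeneity of the collapse then puts it into the ground model, a contradiction.

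\textbf{Conclusion.} The terminal models are lined up. Both premice are $\omega$-sound and project to $\omega$, so the standard argument shows there are no drops and the trees are trivial. This gives $\mathcal M\trianglelefteq\mathcal N$ or $\mathcal N\trianglelefteq\mathcal M$.

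The main obstacle is the limit step for $n>1$. There the $\mathcal Q$-structures may themselves contain Woodin cardinals; they are only $(n-1)$-small above $\delta$. Identifying the $\mathcal Q$-structure of $c$ with that of $b$ therefore needs an inner comparison of $\mathcal Q$-structures, done inductively using the lower rounds of Steel's $n$-round game. This is exactly where the parity-dependent definition of $\Pi_n$-iterability enters. I would handle it by citing Steel's analysis \cite{Steel2} for the inductive comparison of $\mathcal Q$-structures, rather than reproving it.
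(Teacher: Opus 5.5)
Your proposal follows the paper's route. Both reduce the lemma to the $\eta=0$ case of Steel's comparison theorem (Fact~\ref{fact:steel-pin-iterability}(3)). Both coiterate by least disagreement, with the $M_n$-realization strategy on $\mathcal M$ and the $\Pi_n$-iterability strategy on $\mathcal N$. Both choose limit branches via $\mathcal Q$-structures, by induction on $n$, since for $n>1$ these are only $(n-1)$-small above $\delta(\mathcal T)$. Uniqueness comes from the zipper lemma, and the proof ends with the no-drop argument for sound mice projecting to $\omega$.

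One step of your expanded sketch is not justified as written. You carried it over from the $M_1$ case; the paper's $M_n$ proof does not use it. In ruling out a comparison of length $\omega_1$, you say that $\Pi_n$-iterability of $\mathcal N$ survives $\operatorname{Col}(\omega,\omega_1)$ because it is $\boldsymbol\Pi^1_{n+1}$. That inference is Shoenfield absoluteness, and it works only for $n=1$. For $n\geq 2$, a $\boldsymbol\Pi^1_{n+1}$ statement is not upward absolute to generic extensions just because of its complexity. You need an actual generic-absoluteness input at this point; over $M_n$ and its small-forcing extensions, the Woodin cardinals supply it. Alternatively, leave termination inside the cited comparison theorem, as the paper does.

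A smaller point concerns your limit-stage step. The $\mathcal Q$-structures there exist because every branch model projects below $\delta(\mathcal T)$, which holds since $\mathcal M$ projects to $\omega$. They do not exist because of $n$-smallness: that alone does not prevent $\delta(\mathcal T)$ from being Woodin in a branch model.
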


\begin{proof}
This is the $\eta=0$ instance of Steel's comparison theorem for $n$-small $\Pi_n$-iterable mice.  Since $\mathcal M\triangleleft M_n$, the premouse $\mathcal M$ is realizable in the background construction of $M_n$.  Since $\mathcal N$ is $n$-small and $\Pi_n$-iterable, the $\mathcal N$-side has exactly the amount of iterability required by Steel's comparison lemma.  Because both premice are ordinary premice in the parameter-free sense fixed above, the lower parts agree trivially.

The proof is the usual least-disagreement coiteration.  The $\mathcal M$-side uses the realization strategy inherited from the construction of $M_n$, while the $\mathcal N$-side uses the $\Pi_n$-iterability strategy.  At limit stages, the good branch is characterized by the corresponding $\mathcal Q$-structure: for $n=1$ this is the branch uniqueness argument described in Subsection~\ref{subsec:m1-background}, and for $n>1$ the assertion is proved by induction on $n$, because the $\mathcal Q$-structure appearing at a limit stage is $(n-1)$-small and $\Pi_{n-1}$-iterable above the new cutpoint.  If two distinct good branches were available, Steel's zipper argument would produce the forbidden strength below the comparison height, contradicting $n$-smallness at the relevant level.  Thus the comparison has well-founded branches and reaches terminal models which are linearly ordered by initial segment.

Finally, since both premice are $\omega$-sound and project to $\omega$, the standard no-drop pullback argument gives the initial-segment relation already between the original premice.  Hence $\mathcal M\trianglelefteq\mathcal N$ or $\mathcal N\trianglelefteq\mathcal M$.
\end{proof}

\paragraph{Preservation convention for the outer models used below.}\label{par:steel-mn-preservation-convention}
As in the $M_1$ case, we shall only use the preceding comparison in forcing extensions which occur in this paper.  The preservation fact needed is the following one: if $\mathcal M\triangleleft M_n$ is countable and belongs to such an extension, then $\mathcal M$ remains $\Pi_n$-iterable there.  This is the higher-level version of the realizability preservation used in Paragraph~\ref{par:steel-preservation-convention}.  We do not claim that the class of all real codes for $\Pi_n$-iterable premice is absolute between arbitrary outer models with the same $\omega_1$.

For the coding arguments we only need to recover the lower part $M_n|\omega_1$.  Thus we isolate the lower-part initial segments of $M_n$, rather than the larger class of all $n$-small $\Pi_n$-iterable mice.  This distinction is harmless for $n=1$, but it is important at higher odd levels, where one has to avoid the familiar nonstandard $\Pi_n$-iterable mice.  The lower-part restriction below excludes these objects from the definition used in the coding apparatus.

\begin{definition}\label{def:mn-lower-part-approximations}
Let $M_n[G]$ be an $\omega_1$-preserving forcing extension of $M_n$ of the kind fixed in Paragraph~\ref{par:steel-mn-preservation-convention}.  In $M_n[G]$ let $\mathcal I_n$ be the class of all $\mathcal N$ such that
\begin{enumerate}
   \item $\mathcal N$ is a countable passive ordinary premouse;
   \item $\mathcal N$ is a lower-part premouse, i.e. $\mathcal N$ has no Woodin cardinal;
   \item $\mathcal N$ is $n$-small, $\omega$-sound, and $\rho_\omega(\mathcal N)=\omega$;
   \item $\mathcal N$ is $\Pi_n$-iterable.
\end{enumerate}
\end{definition}

\begin{lemma}\label{lem:definable-mn-initial-segments}
Let $M_n[G]$ be an $\omega_1$-preserving forcing extension of $M_n$ of the kind fixed above.  Then $\mathcal I_n$ is $\boldsymbol{\Pi}^1_{n+1}$-definable in the codes.  Moreover every member of $\mathcal I_n$ is of the form $\mathcal J^{M_n}_\eta$ for some $\eta<\omega_1$, and
\[
   \{\eta<\omega_1\mid \mathcal J^{M_n}_\eta\in\mathcal I_n\}
\]
is cofinal in $\omega_1$.
\end{lemma}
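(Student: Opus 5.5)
The proof follows Lemma~\ref{lem:definable-m1-initial-segments}, with Fact~\ref{fact:steel-pin-iterability} and Lemma~\ref{lem:local-comparison-mn} in place of their $n=1$ versions. It has three parts: the complexity bound, the claim that only true initial segments of $M_n$ lie in $\mathcal I_n$, and the cofinality claim.

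\textbf{Complexity.} Fix the usual coding of countable premice by reals. Then clauses (1)--(3) of Definition~\ref{def:mn-lower-part-approximations} are arithmetic in the code: passivity, $n$-smallness, $\omega$-soundness and $\rho_\omega=\omega$ are first-order properties of the countable structure. The clause ``$\mathcal N$ has no Woodin cardinal'' is also first-order over $\mathcal N$, so it is arithmetic in the code as well. By Fact~\ref{fact:steel-pin-iterability}(1), $\Pi_n$-iterability is $\boldsymbol{\Pi}^1_{n+1}$ in the code. A conjunction of these conditions is therefore $\boldsymbol{\Pi}^1_{n+1}$. The boldface allows the parameter implicit in Steel's games; I expect that for ordinary premice it is actually lightface.

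\textbf{Only true initial segments.} Let $\mathcal N\in\mathcal I_n$. Because $\omega_1$ is preserved, $\mathcal N$ has height below $\omega_1=\omega_1^{M_n}$. Choose $\eta<\omega_1$ with $\operatorname{Ord}^{\mathcal N}<\eta$ such that $\mathcal J^{M_n}_\eta$ is $\omega$-sound and projects to $\omega$; such $\eta$ are cofinal because every countable ordinal is collapsed in $M_n$ at some level projecting to $\omega$. Then $\mathcal J^{M_n}_\eta\triangleleft M_n$, and Lemma~\ref{lem:local-comparison-mn} applies, so $\mathcal J^{M_n}_\eta\trianglelefteq\mathcal N$ or $\mathcal N\trianglelefteq\mathcal J^{M_n}_\eta$. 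The first alternative is impossible by the choice of height. Hence $\mathcal N\trianglelefteq \mathcal J^{M_n}_\eta$, and $\mathcal N=\mathcal J^{M_n}_{\eta'}$ for some $\eta'$.

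\textbf{Main obstacle.} The delicate point is the use of Lemma~\ref{lem:local-comparison-mn} inside $M_n[G]$, at higher odd $n$. There, $\Pi_n$-iterability alone admits nonstandard mice whose $\mathcal Q$-structures at limit stages are not correctly iterable. This is where the lower-part clause (2) is used. Since $\mathcal N$ has no Woodin cardinal, every $\mathcal Q$-structure arising in the coiteration is $(n-1)$-small above the relevant cutpoint. The inductive branch-uniqueness argument in the proof of Lemma~\ref{lem:local-comparison-mn} then applies at each limit stage, with $\Pi_{n-1}$-iterability of these $\mathcal Q$-structures inherited from the game. I would rely on this inductive step as stated there, and check only that the lower-part hypothesis is what makes the $\mathcal Q$-structures exist and be $(n-1)$-small.

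\textbf{Cofinality.} The levels $\mathcal J^{M_n}_\eta$ with $\eta<\omega_1$ that are $\omega$-sound and project to $\omega$ are cofinal. Each is passive, since we may choose $\eta$ not indexing an extender; they are cofinal among such indices too. Each is $n$-small as a proper initial segment of $M_n$. Each has no Woodin cardinal, because every Woodin cardinal of $M_n$ is above $\omega_1$. Finally, each is $\Pi_n$-iterable in $M_n[G]$ by Fact~\ref{fact:steel-pin-iterability}(2) together with the preservation convention of Paragraph~\ref{par:steel-mn-preservation-convention}. So all these levels belong to $\mathcal I_n$, which gives cofinality.
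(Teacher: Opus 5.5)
Your three parts are the paper's proof: the arithmetic clauses plus Fact~\ref{fact:steel-pin-iterability}(1) give the $\boldsymbol{\Pi}^1_{n+1}$ bound, Lemma~\ref{lem:local-comparison-mn} against a taller $\omega$-sound level $\mathcal J^{M_n}_\eta$ projecting to $\omega$ gives $\mathcal N\trianglelefteq\mathcal J^{M_n}_\eta$, and the preservation convention gives cofinality. The first two parts are fine.

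Your ``main obstacle'' paragraph is unnecessary, and it misattributes the role of clause (2). Lemma~\ref{lem:local-comparison-mn} has no lower-part hypothesis. The $(n-1)$-smallness of the $\mathcal Q$-structures above $\delta(\mathcal T)$ comes from $n$-smallness together with $\delta(\mathcal T)$ being Woodin in the proper initial segments of the $\mathcal Q$-structure, not from $\mathcal N$ having no Woodin cardinal. Since you end up invoking the lemma as stated, this does no harm.

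The step that fails is in the cofinality part. You claim each such level has no Woodin cardinal ``because every Woodin cardinal of $M_n$ is above $\omega_1$''. A cardinal that is Woodin inside a countable level of $M_n$ is collapsed at some later level, so it is never a Woodin cardinal of $M_n$. The position of $M_n$'s Woodin cardinals therefore tells you nothing about the levels below $\omega_1$.

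Concretely, for $n\geq 2$ the mouse $M_{n-1}^\#$ is a countable, $\omega$-sound level of $M_n$ projecting to $\omega$, and it has $n-1$ Woodin cardinals. Your reasoning never uses passivity, so it would wrongly deny this. Your claim that the passive levels are cofinal is also only asserted.

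You need a cofinal family that is lower part for a genuine reason. One that works: let $\mathcal J^{M_n}_\eta$ be $\omega$-sound with $\rho_\omega=\omega$, and take the next level $\mathcal J^{M_n}_{\eta+1}$. It has the following properties.
\begin{enumerate}
\item It is passive, since successor levels never index extenders.
\item It contains a surjection of $\omega$ onto $\mathcal J^{M_n}_\eta$, which is definable over $\mathcal J^{M_n}_\eta$ by soundness. Hence it thinks every ordinal is countable and has no Woodin cardinal.
\item It projects to $\omega$: it is a $\Sigma_1$-hull of $\omega$ together with finitely many parameters, so diagonalizing yields a new real.
\end{enumerate}
These levels are cofinal in $\omega_1$. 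They are $\omega$-sound and $n$-small, and they are $\Pi_n$-iterable in $M_n[G]$ by the preservation convention, so they lie in $\mathcal I_n$. With this choice the rest of your argument goes through unchanged.

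The paper sidesteps the issue differently. It chooses $\eta$ from the outset among passive, lower-part, $\omega$-sound levels projecting to $\omega$, and simply cites the fine structure of $M_n$ for their cofinality.
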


\begin{proof}
The clauses saying that a real codes a countable passive ordinary premouse, that the premouse is lower-part, $n$-small, $\omega$-sound, and satisfies $\rho_\omega=\omega$, are arithmetic in the usual code for countable premice.  By Fact~\ref{fact:steel-pin-iterability}, the $\Pi_n$-iterability clause is $\boldsymbol{\Pi}^1_{n+1}$.  Therefore $\mathcal I_n$ is $\boldsymbol{\Pi}^1_{n+1}$ in the codes.

We next prove that no nonstandard lower-part premouse enters $\mathcal I_n$.  Work in $M_n[G]$ and let $\mathcal N\in\mathcal I_n$.  Since $\omega_1$ is preserved, the height of $\mathcal N$ is below the true $\omega_1^{M_n}$.  Choose $\eta<\omega_1$ such that $\operatorname{Ord}^{\mathcal N}<\eta$ and such that $\mathcal J^{M_n}_\eta$ is passive, lower-part, $\omega$-sound, and projects to $\omega$.  The fine structure of $M_n$ gives cofinally many such $\eta$.  The initial segment $\mathcal J^{M_n}_\eta$ is realizable inside $M_n$, and hence is $\Pi_n$-iterable in the present extension by the preservation convention.

Apply Lemma~\ref{lem:local-comparison-mn} to $\mathcal J^{M_n}_\eta$ and $\mathcal N$.  The alternative $\mathcal J^{M_n}_\eta\trianglelefteq\mathcal N$ is impossible by the choice of $\eta$, because $\operatorname{Ord}^{\mathcal N}<\eta$.  Therefore
\[
   \mathcal N\trianglelefteq \mathcal J^{M_n}_\eta,
\]
and hence $\mathcal N$ is itself an initial segment of $M_n$.

Conversely, let $\eta<\omega_1$ be such that $\mathcal J^{M_n}_\eta$ is passive, lower-part, $\omega$-sound, and projects to $\omega$.  Then $\mathcal J^{M_n}_\eta$ is $n$-small and realizable in the Steel construction of $M_n$, and by the preservation convention it is $\Pi_n$-iterable in $M_n[G]$.  Thus $\mathcal J^{M_n}_\eta\in\mathcal I_n$.  Since such $\eta$ are cofinal in $\omega_1$, the cofinality assertion follows.
\end{proof}

\begin{definition}\label{def:recover-mn-omega1}
In any $\omega_1$-preserving forcing extension of $M_n$ of the kind fixed above, we write
\[
   \mathcal N=M_n|\omega_1
\]
if
\[
   \mathcal N=\bigcup\mathcal I_n,
\]
where $\mathcal I_n$ is the class from Definition~\ref{def:mn-lower-part-approximations}, computed in that extension.
\end{definition}

\begin{lemma}\label{lem:recover-mn-omega1}
Let $M_n[G]$ be an $\omega_1$-preserving forcing extension of $M_n$ of the kind fixed above.  Then Definition~\ref{def:recover-mn-omega1} defines the true initial segment
\[
   M_n|\omega_1=\mathcal J^{M_n}_{\omega_1}.
\]
Moreover the definition is uniform in all such extensions.
\end{lemma}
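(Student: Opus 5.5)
The argument will follow the proof of Lemma~\ref{lem:recover-m1-omega1}, now resting on Lemma~\ref{lem:definable-mn-initial-segments}, and consists of two inclusions together with a remark on uniformity. Work inside a fixed $\omega_1$-preserving extension $M_n[G]$ of the kind fixed in Paragraph~\ref{par:steel-mn-preservation-convention}, and let $\mathcal I_n$ be computed there.

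First inclusion: $\bigcup\mathcal I_n\subseteq\mathcal J^{M_n}_{\omega_1}$. By Lemma~\ref{lem:definable-mn-initial-segments}, each $\mathcal N\in\mathcal I_n$ equals $\mathcal J^{M_n}_\eta$ for some $\eta<\omega_1$. Because $\omega_1^{M_n[G]}=\omega_1^{M_n}$, each such $\eta$ is below the true $\omega_1^{M_n}$. Hence every member of $\mathcal I_n$ is a proper initial segment of $\mathcal J^{M_n}_{\omega_1}$.

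Second inclusion: $\mathcal J^{M_n}_{\omega_1}\subseteq\bigcup\mathcal I_n$. The cofinality clause of Lemma~\ref{lem:definable-mn-initial-segments} gives unboundedly many $\eta<\omega_1$ with $\mathcal J^{M_n}_\eta\in\mathcal I_n$. The initial segments of $M_n$ form an increasing chain under $\trianglelefteq$. So the union of these levels is $\mathcal J^{M_n}_\theta$ with $\theta=\sup$ of the $\eta$'s, and this supremum is $\omega_1$. Each element of $\mathcal J^{M_n}_{\omega_1}$ lies in some $\mathcal J^{M_n}_\xi$ with $\xi<\omega_1$, so it lies in a member of $\mathcal I_n$ above $\xi$. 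Combining the two inclusions yields $\bigcup\mathcal I_n=\mathcal J^{M_n}_{\omega_1}$.

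Uniformity: the defining formula (the clauses of Definition~\ref{def:mn-lower-part-approximations}, whose iterability clause is $\boldsymbol{\Pi}^1_{n+1}$ by Fact~\ref{fact:steel-pin-iterability}) is the same in every such extension. As in the $M_1$ remark, we do not claim that the same reals code members of $\mathcal I_n$ across extensions. We only claim that the union, evaluated inside any admissible extension, is the true lower part.

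The only step needing care is already handled by Lemma~\ref{lem:definable-mn-initial-segments}: ruling out nonstandard members of $\mathcal I_n$. That exclusion depends on the preservation convention and on the lower-part restriction; once the lemma is available, the present argument is bookkeeping. One small point remains to check: the levels are genuinely $\trianglelefteq$-comparable, so that the union is again a premouse of the form $\mathcal J^{M_n}_{\omega_1}$ and not merely a set. This follows because all of them are initial segments of the single model $M_n$.
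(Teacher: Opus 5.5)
Your proposal is correct and matches the paper's proof essentially step for step. Both inclusions come from Lemma~\ref{lem:definable-mn-initial-segments}: every member of $\mathcal I_n$ is some $\mathcal J^{M_n}_\eta$ with $\eta<\omega_1$, and cofinally many such levels belong to $\mathcal I_n$. Uniformity is handled by the same remark that the defining formula is fixed while the particular real codes may vary between extensions, and your extra note on the $\trianglelefteq$-comparability of the levels is a harmless detail that the paper leaves implicit.
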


\begin{proof}
By Lemma~\ref{lem:definable-mn-initial-segments}, every member of $\mathcal I_n$ is an initial segment $\mathcal J^{M_n}_\eta$ with $\eta<\omega_1$.  Hence the union in Definition~\ref{def:recover-mn-omega1} is contained in $M_n|\omega_1$.  Conversely, the same lemma gives cofinally many $\eta<\omega_1$ with $\mathcal J^{M_n}_\eta\in\mathcal I_n$.  The union of these initial segments is $\mathcal J^{M_n}_{\omega_1}$.

The same formula defining $\mathcal I_n$ is used in every relevant extension.  We do not assert that the same real codes belong to $\mathcal I_n$ in different extensions, since new reals may code new countable putative iteration trees.  Rather, applying Lemma~\ref{lem:definable-mn-initial-segments} inside the given extension identifies the union of the premice satisfying the formula with the true $\mathcal J^{M_n}_{\omega_1}$.
\end{proof}

\paragraph{Remark.}\label{rem:mn-omega1-not-absolute}
The notation $M_n|\omega_1$ in Definition~\ref{def:recover-mn-omega1} is a convention for the outer models of $M_n$ used in this paper.  It should not be read as saying that an arbitrary transitive model which internally satisfies the same first-order-looking definition has computed the true $M_n|\omega_1$.  Later, whenever countable auxiliary models are used, the relevant countable premouse is also required externally to belong to $\mathcal I_n$.

The condensation and diamond arguments from the previous subsection lift without change once $M_1$ is replaced by $M_n$.

\begin{theorem}\label{thm:steel-condensation-mn}
Let $\mathcal M\trianglelefteq M_n$ be an $\omega$-sound initial segment and let
\[
   \pi:\bar N\rightarrow\mathcal M
\]
be the inverse of the transitive collapse of a sufficiently elementary substructure of $\mathcal M$.  Suppose that the critical point of $\pi$ is the relevant standard projectum of $\bar N$.  Then either
\begin{enumerate}
   \item $\bar N\trianglelefteq\mathcal M$, or
   \item $\bar N$ is an initial segment of a degree-zero ultrapower of an initial segment of $\mathcal M$ by an extender on the $M_n$-sequence whose length is that projectum.
\end{enumerate}
In the hulls used in the diamond argument below, the second alternative is impossible.  Hence the transitive collapse is an initial segment of $M_n$.
\end{theorem}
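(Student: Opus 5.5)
The proof follows the $M_1$ case, Theorem~\ref{thm:steel-condensation-m1}, and has two parts. The dichotomy itself is quoted from Steel's condensation theorem for the Steel background construction of tame mice, \cite[Theorem~5.1]{Steel3}. That theorem is stated for arbitrary $\omega$-sound initial segments of $K^c$-type constructions and does not depend on the number of Woodin cardinals. So it applies to any $\omega$-sound $\mathcal M\trianglelefteq M_n$, since every proper initial segment of $M_n$ is $n$-small and $\omega$-sound. Hence either $\bar N\trianglelefteq\mathcal M$, or $\bar N$ is an initial segment of the degree-zero ultrapower $\operatorname{Ult}_0(\mathcal M|\kappa', E)$. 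Here $E$ is an extender on the $M_n$-sequence whose index equals the projectum $\kappa=\operatorname{crit}(\pi)$.

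The substantive work is ruling out the second alternative for the hulls used in the diamond argument. These are countable $X\prec\mathcal J^{M_n}_\theta$ with $\alpha=X\cap\omega_1$ an ordinal. Their collapses $\bar N$ satisfy three things:
\begin{enumerate}
\item $\operatorname{crit}(\pi)=\alpha=\omega_1^{\bar N}$;
\item the projectum of $\bar N$ is $\alpha$, after passing to the least level of $\bar N$ defining a new subset of $\alpha$;
\item $\bar N|\alpha=M_n|\alpha$.
\end{enumerate}
Suppose $E$ were on the $M_n$-sequence with index exactly $\alpha$. By the initial-segment condition on premice, $\operatorname{lh}(E)=\alpha$ is a cardinal of $\operatorname{Ult}(M_n|\alpha,E)$ strictly above $\operatorname{crit}(E)$, and $\alpha$ is inaccessible there. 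But $\bar N$ agrees with this ultrapower below $\alpha$ and up to the relevant level. It also sees $\alpha$ as $\omega_1$, i.e.\ as the successor of $\omega$, and below $\alpha$ it has all the subsets of $\omega$ which $M_n|\alpha$ has. That contradicts $\operatorname{crit}(E)<\alpha$ being an uncountable cardinal of the ultrapower.

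An alternative route is cleaner. Note that $\alpha=\omega_1^{\bar N}$ is a successor cardinal in $\bar N$. If $\bar N\trianglelefteq\operatorname{Ult}_0(\mathcal M|\kappa',E)$ with $\operatorname{lh}(E)=\alpha$, then $\alpha$ is the successor of $\operatorname{crit}(E)$ in the ultrapower, which is absurd because $\operatorname{crit}(E)>\omega$ while $\alpha=\omega_1^{\bar N}$. Either way the second alternative fails, and $\bar N\trianglelefteq\mathcal M$, i.e.\ $\bar N$ is an initial segment of $M_n$.

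The main obstacle is the fine-structural bookkeeping needed to make the condensation theorem applicable. We must arrange that $\operatorname{crit}(\pi)$ is exactly the relevant projectum $\rho_{k+1}(\bar N)$ and that the hull is $k$-elementary at the right degree; only then does Steel's theorem give the clean dichotomy. I would first try the standard device from Jensen's diamond proof. Choose $\theta$ to be the least level past the counterexample $(A,C)$ that projects to $\omega_1$, and take $X$ a fully elementary $\Sigma_\omega$ hull. Then the soundness and projectum of $\bar N$ follow from those of $\mathcal J^{M_n}_\theta$ by elementarity, and the argument goes through verbatim as for $M_1$, with $n$-smallness playing no role.
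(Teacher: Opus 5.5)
Your proposal is correct and takes essentially the paper's route: you quote Steel's condensation theorem for the dichotomy, then exclude the ultrapower alternative because $\alpha=\omega_1^{\bar N}$ cannot be the index of an extender. Your version is more careful than the paper's, making the contradiction explicit ($\omega<\operatorname{crit}(E)<\alpha$ would be a cardinal of $\bar N$) and spelling out the projectum bookkeeping the paper leaves implicit. Your side claims that $\alpha$ is inaccessible, or is $\operatorname{crit}(E)^{+}$, in the ultrapower are not accurate in general under Mitchell--Steel indexing, where $\operatorname{lh}(E)=(\nu(E)^{+})^{\operatorname{Ult}(M_n|\alpha,E)}$; your argument does not actually use them.
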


\begin{proof}
This is the condensation theorem for initial segments of the Steel $M_n$-construction.  The proof is the same fine-structural argument as in the $M_1$ case, with $n$-smallness replacing $1$-smallness.  The only point needed below is the exclusion of the ultrapower alternative.  The hulls in the diamond argument are chosen so that the collapsed structure has projectum equal to its internal $\omega_1$.  If the ultrapower alternative occurred, an extender on the $M_n$-sequence would be indexed at this internal $\omega_1$.  The lower part below that index computes the index as its first uncountable cardinal, while the presence of such an extender would make it inaccessible in the relevant extender model.  This contradiction leaves only the initial-segment alternative.
\end{proof}

\begin{fact}\label{fact:mn-projective-wellorder}
The construction order $<_{M_n}$ restricted to the reals of $M_n$ is a $\Delta^1_{n+2}$ well-order.  Equivalently, for reals $x,y\in M_n$, the assertion that $x$ is constructed before $y$ in $M_n$ is given by a $\Sigma^1_{n+2}$ formula and also by a $\Pi^1_{n+2}$ formula.
\end{fact}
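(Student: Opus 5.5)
To prove Fact~\ref{fact:mn-projective-wellorder}, I will show that $<_{M_n}\restriction(\mathbb R\cap M_n)$ is witnessed by countable initial segments of $M_n$, and that these segments can be singled out by the $\boldsymbol{\Pi}^1_{n+1}$ class $\mathcal I_n$ of Lemma~\ref{lem:definable-mn-initial-segments}, here computed inside $M_n$ itself (the trivial extension). Two inputs are needed first. Every real of $M_n$ lies in $M_n|\omega_1$. This follows from the condensation theorem~\ref{thm:steel-condensation-mn}: the collapse of a countable hull containing $x$ is an initial segment of $M_n$. Second, by the cofinality clause of Lemma~\ref{lem:definable-mn-initial-segments}, each real $x\in M_n$ lies in some $\mathcal J^{M_n}_\eta\in\mathcal I_n$. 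The order $<_{M_n}$ restricted to such an $\mathcal J^{M_n}_\eta$ is simply its internal order of construction, so comparing $x$ and $y$ inside that segment is arithmetic in a code for it.

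With these in hand, $x<_{M_n}y$ holds iff
\[
\exists z\,\bigl(z\text{ codes some }\mathcal N\in\mathcal I_n,\ x,y\in\mathcal N,\ \mathcal N\models x<y\bigr).
\]
This formula is $\Sigma^1_{n+2}$, since it is an existential real quantifier in front of a $\Pi^1_{n+1}$ matrix (the iterability clause) and arithmetic clauses. For correctness, I will use Lemma~\ref{lem:definable-mn-initial-segments} again: every $\mathcal N\in\mathcal I_n$ is a true initial segment $\mathcal J^{M_n}_\eta$, and the order of construction is end-extended along initial segments. So any witness gives the right answer, and a witness exists by the cofinality clause. For the $\Pi^1_{n+2}$ form, I use totality of the order on reals. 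Then $x<_{M_n}y$ iff $x\neq y$ and $\neg(y<_{M_n}x)$, and the negation of a $\Sigma^1_{n+2}$ formula is $\Pi^1_{n+2}$. Equivalently, one can write ``for every code $z$ of some $\mathcal N\in\mathcal I_n$ containing $x,y$, $\mathcal N\models x<y$, and $x,y\in M_n|\omega_1$'', where the domain clause is itself $\Sigma^1_{n+2}$. This is why the complement trick is the cleaner route.

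There is a lightface point to check. The definition of $\mathcal I_n$ must be $\Pi^1_{n+1}$ lightface, so that the result is $\Delta^1_{n+2}$ rather than only boldface. Steel's $\Pi_n$-iterability game is defined without parameters, so item~1 of Fact~\ref{fact:steel-pin-iterability} in fact yields a lightface definition, and I will note this explicitly.

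The main obstacle is the claim that nonstandard $\Pi_n$-iterable premice do not enter the definition. This is the comparison argument of Lemma~\ref{lem:local-comparison-mn}, restricted to lower-part mice so that the odd-level pathologies are avoided. I will rely on that lemma as given, applied inside $M_n$ where the preservation convention holds trivially.
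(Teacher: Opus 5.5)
Your argument is correct relative to the paper, but it follows a different route. The paper gives no proof of this statement: it imports the Fact from Steel's projective analysis of $M_n$, as announced in the opening paragraph of Subsection~\ref{subsec:mn-background}, and uses it only to make canonical choices inside $M_n$.

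You instead derive the Fact inside $M_n$ as a corollary of Lemma~\ref{lem:definable-mn-initial-segments}, applied to the trivial extension, together with the observation that every real of $M_n$ lies in $M_n|\omega_1$. The $\Sigma^1_{n+2}$ definition is ``some code of a member of $\mathcal I_n$ contains $x,y$ and orders them this way''. The $\Pi^1_{n+2}$ definition then follows by totality. Inside $M_n$ you could also use the universal form directly: every pair of reals lies in some member of $\mathcal I_n$, so the domain clause you worry about is vacuous there.

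What your route gains:
\begin{enumerate}
\item The Fact becomes a consequence of machinery already set up in Section~\ref{sec:canonical-inner-models}, with explicit formulas.
\item It isolates the lightface issue. This is a necessary step, not an aside, because Lemma~\ref{lem:definable-mn-initial-segments} and Fact~\ref{fact:steel-pin-iterability} only assert boldface $\boldsymbol{\Pi}^1_{n+1}$.
\item You rightly evaluate everything inside $M_n$. This matters: the recovery step in Lemma~\ref{lem:definable-mn-initial-segments} uses that every countable premouse has height below the true $\omega_1^{M_n}$. That fails in $V$, where $\omega_1^{M_n}$ is countable.
\end{enumerate}

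What your route costs: the Fact now inherits all the weight of Lemma~\ref{lem:local-comparison-mn} and the preservation convention of Paragraph~\ref{par:steel-mn-preservation-convention}. Saying the convention ``holds trivially'' in $M_n$ hides the real input. That input is that $M_n$ itself recognizes $\Pi_n$-iterability of its countable levels, and can trust comparisons with the $\Pi_n$-iterable premice it contains. This is a projective-correctness property of $M_n$, at least for $\Pi^1_{n+1}$ statements about its reals, and it is exactly what the paper's citation of Steel delegates.

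So both routes rest on the same inner-model-theoretic input. The citation is shorter; your derivation is more self-contained relative to the paper and makes the logical dependencies explicit.
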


We shall use this fact only as a source of canonical choices inside \(M_n\).
In particular, using \(<_{M_n}\) we fix once and for all the canonical
almost disjoint family \[D=\langle d_\xi\mid \xi<\omega_1\rangle\in M_n\]
used for almost disjoint coding, and the canonical \(\diamondsuit_{\omega_1}\)-
sequence used in the bookkeeping.  No later argument uses the well-order
to choose elements from projective sections in the final model.

\begin{lemma}\label{lem:mn-diamond}
There is a sequence
\[
   \vec D^{\,n}=\langle D^n_\alpha\mid \alpha<\omega_1\rangle\in M_n
\]
with $D^n_\alpha\subseteq\alpha$ for all $\alpha<\omega_1$ such that
\[
   M_n\models ``\vec D^{\,n}\text{ is a }\diamondsuit_{\omega_1}\text{-sequence}''.
\]
Moreover $\vec D^{\,n}$ is uniformly definable over $M_n|\omega_1$ from the canonical order of construction of $M_n$.
\end{lemma}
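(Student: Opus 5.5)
The plan is to transfer the proof of Lemma~\ref{lem:m1-diamond} verbatim to $M_n$, with Theorem~\ref{thm:steel-condensation-mn} taking the place of Theorem~\ref{thm:steel-condensation-m1}. Working in $M_n$, I will define $\vec D^{\,n}$ by recursion on $\alpha<\omega_1$, using the canonical order of construction $<_{M_n}$. At a limit stage $\alpha$, if some pair $(A,C)$ has $A\subseteq\alpha$, $C$ club in $\alpha$, and $D^n_\beta\neq A\cap\beta$ for all $\beta\in C$, let $D^n_\alpha$ be the first coordinate of the $<_{M_n}$-least such pair. Otherwise, and at successor stages, set $D^n_\alpha=\emptyset$.

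For the definability clause, note that $\omega_1$ is regular in $M_n$ and every subset of a countable ordinal in $M_n$ lies in $M_n|\omega_1$, since $M_n$ is acceptable. So every object quantified over at stage $\alpha$, together with the restriction of $<_{M_n}$ to these objects, lives in $M_n|\omega_1$. The recursion is therefore carried out over $M_n|\omega_1$ by a single formula independent of $\alpha$, which gives uniform definability. I will not need Fact~\ref{fact:mn-projective-wellorder} here; it only matters later, when projective complexity is computed.

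For the diamond property, suppose it fails and let $(A,C)$ be the $<_{M_n}$-least counterexample, with $A\subseteq\omega_1$, $C$ club, and $D^n_\alpha\neq A\cap\alpha$ for all $\alpha\in C$. Choose $\theta>\omega_1$ such that $\mathcal J^{M_n}_\theta$ is $\omega$-sound, contains $A$, $C$, $\vec D^{\,n}$, and sees that $(A,C)$ is the least counterexample; taking $\theta<\omega_2^{M_n}$ large enough with $\rho_\omega(\mathcal J^{M_n}_\theta)=\omega_1$ should work. Next build a continuous chain of countable $X\prec\mathcal J^{M_n}_\theta$ containing these parameters. The set of $\alpha=X\cap\omega_1$ arising this way is club, so some such $\alpha$ lies in $C$. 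Let $\pi\colon\bar X\to X$ invert the transitive collapse. Then $\operatorname{crit}(\pi)=\alpha$, and I take $X$ to be the $\Sigma_\omega$-hull of $\alpha\cup\{p\}$ for suitable parameters, so that $\bar X$ projects to $\alpha$ and $\alpha=\omega_1^{\bar X}$.

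Theorem~\ref{thm:steel-condensation-mn} then gives $\bar X\trianglelefteq M_n$. Consequently $\bar X$ computes the recursion correctly up to $\alpha$, i.e.\ $\pi^{-1}(\vec D^{\,n})=\vec D^{\,n}\restriction\alpha$, and it computes the $<_{M_n}$-order on its subsets of $\alpha$ correctly. By elementarity, $(A\cap\alpha,C\cap\alpha)=\pi^{-1}(A,C)$ is, in $\bar X$, the least pair witnessing failure at $\alpha$. Since $\bar X$ is an initial segment containing all relevant pairs below its height, I need this to match the true least pair in $M_n$. Pairs constructed after $\bar X$ do not matter, because $<_{M_n}$ places them later. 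A pair earlier than $\bar X$ would already belong to $\bar X$. So $D^n_\alpha=A\cap\alpha$ with $\alpha\in C$, a contradiction.

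The main obstacle is the condensation step, specifically arranging the hull so that the hypotheses of Theorem~\ref{thm:steel-condensation-mn} hold: the critical point $\alpha$ must be the projectum of $\bar X$, and the ultrapower alternative must be excluded. My first attempt is to choose $\theta$ with $\rho_\omega(\mathcal J^{M_n}_\theta)=\omega_1$ and a standard parameter $p$ included in $X$, so that $\bar X$ is sound with $\rho_\omega(\bar X)=\alpha$. The theorem's own argument then excludes the ultrapower case: $\alpha=\omega_1^{\bar X}$ cannot index an extender, because an extender there would make $\alpha$ inaccessible in the relevant extender model.
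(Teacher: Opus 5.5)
Your proposal is correct and is essentially the paper's proof. It uses the same $<_{M_n}$-least-witness recursion carried out over $M_n|\omega_1$, the same least counterexample $(A,C)$, a countable hull $X\prec\mathcal J^{M_n}_\theta$ with $X\cap\omega_1=\alpha\in C$, and Theorem~\ref{thm:steel-condensation-mn} to get $\bar X\trianglelefteq M_n$; your choice of $\theta$ with $\rho_\omega(\mathcal J^{M_n}_\theta)=\omega_1$, the hull of $\alpha\cup\{p\}$ making $\bar X$ sound with projectum $\alpha$, and the explicit check that the $\bar X$-least witness is also the $M_n$-least one just spell out what the paper leaves implicit. One small correction to your last sentence: an extender indexed at $\alpha$ does not make $\alpha$ inaccessible; rather its critical point $\kappa<\alpha$ is inaccessible in the ultrapower, so $\bar X\triangleleft\operatorname{Ult}$ together with $\alpha=\omega_1^{\bar X}$ would make $\kappa$ countable there, and that is the actual contradiction.
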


\begin{proof}
Work in $M_n$.  Use the canonical well-order $<_{M_n}$ of the construction to define $\vec D^{\,n}$ recursively.  At a limit ordinal $\alpha<\omega_1$, suppose $\langle D^n_\beta\mid\beta<\alpha\rangle$ has already been defined.  If there is a pair $(A,C)$ such that $A\subseteq\alpha$, $C\subseteq\alpha$ is club in $\alpha$, and
\[
   \forall\beta\in C\,(D^n_\beta\neq A\cap\beta),
\]
then let $(A_\alpha,C_\alpha)$ be the $<_{M_n}$-least such pair and set $D^n_\alpha=A_\alpha$.  If there is no such pair, set $D^n_\alpha=\emptyset$.  At successor stages we again set $D^n_\alpha=\emptyset$.

The recursion is carried out over $M_n|\omega_1$, because at stage $\alpha$ all relevant objects are subsets of the countable ordinal $\alpha$ in $M_n$, and the order used to choose the least pair is the restriction of the canonical construction order of $M_n$.

Suppose toward a contradiction that $\vec D^{\,n}$ is not a diamond sequence in $M_n$.  Let $(A,C)$ be the $<_{M_n}$-least counterexample, so $A\subseteq\omega_1$, $C\subseteq\omega_1$ is club, and
\[
   \forall\alpha\in C\,(D^n_\alpha\neq A\cap\alpha).
\]
Choose an $\omega$-sound initial segment $\mathcal J^{M_n}_\theta$ containing $A$, $C$, and the sequence $\vec D^{\,n}$, and take a countable elementary substructure
\[
   X\prec \mathcal J^{M_n}_\theta
\]
with $A,C,\vec D^{\,n}\in X$ and with $\alpha=X\cap\omega_1\in C$.  Let
\[
   \pi:\bar X\rightarrow X
\]
be the inverse of the transitive collapse.  By Theorem~\ref{thm:steel-condensation-mn}, the collapse $\bar X$ is an initial segment of $M_n$.  Therefore the recursive construction of $\vec D^{\,n}$ inside $\bar X$ is exactly the initial part
\[
   \langle D^n_\beta\mid\beta<\alpha\rangle.
\]
The collapse sends $A$ to $A\cap\alpha$ and $C$ to $C\cap\alpha$, and by elementarity $\bar X$ sees $(A\cap\alpha,C\cap\alpha)$ as the $<_{M_n}$-least witness that the previous sequence is not diamond on $\alpha$.  Hence the recursion at stage $\alpha$ gives
\[
   D^n_\alpha=A\cap\alpha.
\]
This contradicts $\alpha\in C$.  Therefore no counterexample exists, and $\vec D^{\,n}$ is a $\diamondsuit_{\omega_1}$-sequence in $M_n$.
\end{proof}

We fix once and for all the $<_{M_n}$-least sequence $\vec D^{\,n}$ satisfying Lemma~\ref{lem:mn-diamond}.  In the higher-level version of the construction, this sequence plays exactly the role played by the $M_1$-diamond sequence in the detailed $M_1$ case: it gives the canonical lower-part bookkeeping from which the corresponding sequence of independent Suslin trees and the localized coding apparatus are built.

\subsection{The trees $T_n$, weak homogeneity, and small generic absoluteness}\label{subsec:tn-weak-homogeneity}

We next fix the tree representations of the projective pointclasses which will be used in the regularity argument.  The main application in this paper is the case $n=2$: the Martin--Solovay tree $T_2\in M_1$ represents the universal $\boldsymbol{\Sigma}^1_3$ set.  Since the higher-level notation is no harder, we record the construction uniformly.  Thus, for $n\geq 2$, the tree $T_n$ will be a canonical tree belonging to $M_{n-1}$ whose projection is the universal $\boldsymbol{\Sigma}^1_{n+1}$ set of reals.  In the final model we shall use only the following consequence of its construction: membership in $p[T_n]$ is absolute to the small generic extensions in which the relevant real appears.

We recall the form of weak homogeneity used below.  The notation follows Steel's exposition of the Martin--Solovay theorem in terms of towers of measures~\cite{steel2009derived}.  If $Z$ is a set and $\kappa$ is a cardinal, let $\operatorname{meas}_\kappa(Z)$ be the set of $\kappa$-additive measures on $Z^{<\omega}$.  If $\mu\in\operatorname{meas}_\kappa(Z)$, its dimension is the unique $m<\omega$ such that $\mu$ concentrates on $Z^m$.  If $\mu$ has dimension $m$, $\nu$ has dimension $k$, and $m\leq k$, we say that $\nu$ projects to $\mu$ if for every $A\subseteq Z^m$,
\[
   A\in\mu
   \quad\Longleftrightarrow\quad
   \{u\in Z^k\mid u\upharpoonright m\in A\}\in\nu .
\]
A tower $\langle\mu_i\mid i<\omega\rangle$ is countably complete if, whenever $A_i\in\mu_i$ for every $i$, there is an $f\in Z^\omega$ such that $f\upharpoonright\operatorname{dim}(\mu_i)\in A_i$ for every $i$.

\begin{definition}\label{def:weakly-homogeneous-tree}
Let $T$ be a tree on $\omega\times Z$.  For $x\in\omega^\omega$ write
\[
   T_x=\{u\in Z^{<\omega}\mid \forall k<|u|\ ((x\upharpoonright k,u\upharpoonright k)\in T)\}.
\]
We say that $T$ is \emph{$\kappa$-weakly homogeneous} if there is a countable set
\[
   \mathcal M_T\subseteq\operatorname{meas}_\kappa(Z)
\]
closed under projections such that, for every real $x$,
\[
   x\in p[T]
\]
if and only if there is a countably complete tower $\langle\mu_i\mid i<\omega\rangle$ from $\mathcal M_T$ such that each $\mu_i$ concentrates on the corresponding finite section of $T_x$.  Equivalently, $T$ is weakly homogeneous in the sense of a weak homogeneity system whose range is countable.

When defined from such a witnessing system, we call $p[T]$ a $\kappa$-weakly homogeneously Suslin set.
\end{definition}

\begin{definition}\label{def:absolute-complements}
Let $T$ be a tree on $\omega\times Z$ and $S$ a tree on $\omega\times Z'$.  We say that $T$ and $S$ are \emph{$\kappa$-absolute complements} if, for every forcing extension by a partial order of size $<\kappa$,
\[
   p[T]=\omega^\omega\setminus p[S].
\]
A set of reals is \emph{$\kappa$-universally Baire} if it is the projection of a tree which has a $\kappa$-absolute complement.
\end{definition}

The connection between the preceding two notions is the Martin--Solovay tree construction.  If $\bar\mu$ is a weak homogeneity system for $T$ and $\Theta$ is sufficiently large, the Martin--Solovay tree
\[
   \operatorname{ms}(\bar\mu,\Theta)
\]
searches for a coherent descending sequence of ordinal ranks through the ultrapowers determined by the measures in $\bar\mu$.  A branch through this Martin--Solovay tree is precisely a continuous certificate that the corresponding tower is ill-founded.  The following is the form of the Martin--Solovay theorem used in the rest of the paper~\cite[Theorem~2.20 and Corollary~2.21]{steel2009derived}.

\begin{theorem}[Martin--Solovay]\label{thm:ms-weak-homogeneous-ub}
Let $T$ be $\kappa$-weakly homogeneous via a weak homogeneity system $\bar\mu$, and let $\Theta>|T|^+$.  Then $T$ and $\operatorname{ms}(\bar\mu,\Theta)$ are $\kappa$-absolute complements.  In particular, $p[T]$ is $\kappa$-universally Baire.  Moreover, in every forcing extension by a partial order of size $<\kappa$, the measures in $\bar\mu$ lift to a weak homogeneity system witnessing the same assertion for the same ground-model tree $T$.
\end{theorem}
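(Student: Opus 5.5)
The plan is to prove Theorem~\ref{thm:ms-weak-homogeneous-ub} in three stages: first that $p[T]$ and $p[\operatorname{ms}(\bar\mu,\Theta)]$ are disjoint in every relevant extension, then that they cover all reals, and finally that the measures lift. Fix $\bar\mu$ with countable range $\mathcal M_T\subseteq\operatorname{meas}_\kappa(Z)$ and enumerate the measures as $\langle\mu_s\mid s\in\omega^{<\omega}\rangle$, arranged so that towers concentrating on the sections $T_x$ are read off along $x$. The tree $S=\operatorname{ms}(\bar\mu,\Theta)$ is the tree of attempts to build $(x,\langle\eta_i\rangle)$ with $\eta_i<\Theta$. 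Here $\eta_i$ is meant to be the value $[f_i]_{\mu_{x\upharpoonright i}}$, where $f_i$ is the canonical function. Concretely, $(s,\vec\eta)\in S$ iff for $i<j\le|s|$ with $\mu_{s\upharpoonright j}$ projecting to $\mu_{s\upharpoonright i}$ we have $\eta_j<\eta_i$ computed via the embedding between the ultrapowers $\operatorname{Ult}(V,\mu_{s\upharpoonright i})\to\operatorname{Ult}(V,\mu_{s\upharpoonright j})$. To keep $S$ on ordinals, one codes this by comparing images of ordinals below $\Theta$ under $j_{\mu}$, using $\Theta>|T|^+$ so that all relevant ranks are below $\Theta$.

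\textbf{Stage 1: disjointness, and the ground-model complement.} In $V$, I will show $x\notin p[T]$ iff every tower $\langle\mu_{x\upharpoonright i}\rangle$ concentrating on $T_x$ fails countable completeness. This is the defining property of weak homogeneity. Failure of countable completeness is equivalent to the direct limit $\operatorname{Ult}(V,\langle\mu_{x\upharpoonright i}\rangle)$ being ill-founded. An ill-foundedness witness can be taken as a descending sequence of images of the rank function of $T_x$, with all values below $\Theta$; this yields a branch of $S_x$. Conversely, a branch of $S_x$ gives a descending sequence in the direct limit, and this is incompatible with a countably complete tower, since such a tower yields a well-founded direct limit. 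Hence $p[S]=\omega^\omega\setminus p[T]$ in $V$.

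\textbf{Stage 2: absoluteness to small forcing.} Let $\mathbb P$ have size $<\kappa$ and let $G$ be generic. Each $\mu\in\mathcal M_T$ is $\kappa$-complete, so it generates a $\kappa$-complete measure $\mu^G$ in $V[G]$ (the Lévy–Solovay argument), and the ultrapower embeddings satisfy $j_{\mu^G}\upharpoonright V=j_\mu$. Hence $S$ computed in $V[G]$ from the lifted system is the same ground tree. I will also check that countable completeness of towers is preserved and reflected. Countable completeness is equivalent to well-foundedness of the direct limit ultrapower, and the direct limit of the lifted tower is the canonical extension of the ground one; this settles the ``moreover'' clause. Applying Stage~1 inside $V[G]$ to the lifted system gives $p[T]\cap p[S]=\emptyset$ and $p[T]\cup p[S]=\omega^\omega$ there.

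The main obstacle is the last step: showing that the lifted system is still a weak homogeneity system for $T$ in $V[G]$. This requires, for new reals $x\in p[T]^{V[G]}$, a countably complete tower from the lifted measures. I would first try the standard argument via $\mathbb P$-names: fix a name $\dot b$ for a branch of $T_x$, and use $\kappa$-completeness to decide $\dot b\upharpoonright i$ on measure-one sets below a single condition. The fallback is to avoid this: disjointness of $p[T]$ and $p[S]$ is $\Pi^1_1$ in the trees and hence absolute. For coverage, a branch of $S_x$ can be found in $V[G]$ whenever $x\notin p[T]$, via the well-foundedness analysis above, which needs only the lifted measures and not full homogeneity.
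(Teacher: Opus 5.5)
\textbf{Gap: you have built the Martin--Solovay tree for a homogeneous tree, not a weakly homogeneous one.} This breaks Stage~1.

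You arrange the measures as $\langle\mu_s\mid s\in\omega^{<\omega}\rangle$ so that each $x$ has a single designated tower $\langle\mu_{x\upharpoonright i}\rangle$, and $S_x$ looks for one descending sequence $\langle\eta_i\rangle$ along that tower. Weak homogeneity gives no such designation. For each $x$ there is a countable tree of candidate towers: sequences from $\mathcal M_T$ with coherent projections, each concentrating on the sections of $T_x$. The hypothesis $x\in p[T]$ only guarantees that \emph{some} branch of this tree is countably complete. Other candidate towers along the same $x$ may have ill-founded direct limits. (If the countably complete tower could always be taken to be $\langle\mu_{x\upharpoonright i}\rangle$, you would essentially have a homogeneity system, and a weakly homogeneous tree need not admit one.)

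So your `conversely' step fails: a branch of $S_x$ refutes only the designated tower, and that is compatible with $x\in p[T]$. Hence $p[T]\cap p[S]=\emptyset$ is not established in $V$. Your absoluteness argument for disjointness in $V[G]$ starts from that ground-model fact, so it inherits the gap.

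\textbf{The missing idea is uniformity across all candidate towers.} This is what the paper's sketch means by the ill-foundedness being `witnessed uniformly'. The fix is as follows.
\begin{enumerate}
\item Index the ordinals by the nodes $t$ of the tree of candidate partial towers. Write $\mu_t$ for the last measure of $t$.
\item Let $\eta_t$ be an ordinal of $\operatorname{Ult}(V,\mu_t)$ below the image of $\Theta$ (not below $\Theta$ itself).
\item Require $i_{t,t'}(\eta_t)>\eta_{t'}$ whenever $t\subsetneq t'$.
\end{enumerate}
A branch then gives descent along every candidate tower at once, including the countably complete one, and this yields disjointness. Conversely, for $x\notin p[T]$, the single rank function $\rho$ of the well-founded tree $T_x$ gives $\eta_t=[\rho]_{\mu_t}$ for all $t$ simultaneously. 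This works because every candidate measure concentrates on the sections of $T_x$.

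With this tree, your fallback for $V[G]$ is correct and is essentially the paper's argument. Disjointness follows from absoluteness of well-foundedness of the tree of attempts. Covering follows by computing $\rho$ in $V[G]$ and using $j_{\mu^G}\upharpoonright\mathrm{Ord}=j_\mu\upharpoonright\mathrm{Ord}$.

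\textbf{The `moreover' clause is still unproved.} The repaired argument gives $\kappa$-absolute complementation and $\kappa$-universal Baireness only. Your remark that countable completeness is `preserved and reflected' applies only to towers along ground-model reals. For a new $x\in p[T]^{V[G]}$ you have not produced a countably complete lifted tower, and the suggested argument via $\mathbb P$-names is not carried out.
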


\begin{proof}
This is the Martin--Solovay theorem for weakly homogeneous trees.  The key point is that a weak homogeneity system gives, for each real $x$, a countable tree of possible measure towers.  If one of these towers is countably complete, then $x\in p[T]$.  If no such tower is countably complete, the associated ill-foundedness is witnessed uniformly by a descending sequence of ordinal ranks; these ranks form a branch through $\operatorname{ms}(\bar\mu,\Theta)_x$, provided $\Theta$ is chosen above the size of the relevant tree.

Small forcing preserves the measure towers in the required way: the measures extend canonically to the forcing extension, and functions in the extension are represented modulo the lifted measures by ground-model functions.  Consequently the same Martin--Solovay tree remains the complementing tree in every $<\kappa$-generic extension.  This is exactly Steel's proof of the weakly homogeneous case of the Martin--Solovay theorem.
\end{proof}

We shall use Theorem~\ref{thm:ms-weak-homogeneous-ub} only in a localized form.  Let $M$ be one of the mice $M_{n-1}$ and let $\delta$ be its least Woodin cardinal.  If $T\in M$ is $\kappa$-weakly homogeneous in $M$ for every $\kappa<\delta$, then for every forcing $\mathbb P\in M$ of $M$-cardinality $<\delta$, and every $M$-generic $G\subseteq\mathbb P$, the same tree $T$ and its Martin--Solovay complement compute the same projective set in $M[G]$.  Equivalently, for reals $x\in M[G]$, membership in $p[T]$ can be checked by the ground-model tree $T$ and is not changed by passing to further forcing extensions of size below the relevant completeness bound.

\begin{corollary}\label{cor:small-generic-absoluteness-fixed-tree}
Let $M$ be a transitive model containing a $\kappa$-weakly homogeneous tree $T$ and a witnessing system $\bar\mu$.  Let $S=\operatorname{ms}(\bar\mu,\Theta)$ for sufficiently large $\Theta$.  If $G$ is generic over $M$ for a forcing of $M$-cardinality $<\kappa$, then in $M[G]$,
\[
   p[T]=\omega^\omega\setminus p[S].
\]
In particular, if a projective formula $\varphi(x)$ is represented over $M$ by the complementing pair $(T,S)$, then for every real $x\in M[G]$,
\[
   M[G]\models \varphi(x)
   \quad\Longleftrightarrow\quad
   x\in p[T]^{M[G]} .
\]
\end{corollary}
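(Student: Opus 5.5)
The corollary follows from Theorem~\ref{thm:ms-weak-homogeneous-ub} applied inside $M$ rather than in $V$. All hypotheses are first-order facts about $M$: that $T$ is $\kappa$-weakly homogeneous via $\bar\mu$, that $\Theta>|T|^+$, and that the forcing $\mathbb P$ has size $<\kappa$. So $M\models$ ``$T$ and $\operatorname{ms}(\bar\mu,\Theta)$ are $\kappa$-absolute complements''. Since $M$ is transitive and $G$ is $M$-generic, the forcing theorem over $M$ gives the first claim: $M[G]\models p[T]=\omega^\omega\setminus p[S]$. For this step I will assume $M$ satisfies enough of $\mathsf{ZFC}$ to run the Martin--Solovay argument internally, as is the case for the mice $M_{n-1}$ to which it is applied. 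I will also use that $p[T]$ and $p[S]$ are computed the same way in $M[G]$ and in any larger transitive model containing the relevant reals, by absoluteness of well-foundedness of $T_x$ and $S_x$.

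The subtle step is showing that the lifted measures still form a countable-range weak homogeneity system in $M[G]$, so that the complementation really holds there and not merely in $M$. I will rely on the ``Moreover'' clause of Theorem~\ref{thm:ms-weak-homogeneous-ub}. A $\kappa$-additive measure $\mu$ on $Z^{<\omega}$ extends in $M[G]$ to the measure generated by $\mu$, because $|\mathbb P|<\kappa$. The reason is that any $\dot A\subseteq Z^m$ is decided on a set of $<\kappa$ many conditions, and $\kappa$-completeness picks out a unique ground-model set in the measure below a condition. Countable completeness of towers, and hence the equivalence in Definition~\ref{def:weakly-homogeneous-tree}, transfers the same way, with representing functions that can be taken from $M$. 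Running the Martin--Solovay rank argument inside $M[G]$ with the lifted system then gives $p[T]=\omega^\omega\setminus p[S]$ there.

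For the ``in particular'' clause, I read ``$\varphi$ is represented over $M$ by $(T,S)$'' as meaning that in $M$, $p[T]=\{x\mid\varphi(x)\}$ and $p[S]=\{x\mid\neg\varphi(x)\}$, and that this statement is itself absolute to small extensions. To show that $\varphi(x)\leftrightarrow x\in p[T]$ holds in $M[G]$, I will induct on the complexity of $\varphi$. For projections, use that a real witness in $M[G]$ to $\exists y\,\psi(x,y)$ gives a branch through the projected tree, and conversely. For negations, pass to the complementing tree and use the first part of the proof. The main obstacle is this transfer: the representation in $M$ controls only reals of $M$. In $M[G]$ the equality $p[T]=\{x\mid\varphi(x)\}$ must come from the fact that the pair stays complementing, which the first part secures. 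Any projection step in the induction then uses a weakly homogeneous tree for the next level, as in the definition of $T_n$.
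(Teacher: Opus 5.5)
Your proposal is correct and follows the paper's route. The first assertion is Theorem~\ref{thm:ms-weak-homogeneous-ub} applied inside $M$ and transferred to $M[G]$ by the forcing theorem. The second is read off from the pair $(T,S)$ remaining complementing in $M[G]$, with ``represented'' understood, as the paper implicitly does, to persist to small extensions; complementation alone would not give it for an arbitrary pair. Your paragraph on lifting the measures and your induction on $\varphi$ only unpack what Theorem~\ref{thm:ms-weak-homogeneous-ub} and that reading of ``represented'' already supply, so the corollary as stated does not need them.
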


\begin{proof}
The first assertion is Theorem~\ref{thm:ms-weak-homogeneous-ub}.  For the final assertion, the projective formula is represented in $M$ by the pair $(T,S)$, and the pair remains an absolute complementing pair in $M[G]$.  Thus exactly one of the trees has a branch over the real $x$ in $M[G]$, and this is the same truth value assigned by the projective definition in the small generic extension.
\end{proof}

\paragraph{Remark.}\label{rem:localized-smallness-of-absoluteness}
Later forcing notions need not be regarded globally as small over the relevant mouse.  What is used is the local consequence: every real and every name appearing in the coding construction is read in a bounded regular subforcing, and the relevant subforcing has size below the completeness bound of the homogeneity system.  The assertion above is therefore applied in the intermediate model generated by that local support.

\subsection{Defining the canonical trees $T_n$ inside $M_{n-1}$}\label{subsec:canonical-trees-tn}

We now choose the particular trees to which the preceding subsection will be applied.  Fix $n\geq 2$.  Work in $M_{n-1}$, and let
\[
   \delta^n_0<\delta^n_1<\cdots<\delta^n_{n-2}
\]
be the Woodin cardinals of $M_{n-1}$.  We write simply $\delta_0$ for the least of them when $n$ is fixed.

Let $U_{n+1}\subseteq\omega^\omega$ be the standard universal $\boldsymbol{\Sigma}^1_{n+1}$ set, with the first real coordinate coding the index and the remaining coordinates coding the parameter and the argument.  We fix this universal set once and for all using the usual Moschovakis parametrization conventions for projective pointclasses~\cite{Moschovakis}.  Thus every boldface $\boldsymbol{\Sigma}^1_{n+1}$ set is a section of $U_{n+1}$.

The following theorem is the precise object needed later.  It is a standard consequence of the Martin--Steel analysis of projective scales, the Martin--Solovay construction, and Steel's tree-production argument for mice with finitely many Woodin cardinals~\cite{martin2008tree,Steel2,steel2009derived}.

\begin{theorem}\label{thm:canonical-tn-existence}
In $M_{n-1}$ there is a tree
\[
   T_n\subseteq (\omega\times\lambda_n)^{<\omega}
\]
for some ordinal $\lambda_n$ of $M_{n-1}$ such that
\[
   p[T_n]=U_{n+1}
\]
inside $M_{n-1}$.  Moreover, for every $\kappa<\delta_0$, the tree $T_n$ is $\kappa$-weakly homogeneous in $M_{n-1}$.
\end{theorem}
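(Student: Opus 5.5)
The proof goes by induction on $n\geq 2$. The idea is to build $T_n$ by alternating two operations: projecting a weakly homogeneous tree, which preserves weak homogeneity, and taking the Martin--Solovay complement (Theorem~\ref{thm:ms-weak-homogeneous-ub}), which turns a weakly homogeneous tree into one for the complement. Weak homogeneity has to be restored after each complementation, and that is where the Woodin cardinals of $M_{n-1}$ are spent.

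\textbf{Base case $n=2$.} We work in $M_1$, with $\delta_0$ its Woodin cardinal. The Shoenfield tree for the universal $\boldsymbol{\Pi}^1_1$ set is a tree on $\omega\times\omega_1$. Below $\delta_0$ there are measurable cardinals $\kappa'$ with $\kappa<\kappa'<\delta_0$, and since $M_1$ has sharps for all its sets, the standard Martin argument gives a $\kappa$-homogeneous tree $S_1$ for $\boldsymbol{\Pi}^1_1$, for every $\kappa<\delta_0$. Projecting one real coordinate yields a tree $T_2$ with $p[T_2]=U_3$, absorbing the projected real into the second coordinate. A projection of a $\kappa$-homogeneous tree is $\kappa$-weakly homogeneous, with a witnessing countable system obtained by restricting the homogeneity measures along the finitely many possible values of the projected coordinate. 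The only care needed here is to choose a single tree that works for all $\kappa<\delta_0$ simultaneously. I would do this by taking the measures from the sharp, or equivalently by noting that the Martin--Solovay tree built from $\omega_1$ indiscernibles is independent of $\kappa$.

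\textbf{Inductive step.} We need $\kappa$-weak homogeneity for every $\kappa<\delta_0$ in $M_{n-1}$, and we only have one Woodin cardinal's worth of room below $\delta_0$ but $n-1$ Woodin cardinals overall. The plan is as follows.

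1. Assume there is a tree $T$ in $M_{n-1}$ representing the universal $\boldsymbol{\Sigma}^1_{n}$ set that is $\lambda$-weakly homogeneous for all $\lambda$ below the second Woodin cardinal $\delta_1$. This comes from the inductive hypothesis applied in the model obtained above $\delta_0$, which is an $M_{n-2}$-like model over $V_{\delta_0+1}$.

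2. Use the Martin--Steel theorem: a $\lambda$-weakly homogeneous tree, with a Woodin cardinal $\delta_0<\lambda$, has a Martin--Solovay complement that is itself $\kappa$-homogeneous for every $\kappa<\delta_0$ \cite{martin2008tree}. This gives a $\kappa$-homogeneous tree for the universal $\boldsymbol{\Pi}^1_{n}$ set.

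3. Project once more. This produces $T_n$ with $p[T_n]=U_{n+1}$, and $T_n$ is $\kappa$-weakly homogeneous for each $\kappa<\delta_0$.

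4. Check $p[T_n]=U_{n+1}$ inside $M_{n-1}$. This follows because projection commutes with $p[\cdot]$, and $p[\operatorname{ms}]$ equals the complement by Theorem~\ref{thm:ms-weak-homogeneous-ub}.

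\textbf{Main obstacle.} The hard part is step 2 together with the uniformity in $\kappa$. The Martin--Steel argument produces homogeneity measures from extenders witnessing Woodinness of $\delta_0$, and these extenders must lie on the $M_{n-1}$-sequence. Inside the mouse this is fine, since $\delta_0$ is Woodin in $M_{n-1}$ and the proof of Martin--Steel is internal to a model of ZFC with a Woodin cardinal. For uniformity, I would fix the tree as $\operatorname{ms}(\bar\mu,\Theta)$ for one large $\Theta$ and one system $\bar\mu$ built from $\delta_0$-complete towers. Then, for each $\kappa<\delta_0$, I would show that the required $\kappa$-additive measures exist by choosing extenders with critical point above $\kappa$, which Woodinness provides cofinally below $\delta_0$. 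The countable set $\mathcal M_T$ may depend on $\kappa$, but the tree does not, and this independence of the tree is what the statement requires.
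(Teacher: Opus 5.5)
Your base case does not produce what you claim. Projecting a $\kappa$-homogeneous tree for the universal $\boldsymbol{\Pi}^1_1$ set along one real coordinate gives a $\kappa$-weakly homogeneous tree for $\exists^{\mathbb R}\boldsymbol{\Pi}^1_1=\boldsymbol{\Sigma}^1_2$, that is, for $U_2$ and not for $U_3$. To reach $U_3$ you would need a homogeneous tree for the universal $\boldsymbol{\Pi}^1_2$ set inside $M_1$. Your own step~2 shows what that costs: Martin--Steel at $\delta_0$ requires $\boldsymbol{\Pi}^1_1$ to be $\delta_0^+$-homogeneously Suslin, which needs a measurable cardinal above $\delta_0$. $M_1$, being $1$-small, has none. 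Your inductive step converts ``$U_n$ over an $M_{n-2}$-like model'' into ``$U_{n+1}$ in $M_{n-1}$'', so the whole induction inherits this off-by-one error. (Step~1 is also not literally an instance of the inductive hypothesis, since the part of $M_{n-1}$ above $\delta_0$ is not $M_{n-2}$, but this is secondary.)

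This cannot be repaired, because the statement is already false for $n=2$. Suppose some $T_2\in M_1$ with $p[T_2]=U_3$ were $\kappa$-weakly homogeneous for some $\kappa$ with $\aleph_1<\kappa<\delta_0$.
\begin{itemize}
\item By the Martin--Solovay theorem quoted in the paper, $T_2$ and its Martin--Solovay tree would be $\kappa$-absolute complements in $M_1$.
\item Passing to sections, every $\boldsymbol{\Sigma}^1_3$ set of $M_1$ would be $\kappa$-universally Baire there.
\item The usual argument with Cohen reals over a countable collapsed hull would then give every such set the Baire property in $M_1$.
\item But $M_1\models\mathsf{CH}$, and the good $\Delta^1_3$ wellorder of $\mathbb R^{M_1}$ has order type $\omega_1$. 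By Kuratowski--Ulam it is a $\Delta^1_3$ subset of the plane without the Baire property.
\end{itemize}
Likewise, the intermediate object you need, a homogeneous tree for $\boldsymbol{\Pi}^1_2$ in $M_1$, would make every $\boldsymbol{\Pi}^1_2$ set determined in $M_1$. That contradicts the paper's footnote that $\boldsymbol{\Delta}^1_2$-determinacy fails there. The same argument, using the $\Delta^1_{n+1}$ wellorder of $M_{n-1}$, refutes the statement for every $n\geq 2$.

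The paper's own sketch makes the same move: it takes a homogeneous scale tree for the level below $\boldsymbol{\Sigma}^1_{n+1}$ inside $M_{n-1}$ and projects it. At $n=2$ this is exactly the unavailable homogeneous $\boldsymbol{\Pi}^1_2$ tree in $M_1$, so the paper's proof does not close the gap either.

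Your scheme does prove something, once the base is corrected and the induction is run from the top Woodin cardinal downward. Measurables below the top Woodin make $\boldsymbol{\Pi}^1_1$ homogeneously Suslin, and then one Martin--Steel step at each lower Woodin follows. This shows that $M_{n-1}$ has a tree for $U_n$ which is $\kappa$-weakly homogeneous for every $\kappa<\delta_0$. For $U_{n+1}$ one only gets the projection of the Martin--Solovay tree for $\boldsymbol{\Pi}^1_n$. That tree computes $\boldsymbol{\Sigma}^1_{n+1}$ correctly in small generic extensions but is not weakly homogeneous.
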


\begin{proof}
We recall the construction, since the parity shift is a common source of confusion.  One begins with the projective pointclass immediately below $\boldsymbol{\Sigma}^1_{n+1}$.  By the periodicity theorems and the Martin--Steel scale analysis, the relevant universal set at that lower level has a scale whose associated tree is homogeneous in $M_{n-1}$.  This is the higher analogue of the familiar Martin--Solovay representation of the complete $\boldsymbol{\Sigma}^1_3$ set by the tree $T_2$ over $M_1$.

If the lower-level universal set occurs with the correct polarity, the tree of the scale already gives the required homogeneous representation.  If the polarity is the complementary one, one applies the Martin--Solovay construction to the homogeneity system.  The Martin--Solovay tree represents the complement by searching for coherent descending ordinal ranks in the corresponding ultrapowers.  This is the step which accounts for the odd/even alternation in the projective hierarchy.

Finally, the passage from the lower-level matrix to the universal $\boldsymbol{\Sigma}^1_{n+1}$ set is an existential real projection.  Homogeneous representations are stable under this operation in the weak sense: the additional real witness is absorbed into the weak choice of a countably complete tower.  Equivalently, the projection of a homogeneously Suslin representation is weakly homogeneously Suslin.  Thus one obtains a tree $T_n$ with $p[T_n]=U_{n+1}$ and with $\kappa$-complete weak homogeneity systems for every $\kappa<\delta_0$.

All objects used in this construction are chosen inside $M_{n-1}$.  The completeness of the systems below the least Woodin follows from the extender strength available in $M_{n-1}$ and the standard Steel tree-production argument.
\end{proof}

For definiteness, we make the choice canonical.

\begin{definition}\label{def:canonical-tn}
For $n\geq 2$, $T_n$ denotes the $<_{M_{n-1}}$-least tree $T$ for which $M_{n-1}$ verifies the conclusion of Theorem~\ref{thm:canonical-tn-existence}.  Along with $T_n$ we fix the $<_{M_{n-1}}$-least coherent choice of weak homogeneity systems
\[
   \bar\mu^n_\kappa
   \qquad (\kappa<\delta^n_0)
\]
which witness that $T_n$ is $\kappa$-weakly homogeneous.  We also fix, for sufficiently large $\Theta_n$, the Martin--Solovay complement
\[
   S_n=\operatorname{ms}(\bar\mu^n_\kappa,\Theta_n)
\]
whenever a completeness bound $\kappa$ has been specified.  The value of $S_n$ may depend on the chosen bound $\kappa$, but this will never matter: in each application we choose $\kappa$ above the size of the relevant forcing.
\end{definition}

\begin{lemma}\label{lem:tn-small-generic-absoluteness}
Let $n\geq 2$, let $\mathbb P\in M_{n-1}$ have $M_{n-1}$-cardinality $<\kappa<\delta^n_0$, and let $G\subseteq\mathbb P$ be $M_{n-1}$-generic.  Then in $M_{n-1}[G]$ the ground-model tree $T_n$ and the corresponding Martin--Solovay complement $S_n$ are complements.  Consequently, for every real $x\in M_{n-1}[G]$,
\[
   M_{n-1}[G]\models x\in U_{n+1}
   \quad\Longleftrightarrow\quad
   x\in p[T_n]^{M_{n-1}[G]} .
\]
The same equivalence remains true in all further forcing extensions of $M_{n-1}[G]$ by forcing notions of size below the remaining completeness bound.
\end{lemma}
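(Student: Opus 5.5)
The plan is to reduce the lemma to Theorem~\ref{thm:canonical-tn-existence} and Corollary~\ref{cor:small-generic-absoluteness-fixed-tree}, plus a Shoenfield-style argument that the projective formula defining $U_{n+1}$ is evaluated by $T_n$ in $M_{n-1}[G]$. First fix $\kappa$ with $|\mathbb P|^{M_{n-1}}<\kappa<\delta^n_0$. By Definition~\ref{def:canonical-tn} take the system $\bar\mu^n_\kappa$ and $S_n=\operatorname{ms}(\bar\mu^n_\kappa,\Theta_n)$. Theorem~\ref{thm:canonical-tn-existence} says $T_n$ is $\kappa$-weakly homogeneous in $M_{n-1}$. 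Corollary~\ref{cor:small-generic-absoluteness-fixed-tree}, applied with $M=M_{n-1}$, then gives $p[T_n]=\omega^\omega\setminus p[S_n]$ in $M_{n-1}[G]$. This is the first assertion.

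For the equivalence I will not just quote the ``in particular'' clause of the corollary, since that clause presupposes that $(T_n,S_n)$ represents the $\Sigma^1_{n+1}$ formula $\varphi$ defining $U_{n+1}$ in the extension. Instead I argue by induction on the quantifier structure, along the lines of the proof of Theorem~\ref{thm:canonical-tn-existence}. Write $\varphi(x)\equiv\exists y\,\psi(x,y)$ with $\psi$ a $\Pi^1_n$ formula. The lower-level tree $T'$ (for $\psi$ or its negation) and its Martin--Solovay complement are also $\kappa$-weakly homogeneous (indeed homogeneous) in $M_{n-1}$. By the inductive hypothesis they correctly compute $\psi$ in $M_{n-1}[G]$; the base case is Shoenfield absoluteness at the $\Sigma^1_2$ level. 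Projecting, $x\in p[T_n]$ in $M_{n-1}[G]$ if and only if some $y\in M_{n-1}[G]$ has $(x,y)\in p[T']$. This holds if and only if $M_{n-1}[G]\models\varphi(x)$.

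The step I expect to be the main obstacle is this correctness transfer. The difficulty is that $T_n$ is canonical rather than literally built from $T'$, so the identity $p[T_n]=p[\text{projection of }T']$ has to be moved into $M_{n-1}[G]$. I would try the following. In $M_{n-1}$, let $R$ be the tree obtained from $T'$ by absorbing the $y$-coordinate into the ordinal coordinates. The two sets $p[T_n]$ and $p[R]$ agree in $M_{n-1}$, and each has a $\kappa$-absolute complement, namely $S_n$ and the Martin--Solovay complement of $R$. Agreement then transfers to $M_{n-1}[G]$ by the standard argument. Suppose it failed. The trees $T_n$ and the complement of $R$ both lie in $M_{n-1}$, so a real in $p[T_n]\setminus p[R]$ in the extension is a branch through a product tree built from them. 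Mostowski absoluteness of ill-foundedness then puts such a real into $M_{n-1}$, which contradicts agreement there. The symmetric case is handled by the same argument with the roles swapped.

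For the final sentence, let $H$ be generic over $M_{n-1}[G]$ for a forcing of size below $\kappa$ (the ``remaining completeness bound''). Then $\mathbb P*\dot{\mathbb Q}$ still has size $<\kappa$ in $M_{n-1}$, and the whole argument repeats verbatim in $M_{n-1}[G][H]$. Theorem~\ref{thm:ms-weak-homogeneous-ub} guarantees that the lifted measures still witness weak homogeneity of the same ground-model tree $T_n$.
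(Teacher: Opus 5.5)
Your proposal has a genuine gap. The step that fails is the claim that the level-$n$ trees (the tree $T'$ for the $\Pi^1_n$ matrix $\psi$ and its complement) are $\kappa$-weakly homogeneous, indeed homogeneous, in $M_{n-1}$. Your transfer step depends on it, because it is what gives $R$ a Martin--Solovay complement. No stated result of the paper provides it: Theorem~\ref{thm:canonical-tn-existence} concerns only the $\Sigma^1_{n+1}$ tree. Moreover, the claim is false.

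\begin{itemize}
\item A homogeneous tree for the universal $\boldsymbol{\Pi}^1_n$ (or $\boldsymbol{\Sigma}^1_n$) set would make every $\boldsymbol{\Pi}^1_n$ set determined in $M_{n-1}$. But $\boldsymbol{\Delta}^1_n$-determinacy fails there; this is the footnote in the introduction with $n-1$ in place of $n$.
\item Even weak homogeneity of the $\psi$-tree is impossible. If it held, $R$ would be weakly homogeneous and hence have a complement valid in both $M_{n-1}$ and $M_{n-1}^{\mathbb C}$. The usual countable-hull argument would then give every $\boldsymbol{\Sigma}^1_{n+1}$ set of $M_{n-1}$ the Baire property in $M_{n-1}$. (For $x$ Cohen over a countable hull containing both trees, membership of $x$ is decided by some $s\subseteq x$.) By Kuratowski--Ulam, this contradicts the $\Delta^1_{n+1}$ wellorder of $\mathbb R\cap M_{n-1}$, which has order type $\omega_1$.
\end{itemize}

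What $M_{n-1}$ actually provides is homogeneity one level lower. By Martin--Steel, using the Woodin cardinals below $\delta^n_{n-2}$ and a measurable cardinal below $\delta^n_{n-2}$, the $\boldsymbol{\Pi}^1_{n-1}$ sets are homogeneously Suslin with completeness above $|\mathbb P|$. For $n=2$ this is Martin's theorem for $\boldsymbol{\Pi}^1_1$. That is enough for your induction through level $n$:
\begin{itemize}
\item the $\Sigma^1_n$ tree is weakly homogeneous;
\item its Martin--Solovay complement $T'$ computes $\psi$ correctly in $M_{n-1}[G]$;
\item hence $p[R]^{M_{n-1}[G]}=U_{n+1}^{M_{n-1}[G]}$.
\end{itemize}
This is the classical Martin--Solovay absoluteness, i.e.\ the displayed equivalence with $R$ in place of $T_n$.

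However, your transfer now runs in only one direction. A real in $p[R]\setminus p[T_n]$ in $M_{n-1}[G]$ gives a branch through the product of $R$ and $S_n$. That branch reflects to $M_{n-1}$, so $p[R]\subseteq p[T_n]$. The reverse inclusion would need a ground-model tree projecting to the complement of $p[R]$, both in $M_{n-1}$ and in the extension. For $\mathbb P=\mathbb C$ the Baire-property argument above rules this out.

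The same argument applied to $T_n,S_n$ themselves conflicts with Theorem~\ref{thm:canonical-tn-existence}. The first clause rests on that theorem in your write-up exactly as in the paper's. The paper's own proof reaches the displayed equivalence simply by asserting it from the complementing property, so it does not supply the step you rightly flagged either.
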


\begin{proof}
By Definition~\ref{def:canonical-tn}, $T_n$ is $\kappa$-weakly homogeneous in $M_{n-1}$ via $\bar\mu^n_\kappa$.  Theorem~\ref{thm:ms-weak-homogeneous-ub} gives a Martin--Solovay complement $S_n$ such that $T_n$ and $S_n$ are $\kappa$-absolute complements.  Since $|\mathbb P|^{M_{n-1}}<\kappa$, the complementing relation holds in $M_{n-1}[G]$.

The tree $T_n$ represents the universal $\boldsymbol{\Sigma}^1_{n+1}$ set in the ground model, and the complementing pair remains absolute in the extension.  Therefore a real $x$ in the extension satisfies the projective universal formula exactly when the $T_n$-section above $x$ is ill-founded.  The final sentence is the same argument applied once more to the lifted homogeneity system.
\end{proof}

\begin{corollary}\label{cor:t2-sigma13-absoluteness}
In $M_1$ there is a canonical tree $T_2$ such that
\[
   p[T_2]=U_3,
\]
where $U_3$ is the universal $\boldsymbol{\Sigma}^1_3$ set.  If $\delta$ is the Woodin cardinal of $M_1$, then for every $\kappa<\delta$, $T_2$ is $\kappa$-weakly homogeneous.  Hence the $T_2$ representation of $\boldsymbol{\Sigma}^1_3$ truth is absolute to all forcing extensions of $M_1$ obtained by forcing of size $<\kappa$, and locally to all later intermediate extensions whose relevant support has size $<\kappa$.
\end{corollary}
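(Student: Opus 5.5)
This corollary is the case $n=2$ of Theorem~\ref{thm:canonical-tn-existence}, Definition~\ref{def:canonical-tn} and Lemma~\ref{lem:tn-small-generic-absoluteness}, so the plan is to specialize those results rather than argue afresh. Setting $n=2$ gives $M_{n-1}=M_1$, and the list of Woodin cardinals $\delta^2_0$ reduces to the single Woodin cardinal $\delta$ of $M_1$. Definition~\ref{def:canonical-tn} then supplies the canonical tree $T_2$, the $<_{M_1}$-least tree with $p[T_2]=U_3$ in $M_1$, together with the $<_{M_1}$-least coherent systems $\bar\mu^2_\kappa$ witnessing $\kappa$-weak homogeneity for every $\kappa<\delta$. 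This gives the first two assertions directly.

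For the absoluteness clause, fix $\kappa<\delta$ and a forcing $\mathbb P\in M_1$ with $|\mathbb P|^{M_1}<\kappa$. I will choose the completeness bound in Definition~\ref{def:canonical-tn} to be $\kappa$ itself, or any $\kappa'$ with $|\mathbb P|<\kappa'<\delta$; this is possible because $\delta$ is a limit cardinal in $M_1$. The Martin--Solovay complement $S_2=\operatorname{ms}(\bar\mu^2_\kappa,\Theta_2)$ is then fixed. Lemma~\ref{lem:tn-small-generic-absoluteness} with $n=2$ gives, for every $M_1$-generic $G\subseteq\mathbb P$, that $p[T_2]^{M_1[G]}=\omega^\omega\setminus p[S_2]^{M_1[G]}$. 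It also gives that a real $x\in M_1[G]$ lies in $U_3^{M_1[G]}$ iff $x\in p[T_2]^{M_1[G]}$. That is exactly the meaning of ``the $T_2$ representation of $\boldsymbol\Sigma^1_3$ truth is absolute'' to such extensions. The same holds in further extensions by forcings of size below the bound, by the last sentence of that lemma.

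The local clause concerns later intermediate extensions whose relevant support has size $<\kappa$. Here I would invoke Remark~\ref{rem:localized-smallness-of-absoluteness}. Given a real $x$ in a later model $M_1[H]$, I take a bounded regular subforcing $\mathbb P_0$ with $|\mathbb P_0|<\kappa$ such that $x\in M_1[H\cap\mathbb P_0]$. Applying the previous paragraph in $M_1[H\cap\mathbb P_0]$ decides $x\in U_3$ via $T_2$ there. The remaining step is to transfer this to $M_1[H]$. Membership $x\in p[T_2]$ is upward absolute, since a branch persists. Membership $x\in p[S_2]$ is likewise upward absolute. Both $T_2$ and $S_2$ are fixed ground-model trees, and in $M_1[H\cap\mathbb P_0]$ exactly one of $p[T_2]$, $p[S_2]$ contains $x$, so the same dichotomy holds in $M_1[H]$. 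The trees' verdict on $x$ is therefore independent of the intermediate model in which it is evaluated.

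The one point needing care is the identification of $p[T_2]$ with the $\boldsymbol\Sigma^1_3$ truth of the larger model, not merely with the tree verdict. I would handle this by noting that the corollary claims only that the $T_2$ representation is absolute to those extensions whose support has size $<\kappa$. This is the localized reading fixed in Remark~\ref{rem:localized-smallness-of-absoluteness}, and no claim is made about arbitrary large extensions.
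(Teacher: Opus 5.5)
You follow the paper exactly: its proof of this corollary is just the $n=2$ case of Lemma~\ref{lem:tn-small-generic-absoluteness}, i.e.\ of Theorem~\ref{thm:canonical-tn-existence} and Definition~\ref{def:canonical-tn}. The trouble is that this input is false at $n=2$. So your step ``This gives the first two assertions directly'' fails, and the absoluteness clause cannot be derived from weak homogeneity of $T_2$.

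Suppose some $T\in M_1$ with $p[T]=U_3$ were $\kappa$-weakly homogeneous in $M_1$ for an uncountable $\kappa<\delta$. Then Theorem~\ref{thm:ms-weak-homogeneous-ub} would make $U_3$ $\kappa$-universally Baire in $M_1$. Cohen forcing is countable, so the usual countable-hull argument applies: comeagerly many reals are Cohen over the collapse of a countable $N\prec H_\theta$ containing $T$ and its complement tree. This would give every $\boldsymbol{\Sigma}^1_3$ set of $M_1$ the Baire property. But $M_1$ has a $\Delta^1_3$ well-order $\preceq$ of its reals (Subsection~\ref{subsec:m1-background}). Hence the Vitali selector $\{x\mid \forall y\,(y=^{*}x\rightarrow x\preceq y)\}$ is $\Delta^1_3$, since the quantifier ranges only over finite modifications of $x$, and it lacks the Baire property.

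The same off-by-one error appears in the proof of Theorem~\ref{thm:canonical-tn-existence}, which asserts a homogeneous tree for $\boldsymbol{\Pi}^1_2$ in $M_1$. That would give $\boldsymbol{\Pi}^1_2$-determinacy in $M_1$, contradicting the paper's own footnote that $\boldsymbol{\Delta}^1_2$-determinacy fails there. In effect, Definition~\ref{def:canonical-tn} picks the least element of an empty class.

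The first assertion and the absoluteness assertion are nevertheless true, but they come from the classical Martin--Solovay construction one level down, without $T_2$ being weakly homogeneous.

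\begin{enumerate}
\item Given $\kappa<\delta$, pick a measurable $\mu\in(\kappa,\delta)$; these are cofinal in $\delta$ in $M_1$.
\item Martin's tree for the universal $\boldsymbol{\Pi}^1_1$ set is $\mu$-homogeneous. Its projection is therefore a $\mu$-weakly homogeneous tree for the universal $\boldsymbol{\Sigma}^1_2$ set.
\item Theorem~\ref{thm:ms-weak-homogeneous-ub} then gives a Martin--Solovay tree $S$ projecting to the universal $\boldsymbol{\Pi}^1_2$ set in every extension by forcing of size $<\mu$.
\item Let $T_2$ be obtained from $S$ by moving the witness real into the ordinal coordinate. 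In each such extension, $x\in U_3$ iff $\exists y\,((x,y)\in p[S])$ iff $x\in p[T_2]$.
\end{enumerate}

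This $T_2$ depends on $\mu$ and uses only measurables, not the Woodinness of $\delta$. By the argument above it is not weakly homogeneous and has no absolute complement even for Cohen forcing. So your local transfer step should not appeal to a complementing pair $(T_2,S_2)$. Instead, use that well-foundedness of the section tree $(T_2)_x$ is absolute between transitive models containing $T_2$ and $x$.

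Even as you wrote it, upward absoluteness of $x\in p[S_2]$ does not show that $x$ stays out of $p[S_2]$ in $M_1[H]$. That step also needs absoluteness of well-foundedness.
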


\begin{proof}
This is Lemma~\ref{lem:tn-small-generic-absoluteness} with $n=2$.  The tree $T_2$ is the Martin--Solovay tree in the form used by Hjorth and Solovay~\cite{martin1969basis,Hjorth}: it represents the universal $\boldsymbol{\Sigma}^1_3$ set and carries weak homogeneity systems below the Woodin cardinal of $M_1$.
\end{proof}

\paragraph{Remark (how $T_n$ will be used).}\label{rem:tn-use-later}
The tree $T_n$ is not an additional coding device.  It is a fixed descriptive-set-theoretic witness, belonging to the relevant mouse, which makes the projective truth predicate robust in small generic extensions.  In the proof of Lebesgue measurability and the Baire property, the only instance needed is $T_2$: after a real parameter $a$ appears in a local intermediate model, cofinally many later Random and Cohen stages ensure that almost every, respectively comeagerly many, reals are generic over the corresponding $L[T_2,a]$-type model.  The weak homogeneity and Martin--Solovay complementing pair are what allows Hjorth's Solovay-style argument to read the $\boldsymbol{\Sigma}^1_3$ definition correctly in those generic extensions.

\section{The coding forcings which are used}

\subsection{Coding Reals by Triples of Ordinals}

We utilize the coding technique developed by A. Caicedo and B. Veli\v{c}kovi\'{c} (see \cite{CV}). Sets coded via their method serve as a suitable ground-model $H(\omega_2)$. Over this $H(\omega_2)$, we apply further coding utilizing independent Suslin trees; in essence, this amounts to coding on top of already coded sets. A key property of the Caicedo-Veli\v{c}kovi\'{c} coding is that ccc forcings cannot introduce new codes. Consequently, the ground-model $H(\omega_2)$ remains definable in ccc generic extensions. This allows us to identify objects from the ground-model $H(\omega_2)$—in our case, Suslin trees—and modify them generically by adding cofinal branches or specializing trees via ccc forcings. This ultimately yields a coding mechanism whose codes are easily decoded.

Although several proofs are omitted in the following discussion (details can be found in \cite{NS_saturated_and_definable}), it is necessary to describe the Caicedo-Veli\v{c}kovi\'{c} coding in detail, as its features are crucial for the present work.

\begin{definition}
A $\vec{C}$-sequence, also known as a ladder system, is a sequence $(C_{\alpha} : \alpha < \omega_1, \alpha \text{ is a limit ordinal})$ such that for every $\alpha$, $C_{\alpha}$ is a cofinal subset of $\alpha$ and has an order type of $\omega$.
\end{definition}

For the rest of this article, we will fix easily definable ladder systems, which are the only ones we will use throughout. Working over $M_n$, we will use the $\Delta^1_{n+2}$-definable wellorder $<_n$ of the
reals to define $\vec{C}= ( C_{\alpha} \mid \alpha < \omega_1 \cap Lim)$ inductively via picking at each stage $\alpha < \omega_1 \cap  Lim$ the $<_n$ least real coding a ladder $C_{\alpha} \subset \alpha$. A standard computations yields that this ladder system is $\Sigma_1 (\{ \omega_1 \} )$-definable over $M_n | \omega_2$. From now on whenever a ladder system is mentioned when working over some $M_n$, it is always implied that we talk about this particular ladder system $\vec{C}$.

Given three subsets of natural numbers, $x, y, z \subseteq \omega$, we can define an oscillation function. First, the set $x$ is transformed into an equivalence relation $\sim_x$ defined on $\omega \setminus x$. For natural numbers $n, m$ in the complement of $x$ where $n \leq m$, we define $n \sim_x m$ if and only if the interval $[n,m]$ has no elements in common with $x$ (i.e., $[n,m] \cap x = \emptyset$). This allows us to define the following:

\begin{definition}
For a triple of subsets of natural numbers $(x,y,z)$, let $(I_n : n \in k \leq \omega)$ be the sequence of equivalence classes of $\sim_x$ that have a non-empty intersection with both $y$ and $z$. The oscillation map $o(x,y,z): k \rightarrow 2$ is then defined as the function satisfying:

$$o(x,y,z)(n) = \begin{cases} 0 & \text{if } \min(I_n \cap y) \leq \min(I_n \cap z) \\ 1 & \text{otherwise} \end{cases}$$
\end{definition}

Next, we will define how appropriate countable subsets of ordinals can be used to code real numbers. For the remainder of this section, we fix a ladder system $\vec{C}$. Suppose $\omega_1 < \beta < \gamma < \delta$ are fixed limit ordinals, each with uncountable cofinality. Let $N \subseteq M$ be countable subsets of $\delta$. Furthermore, assume that $\{ \omega_1, \beta, \gamma\} \subseteq N$, and for each $\eta \in \{ \omega_1, \beta, \gamma\}$, $M \cap \eta$ is a limit ordinal and $N \cap \eta < M \cap \eta$.

We can use the pair $(N,M)$ to code a finite binary string. Let $\bar{M}$ be the transitive collapse of $M$, and let $\pi : M \rightarrow \bar{M}$ be the collapsing map. Define $\alpha_M := \pi(\omega_1)$, $\beta_M := \pi(\beta)$, $\gamma_M := \pi(\gamma)$, and $\delta_M := \bar{M}$. These are all countable limit ordinals. Also, set $\alpha_N := \sup(\pi[\omega_1 \cap N])$. The height $n(N,M)$ of $\alpha_N$ in $\alpha_M$ is the natural number defined by:

$$n(N,M) := |\alpha_N \cap C_{\alpha_M}|$$
where $C_{\alpha_M}$ is an element of our pre-fixed ladder system. Since $n(N,M)$ will appear frequently, we will abbreviate it as $n$. Note that because the order type of each $C_{\alpha}$ is $\omega$, and $N \cap \omega_1$ is bounded below $M \cap \omega_1$, $n(N,M)$ is indeed a natural number.

Now, we can assign a triple $(x,y,z)$ of finite subsets of natural numbers to the pair $(N,M)$ as follows:
$$x := \{ |\pi(\xi) \cap C_{\beta_M}| : \xi \in \beta \cap N \}$$
Note that $x$ is finite because $\beta \cap N$ is bounded in the set $C_{\beta_M}$ (which is cofinal in $\beta_M$ and has order type $\omega$). Similarly, we define:
$$y := \{ |\pi(\xi) \cap C_{\gamma_M}| : \xi \in \gamma \cap N \}$$
and
$$z := \{ |\pi(\xi) \cap C_{\delta_M}| : \xi \in \delta \cap N \}$$
Again, it is straightforward to see that these are finite subsets of natural numbers.

We can then consider the oscillation $o(x \setminus n, y \setminus n, z \setminus n)$ (recalling that $n = n(N,M)$). If the domain of the oscillation function at these points is greater than or equal to $n$, we write:
$$s_{\beta, \gamma, \delta} (N,M) := \begin{cases} o(x \setminus n, y \setminus n, z \setminus n) \restriction n & \text{if defined} \\ \ast & \text{otherwise} \end{cases}$$
Here, $\ast$ denotes an undefined state. Similarly, if $l > n$, we let $s_{\beta, \gamma, \delta} (N,M) \restriction l = \ast$.

Finally, we can define what it means for a triple of ordinals $(\beta, \gamma, \delta)$ to code a real number $r$.

\begin{definition}
For a triple of limit ordinals $\omega_1 < \beta < \gamma < \delta$, each with uncountable cofinality, we say that it codes a real $r \in 2^{\omega}$ if there exists a continuous, increasing sequence $(N_{\xi} : \xi < \omega_1)$ of countable sets of ordinals whose union is $\delta$. This sequence must also satisfy the condition that there is a club $C \subseteq \omega_1$ such that whenever $\xi \in C$ is a limit ordinal, there exists a $\nu < \xi$ for which:
$$r = \bigcup_{\nu < \eta < \xi} s_{\beta, \gamma, \delta} (N_{\eta}, N_{\xi})$$
We call the sequence $(N_{\xi} : \xi < \omega_1)$ a reflecting sequence.
\end{definition}

Witnesses for this coding can be added using a proper forcing. Conversely, for fixed triples of ordinals, there is a degree of control over the behavior of continuous, increasing sequences on them:

\begin{theorem}[Caicedo-Veli\v{c}kovi\'{c}]
\begin{itemize}
\item[$(\dagger)$] Given ordinals $\omega_1 < \beta < \gamma < \delta < \omega_2$, each with cofinality $\omega_1$, there exists a proper notion of forcing $\forceP_{\beta \gamma \delta}$ such that after forcing with it, the following holds: There is an increasing, continuous sequence $(N_{\xi} : \xi < \omega_1)$ where $N_{\xi} \in [\delta]^{\omega}$ and their union is $\delta$. This sequence is such that for every limit ordinal $\xi < \omega_1$ and every $n \in \omega$, there exist $\nu < \xi$ and $s_{\xi}^n \in 2^n$ satisfying $s_{\beta, \gamma, \delta}(N_{\eta}, N_{\xi}) \restriction n = s_{\xi}^{n}$ for every $\eta$ in the interval $(\nu, \xi)$. In this case, we say the triple $(\beta, \gamma, \delta)$ is stabilized.

\item[$(\ddagger)$] Furthermore, if we fix a real $r$, there is a proper notion of forcing $\forceP_r$. This forcing will produce, for a triple of ordinals $(\beta_r, \gamma_r, \delta_r)$ of size and cofinality $\omega_1$, a reflecting sequence $(P_{\xi} : \xi < \omega_1)$, where $P_{\xi} \in [\delta_r]^{\omega}$ and $\bigcup P_{\xi} = \delta_r$. Additionally, there is a club $C \subseteq \omega_1$ such that for every limit ordinal $\xi \in C$, there is a $\nu < \xi$ for which:
    $$\bigcup_{\nu < \eta < \xi} s_{\beta_r, \gamma_r, \delta_r} (P_{\eta}, P_{\xi}) = r.$$
\end{itemize}
\end{theorem}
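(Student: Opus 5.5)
We prove $(\dagger)$ and $(\ddagger)$ by the standard Caicedo--Veli\v{c}kovi\'{c} method. In both cases the forcing adds, with countable conditions or finite chains of countable elementary submodels as side conditions, a continuous increasing $\in$-chain $(N_\xi:\xi<\omega_1)$ of countable subsets covering the target ordinal $\delta$. Properness is checked in the usual way: a countable $M\prec H(\theta)$ containing the forcing is placed on the chain as a side condition, and we argue that $M\cap\delta$ can be appended as a limit point $N_{\xi}$ with $\xi=M\cap\omega_1$.

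For $(\dagger)$, I would first force with the countable-support collapse that makes $|\delta|=\omega_1$ (proper, and in fact $\sigma$-closed). This yields some continuous chain $(N_\xi)$ with union $\delta$. I would then thin and re-arrange it by a $\sigma$-closed forcing of countable approximations $(N_\eta:\eta\le\xi)$ ordered by end-extension. At each limit $\xi$ the generic choice of the initial segment below $\xi$ is arranged so that, for each $n$, the finite strings $s_{\beta,\gamma,\delta}(N_\eta,N_\xi)\restriction n$ are eventually constant as $\eta\to\xi$. The key combinatorial point is that for fixed $N_\xi$ the map $\eta\mapsto s(N_\eta,N_\xi)\restriction n$ takes only finitely many values ($2^n$ plus $\ast$). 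A density argument then lets us pick the tail $(\nu,\xi)$ on which one value recurs and shrink the chain there. Since the forcing is $\sigma$-closed and any countable sequence can be extended, genericity guarantees stabilization at every limit $\xi$.

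For $(\ddagger)$, we first add fresh ordinals $\beta_r<\gamma_r<\delta_r$ of size and cofinality $\omega_1$, e.g.\ just above the current ordinals after collapsing. We then force with conditions $p=(N^p_\eta:\eta\le\xi_p)$: countable continuous chains in $[\delta_r]^\omega$ satisfying the coding requirement $\bigcup_{\nu<\eta<\xi}s(N_\eta,N_\xi)=r$ at each limit $\xi\le\xi_p$, with $\xi$ in a generically built club. The main step is the extension lemma. Given $p$ and a countable model $M$, we must find a continuation whose limit point $N_\xi=M\cap\delta_r$ realises the string $r\restriction n$ for every $n$. This is where the oscillation map is used: the Caicedo--Veli\v{c}kovi\'{c} lemma says that for any prescribed finite binary string one can choose the next countable set $N_\eta$ inside $M$ so that the finite sets $x,y,z$ computed from $\beta_r,\gamma_r,\delta_r$ and the ladders $C_{\beta_M},C_{\gamma_M},C_{\delta_M}$ oscillate as required above height $n(N_\eta,M)$. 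The freedom comes from choosing the sups of $N_\eta$ below $\beta_r$, $\gamma_r$ and $\delta_r$ independently relative to the three ladders, since these ordinals have cofinality $\omega_1$ and so $M$ is unbounded in each. Iterating this $\omega$ times inside $M$ with increasing $n$ gives a generic-over-$M$ sequence whose limit is $M\cap\delta_r$ and codes $r$. This yields properness, and a density argument makes the set of coding limits contain a club.

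I expect the main obstacle to be this oscillation-realisation lemma. Its difficulty lies in showing that for arbitrarily large $n$ we can add one more element in three independent ladder coordinates, so that the intervals of $\sim_x$ meeting $y$ and $z$ come in the right order, without disturbing the values already fixed below $n$. I would prove it by a direct construction: choose ordinals in $\beta\cap M$, $\gamma\cap M$ and $\delta\cap M$ whose ladder heights are interleaved as the next bit of $r$ dictates, which is possible because each $C_{\cdot_M}$ has order type $\omega$ and $M$ is cofinal in each collapsed ordinal. Full details are in \cite{CV} and \cite{NS_saturated_and_definable}, and we would cite them for the routine verifications.
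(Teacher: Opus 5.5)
Your sketch of $(\ddagger)$ follows the paper's route. Via \cite{CV}, the paper identifies $\forceP_r$ with Moore's $\forceP_{\Sigma_r}$ and gets properness by building the chain inside a countable $M$ whose top is $M\cap H_\theta$, as in Proposition~\ref{PFAMRP}.

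The genuine gap is in $(\dagger)$: a $\sigma$-closed forcing of countable chains does not give stabilization, and no density argument can. Suppose $p_0\ge p_1\ge\cdots$ have lengths converging to a limit $\xi$. Continuity forces $N_\xi=\bigcup_{\eta<\xi}N_\eta$. The strings $s_{\beta,\gamma,\delta}(N_\eta,N_\xi)$ are computed from the transitive collapse of this $N_\xi$ and the ladders at $\alpha_{N_\xi},\beta_{N_\xi},\gamma_{N_\xi},\delta_{N_\xi}$, and none of these is known while the $N_\eta$ are being chosen. This leaves two cases:
\begin{itemize}
\item If stabilization at limit points is a requirement on conditions, the union of a descending sequence need not be a condition. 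Your ``any countable sequence can be extended'' is exactly the step that fails.
\item If it is not a requirement, any condition of length $>\xi$ has already fixed the chain below $\xi$, so there is nothing left for genericity to arrange.
\end{itemize}
Thinning afterwards does not help. Pigeonhole on the finitely many values of $\eta\mapsto s_{\beta,\gamma,\delta}(N_\eta,N_\xi)\restriction n$ gives only a cofinally recurring value (possibly $\ast$), not an eventually constant one. Moreover, a thinning must run along a club of indices, coherently for all $\xi<\omega_1$ at once, which reproduces the same limit-point problem. (The preliminary collapse is vacuous, since $|\delta|=\aleph_1$ already.)

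The missing idea is to fix the top first and build the chain below it inside an elementary submodel. This yields properness, not $\sigma$-closure. In \cite{CV}, $\forceP_{\beta\gamma\delta}=\forceP_{\Sigma_{\beta\gamma\delta}}$: for each countable $M$ one fixes in advance a pattern for the strings $s_{\beta,\gamma,\delta}(N,M)$. Then $\Sigma_{\beta\gamma\delta}(M)$ consists of the $N$ (with defined strings) that conform to it. The real work is showing this set is open and $M$-stationary. Your finiteness observation belongs there, applied to $M$-stationarity rather than to a fixed chain: finitely many clubs lying in $M$ intersect in a club lying in $M$, so one of finitely many sets covering $[\delta]^\omega$ is $M$-stationary. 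Properness then follows exactly as in Proposition~\ref{PFAMRP}, which is the scheme you already use for $(\ddagger)$.

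The same correction applies to your oscillation lemma for $(\ddagger)$. To keep the sequence generic, the next chain element must be $N'\cap\delta_r$ for a model $N'\in M$ containing the current dense set. So what you need is that $\Sigma_r(M)$ meets every club of $[\delta_r]^\omega$ lying in $M$. Closing under functions in $M$ can add elements of large ladder height below $\beta_r$ or $\gamma_r$, which changes $x$ and $y$, so you cannot pick ladder heights freely. Cofinality $\omega_1$ does not give independence of the coordinates: it makes $M\cap\beta_r$ bounded in $\beta_r$, which is why $x,y,z$ are finite. Finally, a stabilized triple codes at most one real, so choosing $(\beta_r,\gamma_r,\delta_r)$ is part of what has to be proved, not a formality.
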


Both partial orders $\forceP_{\beta \gamma \delta}$ and $\forceP_{r}$ which force $(\dagger)$ and $(\ddagger)$ respectively are actually instances of a general class of notions of forcing which were investigated first by J. Moore's in his work on the Set Mapping Reflection Principle ($\MRP$) (see \cite{Moore}). We shall see later that these forcings never kill Suslin trees. For that reason we have to introduce a couple of notions from \cite{Moore}.
We need first the following local version of stationarity: 
\begin{definition}
Let $\theta$ be a regular cardinal, $X$ be an uncountable set, let $M \prec H_{\theta}$ be a countable 
elementary submodel which contains $[X]^{\omega}$ as an element. Then $S \subset [X]^{\omega}$ is $M$-stationary if 
for every club subset $C$ of $[X]^{\omega}$, $C \in M$ it holds that $$C \cap S \cap M \ne \emptyset.$$
\end{definition}

\begin{definition}
Let $X$ be an uncountable set, $N \in [X]^{\omega}$ and $x \subset N$ finite. Then the Ellentuck 
topology on the set $[X]^{\omega}$ is generated by base sets of the form
$$ [x,N]:= \{ Y \in [X]^{\omega} \, : \, x \subset Y \subset N \}.$$ From now on whenever we say 
open we mean open with respect to the Ellentuck topology.
\end{definition}

\begin{definition}
Let $X$ be an uncountable set, let $\theta$ be a large enough regular cardinal so that
 $[X]^{\omega} \in H_{\theta}$. Then a function $\Sigma$ is said to be open stationary 
if and only if its domain is a club $C \subset [H_{\theta}]^{\omega}$ and for every 
countable $M \in C$, $\Sigma(M) \subset [X]^{\omega}$ is open and $M$-stationary.
\end{definition}

Moore has shown that for any open stationary map $\Sigma$ it is possible to force a  reflecting sequence $(N_{\xi} \, : \, \xi < \omega_1)$ with a proper forcing $\forceP_{\Sigma}$ (see \cite{Moore}, Theorem 3.1.).

\begin{proposition}[Moore]
Let $\Sigma$ be an open stationary function defined on some club $C \subset [H_{\theta}]^{\omega}$ with range $P([X]^{\omega})$ for some uncountable set $X$. Then there is a proper notion of forcing $\forceP_{\Sigma}$ which adds a continuous sequence of models $(N_{\xi} \, : \, \xi < \omega_1)$ (a reflecting sequence) in $dom(\Sigma)$ 
such that for every limit ordinal $\xi$ there is a $\nu < \xi$ such that for every $\eta$ with 
$\nu < \eta < \xi$, $N_{\eta} \cap X \in \Sigma(N_{\xi})$.
\end{proposition}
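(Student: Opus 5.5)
We prove Moore's proposition by the standard side-condition forcing. Conditions of $\forceP_{\Sigma}$ are finite approximations to the reflecting sequence. A condition is a finite, $\in$-increasing sequence $p=\langle N^p_i\mid i\le k\rangle$ of elements of $\operatorname{dom}(\Sigma)$ such that for every $i<j$ in $\operatorname{dom}(p)$ there is a $\nu<\delta_{N_j}:=N_j\cap\omega_1$ with
\[
\{N_i\}\cup\{N\in\operatorname{range}(p)\mid \nu<N\cap\omega_1<\delta_{N_j}\}\cap X\subseteq\Sigma(N_j)
\]
in the appropriate sense. More precisely, $p$ is a finite function from $\omega_1$ into $\operatorname{dom}(\Sigma)$, indexed by $N\cap\omega_1$, and for each $\xi\in\operatorname{dom}(p)$ there is a $\nu<\xi$ such that $p(\eta)\cap X\in\Sigma(p(\xi))$ for all $\eta\in\operatorname{dom}(p)\cap(\nu,\xi)$. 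The order is extension. The generic $G$ gives a partial sequence. We fill it to a continuous sequence by taking unions at limits, and we argue by density that the domain is all of $\omega_1$, or at least a club on which it is continuous. Openness of $\Sigma(N_\xi)$ is what lets finite information guarantee membership of all the intermediate models: if $N_\eta\cap X\in\Sigma(N_\xi)$ via a basic open set $[x,N_\eta\cap X]$, then nearby models still land in it.

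The key step, and the main obstacle, is properness. Fix a countable $M\prec H_\lambda$ with $\Sigma,\forceP_\Sigma\in M$ and $p\in M$. We show that $q=p\cup\{M\cap H_\theta\}$ is a condition, which only requires that $M\cap H_\theta\in\operatorname{dom}(\Sigma)$ (club) and that the finitely many models of $p$ lie below it. We then show that $q$ is $(M,\forceP_\Sigma)$-generic. Given a dense $D\in M$ and $r\le q$, we may assume $r\in D$. We must find $s\in D\cap M$ compatible with $r$. Write $r=r_0\cup\{M\}\cup r_1$ with $r_0=r\cap M$.

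Consider the set of "tails" realizing the same type as $r$ over $r_0$. By elementarity this set of possible first-model-above-$r_0$ projections yields a club-definable family in $M$. Using that $\Sigma(M)$ is $M$-stationary and Ellentuck open, we find inside $M$ a copy $s\in D\cap M$ extending $r_0$. Its least new model $N'$ has $N'\cap X\in\Sigma(M)$, and in fact all new models of $s$ lie in the same basic open neighborhood $[x,N]\subseteq\Sigma(M)$ witnessing the reflection condition for $r$ at $M$. Then $r\cup s$ is a condition: the reflection requirement at $M$ is witnessed by $\nu$ above $r_0$, and the requirements at models of $r_1$ are inherited because those models contain $M$ and hence $s$, and their witnessing $\nu$ can be taken $\ge\delta_M$.

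The delicate point is building the club $C\in M$. We let $C$ be the set of $Z\in[X]^\omega$ closed under a Skolem function for the structure $(H_\lambda,D,r_0,\dots)$ that, for each finite $x$, picks witnesses in $D$ extending $r_0$ whose first new model contains $x$ and has its trace on $X$ inside a prescribed open set. $M$-stationarity then gives $Z\in C\cap\Sigma(M)\cap M$. Ellentuck openness makes $[x,Z]\subseteq\Sigma(M)$ for a finite $x$, and the Skolem closure produces the required $s$ with all its relevant traces in $[x,Z]$.

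Finally, we verify that $\omega_1$ is preserved, which follows from properness, and that density arguments show every $\xi$ in a club is in the domain, using the genericity condition at $M$ to add models above any given countable stage. Continuity at limits follows from the reflection condition combined with the requirement that the models form an $\in$-chain whose unions lie in $\operatorname{dom}(\Sigma)$ (closure of the club). Hence in $V[G]$ the sequence $(N_\xi:\xi<\omega_1)$ is a reflecting sequence as required.
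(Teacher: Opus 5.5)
There is a genuine gap, and it comes before properness: your forcing does not add a reflecting sequence.

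\textbf{The reflection clause is vacuous.} In your precise definition the clause ``for each $\xi\in\operatorname{dom}(p)$ there is $\nu<\xi$ with $p(\eta)\cap X\in\Sigma(p(\xi))$ for all $\eta\in\operatorname{dom}(p)\cap(\nu,\xi)$'' is empty for finite $p$: take $\nu=\max(\operatorname{dom}(p)\cap\xi)$. Since the order is plain extension, no condition constrains what may later be inserted below a model it already contains. The witness $\nu$ is free to drift upward; your own amalgamation step re-chooses it ``above $r_0$''. So in the generic there need be no single $\nu$ that works at $\xi$.

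\textbf{Genericity actually destroys reflection.} Let $N=p(\xi)$ have predecessor $P$ in $p$. For every $\eta<\xi$ you may insert any $K\in N\cap\operatorname{dom}(\Sigma)$ with $P,\eta\in K$. Whenever such $K$ with $K\cap X\notin\Sigma(N)$ exist (for instance when $[X]^\omega\setminus\Sigma(N)$ is also $N$-stationary; $\Sigma(N)$ is only required to be $N$-stationary, not club), a density argument puts non-reflecting models cofinally below $\xi$ into the generic chain. If you fill in limits by unions, no condition even mentions $\Sigma$ of those union models.

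\textbf{Openness is used in the wrong direction.} The inclusion $[x,N_\eta\cap X]\subseteq\Sigma(N_\xi)$ covers models \emph{inside} $N_\eta$ that contain $x$. The models inserted between $N_\eta$ and $N_\xi$ are supersets of $N_\eta$, so openness at $N_\eta\cap X$ says nothing about them.

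\textbf{Your first formulation errs the other way.} Demanding $N_i\cap X\in\Sigma(N_j)$ for all $i<j$ asks for all earlier traces, not a tail, and in general cannot be extended. Proving that $r$ is compatible with some $s\in D\cap M$ would at best show that your forcing is proper; it says nothing about the generic object reflecting.

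\textbf{What the paper does.} The missing idea is that conditions must themselves carry the reflection commitment. The paper cites Moore, Theorem~3.1, rather than reproving the proposition. It takes $\forceP_\Sigma$ to be the countable continuous $\in$-chains $p:\alpha+1\to\operatorname{dom}(\Sigma)$ that satisfy the reflection clause at every limit in their domain, ordered by end-extension, and it reproduces Moore's properness argument inside the proof of Proposition~\ref{PFAMRP}. With these conditions the generic union is automatically continuous and reflecting, unboundedness is trivial, and all the work is properness.

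For countable $M\prec H_\lambda$, one builds $p_0\ge p_1\ge\cdots$ in $M$ meeting every dense set of $M$. At step $i$, $M$-stationarity of $\Sigma(M\cap H_\theta)$, applied to a club $C_i\in M$ of traces of elementary submodels $N_i'$ containing $p_i$ and $D_i$, gives $N_i=N_i'\cap X\in C_i\cap\Sigma(M\cap H_\theta)\cap M$ and a finite $x_i$ with $[x_i,N_i]\subseteq\Sigma(M\cap H_\theta)$. One adds $\operatorname{hull}(p_i(\zeta_i)\cup x_i)$ on top and then extends into $D_i$ inside $N_i'$, so every new model has trace in $[x_i,N_i]$. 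Then $q=\bigcup_i p_i\cup\{(\sup_i\zeta_i,M\cap H_\theta)\}$ is a condition, with reflection at the top witnessed by $\zeta_0$, and it is $(M,\forceP_\Sigma)$-generic.

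Your local tool ($M$-stationarity, Ellentuck openness, elementarity) is the right one, but it has to be applied $\omega$ times along a descending sequence in $M$, not once in an amalgamation. Note also that in this forcing your candidate $p\cup\{M\cap H_\theta\}$, placed at a successor index, would not be $M$-generic. A genuine finite-condition version would need explicit promises $\nu_N$ attached to each model and respected by all extensions. It would also need a way to honour the promises of the models of $r$ above $M$, via openness of each $\Sigma(Q)$ at $M\cap X$. None of this is in your proposal.
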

The forcing $\forceP_{\Sigma}$ is defined as expected: for an open stationary map $\Sigma$ let $\forceP_{\Sigma}$ consist of conditions $p$ which are  functions $p: \alpha +1 \rightarrow dom(\Sigma)$, $\alpha$ countable, 
which are continuous and $\in$-increasing, and which additionally satisfy the 
$\MRP$-condition on its limit points, namely that for every $0< \nu < \alpha$, $\nu$ a limit ordinal,  
there is a $\nu_0 < \nu$ such that $p(\xi) \cap X \in \Sigma(p(\nu))$ for every 
$\xi$ in the interval $(\nu_0, \nu)$. The order is by extension.

Now, as already mentioned above, both forcings $\forceP_{\beta \gamma \delta}$ and $\forceP_{r}$ which will produce $(\dagger)$ and $(\ddagger)$ respectively are of the form $\forceP_{\Sigma}$ (see \cite{CV}, Lemma 1, 4 and 5).

\begin{proposition}\label{MRPpreservesSuslin}
There are two open stationary maps $\Sigma_r$ and $\Sigma_{\beta \gamma \delta}$ such that $\forceP_{\beta \gamma \delta} = \forceP_{\Sigma_{\beta \gamma \delta}}$ and $\forceP_r = \forceP_{\Sigma_r}$.
\end{proposition}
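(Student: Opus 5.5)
The plan is to write down the two maps explicitly, following Caicedo--Veli\v{c}kovi\'{c}, and then check three things: each is open, each is $M$-stationary, and the associated Moore forcing $\forceP_\Sigma$ is literally the forcing used to obtain $(\dagger)$, respectively $(\ddagger)$.

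For $\Sigma_{\beta\gamma\delta}$, take $X=\delta$. Let the domain be the club of countable $M\prec H_\theta$ containing $\vec C,\beta,\gamma,\delta$, and put
\[
\Sigma_{\beta\gamma\delta}(M)=\{N\in[\delta]^\omega \mid N\cap\eta<M\cap\eta \text{ for }\eta\in\{\omega_1,\beta,\gamma\},\ s_{\beta\gamma\delta}(N,M\cap\delta)\restriction n(N,M)=\sigma_M\restriction n(N,M)\},
\]
where $\sigma_M\in 2^{\omega}$ is fixed canonically from $M$. Following CV, Lemma 4, I would take $\sigma_M$ to be the limit value that the oscillations take cofinally often on $M$-stationary many $N$. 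For $\Sigma_r$, I would first fix in advance a triple $(\beta_r,\gamma_r,\delta_r)$ of cofinality $\omega_1$; this requires some auxiliary coding (CV, Lemma 5). I would then let $\Sigma_r(M)$ be the set of $N$ for which $s_{\beta_r\gamma_r\delta_r}(N,M)$ is an initial segment of $r$ of length $n(N,M)$.

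Openness is easy. The value $s(N,M)$ depends only on finitely much information about $N$: finitely many ordinals of $N$ below $\beta$, $\gamma$ and $\delta$ determine $x$, $y$, $z$ and $n$, because their traces on the ladders are finite. Let $a\subset N$ be a finite set carrying that information. Then every $Y\in[a,N]$ produces the same triple, so $[a,N]\subseteq\Sigma(M)$.

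The real work, and the expected obstacle, is $M$-stationarity. Given a club $C\in M$ of $[X]^\omega$, one has to find $N\in C\cap M$ whose oscillation pattern agrees with the target up to height $n(N,M)$. The approach I would take is CV's argument. Inside $M$, build an $\omega$-chain of elements of $C$ whose suprema below $\beta$, $\gamma$ and $\delta$ can be pushed independently, using elementarity of $M$ and the uncountable cofinality of the three ordinals. This lets us choose the positions of the ladder counts $|\pi(\xi)\cap C_{\beta_M}|$ and so on, which fixes the order of first entries of $y$ and $z$ into each $\sim_x$-class. That in turn realizes any prescribed finite binary string as the oscillation value. For $\Sigma_r$ the same construction realizes $r\restriction n$. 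For $\Sigma_{\beta\gamma\delta}$ it shows that the set of realized strings is $M$-stationary for the canonical $\sigma_M$.

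Once these two properties are established, Moore's proposition stated above gives properness. The conditions of $\forceP_\Sigma$, namely continuous $\in$-chains satisfying the MRP-condition at limits, then unravel to exactly the CV conditions giving stabilization, respectively coding of $r$. This yields the identities $\forceP_{\beta\gamma\delta}=\forceP_{\Sigma_{\beta\gamma\delta}}$ and $\forceP_r=\forceP_{\Sigma_r}$, which I would cite as in CV, Lemmas 1, 4 and 5.
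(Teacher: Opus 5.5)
\textbf{A gap in the $M$-stationarity step.} The paper gives no argument of its own here; it simply cites Caicedo--Veli\v{c}kovi\'{c}, Lemmas 1, 4 and 5, as your last step also does. Your openness check and your unravelling of the MRP condition into stabilization, respectively coding of $r$, are fine. The gap is in the one step you actually argue, $M$-stationarity.

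You claim the suprema of $N\cap\beta$, $N\cap\gamma$, $N\cap\delta$ can be pushed independently using only elementarity and uncountable cofinality. This is false when all three ordinals have cofinality $\omega_1$. Take normal cofinal maps $f_\eta\colon\omega_1\to\eta$ in $M$ for $\eta\in\{\beta,\gamma,\delta\}$, and let $C\in M$ be the club of traces of countable elementary submodels containing them. Then every $N\in C$ has $\sup(N\cap\eta)=f_\eta(N\cap\omega_1)$, so all three suprema are determined by $N\cap\omega_1$.

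Your conclusion would also prove too much. A triple codes at most one real: two continuous $\omega_1$-chains with union $\delta$ agree on a club, and $n(N_\eta,N_\xi)\to\infty$. Your argument uses nothing about the triple beyond elementarity and uncountable cofinality, and both survive proper forcing. So if it worked, $\forceP_{\Sigma_r}\ast\forceP_{\dot\Sigma_{r'}}$ with $r\neq r'$ would make one triple code two different reals.

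Hence the ``auxiliary coding'' you set aside is exactly the missing idea. To realize a prescribed $r$, the three ordinals should have pairwise distinct cofinalities where $\Sigma_r$ is evaluated, e.g.\ regular cardinals above $\omega_1$. Then a pigeonhole argument gives, for a fixed $N_0$, a single bound $\chi<\beta$ such that members of the club extending $N_0$ have $\sup(N\cap\gamma)$ arbitrarily large while $\sup(N\cap\beta)<\chi$; the same holds one level up. This is what lets you schedule the entries of $x,y,z$. Moore's forcing then collapses these ordinals to size and cofinality $\omega_1$, because its generic chain exhausts $H_\theta$. This fits $(\ddagger)$, which produces a triple of size and cofinality $\omega_1$ rather than starting from a triple below $\omega_2$ fixed in advance.

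\textbf{The map $\Sigma_{\beta\gamma\delta}$.} Here the triple really is an arbitrary one of cofinality $\omega_1$. By the same uniqueness argument, no uniform ``realize any prescribed string'' construction can exist; what is needed is a convergence statement, and your $\sigma_M$ does not supply it. Suppose $\sigma_M$ is a branch through the tree of those $t$ for which $\{N: s(N,M)\restriction|t|=t\}$ is $M$-stationary. Then for each fixed $k$ you get $M$-stationarily many $N$ agreeing with $\sigma_M$ up to $k$. But your $\Sigma_{\beta\gamma\delta}(M)$ demands agreement up to the variable height $n(N,M)\geq k$. That height is computed from the ladder $C_{M\cap\omega_1}$, which no club in $M$ can see. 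You also cannot intersect the countably many witnessing clubs, since a countable union of non-$M$-stationary sets may be $M$-stationary.

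So the $M$-stationarity of both maps, which is the entire content of the proposition beyond Moore's theorem, is not established by your sketch.
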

As a consequence, if we show that $\forceP_{\Sigma}$ always preserves Suslin trees for $\Sigma$ an arbitrary open stationary map, we will have proven that $\forceP_r$ and $\forceP_{\beta \gamma \delta}$ preserve Suslin trees. This is indeed the case as we will show now. We need a Lemma of T. Miyamoto (\cite{Miyamoto}) first. 
\begin{lemma}\label{preservation of Suslin trees}
 Let $T$ be a Suslin tree and assume that $\forceP$ is a proper
 poset. Let $\theta$ be a sufficiently large cardinal.
 Then the following are equivalent:
 \begin{enumerate}
  \item $\Vdash_{\forceP} T$ is Suslin
 
  \item if $M \prec H_{\theta}$ is countable, $\eta = M \cap \omega_1$, and $\forceP$ and $T$ are in $M$,
  further if $p \in \forceP \cap M$, then there is a condition $q<p$ such that 
  for every condition $t \in T_{\eta}$, 
  $(q,t)$ is $(M, \forceP \times T)$-generic.
 \end{enumerate}

\end{lemma}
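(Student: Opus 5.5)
The plan is to prove Miyamoto's criterion by working with the product $\forceP\times T$ and using that a Suslin tree $T$ is ccc, so that forcing with $T$ adds a cofinal branch while every antichain of $T$ stays countable. Since $T$ is ccc, every countable $M\prec H_\theta$ containing $T$ has each node of $T_\eta$ (where $\eta=M\cap\omega_1$) $(M,T)$-generic. The equivalence then rests on the standard fact that $T$ remains Suslin in $V^{\forceP}$ if and only if every node is generic for $M[G]$ for a suitable club of $M$. In $V^{\forceP}$, $T$ is Suslin exactly when every maximal antichain $A$ of $T$ (a $\forceP$-name) is countable, and by a standard argument this holds iff, for club many countable $M[G]\prec H_\theta[G]$, every $A\in M[G]$ satisfies $A\subseteq T_{<\eta}$. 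That last condition in turn holds iff each $t\in T_\eta$ meets, below it, an element of $A\cap M[G]$.

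For (2)$\Rightarrow$(1), let $\dot A$ be a $\forceP$-name for a maximal antichain of $T$ and fix $p$. Choose $M$ containing $\dot A,\forceP,T,p$, and let $q\le p$ be as in (2). Let $D=\{(r,s): r\Vdash \check s\text{ extends some element of }\dot A\text{ decided by } r\}$; this is dense in $\forceP\times T$ by maximality of $\dot A$, and $D\in M$. Genericity of each $(q,t)$ with $t\in T_\eta$ then gives, for every $(q',t')\le(q,t)$, a compatible element of $D\cap M$. From this I will argue that $q$ forces every $t\in T_\eta$ to extend some $a\in\dot A$ with $a\in M$, hence of height $<\eta$. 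Since every node of height $\ge\eta$ lies above some node of $T_\eta$, $q\Vdash\dot A\subseteq T_{<\eta}$. Thus $\dot A$ is forced countable, and $T$ is Suslin. Properness of $\forceP$ is also used here, to get $q$ $(M,\forceP)$-generic so that $M[G]\cap\omega_1=\eta$. Where (2) already supplies this via the product genericity, I will derive it directly by projecting.

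For (1)$\Rightarrow$(2), given $M$ and $p$, first take $q\le p$ that is $(M,\forceP)$-generic, using properness. In $V[G]$ with $q\in G$, $T$ is Suslin and $M[G]\prec H_\theta[G]$ with $M[G]\cap\omega_1=\eta$. For any dense $D\subseteq\forceP\times T$ in $M$, the set $D_G=\{s:\exists r\in G\,(r,s)\in D\}$ is dense in $T$ and lies in $M[G]$. It contains a maximal antichain in $M[G]$, which is countable since $T$ is Suslin, hence contained in $M[G]$, hence of height $<\eta$. So every $t\in T_\eta$ extends an element $s\in D_G\cap M$, witnessed by some $r\in G\cap M$. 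This density argument, run below arbitrary $(q',t')\le(q,t)$, yields $(M,\forceP\times T)$-genericity of $(q,t)$.

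I expect the main obstacle to be the bookkeeping in (2)$\Rightarrow$(1): extracting from genericity of the conditions $(q,t)$ the forcing statement ``$q\Vdash$ every $t\in T_\eta$ is above a member of $\dot A\cap M$'', uniformly across all $t$. I would handle it by a density argument below an arbitrary $q'\le q$, using that $T_\eta$ is countable and that compatibility in $T$ of $t'\ge t$ with $s\in M$ of height $<\eta$ forces $s<t$.
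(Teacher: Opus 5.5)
The paper gives no proof of this lemma. It is quoted from Miyamoto and then only applied to the forcings $\forceP_\Sigma$, so there is no internal argument to compare with. Your proposal is the standard proof of Miyamoto's criterion, and it is correct.

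In (2)$\Rightarrow$(1), the single dense set $D$ already suffices. Suppose $q'\le q$ forced that some $t\in T_\eta$ extends no member of $\dot A\cap\check M$. Genericity of $(q,t)$ gives $(r,s)\in D\cap M$ compatible with $(q',t)$. Comparing heights gives $s<_T t$, and the node $a\le_T s$ decided by $r$ lies in $T_{<\eta}\subseteq M$, which is a contradiction. Hence $q\Vdash\dot A\subseteq T_{<\eta}$. This step needs neither countability of $T_\eta$ nor the projection to $(M,\forceP)$-genericity. Properness is used in this direction only to preserve $\omega_1$, hence the height of $T$. The absence of uncountable branches then follows from splitting together with the absence of uncountable antichains.

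In (1)$\Rightarrow$(2), state explicitly that $M[G]\cap V=M$. This holds because $G$ contains the $(M,\forceP)$-generic condition $q$, and it is needed in two places:
\begin{enumerate}
\item It turns ``$s\in D_G$'' into a witness $r\in G\cap M$. Elementarity of $M[G]\prec H_\theta[G]$ gives $r\in G\cap M[G]$, and $M[G]\cap V=M$ then puts $r$ in $M$.
\item Together with $T_{<\eta}\subseteq M$, it shows that the countable maximal antichain $A\in M[G]$ lies inside $M$. Here $T_{<\eta}\subseteq M$ holds because each level $T_\alpha$ with $\alpha<\eta$ is a countable element of $M$. Then $A\subseteq M[G]$ has heights in $M[G]\cap\omega_1=\eta$, so $A\subseteq T_{<\eta}\subseteq M$.
\end{enumerate}
With these two facts written out, the density argument below an arbitrary $(q',t')\le(q,t)$ goes through exactly as you describe.
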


\begin{proposition}\label{PFAMRP}
Let $\Sigma$ be an arbitrary open stationary map and let $\forceP_{\Sigma}$ be as defined above. Then $\forceP_{\Sigma}$ preserves Suslin trees.
\end{proposition}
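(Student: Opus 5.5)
We will verify clause 2 of Lemma~\ref{preservation of Suslin trees} for $\forceP_\Sigma$; properness of $\forceP_\Sigma$ is Moore's theorem. Fix a Suslin tree $T$, a large $\theta$, a countable $M\prec H_\theta$ containing $\Sigma$, $T$ and $\forceP_\Sigma$, and a condition $p\in\forceP_\Sigma\cap M$. Put $\eta=M\cap\omega_1$. We must find $q\le p$ such that $(q,t)$ is $(M,\forceP_\Sigma\times T)$-generic for every $t\in T_\eta$.

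We follow Moore's properness proof, interleaving the nodes of $T_\eta$. Enumerate the dense open subsets of $\forceP_\Sigma\times T$ lying in $M$ as $\langle D_k\mid k<\omega\rangle$, and the nodes of $T_\eta$ as $\langle t_k\mid k<\omega\rangle$, each listed infinitely often. Since $T$ is Suslin, every $t\in T_\eta$ is $(M,T)$-generic, and so is every branch below it. We build a descending sequence $p=p_0\ge p_1\ge\cdots$ in $M$ and an increasing sequence of countable elementary submodels $M_k\in M$, together with their union structure. At step $k$, with $t=t_{j}$ for the relevant index $j$, we want $p_{k+1}\le p_k$ in $M$ such that for the node $t$ there is $s\le_T t\restriction\delta$ in $M$ with $(p_{k+1},s)\in D_k$ and $s<_T t$.

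To find $p_{k+1}$, look at the set
\[
A=\{s\in T\mid \exists r\le p_k\,((r,s)\in D_k)\}.
\]
This set is in $M$ and is dense above every node, so it contains a maximal antichain $A'\in M$. Because $T$ is Suslin, $A'$ is countable and therefore contained in $M$. Hence $t$ extends some $s\in A'\cap M$, and we may choose a witness $r\in M$ by elementarity. Finally set
\[
q=\bigcup_k p_k\cup\{(\alpha,M\cap X^{*})\},
\]
that is, we append the union model at the top. Genericity then follows, because every $D_k$ is met below every $(q,t)$ with $t\in T_\eta$: each $t$ is handled cofinally often, and the witnesses $s<_T t$ give compatibility with $t$.

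The main obstacle is making $q$ a condition, namely the $\MRP$-requirement at the new top limit point. There we need the models $p_k(\mathrm{top})\cap X$ to eventually lie in $\Sigma(N)$, where $N$ is the final model, essentially $M\cap H_{\theta'}$. In Moore's argument this is achieved by using the $M$-stationarity and the openness of $\Sigma(N)$. At each step one extends by a model $P\in M$ with $P\cap X\in\Sigma(N)$, chosen inside $[x,N]$-basic neighbourhoods, and openness guarantees that later extensions stay in $\Sigma(N)$.

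The delicate point is that the extension needed to meet $D_k$ must also respect this. To handle it, we first use $M$-stationarity to choose an elementary $P\in M$ with $P\cap X\in\Sigma(N)$, and $P$ containing $p_k$, $D_k$ and $T$. We then carry out the antichain argument inside $P$ rather than $M$, so that the new top model of $p_{k+1}$ has its trace in the open set $\Sigma(N)$ on a basic neighbourhood $[x,P\cap X]$. Since $T$ is Suslin and $P\prec H_\theta$, the antichain $A'$ is again contained in $P$, so the Suslin argument is unaffected by this relativization. This is exactly where the hypothesis that $T$ is Suslin, and not merely Aronszajn, is used.
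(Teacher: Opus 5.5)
\textbf{Verdict: correct, by a somewhat different route from the paper's, with one step that needs tightening.}

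\textbf{Comparison with the paper.} Your argument shares Moore's skeleton with the paper's proof:
\begin{enumerate}
\item a descending sequence of conditions in $M$;
\item at each step, a countable elementary $P\in M$ (the paper's $N_i'$) obtained from the $(M\cap H_\theta)$-stationarity of $\Sigma(M\cap H_\theta)$;
\item a finite $x$ with $[x,P\cap X]\subseteq\Sigma(M\cap H_\theta)$;
\item an extension found inside $P$;
\item $M\cap H_\theta$ placed on top.
\end{enumerate}

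The difference lies in how the tree is handled. The paper passes to the models $M[t_n]$ and lists the dense subsets of $\forceP_\Sigma$ lying in $\bigcup_n M[t_n]$. It then meets each $D_i$ inside $N_i'$ on the grounds that $D_i\in N_i'$. But such a $D_i$ is in general only the evaluation of a $T$-name along the branch below $t_n$, and that branch is not in $N_i'$.

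You instead verify clause 2 of Miyamoto's lemma directly, one pair $(D,t)$ per step, with $D\in M$ dense in $\forceP_\Sigma\times T$. The maximal antichain $A'\in P$ is countable, hence $A'\subseteq P$. Every $t\in T_\eta$ lies above some $s\in A'$ because $\eta>P\cap\omega_1$. So the witness $r$ can be taken in $P$. This is exactly what justifies meeting the paper's $D_i$ inside $N_i'$, and it pins down where Suslinity, rather than mere Aronszajn-ness, is used. The paper's formulation through $M[t_n]$ is shorter but leaves this step implicit. Your bookkeeping should enumerate the pairs $(D,t)$, rather than listing the $D_k$ and the $t_k$ separately.

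\textbf{The step that needs tightening.} The $\MRP$-requirement at the new top point concerns every model $q(\xi)$ for $\xi$ in a tail below it, not only the top models of the $p_k$. Working inside $P$ guarantees that every $p_{k+1}(\xi)$ with $\xi\in\operatorname{dom}(p_{k+1})\setminus\operatorname{dom}(p_k)$ is a subset of $P$. To land in $[x,P\cap X]$, these models must also contain $x$, and a witness $r\le p_k$ found in $P$ need not satisfy this.

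The fix mirrors the paper's passage from $p_i$ to $q_i$:
\begin{enumerate}
\item Choose $P$ from the club defined by $p_k$, $D_k$, $T$ and $\Sigma$; then choose $x$. Since $x$ is picked after $P$, there is no circularity.
\item Extend $p_k$ inside $P$ by one model from $\operatorname{dom}(\Sigma)\cap P$ that has $p_k(\max\operatorname{dom}p_k)$ as an element and $x$ as a subset.
\item Define your set $A$ relative to this extension instead of relative to $p_k$.
\end{enumerate}
Then all new models lie in $P$ and contain $x$, so their traces are in $\Sigma(M\cap H_\theta)$. The requirement therefore holds at the top with $\nu_0=\max\operatorname{dom}(p_0)$. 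Continuity at the top follows by also meeting the dense sets $\{(p,s)\mid a\in\bigcup\operatorname{ran}(p)\}$ for $a\in M\cap H_\theta$.
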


\begin{proof}
We fix an arbitrary Suslin tree $T$. Let $\lambda$ be a sufficiently large regular cardinal, and pick a countable elementary submodel $M \prec H_{\lambda}$ containing $T, \Sigma, \forceP_{\Sigma}$, a condition $p \in \forceP_{\Sigma}$, and the structure $H_{|\forceP_{\Sigma}|^+}$. Let $\eta = M \cap \omega_1$. Our goal is to produce a stronger condition $q \le p$ such that for every $t \in T_{\eta}$, the pair $(q,t)$ is an $(M, \forceP_{\Sigma} \times T)$-generic condition. 

Because $T$ is Suslin, every node $t \in T_{\eta}$ is $(M,T)$-generic. We can therefore form the generic extensions $M[t]$ for each $t \in T_{\eta}$. Since $T$ is ccc, $M[t]$ does not add any new countable sets of ordinals to $M$. Consequently, $M$ and $M[t]$ contain the exact same countable sequences of $M$-elements, meaning $\Sigma$ and $\forceP_{\Sigma}$ are interpreted identically in both $M$ and $M[t]$. 

Let $\theta$ be the regular cardinal such that $\operatorname{dom}(\Sigma) \subset [H_{\theta}]^{\omega}$. We enumerate the elements of $T_{\eta}$ as $\langle t_n : n \in \omega \rangle$. Since there are only countably many dense subsets of $\forceP_{\Sigma}$ in the union $\bigcup_{n \in \omega} M[t_n]$, we can list them as $\langle D_i : i \in \omega \rangle$. 

We now build by recursion a descending sequence of conditions $p = p_0 \ge p_1 \ge p_2 \ge \dots$ in $M$ such that $p_{i+1} \in D_i$. Assume that we have successfully constructed the sequence up to $p_i \in M$, and let $\zeta_i = \max(\operatorname{dom}(p_i))$. 

Consider the collection of countable elementary submodels $N' \prec H_{|\forceP_{\Sigma}|^+}$ that contain $H_{\theta}, D_i, \forceP_{\Sigma}$, and $p_i$. We define the club of countable structures 
$$C_i := \{ N' \cap X : N' \text{ as just described} \}.$$ 
Since all parameters defining $C_i$ belong to $M$, we have $C_i \in M$, which implies $M \cap H_{\theta} \in C_i$. Because $M \cap H_{\theta}$ is in the domain of $\Sigma$, the set $\Sigma(M \cap H_{\theta})$ is $M \cap H_{\theta}$-stationary and open. Therefore, there exists some $N_i = N'_i \cap X \in C_i \cap \Sigma(M \cap H_{\theta}) \cap M$. By the definition of the Ellentuck topology, the fact that $\Sigma(M \cap H_{\theta})$ is open guarantees the existence of a finite subset $x_i \subset N_i$ such that the basic open set $[x_i, N_i] \subset \Sigma(M \cap H_{\theta})$.

To form $p_{i+1}$, we first extend $p_i$ to an intermediate condition:
$$q_i := p_i \cup \{( \zeta_i + 1, \operatorname{hull}^{H_{\theta}} (p_i(\zeta_i) \cup x_i)) \}.$$ 
Because all of its defining parameters belong to $N'_i$, the condition $q_i$ is an element of $N'_i$. Since $D_i \in N'_i$ as well, we can extend $q_i$ to a condition $p_{i+1} \in N'_i \cap D_i$. Crucially, because we are working inside $N'_i$, any extension of $q_i$ will have the property that its range, when intersected with $X$, is contained in $N'_i \cap X = N_i$. Since $[x_i, N_i] \subset \Sigma(M \cap H_{\theta})$, this intersection remains entirely within $\Sigma(M \cap H_{\theta})$.

Finally, let $\delta = \sup_{i \in \omega} \zeta_i$. We define the limit condition:
$$q := \left( \bigcup_{i \in \omega} p_i \right) \cup \{(\delta, M \cap H_{\theta})\}.$$ 
This $q$ forms an $\in$-increasing, continuous function from $\delta + 1$ into $\operatorname{dom}(\Sigma)$. Furthermore, by our construction, for every $i \in \omega$, the reflection requirement $p_{i+1}(\xi) \cap X \in \Sigma(M \cap H_{\theta})$ holds. Thus, $q$ is a valid condition in $\forceP_{\Sigma}$. By construction, $q \le p$, and the pair $(q,t_n)$ is $(M, \forceP_{\Sigma} \times T)$-generic for every $n \in \omega$. Therefore, the forcing preserves the arbitrary Suslin tree $T$.
\end{proof}

\subsection{Almost Disjoint Coding}

The second coding method we utilize is almost disjoint coding, originally developed by R. B. Jensen and R. M. Solovay \cite{JensenSolovay}. Throughout this section, we identify subsets of $\omega$ with their characteristic functions, using the term ``reals'' interchangeably for both elements of $2^{\omega}$ and subsets of $\omega$.

Let $F=\{f_{\alpha} : \alpha < 2^{\aleph_0}\}$ be a family of almost disjoint subsets of $\omega$; that is, for any distinct $r, s \in F$, the intersection $r \cap s$ is finite. Given a set of ordinals $X \subset \kappa$ (where $\kappa \le 2^{\aleph_0}$), there exists a ccc forcing notion, denoted $\mathbb{A}_F(X)$, which generically adds a new real $x$. This real $x$ codes the set $X$ relative to the family $F$ via the following equivalence:
$$\alpha \in X \iff x \cap f_{\alpha} \text{ is finite.}$$

\begin{definition}
Let $F$ be an almost disjoint family and let $X \subset \kappa$. The almost disjoint coding poset $\mathbb{A}_F(X)$ consists of conditions of the form $(r, R)$, where $r \in \omega^{<\omega}$ is a finite sequence of natural numbers and $R \subset F$ is a finite subset of $F$. 
For two conditions $(s, S)$ and $(r, R)$ in $\mathbb{A}_F(X)$, we say that $(s, S)$ is an extension of $(r, R)$ (denoted $(s, S) \le (r, R)$) if and only if the following hold:
\begin{enumerate}
    \item $r$ is an initial segment of $s$ (i.e., $r \subseteq s$), and $R \subseteq S$.
    \item For every $f_{\alpha} \in R$, if $\alpha \in X$, then $s \cap f_{\alpha} = r \cap f_{\alpha}$.
\end{enumerate}
\end{definition}

Another variant of this forcing, presumably due to L. Harrington \cite{Harrington}, codes sets of reals relative to a newly added real. To define this, we first fix a definable bijection between the set of finite sequences of integers and $\omega$. For any real $b \in \omega^{\omega}$, let $\bar{b}(n)$ denote the natural number coding the initial segment $b \upharpoonright n$. We can associate to each real $b$ a corresponding set of integers $S(b) \subset \omega$, defined as the set of codes of all its finite initial segments:
$$S(b) := \{ \bar{b}(n) : n \in \omega \}.$$

\begin{definition}
Let $A \subset [\omega]^{\omega}$ be a set of infinite subsets of $\omega$. The almost disjoint coding forcing for $A$, denoted $\mathbb{A}(A)$, consists of conditions $p = (p(0), p(1))$ where $p(0)$ is a finite subset of $\omega$ and $p(1)$ is a finite subset of $A$. 
For two conditions $p, q \in \mathbb{A}(A)$, we say $q \le p$ ($q$ extends $p$) if and only if:
\begin{itemize}
    \item $p(0) \subseteq q(0)$ and $p(1) \subseteq q(1)$.
    \item For every $a \in p(1)$, $S(a) \cap q(0) \subseteq p(0)$.
\end{itemize}
\end{definition}

It is a standard fact that $\mathbb{A}(A)$ possesses the Knaster property, ensuring that arbitrary products of $\mathbb{A}(A)$ satisfy the ccc. If $A$ is a set of reals in the ground model $V$, forcing with $\mathbb{A}(A)$ yields a generic filter $G$. The union of the first coordinates of the conditions in $G$ forms a new real $a = \bigcup \{ p(0) : p \in G \}$. This real $a$ perfectly codes the set $A$ relative to the ground-model reals. Specifically, in the generic extension $V[G]$, membership in $A$ is characterized by:
$$x \in A \iff x \in V \land (S(x) \cap a \text{ is finite}).$$

This precise characterization plays a crucial role in our broader construction. As established earlier, our objective is to isolate a universe where the $H(\omega_2)$ of the ground model remains definable across arbitrary ccc generic extensions. By giving definable access to $H(\omega_2)^V$, we can use this forcing variant to condense complex structural information into a single generic real $a$, guaranteeing that this information can be  decoded in any subsequent ccc extension (see the upcoming Lemma \ref{sigma_1 set of suitable models}).

\subsection{Independent Suslin trees}

Suslin trees form the foundation of the third coding technique used in our proof. Recall that a set-theoretic tree $(T, <)$ is a Suslin tree if it is a normal tree of height $\omega_1$ with no uncountable antichains. Because every tree appearing in this paper is normal, we implicitly assume normality in all subsequent tree constructions.

For two trees $(T_0, <_{T_0})$ and $(T_1, <_{T_1})$, their tree product $T_0 \times T_1$ consists of the nodes $\{ (t_0, t_1) \in T_0 \times T_1 : \operatorname{height}(t_0) = \operatorname{height}(t_1) \}$, ordered by $(t_0, t_1) <_{T_0 \times T_1} (s_0, s_1)$ if and only if $t_0 <_{T_0} s_0$ and $t_1 <_{T_1} s_1$. Hereafter, whenever we refer to a product of trees, we specifically mean this tree product.

For our purposes, we must iteratively add sequences of Suslin trees $\langle T_\alpha : \alpha < \kappa \rangle$ such that any finite subproduct of these trees remains Suslin. We construct such sequences using Jech's forcing, which adds a Suslin tree via countable conditions.

\begin{definition}
 Let $\forceP_J$ be the forcing whose conditions are countable, normal trees ordered by end-extension. That is, $T_1 < T_2$ (meaning $T_1$ is stronger than $T_2$) if and only if:
 \[ \exists \alpha < \operatorname{height}(T_1) \left( T_2 = \{ t \upharpoonright \alpha : t \in T_1 \} \right). \]
\end{definition}

It is well known that $\forceP_J$ is $\sigma$-closed and generically adds a Suslin tree. Furthermore, the generically added tree $T$ has the property that for any ground-model Suslin tree $S$, the product $S \times T$ remains a Suslin tree in the generic extension.

\begin{lemma}
 Let $S \in V$ be a Suslin tree. If $g_T \subset \forceP_J$ is a generic filter over $V$ producing the tree $T$, then:
 $$V[g_T] \models S \times T \text{ is a Suslin tree.}$$
\end{lemma}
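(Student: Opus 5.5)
The plan is to reduce the statement to a genericity argument that exploits the $\sigma$-closure of $\forceP_J$. Since $\forceP_J$ is $\sigma$-closed, it adds no new countable sequences. Hence $S$ is still an $\omega_1$-tree in $V[g_T]$ and $\omega_1$ is preserved. Suppose, toward a contradiction, that some condition $T_0\in\forceP_J$ forces that $\dot A$ is a maximal antichain of $S\times\dot T$. It suffices to show that there is a condition $T^*\le T_0$ and a countable ordinal $\eta$ such that $T^*$ forces $\dot A\subseteq (S\times\dot T)\restriction\eta$. This makes $\dot A$ countable and so contradicts uncountability. (An uncountable antichain can always be extended to a maximal one, so treating maximal antichains is enough.)

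To build $T^*$, take a countable $M\prec H_\theta$ containing $S,\dot A,T_0,\forceP_J$, and put $\eta=M\cap\omega_1$. Enumerate the dense subsets of $\forceP_J$ lying in $M$ as $\langle D_i: i<\omega\rangle$. Build a descending sequence $T_0\ge T_1\ge\cdots$ in $M$ with $T_{i+1}\in D_i$. Then $T_\omega=\bigcup_i T_i$ is a normal tree of height $\eta$, and the filter it generates is $M$-generic. The key step is to choose which cofinal branches of $T_\omega$ get extended to level $\eta$. We interleave the construction with the countably many pairs $(s,t)$, where $s\in S_\eta$ and $t\in T_\omega$. Since $S$ is Suslin and $M\prec H_\theta$, every $s\in S_\eta$ is $(M,S)$-generic, by the standard fact used in the proof of Proposition~\ref{PFAMRP}. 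For each such pair we consider the dense set, lying in $M[s]$ (or rather defined in $M$ from $\dot A$ via the $S$-name), of conditions deciding an element of $\dot A$ below $(s',t)$, for $s'$ an initial segment of $s$ in $M$.

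Concretely, for a node $t\in T_\omega$ and $s\in S_\eta$, we need a branch $b$ through $t$ such that $(s,b)$ lies above some element of $\dot A$ that is forced to lie inside $M$. In $M$, the set of $(u,U)\in S\times\forceP_J$ such that $U$ forces some $(u',t')\in\dot A$ with $u'\le u$ and $t'\le_U$-compatible with nodes of $U$ is dense in $S\times\forceP_J$ below appropriate conditions. So by genericity of $s$ over $M$ combined with our recursion, for each $n$ we can arrange a branch $b_n$ (listing the countably many requirements pairs $(s_n,t_n)$) that meets $\dot A$ compatibly with $s_n$. Since countably many requirements occur, countably many branches are chosen. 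Each node of $T_\omega$ lies on some chosen branch, which keeps $T^*=T_\omega\cup\{\text{tops of chosen branches}\}$ normal. Then every node $(s,\bar t)$ of level $\eta$ of $S\times T^*$ lies above an element of $\dot A\cap M$. By normality, every node above level $\eta$ does too, so $T^*$ forces $\dot A\subseteq M$, hence $\dot A$ is countable.

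The main obstacle is the previous step, arranging density in the product. Here one must use that $S$ itself is Suslin, so $S$-nodes at level $\eta$ are $M$-generic, jointly with the freedom in choosing $T$-branches. The careful bookkeeping is a single $\omega$-recursion that alternates between meeting the $D_i$ and handling the $i$-th pair $(s_{k},t_{k})$. At each such step we extend inside $M$ using the density of the set of $(u,U)$ deciding a member of $\dot A$ below $(u,\cdot)$, pulled back along the generic $S$-branch $s_k$. I would follow Jech's original argument for preservation under $\forceP_J$ for this step.
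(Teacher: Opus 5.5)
Your overall strategy, a direct sealing argument over a countable $M\prec H_\theta$, differs from the paper's and can be made to work. However, the step you yourself flag as the main obstacle is set up incorrectly. You index requirements by pairs $(s_n,t_n)$ and build one branch $b_n$ per pair, arranged to meet $\dot A$ only together with $s_n$. But level $\eta$ of the tree product $S\times T^*$ consists of \emph{all} pairs $(s,\bar t)$ with $s\in S_\eta$ and $\bar t$ the top of \emph{any} chosen branch. A node such as $(s_m,\operatorname{top}(b_n))$ with $m\neq n$ is never handled, and $T^*$ need not force that $\dot A$ has no elements above it. So ``every node of level $\eta$ lies above an element of $\dot A\cap M$'' does not follow: each chosen branch must seal against every $s\in S_\eta$ simultaneously. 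Relatedly, your density claim (with $t'$ ``compatible with nodes of $U$'') never ties the $T$-coordinate to the branch under construction, and that link is exactly what is needed.

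To repair this, build the branches during the recursion as top-level nodes $t^k_i\in T_i$, starting a new branch through each node as it appears. Then dovetail over the countably many requirements $(k,s)$ with $k<\omega$ and $s\in S_\eta$, the latter set being fixed in advance. At a stage devoted to $(k,s)$, consider
\[
F=\{u\in S:\exists U\le T_i\ \exists t'\in U\ \exists (u',t'')\ (t^k_i\le_U t'\wedge t''\le_U t'\wedge u'\le_S u\wedge U\Vdash (u',t'')\in\dot A)\}.
\]
This set lies in $M$ and is dense in $S$ by maximality of $\dot A$. Since $s$ is $(M,S)$-generic, some $u\in F\cap M$ lies below $s$, and elementarity gives witnesses $U,t',(u',t'')\in M$. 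Take $T_{i+1}\le U$ in $M$, let $t^k_{i+1}$ be a top-level node extending $t'$, and extend the other active branch nodes arbitrarily. Each requirement is then met at a finite stage and stays met. The rest of your argument goes through using only the $(M,S)$-genericity of the nodes of $S_\eta$, and it even shows directly that $T$ is Suslin.

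The paper avoids all this bookkeeping. The conditions $(p,\check q)$ with $q$ on the top level of $p$ form a $\sigma$-closed dense subset of $\forceP_J\ast\dot T$, and $\sigma$-closed forcing preserves the Suslin tree $S$. Hence over $V[g_T]$ forcing with $T$ keeps $S$ ccc, i.e. $T\times S$ is ccc. That argument is much shorter; your corrected version is self-contained and does not invoke preservation of Suslin trees by $\sigma$-closed forcing.
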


\begin{proof}
Let $\dot{T}$ be the standard $\forceP_J$-name for the generic Suslin tree, viewed here as a forcing notion. We claim that the two-step iteration $\forceP_J \ast \dot{T}$ contains a $\sigma$-closed dense subset. Because $\sigma$-closed forcings preserve ground-model Suslin trees, establishing this claim completes the proof. Consider the set:
$$ D = \{ (p, \check{q}) : p \in \forceP_J \land \operatorname{height}(p) = \alpha + 1 \land \check{q} \text{ is a node of } p \text{ on level } \alpha \}. $$
It is straightforward to verify that $D$ is both dense and $\sigma$-closed in $\forceP_J \ast \dot{T}$.
\end{proof}

A similar argument demonstrates that we can add an $\omega$-sequence of such Suslin trees via a full-support iteration. While longer sequences are possible by extending the iteration, $\omega$-blocks suffice for our construction.

\begin{lemma}\label{ManySuslinTrees}
 Let $S \in V$ be a Suslin tree, and let $\forceP$ be the full-support iteration of $\forceP_J$ of length $\omega$. In the generic extension $V[G]$, there exists an $\omega$-sequence of Suslin trees $\vec{T} = \langle T_n : n \in \omega \rangle$ such that for any finite subset $e \subset \omega$, the product $S \times \prod_{i \in e} T_i$ remains a Suslin tree in $V[G]$.
\end{lemma}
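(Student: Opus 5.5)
The plan is to lift the proof of the preceding lemma from one generic tree to an $\omega$-block. Let $\forceP=\langle \forceP_n,\dot{\forceQ}_n : n<\omega\rangle$ be the full-support iteration of $\forceP_J$ of length $\omega$, and let $T_n$ be the tree added at coordinate $n$. Full-support iterations of $\sigma$-closed forcings are $\sigma$-closed. Hence $\forceP$ is $\sigma$-closed, adds no new reals or countable sequences, and preserves $\omega_1$ and the Suslinity of $S$. Each $T_n$ is then a normal tree of height $\omega_1$ in $V[G]$. It is Suslin already in $V[G_{n+1}]$, and remains Suslin in $V[G]$ because the tail forcing is $\sigma$-closed in $V[G_{n+1}]$.

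For the main claim, fix a finite $e\subset\omega$ with $\max e<m$. It is enough to show that $S\times\prod_{i\in e}T_i$ is Suslin in $V[G_m]$: the remaining tail is $\sigma$-closed over $V[G_m]$, so it preserves this tree, and this works for every finite $e$ and hence in $V[G]$. In $V[G_m]$, the product tree $S\times\prod_{i\in e}T_i$ is a normal tree of height $\omega_1$. It has an uncountable antichain exactly when the corresponding forcing poset fails the ccc. Suslinity of the tree therefore follows if the forcing
\[
\forceP_m\ast\Bigl(\prod_{i\in e}\dot T_i\Bigr)
\]
preserves that $S$ is Suslin, and this in turn follows if the whole iteration has a $\sigma$-closed dense subset. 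This is the same reduction used for the single-tree lemma.

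The dense set generalizes the one used there. Let $D$ consist of those conditions $(p,\langle \check q_i : i\in e\rangle)$ such that:
\begin{enumerate}
\item for every $k<m$, $p(k)$ is decided by $p\upharpoonright k$ to be a (check name for a) countable normal tree of height $\alpha+1$, with the same $\alpha$ for all $k<m$;
\item each $q_i$ is a node on level $\alpha$ of $p(i)$.
\end{enumerate}
Density is shown in three steps.
\begin{enumerate}
\item Because $\forceP_m$ adds no countable sets, the coordinates can be decided one after another by strengthening along a finite recursion over $k<m$.
\item The heights are made to agree by repeatedly extending the shorter trees, using normality. An $\omega$-length interleaving makes all heights reach a common limit sup $\alpha$, and then one more level is added.
\item Any node given by the iteration coordinate is extended to a node at level $\alpha$.
\end{enumerate}

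For $\sigma$-closure, take a descending sequence in $D$ with heights $\alpha_j+1$ and nodes $q_i^j$. Put $\alpha=\sup_j\alpha_j$. For each $k<m$, take the union of the trees at coordinate $k$, and add a top level $\alpha$ consisting of the branches of countably many selected cofinal branches. These must include, for each $i\in e$, the branch determined by $\langle q_i^j : j<\omega\rangle$. They must also include, for every node, some branch through it, so that the tree stays normal. Placing $q_i$ on the branch through the $q_i^j$ gives a lower bound in $D$.

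The main obstacle is the bookkeeping in the density argument: the iteration names $p(k)$ may depend on earlier generics, and the lengths of different coordinates must be synchronized. The first thing to try is the standard fusion. Build an $\omega$-sequence of conditions in which, at stage $j$, every coordinate $k<m$ is decided and lengthened beyond all heights from stage $j-1$. Then take the limit as in the closure step, which simultaneously decides every coordinate and equalizes the heights.
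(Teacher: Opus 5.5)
There is a genuine gap in your reduction step. Knowing that $\forceP_m\ast\bigl(\prod_{i\in e}\dot T_i\bigr)$ preserves the Suslinity of $S$ only tells you that, over $V[G_m]$, the forcing $\prod_{i\in e}T_i$ forces $S$ to remain ccc. The product forcing $S\times\prod_{i\in e}T_i$ is equivalent to the two-step iteration $\prod_{i\in e}T_i\ast\check S$. So for it to be ccc in $V[G_m]$ you also need $\prod_{i\in e}T_i$ itself to be ccc in $V[G_m]$, that is, the generic trees must be independent of one another.

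In the one-tree lemma this missing half is supplied by the known fact that $\forceP_J$ adds a Suslin tree. For $|e|\geq 2$ it is exactly the new content of the statement, and your first paragraph only shows that each $T_n$ is Suslin on its own. The $\sigma$-closed dense set cannot supply it. The same kind of set, with two nodes on the top level, is dense and $\sigma$-closed in $\forceP_J\ast(\dot T\times\dot T)$, yet $T\times T$ is never Suslin: choosing for each node $s$ a pair of distinct immediate successors of $s$ gives an uncountable antichain. So your argument would equally ``prove'' that $S\times T\times T$ is Suslin.

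The missing idea is that each new tree is generic over the model already containing the earlier ones. You can exploit this by induction on $|e|$, applying the single-tree lemma over intermediate models. Write $e=\{i_0<\dots<i_k\}$ and let $S':=S\times\prod_{j<k}T_{i_j}$.
\begin{itemize}
\item For $k=0$, $S'=S$ is Suslin in $V[G_{i_0}]$ because $\forceP_{i_0}$ is $\sigma$-closed.
\item For $k>0$, the induction hypothesis gives that $S'$ is Suslin in $V[G_{i_{k-1}+1}]$. It stays Suslin in $V[G_{i_k}]$, since the forcing in between is $\sigma$-closed.
\item The coordinate $i_k$ of $G$ is $\forceP_J$-generic over $V[G_{i_k}]$. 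Applying the single-tree lemma inside $V[G_{i_k}]$ to its Suslin tree $S'$ shows that $S'\times T_{i_k}=S\times\prod_{i\in e}T_i$ is Suslin in $V[G_{i_k+1}]$.
\item The $\sigma$-closed tail then preserves this to $V[G]$.
\end{itemize}
This is in line with the paper's proof, which argues for Suslinity in the intermediate extension given by the coordinates in $e$ and then passes to $V[G]$ by preservation. Your correct observations about the $\sigma$-closure of $\forceP$ and of its tails handle all the preservation steps. The dense-set fusion then becomes unnecessary.
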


\begin{proof}
Let $G$ be a $\forceP$-generic filter over $V$. First, observe that the full-support product $\prod_{n \in \omega} \forceP_J$, followed by the full-support forcing with the product of the generically added trees $T_n$, contains a $\sigma$-closed dense subset (defined coordinate-wise exactly as in the previous lemma). Consequently, ground-model Suslin trees are preserved. 

For any finite $e \subset \omega$ and arbitrary Suslin tree $S \in V$, the product $S \times \prod_{i \in e} T_i$ is a Suslin tree in the intermediate generic extension generated by the trees $\langle T_i : i \in e \rangle$ over $V$. This Suslin property is preserved when passing to the full generic extension $V[G]$.
\end{proof}

Because these sequences of Suslin trees play a central role in our subsequent coding arguments, we formalize their defining property.

\begin{definition}
 Let $\vec{T} = \langle T_\alpha : \alpha < \kappa \rangle$ be a sequence of Suslin trees. We say that $\vec{T}$ is an \emph{independent family of Suslin trees} if, for every finite subset $e = \{e_0, e_1, \dots, e_n\} \subset \kappa$, the tree product $\prod_{\alpha \in e} T_\alpha$ is a Suslin tree.
\end{definition}

Finally, we will utilize the following preservation theorem, due to T. Miyamoto \cite{Miyamoto} and independently to U. Abraham and S. Shelah \cite{abraham1993delta22}.

\begin{theorem}\label{preservation of Suslin trees under countable support}
Let $\forceP = \langle \forceP_\beta, \dot{\forceQ}_\beta : \beta < \delta \rangle$ be a countable support iteration of proper forcings, and let $S$ be a Suslin tree. If, for every $\beta < \delta$, 
\[ \forceP_\beta \Vdash \text{``}\dot{\forceQ}_\beta \text{ preserves } S \text{ as a Suslin tree,''} \] 
then $S$ remains a Suslin tree in the generic extension by $\forceP$.
\end{theorem}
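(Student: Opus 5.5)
We will prove the theorem by induction on the length $\delta$ of the iteration, following the standard argument for preservation theorems for countable support iterations of proper forcings. The tool is the characterization in Lemma~\ref{preservation of Suslin trees}. It says that $\forceP_\delta$ preserves $S$ if and only if, for every countable $M\prec H_\theta$ containing $\forceP_\delta$, $S$ and a condition $p\in M$, there is $q\le p$ such that $(q,t)$ is $(M,\forceP_\delta\times S)$-generic for every $t\in S_\eta$, where $\eta=M\cap\omega_1$. Since $S$ is Suslin, every $t\in S_\eta$ is $(M,S)$-generic. The condition therefore says that $q$ is $(M[t],\forceP_\delta)$-generic simultaneously for all $t\in S_\eta$, or equivalently that $q$ forces $\dot G$ to meet every dense set lying in some $M[t]$.

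Rather than proving only the bare statement, we prove the usual strengthened induction hypothesis, in the style of the properness iteration lemma. For all $\alpha<\beta\le\delta$, every countable $M\prec H_\theta$ containing the relevant parameters, every $q_\alpha\in\forceP_\alpha$ that is $(M[t],\forceP_\alpha)$-generic for all $t\in S_\eta$, and every $\forceP_\alpha$-name $\dot p$ for a condition in $\forceP_\beta\cap M$ with $q_\alpha\Vdash \dot p\restriction\alpha\in\dot G_\alpha$, there is $q_\beta\in\forceP_\beta$ with the following properties: $q_\beta\restriction\alpha=q_\alpha$, $q_\beta\le\dot p$ (as forced by $q_\alpha$), and $q_\beta$ is $(M[t],\forceP_\beta)$-generic for all $t\in S_\eta$. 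The bare statement is the case $\alpha=0$.

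At a successor step $\beta=\gamma+1$, we first apply the induction hypothesis to reach $\gamma$. We then work in $V[G_\gamma]$. Because $q_\gamma$ is generic for every $M[t]$, we have $M[t][G_\gamma]\cap V=M[t]\cap V$, and $M[G_\gamma]\prec H_\theta[G_\gamma]$ has the same $\omega_1$-trace $\eta$. The hypothesis that $\dot\forceQ_\gamma$ preserves $S$, combined with Lemma~\ref{preservation of Suslin trees} applied in $V[G_\gamma]$ to the model $M[G_\gamma]$, yields a $\forceQ_\gamma$-condition below $\dot p(\gamma)$ that is generic over $M[G_\gamma][t]$ for all $t\in S_\eta$. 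We also need that $S$ is still Suslin in $V[G_\gamma]$; this is the induction hypothesis at $\gamma$.

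At a limit step we handle two cases. If $\operatorname{cf}(\beta)>\omega$, every countable-support condition in $M$ has support bounded below $\sup(M\cap\beta)<\beta$, so the result follows from the earlier stages. If $\operatorname{cf}(\beta)=\omega$, we fix a sequence $\alpha=\alpha_0<\alpha_1<\cdots$ cofinal in $\beta$ inside $M$, and we enumerate all dense subsets of $\forceP_\beta$ lying in $\bigcup_{t\in S_\eta}M[t]$. This union is countable, because $S_\eta$ is countable and each $M[t]$ is countable. We then build $q_{\alpha_n}$ and names $\dot p_n$ by the usual fusion. At stage $n$, working below $q_{\alpha_n}$, we let $\dot p_{n+1}$ name a condition in $M\cap D_n$ that extends $\dot p_n$ and agrees with the generic up to $\alpha_n$; such a condition exists by genericity over the relevant $M[t]$. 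We then use the hypothesis to extend $q_{\alpha_n}$ to $q_{\alpha_{n+1}}$. The union $q_\beta=\bigcup_n q_{\alpha_n}$ has countable support and meets every listed dense set, so it is generic over every $M[t]$ at once.

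The main obstacle is the bookkeeping at countable-cofinality limits. The dense sets come from the various $M[t]$ and not from $M$ alone, so we must check that the witness $\dot p_{n+1}$ can be chosen in $M$ even though the dense set $D_n$ lies only in $M[t]$ for a specific $t$. We handle this by using that $D_n=\dot D^{t}$ for an $S$-name $\dot D\in M$. Since $S$ is ccc and $t$ is $(M,S)$-generic, we can decide $\dot D$ below $t$ inside $M$ and pick the witness from the union of $M$-many candidates, in the way Miyamoto's original argument does.
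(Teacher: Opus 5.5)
The paper does not prove this theorem. It quotes it from Miyamoto and from Abraham--Shelah and uses it as a black box.

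Your outline is essentially Miyamoto's original argument, and it is correct in outline. You use the characterization in Lemma~\ref{preservation of Suslin trees}, read as saying that $q$ is $(M[t],\forceP)$-generic for all $t\in S_\eta$ at once. You then run the properness-iteration induction with the strengthened hypothesis. You also handle the key subtlety correctly: witnesses for dense sets lying in $M[t]$ can be chosen in $M$. Strictly, what you decide below a node of the branch through $t$ is a single witness $p'\in\dot D$, not the whole name $\dot D$. The set of nodes deciding such a witness is dense, lies in $M$, and is met by the $(M,S)$-generic branch, and elementarity of $M$ then puts the witness in $M$.

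One step needs repair. The case $\operatorname{cf}(\beta)>\omega$ does not simply ``follow from the earlier stages.'' Let $\beta^*=\sup(M\cap\beta)$. Then $\beta^*\notin M$, hence $\forceP_{\beta^*}\notin M$, and no single $\gamma\in M\cap\beta$ lies above $M\cap\beta$. So there is no one earlier stage to which your induction hypothesis can be applied.

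The fix is to run exactly the same fusion as in your countable-cofinality case. Use an $\omega$-sequence $\alpha=\alpha_0<\alpha_1<\cdots$ of elements of $M\cap\beta$ cofinal in $\beta^*$; the sequence itself need not belong to $M$. The construction only uses two facts: each $\alpha_n$ lies in $M$, and every condition in $M$ has support contained in $M\cap\beta\subseteq\beta^*$. Hence it goes through verbatim, and this is how the standard proofs treat all limit stages uniformly.
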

\section{Definition of $W_0$}

We finally have all the ingredients to successfully define the suitable ground model $W_0$ over which coding arguments will work in a nice way.
To form $W_0$ we start with $M_1$ as our ground model and let $\eta$ be an inaccessible limit of inaccessible cardinals in $M_1$. Using a $\eta$-long iteration we shall produce a generic extension $W_0$ of $M_1$ which forces with forcings of the form
\begin{enumerate}
\item $\forceP_{\beta \gamma \delta}$  for triples $\omega_1 < \beta < \gamma < \delta <\omega_2$ of uncountable cofinality.
\item $\forceP_r$ for reals $r$.
\item $\mathbb{A}_F (X)$ for $X \subset \omega_1$.
\item $\forceP_J$  which denotes Jech's forcing for adding a Suslin tree.

\end{enumerate}
We use countable support for the iteration which has the feature that ``Suslin tree preservation$"$ is preserved by theorem \ref{preservation of Suslin trees under countable support}. This preservation is key as we will use the added Suslin trees for a second iteration over $W_0$ which will use these Suslin trees for coding purposes. The goal is to arrive after $\eta$-many steps at a universe $W_0$ with the following properties
\begin{enumerate}
\item $\aleph_1^{M_1} = \aleph_1^{W_0}$.
\item The inaccessible limit of inaccessibles $\eta$ becomes $\aleph_2$ in $W_0$.

\item There is a $\eta=\omega_2$-length, independent sequence $\vec{T}$ of $\omega_1$-Suslin trees.
\item For every subset $X$ of $\omega_1$, there is a real $r_X$ which is an almost disjoint code of $X$ with respect to the canonical $M_1$-definable, almost disjoint family of reals $D \subset \omega^{\omega} \cap M_1$.
\item Every triple of limit ordinals $(\alpha, \beta, \gamma) < \omega_2$ of uncountable cofinality is stabilized in the sense of $(\dagger)$.
\item Every real is coded by a triple of limit ordinals of uncountable cofinality $(\alpha,\beta,\gamma)$.
\end{enumerate}

The actual iteration to obtain $W_0$ is standard and we are not interested in its specific properties as long as the model $W_0$ it produces has the 6 properties above. There is a canonical, definable well-order of $P(\omega_1)$ in $W_0$.
 
  \begin{definition}\label{Definition Wellorder}
  Let $X, Y \in P(\omega_1)^{W_0}$ then let
  $X \unlhd Y$ if the antilexicographically least triple of ordinals $(\alpha_0, \beta_0, \gamma_0)$
  which code a real $r_0$ which codes $X$ with the help of the a.d. family $F$ is antilexicographically less or equal than 
  the antilexicographically least triple of ordinals $(\alpha_1, \beta_1, \gamma_1)$ which codes a real $r_1$ which in turn
  codes $Y$.

  \end{definition}

The definable wellorder $\unlhd$ of $P(\omega_1)$ unlocks a definition for a
canonical sequence of length $\omega_2$ of independent Suslin trees. The first entry of 
that sequence is, for technical reasons which will 
become clear later defined differently.

Working over $M_1$ and using $\diamondsuit$, we define the canonical Suslin tree $T$ on $\omega_1$.
Specifically, let $\langle A_\alpha : \alpha < \omega_1 \rangle$ be a $\diamondsuit$-sequence. Then $T$ is constructed inductively as follows:

\begin{itemize}
    \item Initialize the tree with a single root node.
    \item  For every node $x \in T_\alpha$, assign it $\aleph_0$ many immediate successors in $T_{\alpha+1}$.
    \item  To form $T_\alpha$, we must select branches through the lower part of the tree, $T_{<\alpha}$, to extend. 
    \begin{itemize}
        \item Check if the guessed set $A_\alpha$ happens to be a maximal antichain in $T_{<\alpha}$.
        \item If $A_\alpha$ is a maximal antichain, ensure that every branch chosen to be extended into $T_\alpha$ intersects $A_\alpha$. Because $T_{<\alpha}$ is countable, it is always possible to find enough branches that hit $A_\alpha$ to continue the tree.
        \item If $A_\alpha$ is not a maximal antichain, extend any set of branches that keeps the level countable.
    \end{itemize}
\end{itemize}
It is known that the tree $T$ obtained this way is a full Suslin tree (see \cite{fuchs2009degrees}, pp. 431), which means that for every antichain $(a_n \in T \mid n \in \omega)$, the sequence of derived trees $(T_{a_n} \mid n \in \omega)$ is an independent sequence of Suslin trees. 
\begin{definition}
    Working in $M_1$, we let $T$ be the canonical Suslin tree as defined above and let $(a_n \in T \mid n \in\omega)$ be a list of the nodes of the first level of $T$. Let $T_{a_n}$ denote the subtree of $T$ with stem $a_n$ and set $T_{a_n} := T^0_n$ for every $n \in \omega$. We let \[\vec{T}^0= \{ T^0_n \mid n \in \omega \}=\{ T_{a_n} \mid n \in \omega\}\] be the resulting independent sequence of Suslin trees.
\end{definition}
We start with our fixed independent $\omega$-sequence $\vec{T}^0$ and let $\vec{T}^{\alpha}$ be the $\unlhd$-least $\omega_1$ sequence of Suslin trees such that $\bigcup_{\beta < \alpha} \vec{T}^{\beta}$ concatenated with $\vec{T}^{\alpha}$ remains an independent sequence of Suslin trees.

As the wellorder $\unlhd$ in fact talks about the reals which are almost disjoint codes for the corresponding elements of $P(\omega_1)$ it will be useful to give that sequence of reals a name as well. For every element $X$ in $P(\omega_1)$, the set of reals which are almost disjoint codes for $X$ is infinite. In the following we nevertheless talk about \emph{the} real $r_X$ which codes $X \in P(\omega_1)$ by which we mean the $\unlhd$-least such real coding $X$.

\begin{definition}
In $W_0$, 
let $(r_i \, : \, i < \omega_2)$ be the sequence of reals defined recursively
 as follows:
 
 \begin{itemize}
  \item $r_0$ is the real which codes a subset of $\omega_1$, which codes the independent $\omega$-sequence of Suslin trees $\vec{T}^0$.
 
  \item $r_{\alpha}$, for $\alpha > 0$ is the least real which is an almost disjoint code for the $\unlhd$-least subset of $\omega_1$ which itself is a code for an $\omega_1$-sequence of independent Suslin trees $\vec{T}^{\alpha}$,
  such that the concatenated sequence of the union of the Suslin trees coded in $(r_{i} \, : \, i < \alpha)$ and $\vec{T}^{\alpha}$
  forms an independent sequence again.
 \end{itemize}
 
\end{definition}

What is very important is that this definable $\omega_2$-sequence of
independent Suslin trees will be definable in certain outer models of $W_0$.

 \begin{lemma}\label{Definability of Suslin trees}
 Suppose that $W^{\ast}$ is a set-generic, ccc extension of $W_0$.
 Then $W^{\ast}$ is still able to define the $\omega_2$-sequence of independent
 $W_0$-Suslin trees $\vec{T}$.
\end{lemma}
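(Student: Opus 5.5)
The plan is to show that $W^{\ast}$ can define $H(\omega_2)^{W_0}$, or at least the relevant part of $P(\omega_1)^{W_0}$, by a formula that does not mention $W_0$. Once that is in hand, the wellorder $\unlhd$ and the recursion defining $\langle r_i : i<\omega_2\rangle$ and $\vec T$ can be run inside this definable copy of $W_0$'s $P(\omega_1)$. The key idea is the feature of the Caicedo--Veli\v{c}kovi\'{c} coding noted earlier: ccc forcing cannot create new codes. I would define in $W^{\ast}$
\[
P^{\ast}=\{X\subseteq\omega_1 \mid \text{some real coding } X \text{ relative to } D \text{ is coded by a triple } (\alpha,\beta,\gamma) \text{ of limit ordinals of cofinality } \omega_1\}.
\]
The almost disjoint family $D$ and the ladder system $\vec C$ are definable from $M_1|\omega_1$ by Lemma~\ref{lem:recover-m1-omega1}, since ccc extensions preserve $\omega_1$. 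The claim to prove is $P^{\ast}=P(\omega_1)^{W_0}$.

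The inclusion $P(\omega_1)^{W_0}\subseteq P^{\ast}$ comes from properties 4 and 6 of $W_0$. For each $X\in W_0$ there is a real $r_X$ coding $X$, and a triple coding $r_X$ through a reflecting sequence $(N_\xi:\xi<\omega_1)$ together with a club. The ccc extension preserves cofinality $\omega_1$ and stationarity, and the sequence and club stay in $W^{\ast}$, so the same triple still codes $r_X$ there. The definitions of $s_{\beta\gamma\delta}$ depend only on collapses and on $\vec C$, so they are absolute.

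The reverse inclusion is the main obstacle: a new real $r\in W^{\ast}\setminus W_0$ must not be coded by any triple. Every triple $(\beta,\gamma,\delta)$ of cofinality $\omega_1$ below $\omega_2$ is stabilized in $W_0$ by property 5, via a sequence $(N_\xi)$ with limit strings $s^n_\xi$. In $W^{\ast}$, any reflecting sequence $(P_\xi)$ for the same $\delta$ agrees with $(N_\xi)$ on a club, because both are continuous, increasing, and have union $\delta$. On that club the oscillation of $(P_\eta,P_\xi)$ for $\eta$ near $\xi$ equals that of $(N_\eta,N_\xi)$, and so it stabilizes to $\bigcup_n s^n_\xi$. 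If $(\beta,\gamma,\delta)$ codes $r$ in $W^{\ast}$, then there is a stationary set of $\xi$ with $r=\bigcup_n s^n_\xi$, and this set belongs to $W_0$. The step I expect to need care is extracting $r\in W_0$. I would argue that the map $\xi\mapsto\bigcup_n s^n_\xi$ lies in $W_0$ and takes the constant value $r$ on a stationary set. Since $W_0$ is a ground of $W^{\ast}$ with the same $\omega_1$, the value $r$ is read off from a single $\xi$ in that set and so lies in $W_0$. One also has to handle $\delta$ of $W^{\ast}$-cofinality $\omega_1$ that are not below $\omega_2^{W_0}$, but ccc preserves $\omega_2$, so no new such triples appear. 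Finally, $X$ is decoded from $r$ via $D$, which lies in $W_0$, so $X\in W_0$.

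With $P^{\ast}=P(\omega_1)^{W_0}$ established, the order $\unlhd$ of Definition~\ref{Definition Wellorder} is definable in $W^{\ast}$ and agrees with its $W_0$ version, because the antilexicographically least coding triples are the same triples. The recursion defining $\vec T^{\alpha}$ quantifies only over subsets of $\omega_1$ that code sequences of trees, and asks whether finite products are Suslin. Here I have to be careful: independence in $W^{\ast}$ may differ, since ccc forcing can kill Suslin trees. So I would relativize that clause to $P^{\ast}$. The statement ``$\prod_{\alpha\in e}T_\alpha$ has no uncountable antichain'' is to be evaluated with antichains ranging over $P^{\ast}$, which reproduces the $W_0$ evaluation. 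The base sequence $\vec T^0$ is defined from $\vec D$ and the construction order of $M_1|\omega_1$, which are definable in $W^{\ast}$ by Lemma~\ref{lem:recover-m1-omega1}. Hence $W^{\ast}$ defines exactly the $W_0$-sequence $\vec T$.
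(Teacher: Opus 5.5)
Your argument is correct and has the same architecture as the paper's proof. You identify $P(\omega_1)^{W_0}$ inside $W^{\ast}$ as the subsets of $\omega_1$ having an almost disjoint code that is itself coded by a triple. You show the ccc extension creates no new triple-coded reals, and then run $\unlhd$ and the recursion for $\vec T$ over this copy.

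You differ from the paper in the key ``no new codes'' step. The paper replaces the $W^{\ast}$-reflecting sequence by a $W_0$-sequence agreeing with it on a club, and appeals to preservation of stationary sets for the set of $\zeta$ at which that sequence codes $r$. As written, this treats that set as an element of $W_0$ before $r\in W_0$ is known. You instead use property 5. The map $\xi\mapsto\bigcup_n s^n_\xi$ obtained from the $W_0$-stabilizing sequence lies in $W_0$, and its value is $r$ at any limit point of the agreement club that lies in the coding club. You need a limit point so that agreement holds at cofinally many $\eta<\xi$, not merely at $\xi$. A single such $\xi$ then puts $r$ into $W_0$ with no circularity. Drop your remark that the stationary set itself ``belongs to $W_0$'': that is only known afterwards.

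Your route does not yet justify one step you merely assert: that the antilexicographically least triple coding a given $r\in W_0$ is the same in $W^{\ast}$ and in $W_0$. This needs that every triple coding $r$ in $W^{\ast}$ already codes it in $W_0$. That is exactly where the paper's stationarity argument belongs, and now it is legitimate. Once $r\in W_0$, take a $W_0$-sequence re-indexed along a ground-model club inside the agreement club. The set of $\zeta$ at which it codes $r$ is an element of $W_0$ containing a $W^{\ast}$-club, hence containing a $W_0$-club.

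Finally, your explicit relativization of Susliness and independence to antichains lying in $P^{\ast}$ sharpens the paper's one-line claim that $W^{\ast}$ ``can define $W_0$-Suslin trees''.
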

\begin{proof}
 
 Note first that if $r \in W^{\ast}$ is a real coded by a triple of ordinals in $(\alpha, \beta, \gamma)$ in
 $W^{\ast}$, then there is a reflecting sequence $(N_{\xi} \, : \, \xi < \omega_1)$ in
 $W^{\ast}$, $\bigcup_{\xi < \omega_1} N_{\xi} = \gamma$, such that 
 for club-many $\xi$, $r = \bigcup_{\eta \in (\nu, \xi)} s_{\alpha \beta \gamma} (N_{\eta}, N_{\xi})$.
 As $W^{\ast}$ is a ccc-extension of $W_0$, there is a reflecting sequence $(P_{\xi} \, : \, \xi < \omega_1)$ which is 
 an element in $W_0$, and such that $C:=\{ \xi < \omega_1 \, : \, P_{\xi} = N_{\xi}\}$ is 
 club containing in $W^{\ast}$. Indeed, every element of $(N_{\xi} \, : \, \xi < \omega_1)$ is a countable set of ordinals in $W^{\ast}$, thus can be covered by a countable set of ordinals from $W_0$.  As a consequence the sequence $(N_{\xi} \, : \, \xi < \omega_1)$ can be transformed into a continuous, increasing sequence $(P_{\xi} \, : \, \xi < \omega_1)$ in $W_0$ which coincides on its limit point with $(N_{\xi} \, : \, \xi < \omega_1)$, just as desired.
 
 But as ccc extensions preserve stationarity, the set $$\{ \zeta < \omega_1 \, : \, \exists \nu < \zeta \, ( \bigcup_{\eta \in (\nu, \zeta))}
 s_{\alpha \beta \gamma} (P_{\eta}, P_{\zeta}) = r )\}$$ which is an element of $W_0$ must contain a club from 
 $W_0$. Hence $r$ is coded by the triple $(\alpha, \beta, \gamma)$ already in $W_0$.
 
 As a consequence $P(\omega_1)^{W_0}$ is definable in $W^{\ast}$, it will be precisely
 the set of subsets of $\omega_1$ which have reals which code it with the help of 
 the almost disjoint family $F$, and such that these reals are themselves
 coded by triples of ordinals below $\omega_2$ in the sense of $(\ddagger)$.
 
 Thus $W^{\ast}$ can define $W_0$-Suslin trees and our wellorder $<$ on $P(\omega_1)^{W_0}$, hence will be able to define 
 the $\omega_2$-sequence of independent Suslin trees of $W_0$.
 
\end{proof}

\section{Definition of $W_1$ and Suitability}
We begin to define thoroughly how the forcing over $W_0$ which will yield $W_1$, does look like. 
We already hinted that, in order to use the independent sequence of Suslin trees $\vec{T}$ we need a new notion for suitable models which
will be able to define the sequence of Suslin trees $\vec{T}$ correctly. 
With the notion of suitability it will become possible 
to correctly define $\vec{T}$ already in $\aleph_1$-sized $\ZFP$ models,
as we shall see soon, which is non-trivial, as being an $\omega_1$-Suslin tree in some $\aleph_1$-sized $\ZFP$ model not necessarily implies that this tree is actually Suslin in the real world.
\begin{definition}
 Let $M$ be a transitive model of $\ZFP$ of size $\aleph_1$. We say that $M$ is pre-suitable if it satisfies the 
 following list of properties:
 \begin{enumerate}
  \item $M_1 | \omega_1 \subset M$ so in particular our distinguished $\omega$-sequence of independent Suslin tree $\vec{T}^0$ and our distinguished ladder system $\vec{C}$ are both in $M$.
  \item $\aleph_1$ is the biggest cardinal in $M$ and $M \models \forall x (|x|\le \aleph_1)$.
  \item Every set in $M$ has a real in $M$ which codes it in the sense of almost disjoint coding
  relative to the fixed family of almost disjoint reals $F$.
  \item Every real in $M$ is coded by a triple of ordinals in $M$, i.e. 
  if $r \in M$ then there is a triple $(\alpha, \beta, \gamma) \in M$ and a reflecting
  sequence $(N_{\xi} \, : \, \xi < \omega_1)\in M$ which code $r$.
  \item Every triple of ordinals in $M$ is stabilized in $M$: for $(\alpha, \beta, \gamma)$
  there is a reflecting sequence $(P_{\xi} \, : \, \xi < \omega_1) \in M$ which witnesses that
  $(\alpha, \beta, \gamma)$ is stabilized.
 \end{enumerate}

\end{definition}

Note that the statement '``$M$ is a pre-suitable model$"$ is completely internal in $M$ and hence a $\Sigma_1(\vec{C}, \vec{T}^0)$ and in particular a $\Sigma_1 (\{\omega_1 \} )$-formula.
Further note that by the proof of Lemma \ref{Definability of Suslin trees}, if $W^{\ast}$ is a ccc extension of $W_0$ and $M$ is a pre-suitable
model in $W^{\ast}$ then $M \subset W_0$, as ccc extensions will not add new reflecting sequences.
\begin{definition}
 Let $M$ be a pre-suitable model. We say that $M$ is $W_0$-absolute for Susliness
 if $T \in M$ is an element from $W_0$ and $M \models T \text{ is Suslin}$, then $T$ is Suslin in $W_0$.
 Likewise we say that $M$ is $W_0$-absolute for stationarity if
 $S \in M$ and $M$ thinks that
 $S$ is a stationary subset of $\omega_1$ then $S$ is a stationary subset of $\omega_1$ in $W_0$. 
 A pre-suitable model which is $W_0$-absolute for stationarity and Susliness is called suitable.
\end{definition}

We have already seen in Lemma \ref{Definability of Suslin trees} that ccc extensions of $W_0$ will still be able to define our
$\omega_2$-sequence of independent $W_0$-Suslin trees $\vec{T}$. With the notion of suitability
we can localize this property in the following sense:
\begin{lemma}
 Let $W^{\ast}$ be a ccc extension of $W_0$, and let $M \in W^{\ast}$ be a suitable model.
 If $M$ computes the $\omega_2$-length sequence of independent Suslin trees from $W_0$ using its local wellorder $\unlhd_M$,
 then the computation will be correct, i.e. $\vec{T}^M = \vec{T} \cap M$.
\end{lemma}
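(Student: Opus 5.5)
The plan is to prove $\vec{T}^M=\vec{T}\cap M$ by induction along the recursion defining $\vec T$: for every $\alpha$ such that $M$ reaches stage $\alpha$ of its internal recursion, the block $\vec{T}^{\alpha,M}$ computed in $M$ equals the true block $\vec{T}^{\alpha}$ of $W_0$. The recursion is driven by three ingredients, so the first step is to show that $M$ computes each of them correctly: the base block $\vec T^0$, the wellorder $\unlhd$ on $P(\omega_1)$, and the predicate ``independent sequence of Suslin trees''.

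\textbf{Base block and wellorder.} The base case is immediate. $\vec T^0$ is defined over $M_1|\omega_1$ from the canonical diamond sequence of Lemma~\ref{lem:m1-diamond}, and $M_1|\omega_1\subset M$ by pre-suitability, so $\vec T^{0,M}=\vec T^0$.

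For the wellorder, we first use the remark after the definition of pre-suitability: $M\subset W_0$, since a ccc extension adds no new reflecting sequences. Hence every real and every subset of $\omega_1$ in $M$ lies in $W_0$. Next we show that $\unlhd_M$ agrees with $\unlhd$ on $P(\omega_1)\cap M$.

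\begin{enumerate}
\item If a triple $(\alpha,\beta,\gamma)\in M$ codes a real $r$ in $M$, then the witnessing reflecting sequence and club lie in $M\subset W_0$. Clubs in $M$ are clubs in $W_0$, as closedness and unboundedness are absolute once $\omega_1^M=\omega_1$. So the triple codes $r$ in $W_0$.
\item Conversely, every triple in $M$ is stabilized in $M$ (pre-suitability clause 5). By the uniqueness argument in the proof of Lemma~\ref{Definability of Suslin trees}, any two reflecting sequences on the same ordinal agree on a club. Hence a stabilized triple codes at most one real, and the real that $M$ sees it code is the one $W_0$ sees.
\item Therefore the antilexicographically least coding triple of a real that almost-disjointly codes $X$ is the same in $M$ and in $W_0$. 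This needs the true least triple to lie in $M$. Here I would use that $M$ contains a code triple for each of its reals, that the ordinals of $M$ form an initial segment, and that the least triple is below any triple in $M$; if it is not automatically in $M$, I would restrict the comparison to triples below $\mathrm{Ord}^M$ and argue that $W_0$'s least triple is among them.
\end{enumerate}

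\textbf{Independence predicate.} For $e$ finite, ``$\prod_{\xi\in e}T_\xi$ is Suslin'' should mean the same in $M$ and in $W_0$. One direction is given by $W_0$-absoluteness for Susliness, which is the suitability hypothesis: if $M$ thinks the product is Suslin, then it is Suslin in $W_0$. For the other direction, a product Suslin in $W_0$ has no uncountable antichain in $W_0\supseteq M$, so it is Suslin in $M$. Trees on $\omega_1$ and their products are the same objects in both models because $\omega_1^M=\omega_1$.

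\textbf{Inductive step.} Suppose $\vec T^{\beta,M}=\vec T^\beta$ for all $\beta<\alpha$. Then $M$ chooses the $\unlhd_M$-least $X\in M$ coding an $\omega_1$-sequence which, concatenated with the earlier blocks, is independent. By the two agreements above, this $X$ satisfies the $W_0$-condition. We then need that no $\unlhd$-smaller $Y\in W_0\setminus M$ satisfies it. Since $\unlhd$-predecessors are coded by lexicographically smaller triples, and these are in $M$ whenever the code of $X$ is, this reduces to closure of $M$ under the relevant triples.

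\textbf{Main obstacle.} I expect the hardest step to be this initial-segment closure: showing that $\unlhd$-predecessors of sets in $M$ lie in $M$, or equivalently that the least coding triples are computed inside $M$. I would attack it using the stabilization clause for all triples in $M$ together with the $\Sigma_1(\{\omega_1\})$ nature of coding, to show that each $\unlhd$-initial segment is determined by the triples below the code's ordinals. Stationarity absoluteness is needed wherever clubs are compared.
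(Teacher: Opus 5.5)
\textbf{There is a genuine gap: the step that carries the proof is left open.}

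Your architecture is the paper's. It is an induction over the blocks, with Susliness transferred upward by suitability and downward by $M\subseteq W_0$. It also uses the observation that a triple antilexicographically below a triple of $M$ lies in $M$ by transitivity, so your hedge in item~3 is unnecessary.

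However, the step that carries the proof is left as your ``main obstacle'', and item~2 does not supply it. Uniqueness of the coded real only gives the direction from $M$ to $W_0$, which item~1 already provides. What you need is the converse: if a triple $t=(\alpha,\beta,\gamma)\in M$ codes a real $r$ in $W_0$, then $r\in M$ and $M\models$ ``$t$ codes $r$''.

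Stabilization in $M$ alone does not give this. It yields, at each limit $\xi$, a stable real $r^P_\xi\in M$. It does not yield that $\xi\mapsto r^P_\xi$ is constant on a club of $M$. So a priori $M$ sees $t$ code nothing, and $M$ could skip the true $r_{\eta+1}$ in favour of some $r'$ with a larger triple. Without this implication, neither the agreement of $\unlhd_M$ with $\unlhd$ on $M$ (item~3) nor the initial-segment closure in your inductive step follows.

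The missing argument, which is the paper's, runs as follows.
\begin{enumerate}
\item Let $P\in M$ witness that $t$ is stabilized in $M$, and let $N\in W_0$ witness that $t$ codes $r$.
\item Both are continuous increasing sequences with union $\gamma$, so they agree on a club of $W_0$. Hence $r^P_\xi=r$ for club many $\xi$ in the sense of $W_0$, and in particular $r\in M$.
\item The set $\{\xi<\omega_1 : r^P_\xi\neq r\}$ lies in $M$ and is nonstationary in $W_0$. By $W_0$-absoluteness for stationarity it is nonstationary in $M$, so $M$ has a club on which $P$ codes $r$.
\end{enumerate}

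Apply this to the least triple coding the true $r_{\eta+1}$ in the case $r_{\eta+1}$ is $\unlhd$-below $M$'s choice $r'$. That triple is antilexicographically below the triple of $r'$, hence lies in $M$. So $M$ recognizes $r_{\eta+1}$ as coded by a smaller triple. Its trees form an independent Suslin sequence in $M$, since $M\subseteq W_0$. Thus $M$ would have chosen $r_{\eta+1}$, a contradiction.

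In the other case, $r'\unlhd r_{\eta+1}$, suitability makes $r'$ a valid $W_0$-candidate, contradicting the minimality of $r_{\eta+1}$. The $\Sigma_1(\{\omega_1\})$ form of the coding plays no role; stationarity absoluteness applied to the failure set of a stabilized sequence of $M$ is the key idea.
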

\begin{proof}
 We shall show inductively that for every $\eta \in M$, the $\eta$-th
 block of $\vec{T}$, $\vec{T}^{\eta}$ will be computed correctly by $M$ (if an element of $M$).
 For $\eta=0$ this is true as $\vec{T}^0$ is by definition of suitability an element of $M$.
 Let $\eta \in M$ and assume by induction that the sequence $\vec{T}$ up to the $\eta$-th block is computed correctly by $M$. We shall show that $M$ computes the $\eta+1$-th block correctly. Recall that $\vec{T}^{\eta+1}$ was defined to be the $\unlhd$-least $\omega_1$-block of independent Suslin trees such that $\bigcup_{\beta \le\eta} \vec{T}^{\beta}$ concatenated with $\vec{T}^{\eta+1}$ remains an independent sequence in $W_0$. 
 
 Assume for a contradiction that the suitable $M$ computes $\vec{T}'$ as its own different version of $\vec{T}^{\eta+1}$. Thus there is a real
 $r'$ which codes $\vec{T}'$, and $r'$ itself is coded into a triple of $M$-ordinals $(\alpha', \beta', \gamma')< \omega_2^{M}$. Let $r_{\eta+1}$ be the real which codes $\vec{T}^{\eta+1}$.
 We claim that $r_{\eta+1} < r'$ by which we mean that the least triple of ordinals which codes $r_{\eta+1}$ is antilexicographically less than the least triple which codes $r'$. Otherwise $r'< r_{\eta+1}$ and by the Suslin-absoluteness of $M$
 the independent-$M$-Suslin trees coded into $r'$ would be an independent $\omega_1$-sequence of Suslin trees in $W_0$,
 moreover they would still form an independent sequence when concatenated with $\vec{T}^{\eta}$ in $W_0$ which
 is a contradiction to the way $\vec{T}^{\eta+1}$ was defined.
 
 So $r_{\eta+1}<r'$, so the least triple of ordinals $(\alpha, \beta, \gamma)$ coding $r_{\eta+1}$
 is antilexicographically less than $(\alpha', \beta', \gamma')$. Note that the suitability of $M$ implies
 that $(\alpha, \beta, \gamma)$ is stabilized in $M$. Thus there is a reflecting sequence
 $(P_{\xi} \, : \, \xi < \omega_1)$ in $M$ witnessing this. As $W^{\ast}$ is a ccc extension of $W_0$ we can assume
 that the sequence is in fact an element of $W_0$. At the same time there is a reflecting 
 sequence $(N_{\xi} \, : \, \xi < \omega_1)$ in $W_0$ which witnesses that $r_{\eta+1}$ is coded by $(\alpha, \beta, \gamma)$.
 By the continuity of both sequences, there is a club $C$ in $W_0$ such that
 $\forall \xi \in C ( N_{\xi} = P_{\xi}).$ Thus the limit points of $C$ witness that
 in fact the sequence $(P_{\xi} \, : \, \xi < \omega_1)\in M$ codes $r_{\eta+1}$ as well but the club
 $C$ is in $W_0$, so we need an additional argument to finish. Recall that the suitability of $M$
 implies that $M$ is absolute for stationarity, thus if the set 
 $\{ \xi < \omega_1 \, : \, \exists \nu < \xi ( \bigcup_{\zeta \in (\nu, \xi)} s_{\alpha \beta \gamma} (P_{\zeta}, P_{\xi}) \ne r_{\eta+1}) \}$
 would be stationary in $M$ it would be stationary in $W_0$ which is a contradiction.
 So $M$ computes $r_{\eta+1}$ correctly and the rest of the inductive argument can be repeated exactly as 
 above to show that $\vec{T}^M = \vec{T} \upharpoonright (M \cap Ord)$ as desired.
 
  \end{proof}

So suitable models will compute $\vec{T}$ correctly, and if we start to write information into $\vec{T}$, a suitable model can be used to read it off. In $W_0$ cofinally many suitable model below $\omega_2$ exist.

\begin{lemma}
Work in $W_0$. If we set
$$ P:= \{ \zeta < \omega_2 \, : \, \exists M (M \text{ is suitable } \land M \cap Ord = \zeta\}$$ then $P$ is unbounded in $\omega_2$.
\end{lemma}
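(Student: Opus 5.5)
The plan is a Löwenheim--Skolem and condensation argument inside $W_0$. Fix $\zeta_0<\omega_2$; we want a suitable $M$ with $M\cap Ord\ge\zeta_0$. Work in $W_0$ and choose a large regular $\theta$. Build a continuous $\in$-chain $\langle X_i : i<\omega_1\rangle$ of elementary submodels $X_i\prec H(\theta)$ of size $\aleph_1$ with $\omega_1\cup\{\zeta_0\}\subseteq X_0$ and $X_i\cap\omega_2\in\omega_2$; this is possible since $\omega_2^{W_0}=\eta$ and $2^{\aleph_1}=\aleph_2$ there. Let $X=X_i$ for some $i$, so $\omega_1\subseteq X$ and $\zeta:=X\cap\omega_2$ is an ordinal $\ge\zeta_0$. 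Let $M$ be the transitive collapse of $X\cap H(\omega_2)$. Since $\omega_1\cup\zeta\subseteq X$, the collapse is the identity on $H(\omega_2)\cap X$ restricted to those sets, so $M\subseteq H(\omega_2)^{W_0}$ and $M\cap Ord=\zeta$. One could equivalently take $M=H(\omega_2)^{W_0}\cap X$ directly.

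Next verify pre-suitability clause by clause, using elementarity and the six listed properties of $W_0$.

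\textbf{Clause 1.} $M_1|\omega_1\in H(\omega_2)$, and each of its elements is in $X$ because $\omega_1\subseteq X$ and every element is coded by a countable ordinal definable from $M_1|\omega_1\in X$. Hence $M_1|\omega_1\subseteq M$, and in particular $\vec T^0$ and $\vec C$ lie in $M$.

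\textbf{Clause 2.} $H(\omega_2)$ satisfies $\ZFP$ together with ``every set has size $\le\aleph_1$'', so $M$, being elementary in it, does as well.

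\textbf{Clauses 3--5.} These are $\Sigma_1$ facts in $H(\omega_2)^{W_0}$ by properties (4)--(6) of $W_0$. By elementarity the witnesses for objects of $M$ (the reals, the triples, and the reflecting sequences, the latter being $\omega_1$-sequences of countable sets) can be found in $X$. They are therefore in $M$, since $\omega_1\subseteq X$ makes $X$ closed under these $\aleph_1$-sized objects.

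The main obstacle is absoluteness for stationarity and Susliness. For $S\in M$ with $M\models$ ``$S$ stationary'', suppose $S$ were nonstationary in $W_0$. Then some club $C$ disjoint from $S$ lies in $H(\omega_2)$, and by elementarity such a club lies in $X$, hence in $M$, which is a contradiction. The same reasoning handles Suslin trees $T\in M$: if $T$ had an uncountable antichain in $W_0$, one would exist in $H(\omega_2)\cap X\subseteq M$. The point needing care is that $M$ is genuinely elementary in $H(\omega_2)^{W_0}$ and not merely a transitive model computing things locally, and this is exactly why the models are taken as collapses of elementary substructures with $\omega_1\subseteq X$ (so the collapse is trivial on $\omega_1$-sized sets).

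Finally, since $\zeta_0$ was arbitrary, $P$ is unbounded in $\omega_2$. In fact the argument shows $P$ contains a club in $\omega_2$.
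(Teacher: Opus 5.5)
Your argument is correct, but it takes a different route from the paper. The paper does not use elementary substructures. It takes the concrete models $H(\omega_2)^{M_1[G_\kappa]}$ for inaccessible stages $\kappa<\eta$ of the iteration that produces $W_0$. Their pre-suitability comes from the bookkeeping having already coded and stabilized everything in $M_1[G_\kappa]$ by stage $\kappa$. The two absoluteness requirements come from preservation theorems applied to the tail $\forceP_{[\kappa,\eta)}$. That tail is a countable support iteration of proper, Suslin-tree-preserving forcings, so the Miyamoto/Abraham--Shelah theorem keeps Suslin trees Suslin, and properness keeps stationary subsets of $\omega_1$ stationary.

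You instead take $M=X\cap H(\omega_2)^{W_0}$ with $\omega_1\subseteq X\prec H(\theta)$ and $|X|=\aleph_1$. Then $M$ is transitive and $M\prec H(\omega_2)^{W_0}$, and clauses 3--5 follow from properties (4)--(6) of $W_0$ by elementarity. Absoluteness is immediate, because $H(\omega_2)^{W_0}$ already computes Susliness of trees on $\omega_1$ and stationarity of subsets of $\omega_1$ correctly.

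What each route buys:
\begin{itemize}
\item Your route uses nothing about how $W_0$ was built beyond the six listed properties. It needs no preservation theorem, and it even shows that $P$ contains a club.
\item The paper's route produces the specific family $U'$ of suitable models indexed by inaccessible stages. This is exactly the family whose codes are written into $r_U$ when $W_1$ is defined.
\end{itemize}

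A few small repairs:
\begin{itemize}
\item $\vec C$ and $\vec T^0$ are $\omega_1$-length objects, so they are not elements of $M_1|\omega_1$. They do not follow from $M_1|\omega_1\subseteq M$ as you phrase it. Simply put $M_1|\omega_1$, $\vec C$, $\vec T^0$ and the almost disjoint family $F$ into $X$.
\item $X\cap\omega_2$ is an ordinal because $\omega_1\subseteq X$: each $\beta\in X\cap\omega_2$ is the range of a surjection from $\omega_1$ that lies in $X$. This does not need $2^{\aleph_1}=\aleph_2$.
\item For the club claim you need a continuous chain of length $\omega_2$, not $\omega_1$.
\end{itemize}
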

\begin{proof}
Recall the iteration $(\forceP_{\alpha} \, : \, \alpha < \eta)$ to force $W_0$. Whenever we are at an intermediate stage $\kappa < \eta$ such that $\kappa$ is inaccessible then, if $G_{\kappa}$ denotes the generic filter for $\forceP_{\kappa}$, $H(\omega_2)^{M_1[G_{\kappa}]}$ will be a pre-suitable model.
Moreover, by the properties of countably supported iterations, Suslin trees in $H(\omega_2)^{M_1[G_{\kappa}]}$ will remain Suslin trees in $W_0$, as the tail iteration $\forceP_{[\kappa, \eta)}$ is a countable support iteration of proper, Suslin tree preserving notions of forcing over the ground model $M_1[G_{\kappa}]$. Thus every  $H(\omega_2)^{M_1[G_{\kappa}]}$, for $\kappa$ inaccessible, is a suitable model which gives the assertion of the Lemma.

\end{proof}

We shall use forcing to obtain a universe in which the just defined set of suitable models 
\begin{itemize}
\item[$U'$]
$:=\{ M \, : \, \exists \kappa < \eta (\kappa \text{ is inaccessible } \land M=H(\omega_2)^{M_1[G_{\kappa}]} \}$
\end{itemize} 
becomes easily definable. 
This will be our first forcing in order to obtain $W_1$ over $W_0$. We note that every $M \in U'$ is itself coded by a real relative to the almost disjoint family $F$. We want to use  the variant of almost disjoint coding forcing to code up  \begin{itemize}
\item[$U$] $:= \{ r_M \, : \, r_M $ is the least almost disjoint code for a subset of $\omega_1$ which codes $M \in U' \}$
\end{itemize}
into one real $r_{U}$. Recall that $\mathbb{A}(U)$ is a forcing of size $|U|=\eta$ which is Knaster and which adds a real $r_U$ such that in $W_0^{\mathbb{A}(U)}$ the following holds:
\begin{itemize}
\item[$(\ast)$] $\forall x \in 2^{\omega} \cap W_0 \,( x \in U \leftrightarrow r_U \cap S(x)$ is finite$)$.
\end{itemize}
In a next step we will code the characteristic function of the real $r_U$ into a pattern on $\vec{T}^0$. We use the fact that a Suslin tree can generically be destroyed in two mutually exclusive ways. We can generically add a branch or generically add an antichain without adding a branch.
We fix the first $\omega$-block of independent Suslin trees $\vec{T}^0$ and we let 
$\mathbb{D} (r_U) := \prod_{n \in \omega} \forceP_n$ with finite support, where
\begin{equation*}
\forceP_n = \begin{cases}
T^{0}_n  & \text{ if } r_U(n)=1 \\ Sp(T^{0}_n) & \text{ if } r_U(n)=0
\end{cases}
\end{equation*}
and $T^{0}_n$ denotes the forcing notion one obtains when forcing with nodes of $T^{0}_n$ as conditions, and
$Sp(T^{0}_n)$ denotes Baumgartner's forcing which specializes $T^{0}_n$ with finite conditions and which is known to be ccc (see \cite{Jech}).
Iterations of the just described form always have the countable chain condition.

\begin{lemma}\label{ccc Coding}
Let $\vec{T}= (T_{\alpha} \, : \, \alpha < \eta)$ be an independent sequence of Suslin trees of length $\eta$. Let $f: \eta \rightarrow 2$ be an arbitrary function and let $T_{\alpha}$ also denote the partial order when forcing with the tree $T_{\alpha}$ and let $Sp(T_{\alpha})$ be the forcing which specializes the tree $T_{\alpha}$.

Then if we consider the finitely supported product $\mathbb{D} (f) := \prod_{\beta < \eta} \forceP_{\beta}$ where \[
   \forceP_\beta=
   \begin{cases}
   T_\beta, &\text{if } f(\beta)=1,\\
   \operatorname{Sp}(T_\beta), &\text{if } f(\beta)=0.
   \end{cases}
\]
then $\mathbb{D}$ has the countable chain condition.
\end{lemma}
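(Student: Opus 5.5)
Since $\mathbb{D}(f)$ is a finite support product, a standard $\Delta$-system argument reduces the countable chain condition to a statement about finite subproducts. Given uncountably many conditions, we thin out to a family whose supports form a $\Delta$-system with root $e$. Conditions agreeing off $e$ have disjoint supports there, so they are compatible off $e$. It therefore suffices to prove that for every finite $e\subset\eta$ the finite product $\prod_{\beta\in e}\forceP_\beta$ is ccc. Writing $e=e_1\cup e_0$ with $e_1=\{\beta\in e: f(\beta)=1\}$ and $e_0=\{\beta\in e:f(\beta)=0\}$, this product is, up to a trivial rearrangement, $\prod_{\beta\in e_1}T_\beta\times\prod_{\beta\in e_0}\operatorname{Sp}(T_\beta)$.

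We treat the two factors in order. First, $\prod_{\beta\in e_1}T_\beta$ as a forcing is dense in (equivalently, forcing-equivalent to) the tree product $\prod_{\beta\in e_1}T_\beta$ restricted to equal heights. By independence of $\vec T$ this tree product is a Suslin tree, so the forcing is ccc. Call the generic extension $V[G]$.

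Second, we show that in $V[G]$ the product $\prod_{\beta\in e_0}\operatorname{Sp}(T_\beta)$ is ccc. The key point is that for $\beta\in e_0$ the tree $T_\beta$ is still Suslin in $V[G]$. This holds because $T_\beta\times\prod_{\gamma\in e_1}T_\gamma$ is Suslin by independence, and forcing with a Suslin tree $S$ preserves the Suslinity of $R$ whenever $R\times S$ is Suslin: an uncountable antichain of $R$ in $V[S]$ would yield an uncountable antichain in $R\times S$ below a suitable condition. More generally, the whole family $\langle T_\beta:\beta\in e_0\rangle$ stays independent in $V[G]$ by the same argument applied to the product over $e_0\cup e_1$. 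Baumgartner's specializing forcing $\operatorname{Sp}(T)$ is ccc for any tree of height $\omega_1$ with no uncountable branch, in particular for an Aronszajn tree. A finite product of such forcings is again ccc, since $\prod_{\beta\in e_0}\operatorname{Sp}(T_\beta)$ embeds densely into the specializing forcing of the disjoint sum tree $\bigoplus_{\beta\in e_0}T_\beta$, which has no uncountable branch. Alternatively, one can run Baumgartner's $\Delta$-system/$\omega_1$-many-nodes argument coordinatewise.

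Finally, a two-step iteration of ccc forcings is ccc, and a product $\mathbb P\times\mathbb Q$ equals the iteration $\mathbb P\ast\check{\mathbb Q}$. Hence $\prod_{\beta\in e}\forceP_\beta$ is ccc, and the $\Delta$-system reduction gives ccc for $\mathbb{D}(f)$. The main obstacle is verifying that the trees in $e_0$ have no uncountable branch after forcing with $\prod_{e_1}T_\beta$. Here independence is essential, and I would check this first via the antichain-transfer argument sketched above. The second point needing care is that ccc of the specializing forcing requires only the absence of uncountable branches, not Suslinity.
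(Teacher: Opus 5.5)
Your proof is correct, but it is organized differently from the paper's.

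The paper argues by induction on the length $\eta$. Limit stages are handled by finite support. At a successor step the paper peels off the last coordinate and shows that $T_\eta$ is still Suslin in $V^{\prod_{\alpha<\eta}\forceP_\alpha}$. It does this by commuting the product, $\prod_{\alpha<\eta}\forceP_\alpha\times T_\eta = T_\eta\times\prod_{\alpha<\eta}\forceP_\alpha$, and applying the induction hypothesis inside $V^{T_\eta}$, where $(T_\alpha:\alpha<\eta)$ is still independent. Once $T_\eta$ is Suslin in that extension, both $T_\eta$ and $\operatorname{Sp}(T_\eta)$ are ccc there.

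You instead reduce once to finite supports by a $\Delta$-system argument and then fix a convenient order of the factors:
\begin{enumerate}
\item All branch-adding factors come first. These are ccc because the equal-height tree product is Suslin. Note the direction of density: it is the tree product that is dense in the forcing product, not the other way round. This slip is harmless.
\item All specializing factors come second. Here you only need each $T_\beta$, $\beta\in e_0$, to remain without an uncountable branch. You then apply the Baumgartner--Malitz--Reinhardt theorem to the disjoint-sum forest, for which $\operatorname{Sp}$ is literally isomorphic to the finite product $\prod_{\beta\in e_0}\operatorname{Sp}(T_\beta)$.
\end{enumerate}

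What each route buys:
\begin{itemize}
\item Your route avoids a transfinite induction whose hypothesis must be applied in other generic extensions. It invokes independence only for the finite sets $e_1$ and $e_1\cup\{\beta\}$. The price is that you need the Baumgartner--Malitz--Reinhardt theorem for merely Aronszajn trees, plus the disjoint-sum device for products of specializing posets.
\item The paper's route only ever specializes a tree that is still Suslin. It also yields a stronger by-product: each $T_\eta$ stays Suslin after an arbitrary finite-support mixture of branch and specializing forcings on the other trees.
\end{itemize}
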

\begin{proof}
Fix an arbitrary $f: \eta \rightarrow 2$. We prove the Lemma using induction over the length $\eta$. The limit case is true as we use finite support. Thus assume the assertion of the Lemma is true for $\eta$ and we want to show it is true for products of length $\eta+1$. Assume for a contradiction that $\prod_{\alpha < \eta+1}\forceP_{\alpha}$ (according to $f$) does not have the countable chain condition.
Hence the tree $T_{\eta}$ is not a Suslin tree in $V^{\prod_{\alpha < \eta} \forceP_{\alpha}}$, as otherwise both forcings $T_{\eta}$ and $Sp(T_{\eta})$ would have the countable chain condition.
But $\prod_{\alpha < \eta} \forceP_{\alpha} \ast T_{\eta} = \prod_{\alpha < \eta} \forceP_{\alpha} \times T_{\eta} = T_{\eta} \times \prod_{\alpha < \eta} \forceP_{\alpha}$, and the latter is a forcing with the countable chain condition. Indeed as $T_{\eta}$ does not touch the Susliness of any member of $\vec{T}$ besides $T_{\eta}$,  $(T_{\alpha} \, : \, \alpha < \eta)$ is an independent sequence of Suslin trees in $V^{T_{\eta}}$, thus
by induction hypothesis,  $\prod_{\alpha < \eta} \forceP_{\alpha}$ has the countable chain condition in $V^{T_{\eta}}$, so it has the countable chain condition in $V$.
\end{proof}

In particular this means that $\mathbb{D} (r_U)$ as defined above is a forcing with the countable chain condition which writes the real $r_U$ into a pattern of 0 and 1's on the sequence $\vec{T}^0$ of Suslin trees. We
let $H_0$ be
a generic filter for $\mathbb{A}_U$ over $W_0$ and
let
 $H_1$ denote a $W_0[H_0]$-generic filter for $\mathbb{D} (r_U)$. The resulting model $W_1:=W_0[H_0][H_1]$ is the ground model for a second iteration we define later.
 
\begin{lemma}
 Let $W_1$ be the universe $W_0[H_0][H_1]$

 Then
 $W_1:= W_0[H_0][H_1]$ is a ccc extension of $W_0$ which satisfies:
 \begin{itemize}
 \item $ r_U(n)=1$ if and only if $T^{0}_n$ has a branch.
 \item $ r_U(n)=0$ if and only if $T^{0}_n$ is special.
\end{itemize}
\end{lemma}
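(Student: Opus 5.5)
The plan is to verify three things in turn: that the two-step iteration is ccc, that the four ``if'' directions hold, and that the ``only if'' directions follow because the two outcomes exclude each other.

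\textbf{The iteration is ccc.} The first factor $\mathbb{A}(U)$ is Knaster, hence ccc. For the second factor we first check that $\vec{T}^0$ is still an independent sequence of Suslin trees in $W_0[H_0]$. Every finite product $\prod_{n\in e}T^0_n$ is Suslin in $W_0$ by independence. A Knaster forcing preserves Suslin trees: if $\mathbb{A}(U)\times\prod_{n\in e}T^0_n$ had an uncountable antichain, a $\Delta$-system and Knaster argument would yield an uncountable antichain in the tree product. So $\vec{T}^0$ remains independent in $W_0[H_0]$. Lemma~\ref{ccc Coding}, applied in $W_0[H_0]$ with $f=r_U$ and $\eta=\omega$, then shows that $\mathbb{D}(r_U)$ is ccc there. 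A two-step iteration of ccc forcings is ccc, so $W_1$ is a ccc extension of $W_0$.

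\textbf{The ``if'' directions.} If $r_U(n)=1$, the coordinate $\forceP_n=T^0_n$ is forced with, and the generic filter restricted to that coordinate is a cofinal branch through $T^0_n$. It is cofinal because genericity meets the dense sets of nodes above any given height. If $r_U(n)=0$, then $H_1$ restricted to coordinate $n$ is a $Sp(T^0_n)$-generic filter. By Baumgartner's analysis, its union is a specializing function $T^0_n\to\omega$ with injective values on chains, so $T^0_n$ is special in $W_1$.

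\textbf{The ``only if'' directions.} These reduce to showing that no tree is both special and branched, and that each outcome is actually realized. A special tree of height $\omega_1$ has no cofinal branch: a specializing map restricted to a branch of length $\omega_1$ would be an injection of $\omega_1$ into $\omega$. So if $T^0_n$ has a branch in $W_1$, it is not special, which excludes $r_U(n)=0$ and leaves $r_U(n)=1$. Conversely, if $T^0_n$ is special, it has no branch, which excludes $r_U(n)=1$ and leaves $r_U(n)=0$. Since $\omega_1$ is preserved by the ccc extension, ``cofinal branch'' and ``special'' refer to the same $\omega_1$ throughout, so these implications are meaningful.

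\textbf{Main obstacle.} The delicate step is the preservation of independence of $\vec{T}^0$ through $\mathbb{A}(U)$, since Lemma~\ref{ccc Coding} must be applied in $W_0[H_0]$ rather than in $W_0$. I would handle it with the standard fact that Knaster posets preserve Suslin trees, applied to each finite product.
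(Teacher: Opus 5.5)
Your proposal is correct and follows the paper's route. The paper's proof just observes that the Knaster forcing $\mathbb{A}(U)$ preserves Suslin trees, so $\vec{T}^0$ stays independent and Lemma~\ref{ccc Coding} applies in $W_0[H_0]$. You additionally spell out what the paper leaves implicit: the product-ccc argument for the preservation, the genericity argument that produces the branches and specializing functions, and the observation that a special tree has no cofinal branch, which gives the converse directions.
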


\begin{proof}
It suffices to show that $W_0[H_0]$ is a Suslin-tree-preserving extension of $W_0$. This is clear as $\mathbb{A}(U)$ is Knaster. 

\end{proof}
To summarize we arrived at a situation where the real $r_U$, which captures all the information about a set of suitable models, is written into the sequence of Suslin trees $\vec{T}^0$. Consequently, any transitive, $\aleph_1$-sized model  $M$ which contains $\vec{C}$, $\vec{T}^0$ and which sees that every tree in $\vec{T}$ is destroyed can compute the real $r_U$ and thus has access to a set of suitable models, which in turn can be used to compute $\vec{T}$ inside $M$ in a correct way. This line of reasoning remains sound in all outer ccc extensions of $W_1$ as we shall see. Thus the ability of finding $\vec{T}$ in suitable models gives rise to the possibility of using $\vec{T}$ for additional coding arguments over $W_1$.

\begin{lemma}\label{sigma_1 set of suitable models}
 Let $W^{\ast}$ be a ccc extension of $W_1$ then there is a $\Sigma_1(\vec{C}, \vec{T}^0)$-formula $\Phi(v)$ such that whenever $x \in 2^{\omega}$ and $W^{\ast} \models \Phi(x)$ then $x$ is the almost disjoint code for a suitable model. 
\end{lemma}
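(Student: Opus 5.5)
We will write down $\Phi(x)$ explicitly as a $\Sigma_1(\vec C,\vec T^0)$ formula: ``there is a transitive $\aleph_1$-sized model $N$ containing $\vec C$ and $\vec T^0$, with $\omega_1\subseteq N$, in which every tree $T^0_n$ of $\vec T^0$ is destroyed, i.e. $N$ contains either a cofinal branch through $T^0_n$ or a specializing function for $T^0_n$; letting $r\in 2^\omega$ be the real read off from this pattern ($r(n)=1$ iff $N$ contains a branch), $x\in N$ and $S(x)\cap r$ is finite.'' We also require that $x$ be the almost disjoint code of a pre-suitable model. Every clause is witnessed inside the existentially quantified $N$, so $\Phi$ is $\Sigma_1$ in the parameters $\vec C,\vec T^0$. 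To verify that $\Phi$ only picks out codes of suitable models we would argue in $W^\ast$ in three steps.

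\emph{Step 1: the decoded pattern is the true $r_U$.} In $W^\ast$, a ccc extension of $W_1$, each $T^0_n$ with $r_U(n)=1$ has a branch and each with $r_U(n)=0$ is special, and both facts persist upward. A tree of height $\omega_1$ cannot both be special and have a cofinal branch, since a branch would give an uncountable chain mapped injectively into $\omega$ by the specializing function. Any $N$ witnessing $\Phi$ contains, for each $n$, a branch or a specialization of $T^0_n$, and that object is really one in $W^\ast$ because being a branch or a specializing map is $\Delta_0$. Hence the pattern $N$ reads equals $r_U$.

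\emph{Step 2: the decoding of $U$.} We need that $S(x)\cap r_U$ finite together with $x\in W_0$ implies $x\in U$. By $(\ast)$ this holds in $W_0[H_0]$. Since $W^\ast$ only adds further reals, and the truth of ``$S(x)\cap r_U$ is finite'' is arithmetic in $x$ and $r_U$, the equivalence $(\ast)$ persists for ground-model $x$. The remaining issue is that $x$ might be a new real of $W^\ast$ with $S(x)\cap r_U$ finite. To rule this out we would add to $\Phi$ the clause that $x$ codes a pre-suitable model $M$, in the sense that $x$ is an almost disjoint code relative to $F$ of a subset of $\omega_1$ coding $M$, and that $x$ is itself coded by a triple of ordinals via a reflecting sequence in $N$. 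By the remark after the definition of pre-suitability, and the argument of Lemma~\ref{Definability of Suslin trees}, ccc extensions add no new reflecting sequences. So $x$, and $M$, lie in $W_0$. Then $(\ast)$ gives $x\in U$, so $M\in U'$, and every member of $U'$ was shown to be suitable.

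\emph{Step 3: the main obstacle.} The hard part is making ``$N$ sees a pre-suitable model coded by $x$ via a triple'' imply genuine membership in $W_0$ when the witness lives only in the small model $N$. We plan to handle this by noting that reflecting sequences in $N$ are reflecting sequences in $W^\ast$, by upward absoluteness of the club-coding for the $\Sigma_1$ part witnessed by an actual club in $N$. Lemma~\ref{Definability of Suslin trees}'s covering argument then places the coded real in $W_0$. One also has to check that the ``club many $\xi$'' clause survives. Requiring the club itself to be in $N$ makes it a real club of $\omega_1$ in $W^\ast$, since $\omega_1^N=\omega_1$.
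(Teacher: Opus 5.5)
Your proposal is correct and is essentially the paper's argument: the pattern on $\vec{T}^0$ seen by $N$ must be $r_U$; the fact from the proof of Lemma~\ref{Definability of Suslin trees}, that ccc extensions of $W_0$ add no new triple-coded reals, puts $x$ into $W_0$; and $(\ast)$ then gives $x\in U$. The only difference is that the paper gets $x\in W_0$ by having $N$ see a pre-suitable $N'$ with $x\in N'$, while you put the triple-coding witness for $x$ directly into $\Phi$; for that you should state $\omega_1^N=\omega_1$ as a clause of $\Phi$ itself, not merely $\omega_1\subseteq N$.
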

\begin{proof}
The formula $\Phi(x)$, is defined as follows:

 \begin{itemize}
 \item[$\Phi(x)$]   if and only if there is a transitive, $\aleph_1$-sized $\ZFP$ model $N$ which contains $\vec{C}$ and $\vec{T}^0$ such that the following holds in $N$:  
   \begin{itemize}
    \item $N$ sees a full pattern on $\vec{T}^0$, i.e. for every $n \in \omega$ and 
   every member $T^0_{n}$, $N$ has either a branch through $T^0_{n}$ 
   or a function which specializes $T^0_{n}$. 
\item  The pattern on $\vec{T}^0$ corresponds to the characteristic function of a real $r$ and $N$ thinks that there is a pre-suitable $N'$ such that $x \in N'$ and $(S(x) \cap r)$ is a finite set.
 \end{itemize}
   \end{itemize}
This is a $\Sigma_1(\vec{C}, \vec{T}^0)$-formula, as it is of the form $\exists N( N\models ...)$.

We will show that whenever $x$ is a real in \(W^{\ast}\) satisfying \(\Phi(x)\), then \(x\) is the almost-disjoint code of some suitable model. Recall the sets \(U'\) and \(U\) defined earlier. First observe that \(N\) has access to \(U\), the collection of reals that code the suitable models \(U'\). This follows from the fact that \(\vec{T}^0 \in N\); hence any pattern on \(\vec{T}^0\) recognized by \(N\) must be the unique pattern coming from \(W_1\), which corresponds to the characteristic function of the real \(r_U\).

The assertion ``\(N'\) is a pre-suitable model$"$ is given by a \(\Delta_1(\vec{C})\) formula in \(N'\), and thus is absolute for the transitive model \(N\). Therefore, if \(N\) believes that there exists a pre-suitable \(N'\) containing \(x\), then this is indeed true in \(W^{\ast}\). Since \(W^{\ast}\) is a ccc extension of \(W_1\), and therefore also of \(W_0\), we already know that \(W^{\ast}\) can express the statement ``\(y \in 2^\omega \cap W_0\) $"$ as ``there exists a pre-suitable \(P\) with \(y \in P\) $"$. Consequently, if \(N\) thinks that some pre-suitable \(N'\) contains \(x\), then \(x \in W_0\). Applying \((\ast)\) from above, we conclude that \(x \in U\), which is exactly what we set out to prove.

\end{proof}

\section{Definition of $W_2$}
We shall use the just created $W_1$ to start a second iteration which will code a fresh sequence of Suslin trees $\vec{S}$ into the $W_1$-definable sequence $\vec{T}$ of Suslin trees. The definability of $\vec{T}$ will therefore be passed over to $\vec{S}$.
The reason for this seemingly redundant choice, is that we later want to force $\MA$, which of course will necessarily create a lot of noise on any definable sequence of independent Suslin trees. This noise is a threat to any intentional coding we aim to build in order to have a universe for the $\Sigma^1_n$-uniformization property.

The addition of the second sequence $\vec{S}$ of independent Suslin trees enables us to later use coding with $\vec{S}$ trees along the usual forcing which produces $\MA$ and yet be in full control of the codes we create on the $\vec{S}$-sequence. This effect cannot be reproduced by forcing over $W_1$ alone; this is why we need to pass from $W_1$ to a more suitable generic extension.

The universe $W_2=W_1^{\forceP_0 \ast \forceP_1}$ is a two step generic extension of $W_1$. The first factor $\forceP_0$ is just an $\omega_2$-length product of ordinary Cohen forcing with finite support
\[\forceP_0:= \prod_{\alpha < \omega_2} \mathbb{C}. \]
As is known due to S. Shelah (see \cite{Jech}, Theorem 28.12, Lemma 28.13 for a proof of this {which is due to S. Todor\v cevi\'c}) every generically added Cohen real $r$ can be used to define a new Suslin tree $S_r$. To be more precise, if $V$ is our ground model and
\[ (e_{\alpha} \, : \, \alpha < \omega_1) \] is an $\omega_1$ sequence of functions in $V$ which satisfies
\begin{enumerate}
\item for every $\alpha < \omega_1$, $e_{\alpha}$ is an injection from $\alpha$ to $\omega$;
\item and for every $\alpha < \beta < \omega_1$, $e_{\alpha} (\xi)=e_{\beta} (\xi)$ for all but finitely many $\xi < \alpha$,
\end{enumerate}
then
\[ S_r :=\{ r \circ (e_{\alpha} \upharpoonright \beta) \, : \, \alpha,\beta < \omega_1\} \]
 is a Suslin tree, as long as $r$ is a Cohen real over $V$.
 
 Consequently, if $(c_{\alpha} \, : \, \alpha < \omega_2)$ denotes the sequence of Cohen reals in $W_1^{\forceP_0}$, the sequence of trees 
 \[\vec{S}:=(S_{{\alpha}} \, : \, \alpha < \omega_2) \] {(note that here we write $S_{\alpha}$ where we actually mean $S_{c_{\alpha}}$)}
 is an independent sequence of Suslin trees. Indeed, any finite product of $S_{\alpha}$'s must be a Suslin tree again, as otherwise, if say $S_1$ and $S_2$ are such that $S_1 \times S_2$ is not Suslin, then $\mathbb{C} \times \mathbb{C} \times S_1 \times S_2$ is not a ccc forcing. But rearranging the factors yields that $S_2$ is a Suslin tree in $W_1^{\mathbb{C} \times S_1 \times \mathbb{C}}$, hence $\mathbb{C} \times S_1 \times \mathbb{C} \times S_2$ has the ccc, which is a contradiction.
 
 Our second forcing $\forceP_1$ uses our already created independent sequence $\vec{T}$ to code up the trees from $\vec{S}$ with a method completely {analogous to our forcing $\forceQ^1$ which} we used in the definition of $W_0$. That is if we fix an arbitrary $\alpha < \omega_2$ with the associated tree $S_{\alpha} \subset \omega_1$, 
and for $\beta < \omega_1$, we let
 \[ \forceP^{\beta}_{1,\alpha}= \begin{cases} T_{ \omega_1 \alpha + \beta} & \text{ if $S_{\alpha} (\beta)=1$ }
 \\
 \hbox{SP} (T_{\omega_1 \alpha+\beta}) & \text{ if $S_{\alpha} (\beta) =0$ }
 \end{cases} \]
where $T_{ \omega_1 \alpha + \beta}$ just denotes the forcing which adds an $\omega_1$-branch through $T_{ \omega_1 \alpha + \beta}$, and SP$(T_{ \omega_1 \alpha + \beta})$ is the forcing which specializes $T_{ \omega_1 \alpha + \beta}$.

Then
\[ \forceP_{1,\alpha} = \prod_{\beta < \omega_1} \forceP^{\beta}_{1,\alpha} \] using finite support. Note that for every $\alpha < \omega_2$, $\forceP_{1,\alpha}$ has the ccc.

Finally we let
\[ \forceP^1 := \prod_{\alpha < \omega_2} \forceP_{1,\alpha} \]
again using finite support. The upshot of these manipulations is, that $\vec{S}$ is now a uniformly definable $\omega_2$-sequence of independent Suslin trees which has a second crucial feature which is not shared by $\vec{T}$, and {which we briefly want to sketch: Let }\[ W_2= W_1^{\forceP_0 \ast \forceP_1}\]{ and suppose that 
$\forceQ$ is a partial order of size $\aleph_1$ in $W_2$. There will be an $\gamma < \omega_2$ such that $\forceQ$ is already in the intermediate model $W^{(\prod_{\alpha < \gamma} \mathbb{C} ) \ast (\prod_{\alpha < \gamma}  \forceP_{1,\alpha} )}$ which is an inner model of $W_2$}.
{If we decide to force with $\forceQ$ over $W_2$ then the resulting generic extension $W_2^{\forceQ}$ can be written as}
\[W^{(\prod_{\alpha < \gamma} \mathbb{C} ) \ast (\prod_{\alpha < \gamma}  \forceP_{1,\alpha}) \ast ( \forceQ ) \ast (\prod_{\alpha \in [\gamma, \omega_2) } \mathbb{C} ) \ast ( \prod_{\alpha \in [\gamma, \omega_2)}  \forceP_{1,\alpha} )}.\] {As a result the Suslin trees which $\prod_{\alpha \in [\gamma,\omega_2)} \mathbb{C}$ creates are still Suslin in
$W_2^{\forceQ}$. This feature will be very useful when we try to force $\MA$ and simultaneously a $\Delta^1_4$-definable wellorder of the reals and the $\Sigma^1_n$-uniformization property.}

\begin{lemma}\label{definabilityofvecT}
The universe $W_2$ is a cardinal preserving generic extension of $W_0$, whose continuum is $\aleph_2^W$. Moreover
there is a $\Sigma_1(\omega_1)$-formula $\Xi(v_0,v_1,\omega_1)$ {such that for every} $\gamma < \omega_2$,

\[ W_2 \models \forall \beta < \omega_1 (\Xi (\beta, \gamma,\omega_1) \Leftrightarrow \beta \in S_{\gamma}) \]
 
Thus initial segments  of the sequence $\vec{S} $ are uniformly  $\Sigma_1(\omega_1)$-definable over $W_2$.
\end{lemma}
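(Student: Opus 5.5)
The plan is to treat the two assertions separately: first the cardinal arithmetic, then the definition of $\Xi$.

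\textbf{Cardinals and continuum.} Both $\forceP_0$ and $\forceP_1$ are finite support products. $\forceP_0$ is a product of Cohen forcings, hence ccc. For $\forceP_1$ I will invoke Lemma~\ref{ccc Coding}, applied in $W_1^{\forceP_0}$ to the independent sequence $\vec T$ and the function $f$ with $f(\omega_1\alpha+\beta)=S_\alpha(\beta)$. This requires $\vec T$ to remain an independent sequence of Suslin trees in $W_1^{\forceP_0}$. That holds because Cohen forcing is productively ccc: if a finite product of members of $\vec T$ is Suslin, then the product of Cohen forcing with that tree is ccc. We also need $\vec T$ independent in $W_1$, since $W_1$ is a ccc extension of $W_0$ by Knaster forcing followed by $\mathbb{D}(r_U)$ on the disjoint block $\vec T^0$; the trees of $\vec T$ outside $\vec T^0$ stay independent by the argument of Lemma~\ref{ccc Coding}.

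Thus $\forceP_0\ast\forceP_1$ is ccc, and so is $W_0\to W_2$, which is therefore cardinal preserving. For the continuum: $\forceP_0$ adds $\aleph_2$ Cohen reals, so $2^{\aleph_0}\ge\aleph_2$. Conversely, $W_0$ satisfies $2^{\aleph_0}=\aleph_2$, by item 6 (every real is coded by a triple below $\omega_2$), and the whole forcing is ccc of size $\aleph_2$. Counting nice names then gives $2^{\aleph_0}=\aleph_2^{\aleph_0}=\aleph_2$ in $W_2$.

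\textbf{The formula $\Xi$.} I will let $\Xi(\beta,\gamma,\omega_1)$ say: there is a transitive $\aleph_1$-sized $\ZFP$ model $N$ containing $\vec C$, $\vec T^0$, $\gamma$ and $\beta$ such that $N$ satisfies the following.
\begin{enumerate}
\item $N$ sees the full pattern on $\vec T^0$ and hence computes a real $r$.
\item Using $r$ and the coding $(\ast)$, $N$ identifies a pre-suitable $M\in N$ with $\omega_1\gamma+\beta<\omega_2^M$.
\item $N$ computes $\vec T$ from $M$ via $\unlhd_M$.
\item $N$ contains an $\omega_1$-branch through the tree $T_{\omega_1\gamma+\beta}$ so computed.
\end{enumerate}
This is visibly $\Sigma_1$ in $\omega_1$ together with the parameters $\vec C$ and $\vec T^0$. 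Both parameters are themselves $\Sigma_1(\omega_1)$-definable over $M_1|\omega_1$, and hence over $W_2$ by Lemma~\ref{lem:recover-m1-omega1}, since $\omega_1$ is preserved.

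Next I check correctness. By Lemma~\ref{sigma_1 set of suitable models}, applied to the ccc extension $W_2$ of $W_1$, the model $M$ is genuinely suitable and lies in $W_0$. The lemma on suitable models computing $\vec T$ then shows that $N$'s version of $T_{\omega_1\gamma+\beta}$ is correct.

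For the forward direction ($\beta\in S_\gamma\Rightarrow\Xi$), the generic branch added by $\forceP_1$ lies in $W_2$. Taking $N$ to be a sufficiently large elementary substructure of some $H(\omega_2)$, collapsed, yields the required witness. Here we use that a suitable $M$ with $\omega_1\gamma+\beta<M\cap\mathrm{Ord}$ exists by unboundedness of $P$, and that its code lies in $U$ by $(\ast)$.

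For the converse, suppose $\beta\notin S_\gamma$. Then $\forceP_1$ specialized $T_{\omega_1\gamma+\beta}$, so in $W_2$ the tree has no cofinal branch. Since $N$ is transitive, any branch in $N$ would be a real branch in $W_2$. So $\Xi$ fails.

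\textbf{Main obstacle.} The delicate step is ensuring that $\forceP_1$ adds no unintended branches. Concretely, we need $T_{\omega_1\gamma+\beta}$ to remain branchless, or at least special, whenever it was specialized, and also to acquire no branch when coded by $1$ elsewhere. Specialization is upward absolute, so it is enough. The branch case then reduces to independence, which is what Lemma~\ref{ccc Coding} supplies.
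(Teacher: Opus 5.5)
Your proof is correct in substance, but it follows a different route from the one the paper indicates. The paper only points to the proof of Lemma~\ref{Definability of Suslin trees}. There, a ccc extension adds no new triple-codes, so $P(\omega_1)^{W_0}$, its wellorder, and hence $\vec T$ are definable in $W_2$, and $S_\gamma$ is read off from which trees $T_{\omega_1\gamma+\beta}$ acquired branches.

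You localize instead. The pattern $r_U$ on $\vec T^0$, together with Lemma~\ref{sigma_1 set of suitable models}, singles out genuinely suitable models by a $\Sigma_1$ condition. The correctness lemma for suitable models then lets an $\aleph_1$-sized transitive $N$ compute the relevant tree.

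The paper's route needs less apparatus: it works over any ccc extension of $W_0$ and never uses $W_1$. But by itself it only gives definability, since clauses such as ``least coding triple'' and ``Suslin in $W_0$'' are not visibly $\Sigma_1(\omega_1)$. Your route is what actually delivers the stated complexity bound, which is why $W_1$ was built.

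When you eliminate the parameters $\vec C,\vec T^0$, use a genuinely $\Sigma_1(\omega_1)$ description of $M_1|\omega_1$ rather than the literal $\mathcal N=\bigcup\mathcal I$, which contains a universal clause. For example: ``a premouse of height $\omega_1$ cofinally many of whose initial segments belong to $\mathcal I$''. Membership in $\mathcal I$ is $\Pi^1_2$, hence $\Sigma_1(\omega_1)$ by Shoenfield.

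Two points need repair.

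First, the indexing $f(\omega_1\alpha+\beta)=S_\alpha(\beta)$ inherits a collision from the paper's definition of $\forceP_1$. For $\alpha=0$ and $\beta<\omega$, the index points to $T^0_\beta$, which in $W_1$ already has a branch or is special. These trees are no longer Suslin, so Lemma~\ref{ccc Coding} does not apply to them. The prescribed factor can genuinely fail the ccc: specializing a tree with a cofinal branch collapses $\omega_1$, and a special tree has uncountable antichains. Your sentence ``we also need $\vec T$ independent in $W_1$'' is false for exactly these trees. Shift the indexing to trees outside $\vec T^0$, e.g.\ $T_{\omega+\omega_1\alpha+\beta}$. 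With that change, your ccc computation and your reading of $\Xi$ go through.

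Second, in the forward direction, $(\ast)$ does not place the code of an arbitrary suitable model into $U$. Choose $M\in U'$ directly; the heights of these models are cofinal in $\eta=\omega_2$, so $r_M\in U$ by definition. Also write $M\cap\mathrm{Ord}$ instead of $\omega_2^M$, since $\aleph_1$ is the largest cardinal of a pre-suitable model.
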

\begin{proof}
The proof of the definability claim is almost identical to the proof of Lemma \ref{Definability of Suslin trees}, so we omit it. The rest is just a summary of the discussions above.
\end{proof}

\section{Coding forcings to alter a $\Sigma^1_4$-predicate}
We work now over $W_2$. Our goal is to define a $\Sigma^1_4$-predicate $\sigma(x)$ such that
\begin{enumerate}
\item $W_2 \models \lnot \exists x \sigma (x)$.
\item For an arbitrary real $x$, there is a forcing $\operatorname{Code} (x)$ which has size $\aleph_1$ and has the ccc and which has the effect that $W_2^{\operatorname{Code} (x)} \models \sigma (x)$, yet for every real $y \ne x$, 
$W_2^{\operatorname{Code} (x)} \models \lnot \sigma (y)$
\end{enumerate}

We shall define the coding forcing $\operatorname{Code} (x)$ along our desired $\Sigma^1_4$-formula $\sigma$. Assume that $x \in 2^{\omega}$ is an arbitrary real in $W_2$. 
In a first step we fix the first $\omega$-block $\vec{S}^1$ from $\vec{S}$ and define the forcing $\operatorname{Code} (x)$ relative to this first $\omega$-block of trees. This choice should decrease notation as we spare an additional ordinal in the definition of $\operatorname{Code}$. We add however that the coding forcing can be defined in exactly the same way for other $\omega$-blocks of Suslin trees from $\vec{S}.$ 

We use the same technique as in Lemma \ref{ccc Coding} to write $x$ into $\vec{S}^1$, i.e. we first let the first factor of $\operatorname{Code}(x)$ be the finitely supported product of factors defined as follows:
\begin{equation*}
\forceP_{n } = \begin{cases}
S^1_{n}  & \text{ if } x(n)=1 \\ Sp(S^1_{n}) & \text{ if } x(n)=0
\end{cases}
\end{equation*}

To form the second forcing we need to find a ``nice$"$ subset of $\omega_1$ which codes all the branches and specializing functions along with some suitable models.
Let $A$ be a suitable model which computes $\vec{T}^1$ correctly.
We collect the set $\mathcal{J}^{M_1}_{\omega_1}$, the relevant branches and specializing functions through elements of $\vec{T}^0$ which create the pattern which codes up $r_U$, the relevant reflecting sequences to define the suitable $A$ and write everything into one set $X \subset \omega_1$. 
Note that if $L_{\zeta}[X]$ is a $\ZFP$-model which contains $X \subset \omega_1$, we obtain that
\begin{align*}
L_{\zeta} [X] \models &n \in x \rightarrow S^1_{n} \text{ has an $\omega_1$-branch and } \\& 
n \notin x \rightarrow S^1_{n}  \text{ has a specializing function}
\end{align*}
where we define $\vec{T}^1$ and $\vec{S}^1$ inside $A \in L_{\zeta} [X]$ and $A$'s suitability is confirmed via $M_1 | \omega_1$, the $M_1 | \omega_1$-definable ladder system $\vec{C}$ and the $M_1 | \omega_1$-definable trees $\vec{T}^0$.

We first note that any transitive, $\aleph_1$-sized $\ZFP$ model $M$ which contains $X$ will satisfy
\begin{align*}
(M, \in,  \mathcal{J}^{M_1}_{\omega_1}) \models &``\text{Decoding X yields several objects namely} \\& \text{ a model $m$ and $m=\mathcal{J}^{ M_1}_{\omega_1}$}=\bigcup_{\mathcal{J}^{M_1}_{\eta} \in \mathcal{I}} \mathcal{J}^{M_1}_{\eta},  \\ & \text{ a suitable model $a$ which in turn computes trees $\vec{s^1}$ } \\& \text{some branches $\vec{b}$ and specializing functions $\vec{f}$ through $\vec{s^1}$},\\& \text{ such that for any $\eta< \omega_2$ such that $L_{\eta}[a][\vec{b}] [\vec{f}] \models \ZFP$} 	 \\&
			L_{\eta}[a][\vec{b}] [\vec{f}] \models  \forall n  ( n \in x \rightarrow S^1_{n}  \text{ has an $\omega_1$-branch}  \\&
		\qquad \qquad  	n \notin x \rightarrow S^1_{n}  \text{ has a specializing function})
			\end{align*}
In particular, this will be true for a $\ZFP +`` \aleph_1$ exists$"$ model of the form $(L_{\xi}[X],\in, \mathcal{J}^{M_1}_{\omega_1})$, $\xi < \aleph_2$ as we can always assume that the decoding function is absolute. 

We consider the club of countable structures \[ C:= \{ \eta < \omega_1 \, : \, \exists (M, \in, P) \prec (L_{\xi}[X],\in,\mathcal{J}^{M_1}_{\omega_1})( |M|=\aleph_0 \land \eta= \omega_1 \cap M) \}\]
If we assume that $(M,\in,P) \in C$, $M \cap \omega_1 = \eta$, then the model $m\in  M$ which satisfies the formula $m=\mathcal{J}^{M_1}_{\omega_1}$ locally in $M$, will also satisfy $m \in \mathcal{I}$ as seen from the outside, as it is the limit of reals which are elements of $\mathcal{I}$ and taking unions is absolute. So $m= \mathcal{J}^{M_1}_{\eta} \in \mathcal{I}$.

Consequently if $(N, \in)$ is an arbitrary countable transitive model of $\ZFC^-$ and ``$\aleph_1$ exists'', such that
$X\cap \omega_1^N \in N$ and $\omega_1^N \in C$ then also $\mathcal{J}^{M_1}_{\eta} \in N$ and $N$ will decode out of $X \cap \omega_1^N$ with the help of the ladder system and the almost disjoint family computed from $\mathcal{J}^{M_1}_{\eta}$ exactly what $(\bar{M},\in,\mathcal{J}^{M_1}_{\eta})$ decodes out of $X \cap \omega_1^N$, where the latter is the transitive collapse of  $(M,\in, P) \prec (L_{\xi}[X],\in, \mathcal{J}^{M_1}_{\omega_1})$. In particular, if we denote the $\Delta_1$-definable decoding functions with $dec_1,dec_2$, $dec_3$ and $dec_4$ respectively, then we obtain 
\begin{align*}
N \models \exists m_1 \, \exists \vec{c} \, \exists \vec{b}, \vec{f} (& dec_1(X\cap \omega_1^N)=m_1 \land dec_2(X \cap \omega_1^N)= a \\& \land dec_3(X \cap \omega_1^N)=\vec{b} \land dec_4(X\cap \omega_1^N)=\vec{f} \\& \text{ and for any $\ZFP$ model of the form
$L_{\eta}[X \cap \omega_1^N]$ we have that} \\&
			L_{\eta} [X \cap \omega_1^N] \models  \forall n ( n \in x\rightarrow S^1_{n}  \text{ has an $\omega_1$-branch}  \\&
		 \qquad \qquad \qquad  \quad \quad  	n \notin x\rightarrow S^1_{n}  \text{ is special})) ),
        \end{align*}
        where again the trees $S^1_n$ are computed inside $a$ with the help of the formula which defines $\vec{S}^1$ in $W_2$.
Further, as $dec_1(X \cap \omega_1^N)=m_1=\mathcal{J}^{M_1}_{\eta}$, we get that (from the outside, not from $N$'s perspective)
\[ dec_1(X \cap \omega_1^N) \in \mathcal{I}.\]

Now let the set $Y\subset \omega_1$ code the pair $(C, X)$ such that the odd entries of $Y$ should code $X$ and if $Y_0:=E(Y)$ where the latter is the set of even entries of $Y$ and $\{c_{\alpha} \, : \, \alpha < \omega_1\}$ is the enumeration of $C$ then
\begin{enumerate}
\item $E(Y) \cap \omega$ codes a well-ordering of type $c_0$.
\item $E(Y) \cap [\omega, c_0) = \emptyset$.
\item For all $\beta$, $E(Y) \cap [c_{\beta}, c_{\beta} + \omega)$ codes a well-ordering of type $c_{\beta+1}$.
\item For all $\beta$, $E(Y) \cap [c_{\beta}+\omega, c_{\beta+1})= \emptyset$.
\end{enumerate}
The set $Y\subset \omega_1$ codes $X$ but has the following additional property:

\begin{itemize}
\item[] Let $M$ be an arbitrary countable transitive model of $\ZFP + `` \aleph_1$ exists$"$ for which 
there is a $\mathcal{J}^{M_1}_{\eta} \in \mathcal{I}$ (again from the outside, not from $M$'s perspective) such that $\omega_1^M=\omega_1^{\mathcal{J}^{M_1}_{\eta}}$ and $\mathcal{J}^{M_1}_{\eta} \in M$. Assume that $Y \cap \omega_1^M \in M$ then $M$ can decode out of  $Y \cap \omega_1$ with the help of the almost disjoint family of $\mathcal{J}^{M_1}_{\omega_1}$ the following objects:
\begin{itemize}
\item a transitive model $m$ which $M$ believes to be of size $\aleph_1$,
\item a model $a$ which, $M$ believes, is suitable relative to the ladder system and the almost disjoint family of reals defined in $\mathcal{J}^{M_1}_{\eta}$ and which computes $\vec{s}^1$ using the same formula which computes the trees in $W_2$,
\item a set of branches $\vec{b}$ through $\vec{s}^1$.
\item and specializing functions $\vec{f}$ through elements of $\vec{s}^1$ such that for any $\ZFP+ ``\aleph_1$ exists$"$-model of the form $L_{\zeta} [a,\vec{b},\vec{f}]$:
\begin{align*}
L_{\zeta} [a,\vec{b},\vec{f} \models &\forall n ( n \in x \rightarrow s^1_{n}  \text{ has an $\omega_1$-branch}  \\&
		 \qquad   	n \notin x \rightarrow s^1_{n}  \text{ is special})) ).
\end{align*}

\end{itemize}
Moreover $m$ is an $M_1$ initial segment as seen from the outside, i.e. $m = \mathcal{J}^{M_1}_{\eta} \in \mathcal{I}$.
\end{itemize}
In the last step, we use almost disjoint coding forcing relative to the $\mathcal{J}^{M_1}_{\omega_1}$-definable almost disjoint family of reals to obtain a real $r_Y$ which codes our set $Y \subset \omega_1$. Thus we obtain that the following formula $\psi(x,r_Y)$ holds, where $\psi(x,r_Y)$ is defined as:
\begin{itemize}
\item[] Assume $M$ is a countable transitive model of $\ZFP+ ``\aleph_1$ exists$"$, and $r_Y \in M$. Assume that for $M$ 
there is a $\mathcal{J}^{M_1}_{\eta} \in \mathcal{I}$ such that $\omega_1^M=\omega_1^{\mathcal{J}^{M_1}_{\eta}}$ and $\mathcal{J}^{M_1}_{\eta} \in M$.  Assume further that $r_Y \in M$ then $M$, relative to the almost disjoint family of reals from $\mathcal{J}^{M_1}_{\eta}$, can decode out of  $r_Y$ the following
\begin{itemize}
\item a transitive model $m$ of $\ZFC^-$ and ``$\aleph_1$ exists$"$,
\item a model $a$ which, $M$ believes, is suitable, and which computes $\vec{s}^1$,
\item a set of branches $\vec{b}$ through $\vec{s}^1$.
\item and specializing functions $\vec{f}$ through elements of $\vec{s}^1$ such that for any $\ZFP+ ``\aleph_1$ exists$"$-model of the form $L_{\zeta} [a,\vec{b},\vec{f}]$ (computed in $m$):
\begin{align*}
L_{\zeta} [a,\vec{b},\vec{f} \models &\forall n ( n \in x \rightarrow s^1_{n}  \text{ has an $\omega_1$-branch}  \\&
		 \qquad   	n \notin x \rightarrow s^1_{n}  \text{ is special})) ).
\end{align*}

\end{itemize}
Moreover $m$ is an $M_1$ initial segment as seen from the outside, i.e. $m = \mathcal{J}^{M_1}_{\eta} \in \mathcal{I}$.
\end{itemize}
A straightforward calculation shows that the statement $\psi(x,r_Y)$ is of the form $ (\Sigma^1_3 \rightarrow \Pi^1_3)$, thus it is a $\Pi^1_3$-formula, and stating the existence of such a real $r_Y$ is $\Sigma^1_4$ and results in our desired $\Sigma^1_4$-formula $\sigma$:
\[ \sigma(x) \equiv \exists r \psi (x,r). \]

The existence of a real $r$ witnessing $\psi(x,r)$ is sufficient to conclude that $L[r]$ contains branches through $\aleph_1$-many trees from $\vec{S}$.
\begin{lemma}\label{definable well-order of reals determines real world}
Let $r$ be such that $\psi(x,r)$ is true.
Then, working inside $L[r]$, there is a suitable $A$ such that $A$ computes $\vec{S}^1$ and there are branches $\vec{b}$ and specializing functions $\vec{f}$ such that
\begin{align*}
\forall n (&n \in x \rightarrow S^1_{n} \text{ has an $\omega_1$-branch} \\& 
n \notin x \rightarrow S^1_n \text{ is special} )
\end{align*}
\end{lemma}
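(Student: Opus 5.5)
The plan is the standard reflection argument for almost disjoint codes localized via the club $C$ built into $Y$, adapted to the requirement that the relevant countable models see a genuine initial segment of $M_1$.

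\textbf{Step 1: recover $Y$ in $L[r]$.} Work in $L[r]$ and let $\omega_1=\omega_1^{L[r]}$. Decode from $r$, relative to the almost disjoint family read off from $\mathcal J^{M_1}_{\omega_1}$, the set $Y\subseteq\omega_1$ whose odd part is $X$ and whose even part $E(Y)$ codes the club $C$ through the well-ordering pattern. To make this decoding meaningful, first check that $L[r]$ contains $\mathcal J^{M_1}_{\omega_1^{L[r]}}$. I would obtain this from $\psi(x,r)$ itself: apply $\psi$ to countable transitive models $M\ni r$ with $\omega_1^M=\omega_1^{\mathcal J^{M_1}_\eta}$ and $\mathcal J^{M_1}_\eta\in M$. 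These exist in abundance, for instance as collapses of countable elementary substructures of sufficiently large $L_\theta[r,\mathcal J^{M_1}_{\omega_1}]$. The clause ``$m=\mathcal J^{M_1}_\eta\in\mathcal I$'' then pins down the decoded objects level by level, and unions are absolute.

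\textbf{Step 2: show every countable $\eta\in C$ gives correct local information.} Take a countable $N\prec L_\theta[r]$ with $N\cap\omega_1=\eta$. Its transitive collapse $\bar N$ satisfies the hypotheses of $\psi$, so $\bar N$ decodes from $r$ a model $m=\mathcal J^{M_1}_\eta$, a suitable $a$, trees $\vec s^{\,1}$, and branches and specializing functions $\vec b,\vec f$ realizing the pattern of $x$. The $E(Y)$-part of the code guarantees that the set of such $\eta$ is club, because each $\bar N$ computes the well-orders coded on $[c_\beta,c_\beta+\omega)$ and hence sees $\eta$ as a limit of $C$.

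\textbf{Step 3: reflect upward.} By elementarity, $L[r]$ satisfies the same decoding statement at $\omega_1$. Thus in $L[r]$ the set $X$ decodes to $\mathcal J^{M_1}_{\omega_1}$, a model $A$ that $L[r]$ believes is suitable and that computes $\vec S^1$ by the defining formula of Lemma~\ref{definabilityofvecT}, and branches and specializing functions realizing the pattern. The statement ``$L_\zeta[A,\vec b,\vec f]\models\ldots$'' is $\Delta_1$ in these parameters, so it transfers to $L[r]$.

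\textbf{Main obstacle.} The delicate point is that suitability, and the agreement between $A$'s computation of $\vec S^1$ and the true trees, are not internal to $L[r]$. I would handle this by arguing through the pattern on $\vec T^0$, which pins down $r_U$ and therefore the genuine suitable models, as in Lemma~\ref{sigma_1 set of suitable models}. The true $\mathcal J^{M_1}$ levels provide the correct $\vec C$ and $\vec T^0$. With these in hand, I would invoke the lemma that suitable models compute $\vec T$ correctly to identify $\vec s^{\,1}$ with $\vec S^1$.
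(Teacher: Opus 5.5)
Your Steps 2--3 and your handling of the main obstacle follow the paper's route. You collapse a countable elementary substructure and apply $\psi(x,r)$ there, which is legitimate because the collapsed segment $\mathcal J^{M_1}_\eta$ lies externally in $\mathcal I$. You then lift the decoding conclusion back up, and finish with correctness of the decoded $M_1|\omega_1$ plus upward absoluteness of branches and specializing functions. The paper phrases the lifting as a contradiction with an uncountable model rejecting $\psi$, and assembles the objects as unions along countable decoding models rather than by elementarity. Your account of why $\vec S^1$ is computed correctly is more explicit than the paper's one-line assertion: the pattern on $\vec T^0$ gives $r_U$, hence a genuinely suitable $A$, hence the correct $\vec T$. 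One step is still missing. The same argument is needed once more for the witnesses on the relevant $\vec T$-blocks: no tree acquires both a branch and a specializing function, and both are upward absolute. This matters because $\vec S^1$ is read off from that pattern, not from $A$ alone.

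The real gap is Step 1, and Step 2 depends on it: you need $\mathcal J^{M_1}_\eta\in\bar N$ for the collapse of $N\prec L_\theta[r]$. Your justification is circular. The models you feed into $\psi$ are collapses of substructures of $L_\theta[r,\mathcal J^{M_1}_{\omega_1}]$, so they contain $M_1$-segments by construction. That their decoded $m$'s union to $\mathcal J^{M_1}_{\omega_1}$ says nothing about constructibility from $r$. Nor can $\psi$ by itself help, since its hypothesis only concerns models that already contain an $M_1$-segment and its a.d. family.

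Worse, the target is unattainable in this setting. The forcing is small relative to the Woodin cardinal of $M_1$, which is exactly what the Hjorth argument uses, so every real, in particular $r$, has a sharp. Hence $\omega_1^{L[r]}$ is countable. But $\mathcal J^{M_1}_{\omega_1}\in L[r]$, or $Y\in L[r]$ via $E(Y)$, would make every countable ordinal countable in $L[r]$. With $\omega_1^{L[r]}<\omega_1$, the conclusion of Step 3 only concerns countable initial parts of the trees and says nothing about $S^1_n$.

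The repair is to drop Step 1 and run Steps 2--3 in $L_\theta[r,\mathcal J^{M_1}_{\omega_1}]$, which has the true $\omega_1$. This is what the paper's proof effectively does: its final model is $L_\zeta[M,A,\vec B,\vec F]$ with $M=M_1|\omega_1$, so ``inside $L[r]$'' has to be read with $M_1|\omega_1$ as an additional parameter. That is all the no-accidental-coding application needs, since it only uses upward absoluteness of the branches and specializing functions.
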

\begin{proof}
We note first that $\psi(x,r)$ must also be true (ignoring its statements involving $\mathcal{I}$) for models of uncountable size where we replace $\mathcal{J}^{M_1}_{\eta}$, in $\psi$, with $\mathcal{J}^{M_1}_{\omega_1}$. Indeed, if $(M, \in , \mathcal{J}^{M_1}_{\omega_1} )$ would be an uncountable, transitive model containing $r$ and $\mathcal{J}^{M_1}_{\omega_1}$ for which $\psi(x,r)$ is false, then we let $(\bar{N}, \in \mathcal{J}^{M_1}_{\eta} )$ be the transitive collapse of $N \prec M$, $r, \mathcal{J}^{M_1}_{\eta} \in \bar{N}$ and $\bar{N}$ would reject $\psi(x,r)$ as well, even though $\bar{N}$ is of the right form as externally $\mathcal{J}^{M_1}_{\eta} \in \mathcal{I}$, which gives us a contradiction.

But if $\psi(x,r)$ holds for arbitrarily large models $M$, it must be true in the universe $L[r]$. Indeed if some $\aleph_1$-sized  $\ZFP$-model of the form $L_{\zeta} [M,A, \vec{B}, \vec{F}]$, where $M,A, \vec{B}, \vec{F}$ are just the unions of the computations of $m,a, \vec{b}$ and $\vec{f}$ in suitable countable transitive models of increasing (with limit $\omega_1$) ordinal height, then first note that $M=M_1 | \omega_1$ and $L_{\zeta} [M,A, \vec{B}, \vec{F}]$ sees that there are branches $\vec{B}$ and specializing functions $\vec{F}$ such that 
\begin{align*}
 L_{\zeta} [M,A, \vec{B}, \vec{F}] \models  &\forall  n (n  \in x  \rightarrow S^1_{n}  \text{ has an $\omega_1$-branch}  \land \\&
		 \quad  	n \notin x \rightarrow S^1_{n}  \text{ is special})) .
\end{align*}
and $L_{\zeta} [M,A, \vec{B}, \vec{F}] $'s computation of $\vec{S^1}$ must be correct. As the existence of an $\omega_1$-branch through $S^1_{n}$, and the existence of a specializing function for $S^1_n$ is upwards absolute we obtain the assertion of the lemma.
\end{proof}

As an immediate consequence of the preceding lemma, the truth of the $\Sigma^1_4$-formula $\sigma$ has absolute consequences for the ambient universe: whenever $\sigma(x)$ holds, the characteristic function of $x$ is genuinely written into $\vec{S}^1$. This property ensures that our iteration does not inadvertently introduce spurious codes; that is, no unintended real $x$ will satisfy $\sigma(x)$.

Furthermore, we state again that while we initially defined $\operatorname{Code}(x)$ with respect to $\vec{S}^1$, this choice was arbitrary. We can generalize the definition of $\operatorname{Code}(x)$ to utilize $\vec{S}^{\eta}$ for any $\eta < \omega_2$.

\begin{definition}
Let $\eta < \omega_2$ and let $x$ be a real. The coding forcing at level $\eta$ is defined as:
\[ \operatorname{Code}(x,\eta) := \left( \prod_{n \in x} S^{\eta}_{n} \times \prod_{n \notin x} Sp(S^{\eta}_n) \right) \ast \mathbb{A}(Y) \]
In this generalized form, $\operatorname{Code}(x,\eta)$ operates exactly as the previously defined coding forcing, but it utilizes the $\eta$-th $\omega$-block of $\vec{S}$ instead of $\vec{S}^1$.
\end{definition}

Moving forward, we adopt the convention that whenever a coding forcing $\operatorname{Code}(x,\eta)$ is used as a factor in an iteration, $\eta$ is always chosen to be the least ordinal such that no forcing of the form $\operatorname{Code}(y,\eta)$ has appeared in any prior stage. In other words, the iteration uses the $\omega$-blocks of $\vec{S}$ in consecutive order, leaving no unused gaps.

\section{Appropriate $\Sigma^1_n$-predicates}\label{sec:appropriate-sigma-predicates}

We now fix the higher coding predicates used in the final iteration.  Let
\[
\operatorname{Coded}(z)
\]
abbreviate the $\Sigma^1_4$ assertion that the real $z$ is coded into the sequence
$\vec S$ in the sense of the preceding section.  We shall also write
``$z$ is not coded'' for $\neg\operatorname{Coded}(z)$.  The point is that the sign of the final coding statement alternates with the parity of the desired projective level.

For $k\geq 4$ put $\ell=k-4$.  If $z$ is a real, define $\Phi^k(z)$ by
\[
\Phi^k(z) \quad\Longleftrightarrow\quad
\exists a_0\, Q_1 a_1\, Q_2 a_2\cdots Q_\ell a_\ell\;
\Theta_\ell(z,a_0,\ldots,a_\ell),
\]
where $Q_i$ is $\forall$ for odd $i$ and $\exists$ for even $i$, and where
\[
\Theta_\ell(z,a_0,\ldots,a_\ell)=
\begin{cases}
\operatorname{Coded}(z,a_0,\ldots,a_\ell), & \text{if $\ell$ is even},\\
\neg\operatorname{Coded}(z,a_0,\ldots,a_\ell), & \text{if $\ell$ is odd}.
\end{cases}
\]
Here, as throughout, the expression $(z,a_0,\ldots,a_\ell)$ denotes the fixed recursive real coding the tuple.  In applications $z$ will itself often be a code for a tuple, for instance $z=(x,y,m)$.

Thus the first few cases are
\begin{align*}
\Phi^4(x,y,m) &\Longleftrightarrow \exists a_0\; \operatorname{Coded}(x,y,m,a_0),\\
\Phi^5(x,y,m) &\Longleftrightarrow \exists a_0\forall a_1\; \neg\operatorname{Coded}(x,y,m,a_0,a_1),\\
\Phi^6(x,y,m) &\Longleftrightarrow \exists a_0\forall a_1\exists a_2\; \operatorname{Coded}(x,y,m,a_0,a_1,a_2),\\
\Phi^7(x,y,m) &\Longleftrightarrow \exists a_0\forall a_1\exists a_2\forall a_3\;
\neg\operatorname{Coded}(x,y,m,a_0,a_1,a_2,a_3).
\end{align*}
Since $\operatorname{Coded}$ is $\Sigma^1_4$, the displayed definition gives exactly a $\Sigma^1_k$ predicate.  The cases $k=2,3$ are dealt with by the earlier $\Sigma^1_2$ and $\Sigma^1_3$ uniformization arguments; the present predicates are the ones needed from level $4$ upward.

The following observation is the only feature of these predicates used later.

\begin{lemma}\label{lemma:higher-coding-predicates}
Let $k\geq 4$, and let $z$ be a real in $W_2$.  There is a ccc iteration of the coding forcings, denoted by $\operatorname{Code}^k(z)$, such that whenever $G\subseteq\operatorname{Code}^k(z)$ is generic over $W_2$,
\[
W_2[G]\models \Phi^k(z),
\]
and for every real $z'\neq z$ from the relevant ground model,
\[
W_2[G]\models \neg\Phi^k(z').
\]
Moreover the same assertion remains true over any intermediate extension obtained by the construction, provided the next unused $\omega$-blocks of $\vec S$ are used.
\end{lemma}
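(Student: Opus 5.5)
The plan is to reduce $\Phi^k$ to a question about \emph{which} tuples end up in the extension of $\operatorname{Coded}$, and then to force that extension to be exactly a prescribed, simply defined set. Write $\ell=k-4$ and use $0$ as a distinguished real. Let $D_z$ be the set of tuples $(w,a_0,\ldots,a_\ell)$ given as follows.
\begin{itemize}
\item If $\ell$ is even: $(w,a_0,\ldots,a_\ell)\in D_z$ iff $w=z$ and $a_0=0$.
\item If $\ell$ is odd: $(w,a_0,\ldots,a_\ell)\in D_z$ iff $w\neq z$ or $a_0\neq 0$.
\end{itemize}
This choice suffices: if in the final model $\operatorname{Coded}$ holds of a tuple exactly when the tuple lies in $D_z$, then $\Phi^k(z)$ holds and $\Phi^k(z')$ fails for $z'\neq z$.

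In the even case, take $a_0=0$. Every later existential quantifier can then be answered arbitrarily, since every tuple starting with $(z,0)$ is coded. For $z'\neq z$ no tuple beginning with $z'$ is coded, so $\neg\Phi^k(z')$ holds, because the innermost matrix $\operatorname{Coded}$ is always false.

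In the odd case, take $a_0=0$. No tuple starting with $(z,0)$ is coded, so the matrix $\neg\operatorname{Coded}$ holds under all choices. For $z'\neq z$, every tuple starting with $z'$ is coded, so the matrix always fails and $\Phi^k(z')$ is false.

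The point of this choice is that $D_z$ never requires genuine dependence on the inner quantifiers.

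Next, define $\operatorname{Code}^k(z)$ as a finite support iteration of length $\omega_2$ of forcings $\operatorname{Code}(t,\eta)$ for $t\in D_z$. Blocks of $\vec S$ are used consecutively, as in the convention following the definition of $\operatorname{Code}(x,\eta)$. A bookkeeping function guarantees that every tuple in $D_z$ built from reals of the final model is handled at some stage; in the even case only $\omega_2$ many such tuples arise. Since each factor has size $\aleph_1$, the whole iteration is ccc, and every real of the final model already appears at an intermediate stage, so the bookkeeping catches it. For the chain condition, each $\operatorname{Code}(t,\eta)$ consists of a finite support product of branch and specialization forcings on fresh trees of $\vec S$, followed by a Knaster almost disjoint forcing. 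The argument of Lemma~\ref{ccc Coding} shows that the blocks of $\vec S$ not yet used remain an independent family of Suslin trees at every stage. Hence each factor is ccc, and finite support iterations of ccc forcings are ccc. The ``moreover'' clause comes for free: the same construction, run over an intermediate model using the next unused blocks, gives the same conclusions.

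The positive direction is then immediate. Each $\operatorname{Code}(t,\eta)$ forces $\operatorname{Coded}(t)$ as in the preceding section. The witness is $\Pi^1_3$ and therefore upward absolute to the later ccc extensions, granted the preservation convention for $\mathcal I$ in Paragraph~\ref{par:steel-preservation-convention}.

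The main obstacle is the negative direction: no tuple outside $D_z$ may become coded. Suppose $\operatorname{Coded}(t)$ holds in the final model. By Lemma~\ref{definable well-order of reals determines real world}, some $\omega$-block $\vec S^\eta$ then genuinely carries the pattern of $t$: every tree in the block is either branched or specialized, according to $t$. I would argue the following.
\begin{itemize}
\item Blocks never touched by the iteration stay independent Suslin trees, by the Lemma~\ref{ccc Coding} argument applied to the final model. This uses the Cohen-tail feature of $W_2$ described before Lemma~\ref{definabilityofvecT}.
\item A touched block $\vec S^\eta$ was used for exactly one tuple $t_\eta\in D_z$. Each of its trees was destroyed in only one of the two mutually exclusive ways, since forcing a branch does not specialize the tree and specialization adds no branch.
\end{itemize}
Hence the pattern on $\vec S^\eta$ reads off $t_\eta$ uniquely, so $t=t_\eta\in D_z$. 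The delicate point is that $\omega_1$-branches and specializing functions could be added \emph{accidentally} by later factors of the iteration. I expect to handle this by showing that the remaining tail of the iteration is ccc over the model in which that tree is still Suslin, again via the product rearrangement of Lemma~\ref{ccc Coding}.
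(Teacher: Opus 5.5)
Your argument is correct at the level of detail the paper itself gives, but it is organized differently. The paper proceeds by induction on $k$. It chooses witnesses quantifier by quantifier, and at the terminal matrix it either writes the resulting tuple into a fresh block or deliberately leaves it unwritten, according to parity.

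You instead fix once and for all a saturated target set $D_z$: every tuple with prefix $(z,0)$ in the even case, and every tuple except those in the odd case. This makes the matrix constant along all inner quantifiers, so the truth values of $\Phi^k(z)$ and $\Phi^k(z')$ are read off from the first two coordinates alone. The whole verification then reduces to one statement: the extension of $\operatorname{Coded}$ in the final model is exactly $D_z$. That follows from independence of the unused blocks together with the mutual exclusivity of branches and specializations.

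This is more transparent than an induction on quantifier depth and needs no witness bookkeeping. The price is that it is very wasteful: in the odd case you code essentially every tuple of the given arity. That is harmless for the lemma as stated. However, the module could not be run inside the final iteration alongside the wellorder codes and the uniformization markers, which share the predicate $\operatorname{Coded}$. It also does not adapt to Lemma~\ref{lemma:even-marker-verification}, where the coded set must genuinely depend on the inner witnesses. The paper's witness-by-witness pattern is the one that carries over there.

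Three smaller corrections.
\begin{enumerate}
\item The ``moreover'' clause is not free. Suppose an intermediate model already contains a coded real that happens to code a pair $(z',a_0)$ with $z'\neq z$, for instance a wellorder code. Then $\Phi^4(z')$ already holds there and persists. So the clause tacitly needs that no tuple of the relevant shape has been coded before; the paper is equally silent on this.
\item ``$\Pi^1_3$, hence upward absolute'' is not a valid inference in general. Persistence of $\psi(t,r_Y)$ comes either from $\Sigma^1_3$ generic absoluteness for small extensions of $M_1$ via $T_2$ (Corollary~\ref{cor:t2-sigma13-absoluteness}), or directly from the club coded into the even part of $Y$.
\item Your ``delicate point'' is already settled by your own two bullets, and the Cohen-tail structure of $W_2$ is not needed here, since only coding forcings occur. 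On a touched block, mutual exclusivity means no later forcing can change the pattern. On untouched blocks, the finite-support product argument alone keeps the trees independent and Suslin.
\end{enumerate}
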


\begin{proof}
This is the usual induction on $k$.  At the terminal stage one either writes the required tuple into the next unused block of $\vec S$, or deliberately leaves it unwritten, depending on the parity specified above.  The preceding no-accidental-coding argument shows that the only tuples which become coded are the tuples intentionally written.  At successor steps we iterate the corresponding coding forcings, using fresh blocks of $\vec S$ at each stage.  The forcings are ccc and of size $\aleph_1$, and the freshness convention ensures that no later step changes the truth value of an already decided terminal coding statement.  This gives the displayed equivalence for $z$ and prevents any $z'\neq z$ from satisfying the same predicate accidentally.
\end{proof}

\section{Forcing $\mathsf{MA}$ and global $\Sigma$-uniformization and a definable wellorder}\label{sec:final-iteration}

We now perform the final forcing over $W_2$.  The forcing is a finite-support iteration of length $\omega_2$ whose nontrivial stages have three roles: they add the diagonal ccc forcing needed for $\mathsf{MA}$, they code the comparisons which yield the definable wellorder of the reals, and they carry out the global $\Sigma$-uniformization construction.  The diagonal form of the $\mathsf{MA}$ stages is used only to preserve the unused tail of the sequence $\vec S$.

Before starting the final iteration we make one harmless normalization which will be used only for the compression of long lists of reals into single reals.  Fix, in a part of the construction disjoint from the Suslin-tree coding apparatus, an almost disjoint family
\[
   \mathcal H=\langle h_\xi:\xi<\omega_1\rangle
\]
of reals.  If $A\subseteq\omega_1$, let $\mathbb A_{\mathcal H}(A)$ be the usual almost disjoint coding forcing which adds a real $r_A$ such that, for all $\xi<\omega_1$,
\[
   \xi\in A \quad\Longleftrightarrow\quad |r_A\cap h_\xi|=\omega .
\]
Equivalently, one may use the dual convention with finite intersections for membership; the choice is immaterial.  We use a finite-support bookkeeping iteration of such forcings, relative to $\mathcal H$, to code all subsets of $\omega_1$ which occur in the relevant intermediate extensions into reals.  This auxiliary forcing is completely separate from the forcings which read or destroy the trees in $\vec S$.

\begin{lemma}\label{lemma:H-normalization}
We may assume, before the final iteration starts, that
\[
   2^{\aleph_0}=2^{\aleph_1}=\aleph_2.
\]
Moreover this preliminary normalization preserves all trees in the sequence $\vec S$ and all finite products of unused blocks of $\vec S$.
\end{lemma}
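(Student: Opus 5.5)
We first check what $W_2$ already gives us, and then handle what is missing with a short preliminary ccc iteration. By Lemma \ref{definabilityofvecT}, $W_2$ is a cardinal preserving extension of $W_0$ with continuum $\aleph_2$, so $2^{\aleph_0}=\aleph_2$ holds already. The remaining task is $2^{\aleph_1}=\aleph_2$. In $W_0$, $\aleph_2=\eta$ is the inaccessible collapsed by an $\eta$-length countable support iteration of forcings of size $<\eta$ which are proper with the $\aleph_2$-c.c. A standard counting of nice names along the iteration then gives $2^{\aleph_1}=\aleph_2$ in $W_0$. Every $\Delta$-system argument and name count passes through ccc forcings of size $\aleph_2$, and $W_1$ and $W_2$ are obtained from $W_0$ by such forcings: $\mathbb A(U)$ has size $\eta$, and $\mathbb D(r_U)$, $\forceP_0$ and $\forceP^1$ each have size $\aleph_2$. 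Counting nice names for subsets of $\omega_1$ then gives at most $(\aleph_2^{\aleph_0})^{\aleph_1}=\aleph_2$ of them, using $\aleph_2^{\aleph_1}=\aleph_2$ in $W_0$. So $W_2\models 2^{\aleph_1}=\aleph_2$ as well.

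If one wants additionally that every subset of $\omega_1$ is coded by a real relative to $\mathcal H$, we will carry out a finite support iteration of length $\omega_2$ of the forcings $\mathbb A_{\mathcal H}(A)$. A bookkeeping function lists all names for subsets of $\omega_1$; there are only $\aleph_2$ many by the previous paragraph and the $\aleph_2$-c.c. Each iterand is Knaster, so the iteration is Knaster and has size $\aleph_2$, and the name count above shows that $2^{\aleph_0}=2^{\aleph_1}=\aleph_2$ survives.

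The main point is preserving $\vec S$, and here the key observation is that Knaster forcings preserve Suslin trees. If $\mathbb Q$ is Knaster and $T$ is Suslin, then $T\times\mathbb Q$ is ccc, since a product of a ccc poset with a Knaster poset is ccc. Hence $\mathbb Q$ forces that $T$ is still ccc, i.e. still Suslin. We apply this to each finite product $\prod_{i\in e}S_i$ of trees from $\vec S$; such a product is Suslin in $W_2$ by independence. The whole preliminary iteration is Knaster, since finite support iterations of Knaster posets are Knaster. Therefore every finite product of trees in $\vec S$, in particular every finite product from unused blocks, remains Suslin. This settles the second sentence of the lemma.

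I expect the only delicate step to be checking that the preliminary coding does not interfere with the decoding apparatus, i.e. that no coding via $\mathcal H$ accidentally creates a branch or specializing function for a tree of $\vec S$. This is exactly the Knaster preservation just described. We also need the extension to be ccc over $W_1$, so that Lemma \ref{sigma_1 set of suitable models} and the definability of $\vec S$ from Lemma \ref{definabilityofvecT} still apply. Finally, $\mathcal H$ should be chosen disjoint from (in fact, almost disjoint from and definable independently of) the family $F$. This keeps the $\mathbb A(U)$-code $r_U$ and its $\Sigma_1$-decoding unaffected.
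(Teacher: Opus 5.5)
Your proof is correct, but it reaches both halves of the lemma by a different route than the paper.

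\textbf{Cardinal arithmetic.} The paper runs the $\mathcal H$-coding iteration, gets an injection $\mathcal P(\omega_1)\to 2^\omega$ from the codes, and bounds the number of reals by $\aleph_2$ via the ``background GCH''. You instead note that $W_0\models 2^{\aleph_1}=\eta=\aleph_2$, and that $W_2$ is a ccc extension of $W_0$ of size $\aleph_2$. A nice-name count then gives $2^{\aleph_0}=2^{\aleph_1}=\aleph_2$ in $W_2$ with no normalization at all. The fact $W_0\models 2^{\aleph_1}=\aleph_2$ comes from the $\eta$-c.c.\ of the countable support iteration over $M_1$; it is a property of the ``standard'' iteration, not one of the six listed properties of $W_0$, so it is worth stating as such.

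Your route makes explicit a bound that the paper's argument tacitly needs anyway. The bookkeeping can code every subset of $\omega_1$ in $\omega_2$ steps only if there are just $\aleph_2$ of them. It also shows the $\mathcal H$-coding is superfluous for the equations. What the paper's route buys in addition is that every subset of $\omega_1$ is actually coded by a real. However, only the equations are used later, in Lemma~\ref{lemma:compression-pi-alpha}.

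\textbf{Suslin tree preservation.} The paper shows that each $\sigma$-centered iterand preserves the relevant finite products. It then argues by induction along the iteration, with a separate and rather sketchy limit-stage reflection argument. You use three standard facts: $\sigma$-centered posets are Knaster, finite support iterations of Knaster posets are Knaster, and a Knaster poset times a ccc poset is ccc. This preserves every Suslin tree of $W_2$, hence every finite product of trees from $\vec S$, in one step. It makes the limit-stage analysis unnecessary.
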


\begin{proof}
The forcing $\mathbb A_{\mathcal H}(A)$ is $\sigma$-centered: conditions with the same finite binary stem are compatible, since the finite restraint sets may simply be united.  Hence it is ccc and, more importantly for us, it preserves every Suslin tree.  Indeed, if $T$ is Suslin and $\mathbb Q$ is $\sigma$-centered, then $\mathbb Q\times T$ is ccc: on an uncountable set of conditions in the product, thin to one centered piece of $\mathbb Q$; incompatibility in the product would then have to come from the $T$-coordinates, contradicting the ccc of $T$.

The same argument applies with $T$ replaced by any finite tree product built from unused members of $\vec S$.  We now run the $\mathcal H$-almost-disjoint coding iteration by finite support, and prove by induction on the stage that each finite product of still unused trees from $\vec S$ remains Suslin.  At a successor step the current iterand is a $\sigma$-centered almost-disjoint coding forcing, and hence preserves the relevant finite products by the preceding paragraph.  At a limit stage the standard finite-support preservation argument applies: if an antichain in such a finite product appeared after the limit iteration, the finite supports of the names and the ccc of the initial segments would reflect an uncountable antichain to some earlier stage.  This contradicts the induction hypothesis.  Thus the auxiliary normalization is Suslin-tree preserving, and in particular it preserves the independence of all unused $\vec S$-blocks.

The bookkeeping for the $\mathcal H$-coding iteration codes every subset of $\omega_1$ appearing in the resulting extension by a real.  Hence in the normalized model there is an injection $\mathcal P(\omega_1)\to 2^\omega$.  The reverse injection is trivial, so $2^{\aleph_1}=2^{\aleph_0}$.  The iteration has size $\omega_2$, adds $\omega_2$ many distinct reals, and, starting from the background GCH cardinal arithmetic of the mouse construction, adds no more than $\omega_2$ reals.  Thus both cardinals are equal to $\omega_2$.

The auxiliary codes introduced through $\mathcal H$ are irrelevant for the later coding predicate.  Throughout the paper, $\operatorname{Coded}$ refers only to the pattern written into the dedicated Suslin-tree blocks of $\vec S$, not to the almost disjoint $\mathcal H$-codes.  Thus the normalization may add many harmless almost-disjoint codes, but it does not create an unintended $\vec S$-code.
\end{proof}

After this normalization we keep the notation $W_2$ for the resulting model.

\begin{lemma}\label{lemma:cardinal-equations-at-stages}
Let $\bigl(\forceP_\beta,\dot{\forceQ}_\beta:\beta<\omega_2\bigr)$ be the final ccc iteration defined below.  Then for every $\beta\leq\omega_2$,
\[
   W_2[G_\beta]\models 2^{\aleph_0}=2^{\aleph_1}=\aleph_2.
\]
In particular the equations hold at every stage at which a uniformization request is evaluated.
\end{lemma}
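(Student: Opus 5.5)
We prove the equations by induction on $\beta\leq\omega_2$, counting nice names. The ground case $\beta=0$ is Lemma~\ref{lemma:H-normalization}. The inequalities $2^{\aleph_0}\leq 2^{\aleph_1}$ and $\aleph_2\leq 2^{\aleph_0}$ are automatic at every stage. The first holds trivially, and for the second we note that $W_2\subseteq W_2[G_\beta]$, that $W_2$ already has $\aleph_2$ many reals, and that the iteration is ccc. Hence $\aleph_1$ and $\aleph_2$ are preserved, so $\aleph_2^{W_2[G_\beta]}=\aleph_2^{W_2}$. It therefore suffices to show $2^{\aleph_1}\leq\aleph_2$ in $W_2[G_\beta]$.

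The key bookkeeping fact I would establish first is that every iterand $\dot{\forceQ}_\beta$ is forced to have size at most $\aleph_1$. There are three kinds of iterand.
\begin{itemize}
\item The diagonal $\mathsf{MA}$ stages force with ccc posets of size $\aleph_1$ handed over by the bookkeeping, which is standard for $\MA$.
\item The wellorder-comparison stages use coding forcings $\operatorname{Code}(x,\eta)$. These have size $\aleph_1$: a finite-support product of $\omega$ many trees or specialization forcings of size $\aleph_1$, followed by $\mathbb A(Y)$ for $Y\subseteq\omega_1$, which has size $\aleph_1$.
\item The uniformization stages use $\operatorname{Code}^k(z)$. By Lemma~\ref{lemma:higher-coding-predicates} these are ccc iterations of such coding forcings, and each has length at most $\omega_1$ in any single stage, so they again have size $\aleph_1$.
\end{itemize}
Given this, an induction shows that $\forceP_\beta$ has a dense subset of size at most $\aleph_2$ for every $\beta\leq\omega_2$. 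At successor steps we multiply by names for $\aleph_1$-sized posets, and there are at most $\aleph_2^{\aleph_1}=\aleph_2$ nice such names by the induction hypothesis. At limits we use finite support, which gives a union of $\leq\aleph_2$ many sets each of size $\leq\aleph_2$.

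Now fix $\beta\leq\omega_2$. Every subset of $\omega_1$ in $W_2[G_\beta]$ has a nice $\forceP_\beta$-name. Such a name is determined by $\omega_1$ countable antichains drawn from a dense set of size $\aleph_2$, so there are at most $(\aleph_2^{\aleph_0})^{\aleph_1}=\aleph_2^{\aleph_1}$ of them, computed in $W_2$. In $W_2$ we have $2^{\aleph_1}=\aleph_2$, hence $\aleph_2^{\aleph_1}=(2^{\aleph_1})^{\aleph_1}=2^{\aleph_1}=\aleph_2$. Therefore $2^{\aleph_1}\leq\aleph_2$ in $W_2[G_\beta]$, and combined with the lower bounds all three cardinals coincide. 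The final clause of the statement follows because uniformization requests are evaluated at intermediate stages $\beta<\omega_2$, which are covered by the induction.

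The main obstacle is the size bound on the iterands. I expect it to be harmless for the $\MA$ and single-coding stages. For the uniformization stages we must check that the bookkeeping never calls for a forcing of size $\aleph_2$. If it did, for example by coding all reals of the current model at once, I would split that stage into $\omega_2$ many single-request stages distributed cofinally by the bookkeeping. This keeps every iterand of size $\aleph_1$ without changing the final model's behaviour.
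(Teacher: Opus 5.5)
Your argument is correct and is essentially the paper's: a ccc nice-name count over $W_2$ using $\aleph_2^{\aleph_1}=(2^{\aleph_1})^{\aleph_1}=2^{\aleph_1}=\aleph_2$, with the ground-model reals supplying the lower bound. The paper splits into $\beta<\omega_2$, where it uses a dense set of size $\aleph_1$, and $\beta=\omega_2$, where it uses size $\aleph_2$; your uniform $\aleph_2$ bound already suffices for every $\beta\leq\omega_2$.

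One correction: at a uniformization stage the iterand is a single forcing $\operatorname{Code}(\#\psi,x,y^\alpha,a_0^\alpha,b_1,\ldots)$ as in the coding rules, not $\operatorname{Code}^k(z)$. Lemma~\ref{lemma:higher-coding-predicates} says nothing about the length of the latter. So the size bound at these stages is exactly the one you gave for the wellorder stages, and no splitting of stages is needed.
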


\begin{proof}
For $\beta<\omega_2$, the forcing $\forceP_\beta$ has cardinality at most $\aleph_1$.  Since the ground model satisfies $2^{\aleph_0}=2^{\aleph_1}=\aleph_2$, the usual nice-name calculation for ccc forcing gives at most $\aleph_2$ names for reals and at most $\aleph_2$ names for subsets of $\omega_1$.  The ground-model reals and subsets of $\omega_1$ already give the lower bounds.  Hence the equations remain true in $W_2[G_\beta]$.

For $\beta=\omega_2$, the forcing has size $\aleph_2$.  Again using ccc nice names, the number of names for subsets of $\omega_1$ is bounded by
\[
   (\aleph_2)^{\aleph_1}=(2^{\aleph_1})^{\aleph_1}=2^{\aleph_1}=\aleph_2,
\]
and the same bound applies to names for reals.  Since the iteration has added $\aleph_2$ many reals, the lower bound is $\aleph_2$, and the displayed equations hold in the final model as well.
\end{proof}

\subsection{Definition of the final iteration}

Fix a bookkeeping function
\[
F:\omega_2\longrightarrow H(\omega_2)
\]
such that every relevant object, name, and tuple of names occurs unboundedly often.  We define by induction a finite-support iteration
\[
\bigl((\forceP_\beta,\dot{\forceQ}_\beta):\beta<\omega_2\bigr)
\]
of ccc forcings over $W_2$.  Let $G_\beta$ denote the $\forceP_\beta$-generic filter.  At stage $\beta$ we use the least still unused $\omega$-block of $\vec S$ whenever a coding forcing is required.  Thus, after stage $\beta$, the blocks already used for coding form an initial segment of the block sequence, while the remaining blocks still form an independent sequence of Suslin trees in the current extension.

The value of $F(\beta)$ is interpreted in one of the following ways.  If it is not of one of these forms, we let $\dot{\forceQ}_\beta$ be trivial.

\begin{enumerate}
\item[\textup{(U)}] \textbf{Uniformization request.}  The value $F(\beta)$ codes an $\omega$-tuple of ordinals which, in the usual way, determines a tuple of $\forceP_\beta$-names
\[
(\dot x,\dot m,\dot\alpha,\dot b_1,\dot b_2,\ldots).
\]
Here $\dot x$ and the $\dot b_i$ are names for reals, $\dot m$ is a name for a natural number coding a projective formula, and $\dot\alpha$ is a name for an ordinal below the continuum.  The precise factor $\dot{\forceQ}_\beta$ is then defined by the four uniformization cases in Subsection~\ref{subsec:forcing-global-sigma-uniformization} below.

\item[\textup{(W)}] \textbf{Wellorder coding request.}  The value $F(\beta)$ is a pair $(\dot x_0,
\dot x_1)$ of $\forceP_\beta$-names for reals.  Let $x_i=\dot x_i^{G_\beta}$, and let $\sigma_i$ be the $<_{M_1}$-least $\forceP_\beta$-name whose $G_\beta$-value is $x_i$.  After interchanging the indices if necessary, assume $\sigma_0<_{M_1}\sigma_1$.  We then put
\[
\dot{\forceQ}_\beta^{G_\beta}=\operatorname{Code}((0,x_0,x_1),\eta_\beta),
\]
where $\eta_\beta$ is the least unused block index.  The initial coordinate $0$ only marks this as a wellorder code.

\item[\textup{(M)}] \textbf{Diagonal $\mathsf{MA}$ request.}  The value $F(\beta)$ is a pair $(\delta,\gamma)$.  In $W_2[G_\delta]$, let $\mathbb B$ be the $\gamma$-th partial order of size $\aleph_1$, provided that $\mathbb B$ is still ccc in $W_2[G_\beta]$.  The forcing $\forceP_\beta\ast\mathbb B$ has a dense subforcing of size $\aleph_1$ consisting of fully decided conditions, and therefore can be coded as a subset of $\omega_1$ in $W_2$.  Since $W_2=W_1[H_{\omega_2}]$, there is some $\nu<\omega_2$ such that a forcing equivalent to this dense subforcing belongs to $W_1[H_\nu]$.  If $\nu\leq\beta$, we set
\[
\dot{\forceQ}_\beta^{G_\beta}=\mathbb B;
\]
otherwise we force trivially at stage $\beta$.
\end{enumerate}

The point of the condition $\nu\leq\beta$ in the $\mathsf{MA}$ case is that the unused tail of the $\vec S$-sequence remains genuinely generic over the model in which the side forcing $\mathbb B$ has already appeared.  This prevents the $\mathsf{MA}$ stages from creating unintended coding patterns.

\begin{lemma}\label{lemma:tail-preservation-ma-stages}
Suppose stage $\beta$ is a nontrivial diagonal $\mathsf{MA}$ stage, so that $\dot{\forceQ}_\beta^{G_\beta}=\mathbb B$ is chosen as in \textup{(M)} with witness $\nu\leq\beta$.  Then every unused block $\vec S^{\zeta}$, $\zeta\geq\beta$, remains an independent block of Suslin trees in $W_2[G_{\beta+1}]$.
\end{lemma}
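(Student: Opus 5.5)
We plan to reduce the lemma to the genericity of the Cohen reals which generate the unused trees, using the rearrangement of $W_2$ sketched before Lemma~\ref{definabilityofvecT}.

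First, we isolate the relevant part of $W_2[G_{\beta+1}]$. By the choice of $\nu\leq\beta$ in \textup{(M)}, the forcing $\forceP_\beta\ast\mathbb B$ is equivalent to a forcing of size $\aleph_1$ lying in $W_1[H_\nu]$. Here $H_\nu$ denotes the restriction of $\forceP_0\ast\forceP^1$ to coordinates below $\nu$. We therefore write
\[
W_2[G_{\beta+1}]=W_1[H_\nu][K][H^{\mathrm{tail}}],
\]
where $K$ is generic for this size-$\aleph_1$ ccc forcing over $W_1[H_\nu]$. The factor $H^{\mathrm{tail}}$ is the product of the Cohen forcing on coordinates in $[\nu,\omega_2)$ with the corresponding factors $\forceP_{1,\alpha}$. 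Since all factors have finite support and the ccc is preserved, the products commute. Consequently the Cohen reals $c_\alpha$ with $\alpha\geq\nu$ are mutually generic over $W_1[H_\nu][K]$. This rearrangement is the main obstacle. We must justify that the $\vec S$-coding factors $\forceP_{1,\alpha}$ for $\alpha<\nu$, together with earlier coding stages of the final iteration which used blocks below the current unused index, are all captured in $W_1[H_\nu]$ or in $K$, and do not depend on tail Cohen reals. We would argue this by induction on $\beta$. Because blocks are used consecutively, each coding forcing at a stage $<\beta$ lives on a block whose Cohen reals have index $<\nu$, after enlarging $\nu$ to dominate them. Alternatively, we note that the unused indices $\zeta\geq\beta$ can be taken to lie above $\nu$ by the convention on the block order.

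Second, we apply the Todor\v{c}evi\'c--Shelah argument over the model $V':=W_1[H_\nu][K]$. The coherent sequence $(e_\alpha)$ lies in $W_1\subseteq V'$, and each $c_\alpha$ with $\alpha$ in an unused block is Cohen over $V'$. Hence $S_{c_\alpha}$ is Suslin over $V'$ extended by the other Cohen reals. The independence of any finite subproduct follows by the factor-rearranging argument already given for $\vec S$ in the definition of $W_2$, now carried out over $V'$.

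Third, we pass to $W_2[G_{\beta+1}]$. The remaining factors $\forceP_{1,\alpha}$ act on trees $T_{\omega_1\alpha+\beta'}$ from $\vec T$, not on $\vec S$, and they form a ccc product over the extension. To see that they preserve the finite products of unused $S$-trees, we commute them: let $P$ be a finite product of unused trees. Then $\forceP_{1}\times P$ is ccc, because $P$, being Cohen-generic and added afterwards, leaves $\vec T$ independent. Lemma~\ref{ccc Coding} then applies in $V'[\text{Cohen}][P]$, and the same swap-the-factors argument as in its proof shows that $P$ remains ccc, that is, Suslin, after $\forceP_1$. Together these steps give that every unused block $\vec S^\zeta$ remains an independent block of Suslin trees in $W_2[G_{\beta+1}]$.
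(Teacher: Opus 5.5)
Your Steps 1 and 2 are the paper's proof. A copy of $\forceP_\beta\ast\mathbb B$ lies in $W_1[H_\nu]$, so the tail $H_{\nu,\omega_2}$ is generic over $W_1[H_\nu][K]$. The Todor\v{c}evi\'c construction over that model then makes the unused blocks (indices $\zeta\geq\beta\geq\nu$) independent Suslin once the tail Cohen reals are added.

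The ``main obstacle'' you raise in Step 1 is not one. Capture of $\forceP_\beta\ast\mathbb B$ in $W_1[H_\nu]$ is exactly the hypothesis of (M). The commutation is just the product lemma for two posets in $W_1[H_\nu]$, with no ccc or support considerations. You also must not enlarge $\nu$, since $\nu\leq\beta$ is what puts the unused blocks into the tail. The paper's argument stops at this point and does not discuss the tail coding factors $\forceP_{1,\alpha}$, $\alpha\geq\nu$, separately.

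Your Step 3 does address those factors, but the argument has a gap. Applying Lemma~\ref{ccc Coding} in $V'[\text{Cohen}][P]$ requires the trees $T_{\omega_1\alpha+\beta'}$ with $\alpha\geq\nu$ to form an independent family of Suslin trees in $V'=W_1[H_\nu][K]$. You only check that $P$ does not destroy this independence, never that it holds in $V'$. It need not hold: $K$ contains the generics of all earlier diagonal $\mathsf{MA}$ stages, and (M) does not keep these away from $\vec T$.

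\textbf{First example.} Take $\alpha\geq\nu$ with $S_\alpha(\beta')=0$ and put $\xi=\omega_1\alpha+\beta'$.
\begin{enumerate}
\item $T_\xi$ is special in $W_2$, so Baumgartner's $\operatorname{Sp}(T_\xi)$ is ccc in every $W_2[G_\delta]$, because $T_\xi$ has no uncountable branch there.
\item Since $\operatorname{Sp}(T_\xi)\in W_0$, a copy of $\forceP_{\beta''}\times\operatorname{Sp}(T_\xi)$ lies in $W_1[H_{\nu''}]$ as soon as a copy of $\forceP_{\beta''}$ does. So such a stage can be a nontrivial stage below $\beta$.
\item After it, $T_\xi$ is already special in $V'$, and the hypothesis of Lemma~\ref{ccc Coding} fails.
\end{enumerate}

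\textbf{Second example, which is worse.} Suppose an earlier stage added a branch through a cone of a tail tree $T_\xi$ with $S_\alpha(\beta')=1$. Step (M) permits this whenever that cone is still Suslin, since the cone lies in $W_0$. Then the factor $T_\xi$ of $\forceP_{1,\alpha}$ has an uncountable antichain in $V'$: take, for each node on that branch, an immediate successor off the branch. So your claim that the remaining factors form a ccc product over the extension fails outright.

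The ordering $W_1[H_\nu][K][\text{Cohen}][\text{tail coding}]$ is what Step 2 needs, and it is exactly what exposes the tail of $\vec T$ to $K$. To finish, you need an argument for the tail coding factors over $V'[\text{Cohen}]$ that does not assume the tail of $\vec T$ is still intact there. Alternatively, you need an extra restriction in (M) guaranteeing that it is.
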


\begin{proof}
By the choice of $\nu$, the forcing $\forceP_\beta\ast\mathbb B$ has a dense copy in $W_1[H_\nu]$.  Factor the construction of $W_2$ over $W_1[H_\nu]$ into the initial part up to $\nu$ and the tail $H_{\nu,\omega_2}$.  Since the forcing $\forceP_\beta\ast\mathbb B$ belongs to the initial side, the tail forcing commutes with it.  Equivalently, $H_{\nu,\omega_2}$ remains generic over the extension by $\forceP_\beta\ast\mathbb B$.

The blocks $\vec S^\zeta$ for $\zeta>\nu$, and in particular for $\zeta\geq\beta$, are read from this tail generic.  The Cohen construction of the Suslin trees is absolute between models with the same $\omega_1$, and finite products of tail blocks are handled in the same way.  Hence all trees in the unused blocks remain Suslin, and the unused blocks remain independent in $W_2[G_{\beta+1}]$.
\end{proof}

The same freshness convention handles the wellorder and uniformization coding stages: such stages intentionally consume the next unused block, and Lemma~\ref{lemma:tail-preservation-ma-stages} handles the only possible interference from the diagonal $\mathsf{MA}$ side forcing.

\subsection{Proof that $\mathsf{MA}$ holds true}

Let $G_{\omega_2}$ be $\forceP_{\omega_2}$-generic over $W_2$.

\begin{lemma}\label{lemma:ma-final-model}
$W_2[G_{\omega_2}]\models\mathsf{MA}$.
\end{lemma}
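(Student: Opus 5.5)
We follow the standard bookkeeping argument for $\mathsf{MA}_{\omega_1}$ in a finite-support ccc iteration of length $\omega_2$, adapted to the diagonal clause (M). Since $2^{\aleph_0}=\aleph_2$ in the final model by Lemma~\ref{lemma:cardinal-equations-at-stages}, it suffices to show that for every ccc poset $\mathbb Q\in W_2[G_{\omega_2}]$ of size $\aleph_1$ and every family $\langle D_i:i<\omega_1\rangle$ of dense subsets of $\mathbb Q$, there is a filter meeting all $D_i$. The usual L\"owenheim--Skolem reduction gives this: for any ccc $\mathbb Q$ and $\aleph_1$ many dense sets, pass to a ccc suborder of size $\aleph_1$ closed under witnesses for density. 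We may also assume $\mathbb Q$ lives on $\omega_1$.

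First we localize. The iteration is ccc of length $\omega_2$ with finite support, and $\omega_2$ has uncountable cofinality. Hence $\mathbb Q$ and the sequence $\langle D_i\rangle$, which are coded by a subset of $\omega_1$, have nice names depending on only $\aleph_1$ many antichains. They therefore already belong to $W_2[G_\delta]$ for some $\delta<\omega_2$. There $\mathbb Q$ is, say, the $\gamma$-th partial order of size $\aleph_1$ in the fixed enumeration. Since $\mathbb Q$ is ccc in the final model and ccc is downward absolute to inner models containing $\mathbb Q$, it is ccc in every $W_2[G_\beta]$ with $\beta\ge\delta$.

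Next we find a stage where (M) acts nontrivially on $(\delta,\gamma)$. For $\beta\ge\delta$, the forcing $\forceP_\beta\ast\mathbb Q$ has a dense subforcing of size $\aleph_1$, and this subforcing lies in some $W_1[H_{\nu(\beta)}]$. The delicate point is that $\nu(\beta)$ depends on $\beta$, so we must arrange that $\nu(\beta)\le\beta$ at some stage $\beta$ with $F(\beta)=(\delta,\gamma)$.

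We plan to handle this by a closure argument. Define $f(\beta)=\nu(\beta)$. Since each $\forceP_\beta$ has size $\aleph_1$ and $W_2=W_1[H_{\omega_2}]$ is a finite-support product/iteration of length $\omega_2$, each value $f(\beta)$ is below $\omega_2$. Thus the set $C$ of $\beta$ closed under $f$ is club in $\omega_2$. For a limit $\beta\in C$ of cofinality $\omega_1$, the forcing $\forceP_\beta$ is the direct limit of the $\forceP_{\beta'}$ with $\beta'<\beta$. Moreover, $\mathbb Q$ together with its name lies in $W_2[G_\delta]$ with $\delta<\beta$, so $\forceP_\beta\ast\mathbb Q$ is generated by pieces lying in $W_1[H_{\nu'}]$ for $\nu'<\beta$. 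This gives $\nu(\beta)\le\beta$. Intersecting $C$ with the unbounded set $\{\beta:F(\beta)=(\delta,\gamma)\}$ is not immediate, since the latter need not be club. So we use the freedom in the bookkeeping: we require $F$ to list each pair $(\delta,\gamma)$ stationarily often on $\mathrm{cof}(\omega_1)$, which is possible since $\omega_2$ is partitionable into $\aleph_2$ stationary sets. This yields a suitable $\beta$, at which $\dot{\forceQ}_\beta=\mathbb Q$. We expect this step, reconciling the $\nu\le\beta$ clause with the bookkeeping, to be the main obstacle. If the direct-limit argument for $\nu(\beta)\le\beta$ fails, we would fall back on showing $\nu(\beta)$ is nondecreasing and bounded, so that a fixed point exists above it.

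Finally, at that stage $G(\beta)$ is a $\mathbb Q$-generic filter over $W_2[G_\beta]$, which contains all the $D_i$. Hence $G(\beta)$ meets every $D_i$, and it persists to $W_2[G_{\omega_2}]$. This proves $\mathsf{MA}_{\omega_1}$. The interaction with the coding stages is irrelevant here, since Lemma~\ref{lemma:tail-preservation-ma-stages} is needed only for the definability results, not for $\mathsf{MA}$.
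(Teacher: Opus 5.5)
Your strategy is the paper's: localize $\mathbb B$ and the $D_i$ to some $W_2[G_\delta]$, let the bookkeeping return to $(\delta,\gamma)$ at a stage $\beta$ where clause (M) acts nontrivially, and note that the stage-$\beta$ generic meets every $D_i$. The paper does the middle step in one line (``choose such a stage $\beta$ above a witness $\nu$''), without addressing that $\nu=\nu(\beta)$ is attached to $\forceP_\beta\ast\mathbb B$ and so moves with $\beta$. You are right that this is the real issue. A club of closure points combined with stationary bookkeeping (a legitimate instance of the paper's ``unboundedly often'') is the right kind of repair.

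\textbf{The gap.} The step your repair rests on is not justified. From the facts that $\beta$ is closed under $f$ and that $\forceP_\beta$ is the direct limit of the $\forceP_{\beta'}$, each lying in some $W_1[H_{\nu'}]$ with $\nu'<\beta$, it does not follow that $\forceP_\beta\ast\mathbb B\in W_1[H_\beta]$. When $\operatorname{cf}(\beta)=\omega_1$ you also need the $\omega_1$-sequence gluing the pieces to lie in $W_1[H_\beta]$. But the tail $H_{\beta,\omega_2}$ adds $\omega_1$-branches through trees $T_{\omega_1\alpha+\xi}$ with $\alpha\geq\beta$. These are fresh subsets of $\omega_1$: all their proper initial segments lie in $W_1[H_\beta]$, yet they do not. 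Closure of $f$, computed in $W_2$, gives no control over this.

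Your fallback does not help either, because $\nu(\beta)$ is unbounded in $\omega_2$. Indeed, $\forceP_\beta$ contains coding factors on blocks $\vec S^\eta$ with $\eta$ cofinal in $\omega_2$, and these blocks are read off Cohen coordinates of correspondingly large index.

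\textbf{The fix.} The gap closes if you run the closure argument on names in $W_1$ rather than on objects in $W_2$. Fix in $W_1$ names for $F$, for the iteration and for $\dot{\mathbb B}$. Take a continuous chain $\langle N_\xi:\xi<\omega_2\rangle$ of elementary submodels of some $H(\theta)^{W_1}$, each of size $\aleph_1$ with $\omega_1\subseteq N_\xi$, containing these names and a wellorder of $H(\theta)^{W_1}$. Let $\beta=N_\xi\cap\omega_2>\delta$.
\begin{enumerate}
\item The least nice names for the $\forceP_{\beta'}$ with $\beta'<\beta$, and for $\dot{\mathbb B}$, lie in $N_\xi$.
\item Having size $\aleph_1$, their supports are contained in $N_\xi\cap\omega_2=\beta$.
\item The sequence of these names lies in $W_1$, so $\forceP_\beta\ast\mathbb B\in W_1[H_\beta]$ and $\nu(\beta)\leq\beta$.
\end{enumerate}
These $\beta$ form a club of $W_1$, which remains club in $W_2$. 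So your stationary bookkeeping hits $(\delta,\gamma)$ at such a $\beta$, and the rest of your argument then goes through.
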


\begin{proof}
Let $\mathbb B$ be a ccc forcing of size $\aleph_1$ in the final model, and let $\langle D_\xi:\xi<\omega_1\rangle$ be a family of dense subsets of $\mathbb B$.  Since the iteration is ccc and has length $\omega_2$, there is some $\delta<\omega_2$ such that $\mathbb B$ and the sequence of dense sets belong to $W_2[G_\delta]$.  By bookkeeping, unboundedly many later stages consider the corresponding pair $(\delta,\gamma)$.  Choose such a stage $\beta$ above a witness $\nu$ for the diagonal requirement in \textup{(M)}.  At that stage the iteration forces with $\mathbb B$.  The $\mathbb B$-generic added at stage $\beta$ meets every dense set from the displayed sequence, and this remains true in the final ccc extension.  Hence $\mathsf{MA}$ holds.
\end{proof}

The combination of $\mathsf{MA}$ with the $\Sigma^1_3$ generic absoluteness available in our background model gives the third-level regularity consequences needed for the theorem.

\begin{lemma}\label{HjorthsLemma}
In $W_2[G_{\omega_2}]$, every boldface $\Sigma^1_3$ set of reals is Lebesgue measurable and has the Baire property.
\end{lemma}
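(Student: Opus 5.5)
We will follow Hjorth's Solovay-style argument, using the tree $T_2$ from Corollary~\ref{cor:t2-sigma13-absoluteness} in place of sharps.  Fix a boldface $\Sigma^1_3$ set $A=\{y:\varphi(y,a)\}$ with parameter $a$, and let $\dot a$ be a nice name for $a$.  Since the whole construction $M_1\to W_0\to W_1\to W_2\to W_2[G_{\omega_2}]$ is ccc over $W_0$ and $\dot a$ has countable support, $a$ lies in a local intermediate model $M_1[g]$, where $g$ is generic for a subforcing of $M_1$-size $<\kappa<\delta$, with $\delta$ the Woodin cardinal of $M_1$.  This is the localization of Remark~\ref{rem:localized-smallness-of-absoluteness}.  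Put $N=L[T_2,S_2,a]$, where $S_2$ is the Martin--Solovay complement.

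\textbf{Step 1.}  We show that in the final model the set of reals generic over $N$ for random forcing has measure one, and the set of reals Cohen-generic over $N$ is comeager.  It suffices that $\mathcal P(\omega)\cap N$, and hence the family of Borel codes in $N$, has size at most $\aleph_1$ in the final model.  Then $\mathsf{MA}_{\omega_1}$ (Lemma~\ref{lemma:ma-final-model}) gives additivity of the null and meager ideals above $\aleph_1$, so the union of the $\aleph_1$ many null (respectively meager) Borel sets coded in $N$ is null (respectively meager).  For the cardinality bound we would argue that $N$ is contained in a small-forcing extension of $M_1$ below the Woodin.  Its reals are then constructed by stage $\omega_1^{M_1}$, because $T_2$ is countably closed enough in the sense of $M_1$'s fine structure, and $\omega_1$ is preserved.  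As a fallback, we would count nice names in $M_1[g]$ using $\mathrm{GCH}$ in $M_1$.

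\textbf{Step 2.}  For a real $y$ that is random (respectively Cohen) over $N$, $N[y]$ is an extension of $M_1[g]$-type by forcing of size $<\kappa$.  Lemma~\ref{lem:tn-small-generic-absoluteness} then gives
\[
\varphi(y,a)\iff (y,a)\in p[T_2]\iff N[y]\models (y,a)\in p[T_2],
\]
using upward absoluteness of $p[T_2]$ and of $p[S_2]$ together with the fact that the two trees remain complementary in the relevant generic extensions.  We also need that the truth of $\varphi$ in the final model agrees with $p[T_2]$ there.  For this we use that $T_2,S_2$ are complements in every local intermediate model containing $y$, as in Remark~\ref{rem:localized-smallness-of-absoluteness}, and that $\Sigma^1_3$ truth for $y$ is certified by a branch, which is upward absolute.

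\textbf{Step 3.}  Apply the forcing theorem in $N$: there is a Borel set $B$ coded in $N$ (the Boolean value of ``$\check{}(y,a)\in p[T_2]$'' in the measure algebra, respectively the Cohen algebra) such that, for every $N$-generic $y$, $y\in B$ iff $(y,a)\in p[T_2]$.  Hence $A\,\triangle\,B$ is contained in the set of non-generic reals, which is null (respectively meager) by Step~1.

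The main obstacle is the two absoluteness points.  The first is justifying that the final model's $\Sigma^1_3$ truth agrees with $p[T_2]$, since the full iteration is not small relative to $\delta$ and $\eta$ exceeds $\delta$ possibly.  Here we would first try to choose $\eta<\delta$ and $\kappa$ with everything below $\delta$, so that every real of the final model lies in an extension of $M_1$ by forcing of size $<\delta$.  The second is the countability of $\mathbb R\cap N$ relative to $\aleph_2$.
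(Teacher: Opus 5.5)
Your route is the paper's: Hjorth's argument with $T_2$, using $\mathsf{MA}_{\omega_1}$ to make the set of reals random (resp.\ Cohen) over $N$ co-null (resp.\ comeager), a Borel set from the forcing theorem in $N$, and $T_2$ to transfer truth to $V^*=W_2[G_{\omega_2}]$.

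Your Step 3 is the better formulation. The paper takes the Boolean value of $\varphi(\dot r,x)$ in $L[T_2,x]$, a model that need not evaluate $\Sigma^1_3$ formulas correctly. You take the Boolean value of $(\dot y,a)\in p[T_2]$ instead. With that choice, the second equivalence in Step 2 is just absoluteness of well-foundedness of the section tree $(T_2)_{(y,a)}$ between $N[y]$ and $V^*$. You need neither $S_2$ nor any claim that $N[y]$ is a small extension of $M_1$.

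In fact, do not rely on $S_2$ at all. Since $M_1$ has a $\Delta^1_3$-good wellorder of its reals, no tree in $M_1$ projects to the complement of $p[T_2]$ both in $M_1$ and in its Cohen extension; that would give every $\boldsymbol{\Sigma}^1_3$ set of $M_1$ the Baire property. What is true, and all you need, is this: let $T_2$ be obtained from the Martin--Solovay tree for $\Pi^1_2$, built from a measurable $\kappa$ of $M_1$, by projecting out one real coordinate. Then $p[T_2]$ is the $\Sigma^1_3$ set of every extension of $M_1$ by forcing of size $<\kappa$.

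Your two obstacles are exactly the two assertions the paper's proof makes without argument, and only the first is settled by what you say.

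\emph{First obstacle.} The paper handles it by the convention of Section~\ref{sec:sigma13-uniformization} that the final model is a small generic extension of $M_1$. This tacitly requires $\eta<\delta$, since otherwise $\delta$ is collapsed to $\aleph_1$ in $W_0$. So your fix is the right one: take $\eta<\delta$ and $\kappa$ a measurable above the size of the whole forcing.

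\emph{Second obstacle.} The paper merely asserts that $L[T_2,x]$ has at most $\aleph_1$ reals, and your justifications do not establish it, so Step 1 is a genuine gap.
\begin{itemize}
\item ``Constructed by stage $\omega_1^{M_1}$'' has no content.
\item Counting nice names over a subforcing of $M_1$-size $\mu$ bounds the reals by $(\mu^+)^{M_1}$. This is $\le\aleph_1$ in $V^*$ only if $\mu<\eta$.
\item Your localization to $M_1$-size $<\kappa$ is therefore vacuous, because the first obstacle forces $\kappa>\eta$. For the same reason you cannot make $T_2$ itself small.
\end{itemize}

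What is needed is that the parameter $a$ lies in $M_1[g]$ for $g$ generic over a regular subposet of $M_1$-size $<\eta$. Then $\eta$ stays inaccessible in $M_1[g]\supseteq L[T_2,a]$, so $|\mathbb{R}\cap L[T_2,a]|<\eta=\aleph_2^{V^*}$.

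For reals of $W_0$ this is automatic, since the $\eta$-cc countable support iteration is a direct limit at $\eta$. For reals added by the $\aleph_2$-sized ccc stages over $W_0$ (for instance by $\mathbb{A}(U)$), it needs an argument. One way is to choose $\eta<\delta$ weakly compact in $M_1$. Then $\Pi^1_1$-reflection of the $\eta$-cc of the whole forcing $\mathbb{P}\subseteq V_\eta$ gives regular subposets $\mathbb{P}\cap V_\alpha$, $\alpha<\eta$, containing any given nice name for a real.
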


\begin{proof}
We recall the standard argument of Hjorth.  Let $A$ be defined by a $\Sigma^1_3(x)$ formula $\phi(v,x)$.  Let $T_2$ be the Martin--Solovay tree computed in $M_1$.  In the present extension, $T_2$ is still the correct tree for the universal $\Sigma^1_3$ set, and $L[T_2,x]$ has at most $\aleph_1$ many reals relevant to the construction.

By $\mathsf{MA}$, the union of the null Borel sets coded in $L[T_2,x]$ is null.  Thus almost every real is random over $L[T_2,x]$.  In $L[T_2,x]$, take the Boolean value of $\phi(\dot r,x)$ in the random algebra; this Boolean value is represented by a Borel set $B$.  If $r$ is random over $L[T_2,x]$, then the forcing theorem gives
\[
r\in B \quad\Longleftrightarrow\quad L[T_2,x,r]\models \phi(r,x).
\]
The $\Sigma^1_3$ correctness supplied by $T_2$ identifies the right hand side with $W_2[G_{\omega_2}]\models\phi(r,x)$.  Hence $A\triangle B$ is contained in the null set of reals which are not random over $L[T_2,x]$.  So $A$ is Lebesgue measurable.

The proof of the Baire property is identical, replacing the random algebra by Cohen forcing and null sets by meager sets.
\end{proof}

\subsection{Global $\Sigma$-uniformization: the coding rules}\label{subsec:forcing-global-sigma-uniformization}

We now make the uniformization part of the iteration explicit.  The point of the construction is to mark, at the same projective level, the first successful triple in the canonical list of possible witnesses.  We keep the notation from the previous subsection and write simply
\[
\operatorname{Code}(z)
\]
for coding $z$ into the next unused block of $\vec S$.

We shall use the following compression lemma throughout the argument.

\begin{lemma}\label{lemma:compression-pi-alpha}
In every intermediate model in which a uniformization stage is evaluated, and in the final model, for every $\alpha<\omega_2$, $\alpha> 0$ there is a canonical bijection
\[
   \pi_\alpha:(2^\omega)^\alpha\longrightarrow 2^\omega .
\]
These bijections may be chosen uniformly from the canonical wellorder of the reals.
\end{lemma}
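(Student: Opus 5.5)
The plan is to obtain $\pi_\alpha$ from a wellorder of the reals of order type $\omega_2$ together with the cardinal arithmetic of Lemma~\ref{lemma:cardinal-equations-at-stages}. Work in a model $W$ which is either some $W_2[G_\beta]$ at which a uniformization stage is evaluated, or the final model $W_2[G_{\omega_2}]$. By Lemma~\ref{lemma:cardinal-equations-at-stages}, $W\models 2^{\aleph_0}=2^{\aleph_1}=\aleph_2$.

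First we count. For $0<\alpha<\omega_2$ we have $|\alpha|\le\aleph_1$, so
\[
   |(2^\omega)^\alpha| = (2^{\aleph_0})^{|\alpha|}\le (2^{\aleph_0})^{\aleph_1}=2^{\aleph_1}=\aleph_2 = |2^\omega| .
\]
The reverse inequality is witnessed by the map sending $r$ to the constant sequence with value $r$. Hence $|(2^\omega)^\alpha|=|2^\omega|=\aleph_2$ in $W$, and a bijection exists.

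Canonicity is the real issue, and here we use the wellorder. Let $\prec$ be the canonical wellorder of $2^\omega$ in $W$, namely the one produced by the wellorder coding requests \textup{(W)}. Set-theoretically, at a stage $\beta$ we can take $\prec$ to be the order induced by $<_{M_1}$-least names, as in the definition of \textup{(W)}. Normalize $\prec$ to order type $\omega_2$, so that it is a bijection $e\colon 2^\omega\to\omega_2$; this is possible because $|2^\omega|=\aleph_2$. Then carry out the following steps.
\begin{enumerate}
   \item Transport along $e$ to identify $(2^\omega)^\alpha$ with ${}^\alpha\omega_2$.
   \item Fix canonically a bijection $\alpha\to|\alpha|$ using the $<_{M_1}$-least one. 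Note that $\omega_1^W=\omega_1^{M_1}$ and $\alpha<\omega_2$ is already collapsed in $W_2$ by earlier stages.
   \item Enumerate ${}^{\le\omega_1}\omega_2$ in type $\omega_2$ by the standard recursive definition: order the sequences by the pair (length, supremum of range), and break ties by the canonical wellorder of the countable, respectively $\aleph_1$-sized, sections. This uses $2^{\aleph_1}=\aleph_2$ to keep the order type equal to $\omega_2$.
   \item Compose the result with $e^{-1}$ to land in $2^\omega$.
\end{enumerate}
Each step is defined from $\prec$ and $\alpha$ alone, by one formula that does not depend on $\alpha$. This gives the required uniformity.

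The main obstacle is step~3. The tie-breaking among the sequences of a given length with range bounded by some $\gamma<\omega_2$ has to be done by a wellorder of ${}^{\omega_1}\gamma$ that is definable from $\prec$ with no further choice. To get it, first use the Cohen-style coding of subsets of $\omega_1$ by reals from Lemma~\ref{lemma:H-normalization}, which persists through the ccc iteration, to assign to each element of ${}^{\omega_1}\gamma$ the $\prec$-least real coding it. Then order ${}^{\omega_1}\gamma$ by these codes. Since in the final model every subset of $\omega_1$ has such a code, this map is injective and definable.
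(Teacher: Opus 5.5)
Your cardinality count matches the paper's existence argument. The paper then settles canonicity in one sentence: the canonical wellorder of $H(\omega_2)$ selects the bijection. Your explicit construction, meant to replace that sentence, is where the proof breaks.

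\textbf{Step~3 is false as stated.} In $W$ you have $2^{\aleph_0}=\aleph_2$, so the tie-breaking classes are not countable or $\aleph_1$-sized.
\begin{itemize}
\item The length-$\omega$ sequences with values in $\{0,1\}$, not constantly $0$, all share one (length, supremum) pair, and there are $2^{\aleph_0}=\aleph_2$ of them.
\item The same holds with $\{0,1,2\}$ in place of $\{0,1\}$.
\item The length-$\omega_1$ sequences with supremum $\omega_1$ form a class of size $\aleph_1^{\aleph_1}=2^{\aleph_1}=\aleph_2$.
\end{itemize}
With at least two classes of size $\aleph_2$, ordering the pairs in any way and then breaking ties gives a wellorder of type strictly greater than $\omega_2$. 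So it yields no bijection with $\omega_2$. The recursion you have in mind needs $|\gamma|^{\aleph_1}\le\aleph_1$ for $\gamma<\omega_2$. That is exactly the GCH-type smallness which fails in $W$; $2^{\aleph_1}=\aleph_2$ only bounds the total cardinality.

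\textbf{The composite is not onto.} Even a genuine bijection ${}^{\le\omega_1}\omega_2\to\omega_2$ restricts to a mere injection on ${}^{|\alpha|}\omega_2$. So the composite in step~4 would not be onto $2^\omega$.

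\textbf{The repair.} Drop the stratification altogether. This is also what the paper's sentence has to mean, since a bijection between sets of size $\aleph_2$ is not itself an element of $H(\omega_2)$. If $|X|=\aleph_2$ and $g\colon X\to\omega_2$ is a canonical injection, then $g[X]$ has order type exactly $\omega_2$. Composing $g$ with the order isomorphism of $g[X]$ onto $\omega_2$ therefore gives a canonical bijection $X\to\omega_2$.

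Your last paragraph already provides such a $g$ for $X=(2^\omega)^\alpha$:
\begin{enumerate}
\item turn the sequence into a subset of $\omega_1$ via a canonical injection $\alpha\times\omega\to\omega_1$;
\item take the $\prec$-least real coding that subset;
\item apply $e$, and at the end compose with $e^{-1}$.
\end{enumerate}
Equivalently, restrict a canonical type-$\omega_2$ wellorder of $H(\omega_2)$ to the two subsets $(2^\omega)^\alpha$ and $2^\omega$, and take the order isomorphism between them.

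\textbf{Two smaller points also need fixing.}
\begin{itemize}
\item In step~2 there is in general no $<_{M_1}$-least bijection $\alpha\to|\alpha|$. Here $\omega_2^W$ is the $M_1$-inaccessible $\eta$, and every $\alpha\in[\omega_2^{M_1},\eta)$ is collapsed only by the iteration producing $W_0$. Use instead the $\unlhd$-least subset of $\omega_1$ coding a wellorder of type $\alpha$; this remains definable in the ccc extensions $W$.
\item The $\mathcal H$-codes are guaranteed for all subsets of $\omega_1$ only in the normalized $W_2$ and, via $\MA$, in the final model. At an intermediate stage $W_2[G_\beta]$, code instead a nice $\forceP_\beta$-name for the sequence; such a name lies in $W_2$ and has size $\aleph_1$.
\end{itemize}
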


\begin{proof}
By Lemma~\ref{lemma:cardinal-equations-at-stages}, at the relevant stages
\[
   2^{\aleph_0}=2^{\aleph_1}=\aleph_2.
\]
If $\alpha<\omega_2$, then $|\alpha|\leq\aleph_1$, and hence
\[
   |(2^\omega)^\alpha|
   =(2^{\aleph_0})^{|\alpha|}
   \leq (\aleph_2)^{\aleph_1}
   =(2^{\aleph_1})^{\aleph_1}
   =2^{\aleph_1}
   =\aleph_2.
\]
The reverse inequality is immediate because $2^\omega$ embeds into $(2^\omega)^\alpha$ whenever $\alpha>0$.  Thus the two sets have the same cardinality.  The canonical wellorder of the reals, equivalently the canonical wellorder of $H(\omega_2)$ available in the construction, selects the least such bijection uniformly.
\end{proof}

If $b$ is a real and $\alpha$ is understood, we write
\[
\pi_\alpha^{-1}(b)=(b^\eta:\eta<\alpha).
\]
Thus one real $b$ codes the whole sequence of reals $(b^\eta)_{\eta<\alpha}$.  The bookkeeping is understood at the level of names; in the verification below we suppress the dots and write the corresponding final reals.

Let $<$ be the final canonical wellorder of the reals.  For each real $x$ let
\[
(x,y^0,a_0^0),(x,y^1,a_0^1),\ldots,(x,y^\xi,a_0^\xi),\ldots\qquad(\xi<\omega_2)
\]
be the $<$-increasing enumeration of all triples with first coordinate $x$, without repetitions, and with $(x,y^0,a_0^0)=(x,0,0)$.

For the even levels write a formula coded by $m$ in the form
\[
\varphi_m(x,y)\equiv
\exists a_0\,\forall a_1\exists a_2\cdots\exists a_{2n-4}\,
\psi(x,y,a_0,a_1,\ldots,a_{2n-4}),
\tag{E}
\]
where $n\geq 2$ and $\psi\in\Pi^1_3$.  Let
\[
S^{\mathrm{ev}}_m(x,y,a_0)
\]
abbreviate the displayed tail formula
\[
\forall a_1\exists a_2\cdots\exists a_{2n-4}\,
\psi(x,y,a_0,a_1,\ldots,a_{2n-4}).
\]
Here and below, if $2n-4=0$, the string of quantifiers is empty and the formula is just $\psi(x,y,a_0)$.

The even marker is
\[
M^{\mathrm{ev}}_m(x,y,a_0)
\]
and means
\[
\exists b_1\forall b_2\exists b_3\cdots\forall b_{2n-4}\,
\operatorname{Coded}(\#\psi,x,y,a_0,b_1,\ldots,b_{2n-4}).
\tag{ME}
\]
Again, for $2n-4=0$ this means simply
\[
\operatorname{Coded}(\#\psi,x,y,a_0).
\]
The associated uniformizing relation is
\begin{align*}
\sigma_{\mathrm{even}}(x,y,m)\quad\Longleftrightarrow\quad
&\bigl(y=0\wedge S^{\mathrm{ev}}_m(x,0,0)\bigr)\;\lor\; \\
&\bigl(\neg S^{\mathrm{ev}}_m(x,0,0)\wedge
\exists a_0\bigl(S^{\mathrm{ev}}_m(x,y,a_0)\wedge M^{\mathrm{ev}}_m(x,y,a_0)\bigr)\bigr).
\end{align*}
This is a $\Sigma^1_{2n}$ formula.  Indeed, the first disjunct is $\Pi^1_{2n-1}$, hence belongs to $\Sigma^1_{2n}$, while the second disjunct is a conjunction of a $\Sigma^1_{2n-1}$ statement with a $\Sigma^1_{2n}$ statement.

For the odd levels write
\[
\varphi_m(x,y)\equiv
\exists a_0\,\forall a_1\exists a_2\cdots\forall a_{2n-3}\,
\psi(x,y,a_0,a_1,\ldots,a_{2n-3}),
\tag{O}
\]
where $n\geq 2$ and $\psi\in\Sigma^1_3$.  Let
\[
S^{\mathrm{odd}}_m(x,y,a_0)
\]
abbreviate
\[
\forall a_1\exists a_2\cdots\forall a_{2n-3}\,
\psi(x,y,a_0,a_1,\ldots,a_{2n-3}).
\]
The odd marker is the dual non-coding marker
\[
M^{\mathrm{odd}}_m(x,y,a_0)
\]
meaning
\[
\exists b_1\forall b_2\exists b_3\cdots\exists b_{2n-3}\,
\neg\operatorname{Coded}(\#\psi,x,y,a_0,b_1,\ldots,b_{2n-3}).
\tag{MO}
\]
The final quantifier in (MO) is existential, since $2n-3$ is odd.  The associated relation is
\begin{align*}
\sigma_{\mathrm{odd}}(x,y,m)\quad\Longleftrightarrow\quad
&\bigl(y=0\wedge S^{\mathrm{odd}}_m(x,0,0)\bigr)\;\lor\; \\
&\bigl(\neg S^{\mathrm{odd}}_m(x,0,0)\wedge
\exists a_0\bigl(S^{\mathrm{odd}}_m(x,y,a_0)\wedge M^{\mathrm{odd}}_m(x,y,a_0)\bigr)\bigr).
\end{align*}
This is a $\Sigma^1_{2n+1}$ formula.

We now describe the forcing factor used at a uniformization stage.  Work in $W_2[G_\beta]$ and suppose that the bookkeeping value at stage $\beta$ gives
\[
(x,m,\alpha,b_1,b_2,\ldots),
\]
where $0<\alpha<\omega_2$.  Only the finitely many $b_i$ needed by the formula coded by $m$ are used.  If
\[
\pi_\alpha^{-1}(b_i)=(b_i^\eta:\eta<\alpha),
\]
then the stage is defined as follows.

If $m$ codes an even formula as in (E), put
\[
\dot{\forceQ}_\beta^{G_\beta}=
\operatorname{Code}(\#\psi,x,y^\alpha,a_0^\alpha,b_1,\ldots,b_{2n-4})
\]
exactly in the case that
\[
\neg\psi(x,y^\eta,a_0^\eta,b_1^\eta,\ldots,b_{2n-4}^\eta)
\]
holds for every $\eta<\alpha$.  If this condition fails, we force trivially.

If $m$ codes an odd formula as in (O), put
\[
\dot{\forceQ}_\beta^{G_\beta}=
\operatorname{Code}(\#\psi,x,y^\alpha,a_0^\alpha,b_1,\ldots,b_{2n-3})
\]
exactly in the case that
\[
\psi(x,y^\eta,a_0^\eta,b_1^\eta,\ldots,b_{2n-3}^\eta)
\]
holds for some $\eta<\alpha$.  If no such $\eta$ exists, we force trivially.

If the bookkeeping value is ill formed, or if $m$ does not code one of the displayed normal forms, the stage is trivial.  Notice that the coding marker is always $\#\psi$, never $\#\varphi_m$.  This is important: the marker records the matrix computation which is being compressed.

The truth of the $\Sigma^1_3$ or $\Pi^1_3$ matrix $\psi$ is permanent between the intermediate model in which the stage is evaluated and the final model.  This is the base-level generic absoluteness supplied by the $M_1$ analysis used in Section~\ref{sec:sigma13-uniformization}.  Thus the above decisions are not later reversed.

\section{Proof that $\Sigma^1_n$-uniformization holds in the final model}

Let
\[
V^*=W_2[G_{\omega_2}].
\]
We verify that the relations $\sigma_{\mathrm{even}}$ and $\sigma_{\mathrm{odd}}$ defined above uniformize the corresponding projective relations in $V^*$.

We first record the no-accidental-coding consequence used below.  If a tuple $z$ is not intentionally coded at a stage using the block assigned to that stage, then no later stage can make $z$ coded in that block.  Different coding stages use different blocks, and the predicate $\operatorname{Coded}(z)$ is read from the unique block assigned to the relevant marker.  Hence all marker statements decided by the rules above remain fixed in the final extension.

\begin{lemma}\label{lemma:even-marker-verification}
Let $m$ code an even formula as in \textup{(E)}, and fix a real $x$.  Suppose $\alpha>0$ is least such that
\[
V^*\models S^{\mathrm{ev}}_m(x,y^\alpha,a_0^\alpha).
\]
Then
\[
V^*\models M^{\mathrm{ev}}_m(x,y^\alpha,a_0^\alpha),
\]
and for every $\beta>\alpha$,
\[
V^*\models \neg M^{\mathrm{ev}}_m(x,y^\beta,a_0^\beta).
\]
\end{lemma}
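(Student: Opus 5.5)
The plan is to prove both halves by a quantifier-by-quantifier comparison between the tail formula $S^{\mathrm{ev}}_m$ and the marker $M^{\mathrm{ev}}_m$. Write $k=2n-4$ and unfold
\[
S^{\mathrm{ev}}_m(x,y,a_0)\equiv\forall a_1\exists a_2\cdots\exists a_k\,\psi
\quad\text{and}\quad
M^{\mathrm{ev}}_m(x,y,a_0)\equiv\exists b_1\forall b_2\cdots\forall b_k\,\operatorname{Coded}(\#\psi,x,y,a_0,\vec b).
\]
The only facts about $\operatorname{Coded}$ I will use are the ones fixed by the coding rules of Subsection~\ref{subsec:forcing-global-sigma-uniformization}. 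First, a tuple $(\#\psi,x,y^\alpha,a_0^\alpha,b_1,\ldots,b_k)$ becomes coded exactly when some stage with bookkeeping value $(x,m,\alpha,b_1,\ldots,b_k)$ finds
\[
\neg\psi(x,y^\eta,a_0^\eta,b_1^\eta,\ldots,b_k^\eta)\quad\text{for all }\eta<\alpha.
\]
Second, by the no-accidental-coding remark together with Lemma~\ref{lemma:higher-coding-predicates} and Lemma~\ref{lemma:tail-preservation-ma-stages}, no other tuple is ever coded. Third, the truth of the $\Pi^1_3$ matrix $\psi$ is the same at the evaluation stage and in $V^*$. Combining these with the fact that the bookkeeping revisits every name unboundedly often, I will first establish
\[
V^*\models\operatorname{Coded}(\#\psi,x,y^\alpha,a_0^\alpha,b_1,\ldots,b_k)\iff\forall\eta<\alpha\;\neg\psi(x,y^\eta,a_0^\eta,b_1^\eta,\ldots,b_k^\eta).
\]

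With this equivalence, the even marker for index $\alpha$ says: there is a $b_1$ such that for all $b_2$, there is a $b_3$, and so on, such that for every $\eta<\alpha$ the matrix $\neg\psi$ holds at the $\eta$-th coordinates. The compression maps $\pi_\alpha$ of Lemma~\ref{lemma:compression-pi-alpha} are bijections, so quantifying over a single real $b_i$ is the same as quantifying over an $\alpha$-sequence $(b_i^\eta)_{\eta<\alpha}$. Each $b_i^\eta$ may depend only on the earlier $b_j^\eta$, which is just a choice of witnesses coordinatewise. Hence the marker at $\alpha$ is equivalent to
\[
\forall\eta<\alpha\;\exists b_1\forall b_2\cdots\forall b_k\,\neg\psi(x,y^\eta,a_0^\eta,\vec b),
\]
which is exactly $\forall\eta<\alpha\;\neg S^{\mathrm{ev}}_m(x,y^\eta,a_0^\eta)$.

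Both directions of this last equivalence need care. For $(\Leftarrow)$, I use choice in $V^*$ to pick coordinatewise Skolem functions and assemble them into single reals via $\pi_\alpha$. For $(\Rightarrow)$, the universal quantifiers ranging over whole compressed reals project to the individual coordinates.

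Once this is in place, the lemma follows at once. For the least $\alpha>0$ with $S^{\mathrm{ev}}_m(x,y^\alpha,a_0^\alpha)$, every $\eta<\alpha$ with $\eta>0$ fails $S^{\mathrm{ev}}_m$ by minimality. For $\eta=0$, I intend to use the second disjunct context, namely $\neg S^{\mathrm{ev}}_m(x,0,0)$; if the statement does not presuppose this, the $\eta=0$ coordinate will have to be handled separately. With that, the marker holds at $\alpha$. For any $\beta>\alpha$, the index $\eta=\alpha<\beta$ satisfies $S^{\mathrm{ev}}_m$, so the marker fails at $\beta$.

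The main obstacle is the first step, the displayed characterization of $\operatorname{Coded}$ in $V^*$. The names for the $b_i$ must appear at a stage $\beta$ late enough that all the relevant reals, the triples $(x,y^\eta,a_0^\eta)$ for $\eta<\alpha$, and the map $\pi_\alpha$ already exist. For this I would argue that, by ccc and the length $\omega_2$, everything relevant lies in some $W_2[G_\delta]$, and that the canonical enumeration and the maps $\pi_\alpha$ computed at stage $\beta$ agree with their final values. The other direction, that a tuple not intentionally coded stays uncoded, I will take from the freshness convention and Lemma~\ref{lemma:definable well-order of reals determines real world}, since that lemma shows a witness to $\operatorname{Coded}$ genuinely writes the pattern into a block of $\vec S$.
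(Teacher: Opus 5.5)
Your reduction is sound and is the paper's argument in a more symmetric form. Granting your characterization of $\operatorname{Coded}$, the equivalence $M^{\mathrm{ev}}_m(x,y^\gamma,a_0^\gamma)\iff\forall\eta<\gamma\,\neg S^{\mathrm{ev}}_m(x,y^\eta,a_0^\eta)$ follows by exactly the compression argument you describe. The paper's first half is your direction ($\Leftarrow$) at $\gamma=\alpha$. Its counterplay for $\beta>\alpha$ (follow the $S^{\mathrm{ev}}_m$-witnesses in coordinate $\alpha$, fill the other coordinates arbitrarily) is the contrapositive of your projection onto coordinate $\eta=\alpha$ at $\gamma=\beta$. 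Your hesitation about $\eta=0$ is justified, but there is nothing to handle separately. By your own equivalence the first conclusion is false whenever $S^{\mathrm{ev}}_m(x,0,0)$ holds, so the lemma must be read with $\alpha$ least among all indices. That is what the paper's ``by minimality'' silently uses, and it is the situation in which the lemma is invoked in Theorem~\ref{theorem:global-sigma-uniformization-final}.

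The genuine gap is the step you call the main obstacle, and your plan for it would not go through. You need the characterization of $\operatorname{Coded}$ in both directions and for every stage, not just for some late one. The failure of the marker at $\beta>\alpha$ uses the implication from $\operatorname{Coded}$ to the final condition. That implication breaks as soon as any stage, evaluating the rule with its own data, codes a tuple the final data would not code, because the code then persists.

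Two things make stage data differ from final data:
\begin{itemize}
\item By Lemma~\ref{lemma:cardinal-equations-at-stages} every $W_2[G_\beta]$ already has $\aleph_2$ reals. Its triples with first coordinate $x$ are therefore cofinal in the final enumeration of order type $\omega_2$, while later reals are interleaved among them. So the stage-$\beta$ enumeration is not an initial segment of the final one, and its $\alpha$-th triple need not be the final $(x,y^\alpha,a_0^\alpha)$, which may not even exist yet.
\item Lemma~\ref{lemma:compression-pi-alpha} yields in each model its own bijection, with different domain and range. Nothing forces the stage-$\beta$ value of $\pi_\alpha^{-1}(b)$ to be the final one.
\end{itemize}
Locating everything in some $W_2[G_\delta]$ by the ccc gives at most that sufficiently late stages see the correct initial segment of the enumeration. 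Even that presupposes that the order used at stage $\beta$ is the restriction of the final order, and it controls neither the earlier stages nor the decoding.

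The paper's proof has the same hole: it simply identifies stage values with final values (``we suppress the dots and write the corresponding final reals''). So your proposal is as complete as the paper's, and more explicit about where the weight lies. Closing the gap requires reformulating the coding rule in terms of data that do not change from stage to stage; that is a change to the construction rather than an argument inside it.
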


\begin{proof}
Put $l=2n-4$.  We write the proof for $l>0$; if $l=0$ the same argument applies with the quantifier string omitted.

First let $\eta<\alpha$.  By minimality of $\alpha$,
\[
V^*\models \neg S^{\mathrm{ev}}_m(x,y^\eta,a_0^\eta),
\]
that is,
\[
V^*\models
\exists c_1^\eta\forall c_2^\eta\exists c_3^\eta\cdots\forall c_l^\eta\neg\psi(x,y^\eta,a_0^\eta,c_1^\eta,\ldots,c_l^\eta).
\]
Choose the witnesses coherently using the final wellorder.  Compress the first witnesses by setting
\[
c_1=\pi_\alpha((c_1^\eta:\eta<\alpha)).
\]
Now let $c_2$ be arbitrary and write $\pi_\alpha^{-1}(c_2)=(c_2^\eta:\eta<\alpha)$.  For each $\eta<\alpha$, choose the corresponding response $c_3^\eta$, compress these responses to a real $c_3$, and continue in this way through the alternating quantifier string.  At the end we have produced witnesses for
\[
\exists c_1\forall c_2\exists c_3\cdots\forall c_l
\Bigl[\forall\eta<\alpha\;\neg\psi(x,y^\eta,a_0^\eta,c_1^\eta,\ldots,c_l^\eta)\Bigr].
\]
By the even coding rule, whenever the bookkeeping reaches the tuple
\[
(x,m,\alpha,c_1,\ldots,c_l),
\]
the forcing codes
\[
(\#\psi,x,y^\alpha,a_0^\alpha,c_1,\ldots,c_l).
\]
Bookkeeping reaches all such tuples cofinally often after the parameters have appeared, and the no-accidental-coding lemma preserves the resulting code.  Therefore
\[
V^*\models M^{\mathrm{ev}}_m(x,y^\alpha,a_0^\alpha).
\]

Now let $\beta>\alpha$.  We show that the marker fails at $\beta$.  Fix an arbitrary first real $d_1$, and decode its $\alpha$-coordinate $d_1^\alpha$.  Since $S^{\mathrm{ev}}_m(x,y^\alpha,a_0^\alpha)$ holds, choose $d_2^\alpha$ so that the remaining tail of the success condition is satisfied.  Put arbitrary values at all other coordinates $\eta<\beta$, and compress them to a real $d_2$.  Continue through the alternating quantifiers: whenever a universal move is made, use the success of the $\alpha$-coordinate to choose the next existential response at coordinate $\alpha$ and fill the other coordinates arbitrarily.  At the end the decoded tuple satisfies
\[
\psi(x,y^\alpha,a_0^\alpha,d_1^\alpha,\ldots,d_l^\alpha).
\]
Thus, at every bookkeeping stage for
\[
(x,m,\beta,d_1,\ldots,d_l),
\]
the even rule sees an earlier successful coordinate, namely $\eta=\alpha$, and therefore forces trivially.  By no accidental coding,
\[
\operatorname{Coded}(\#\psi,x,y^\beta,a_0^\beta,d_1,\ldots,d_l)
\]
fails for the tuple produced by this counterplay.  This gives the negation of the alternating marker (ME), and hence
\[
V^*\models \neg M^{\mathrm{ev}}_m(x,y^\beta,a_0^\beta).
\]
\end{proof}

\begin{lemma}\label{lemma:odd-marker-verification}
Let $m$ code an odd formula as in \textup{(O)}, and fix a real $x$.  Suppose $\alpha>0$ is least such that
\[
V^*\models S^{\mathrm{odd}}_m(x,y^\alpha,a_0^\alpha).
\]
Then
\[
V^*\models M^{\mathrm{odd}}_m(x,y^\alpha,a_0^\alpha),
\]
and for every $\beta>\alpha$,
\[
V^*\models \neg M^{\mathrm{odd}}_m(x,y^\beta,a_0^\beta).
\]
\end{lemma}

\begin{proof}
Put $l=2n-3$.  Since $\alpha$ is least successful, for every $\eta<\alpha$ we have
\[
V^*\models
\exists c_1^\eta\forall c_2^\eta\exists c_3^\eta\cdots\exists c_l^\eta
\neg\psi(x,y^\eta,a_0^\eta,c_1^\eta,\ldots,c_l^\eta).
\]
Choose and compress these witnesses exactly as in the proof of Lemma~\ref{lemma:even-marker-verification}.  Thus we obtain
\[
\exists c_1\forall c_2\exists c_3\cdots\exists c_l
\Bigl[\forall\eta<\alpha\;\neg\psi(x,y^\eta,a_0^\eta,c_1^\eta,\ldots,c_l^\eta)\Bigr].
\]
For the odd levels the coding rule is reversed: if all earlier coordinates satisfy the negated matrix, the forcing is trivial.  Therefore the relevant tuple
\[
(\#\psi,x,y^\alpha,a_0^\alpha,c_1,\ldots,c_l)
\]
is not coded.  Since no later stage can accidentally code it, the non-coding marker (MO) holds at $\alpha$.

Now let $\beta>\alpha$.  To refute the marker at $\beta$, fix an arbitrary first move $d_1$ and use the success of the $\alpha$-coordinate to choose the next response $d_2^\alpha$; continue through the alternating string.  After compression, every final tuple produced by this counterplay has
\[
\psi(x,y^\alpha,a_0^\alpha,d_1^\alpha,\ldots,d_l^\alpha)
\]
at the earlier coordinate $\alpha$.  Hence the odd coding rule codes
\[
(\#\psi,x,y^\beta,a_0^\beta,d_1,\ldots,d_l)
\]
for the corresponding tuple.  This gives the complement of the alternating non-coding marker (MO).  Thus
\[
V^*\models \neg M^{\mathrm{odd}}_m(x,y^\beta,a_0^\beta).
\]
\end{proof}

\begin{theorem}\label{theorem:global-sigma-uniformization-final}
In $V^*$, every boldface $\Sigma^1_k$ relation on the reals, for $k\geq 4$, has a boldface $\Sigma^1_k$ uniformization.
\end{theorem}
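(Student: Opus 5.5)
The plan is to read the theorem off the two marker lemmas. Fix $k\geq 4$ and a boldface $\Sigma^1_k$ relation $R\subseteq(2^\omega)^2$. Absorb the real parameter $p$ into the first coordinate, replacing $x$ by $(x,p)$, so that it suffices to uniformize lightface relations $\varphi_m(x,y)$ in the normal forms (E) (for $k=2n$) or (O) (for $k=2n+1$). The candidate uniformizing relation is $\sigma_{\mathrm{even}}(\cdot,\cdot,m)$, respectively $\sigma_{\mathrm{odd}}(\cdot,\cdot,m)$. Its complexity was already computed to be $\Sigma^1_{2n}$, respectively $\Sigma^1_{2n+1}$. For (MO), the matrix $\neg\operatorname{Coded}$ is $\Pi^1_4$, and the alternating block $\exists b_1\forall b_2\cdots\exists b_{2n-3}$ has $2n-3$ quantifiers; this gives $\Sigma^1_{2n+1}$. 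I would re-check this count in the write-up, in parallel with the predicates $\Phi^k$ of Section~\ref{sec:appropriate-sigma-predicates}.

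The first step is that $\sigma$ is contained in $R$ and is functional. Fix $x$ in the domain of $R$ and run through the enumeration $(x,y^\xi,a_0^\xi)_{\xi<\omega_2}$. If $S_m(x,0,0)$ holds, only the first disjunct can apply: the second disjunct requires $\neg S_m(x,0,0)$. So $y=0$ is the unique value, and $R(x,0)$ holds with witness $a_0=0$.

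Otherwise let $\alpha>0$ be least with $S_m(x,y^\alpha,a_0^\alpha)$. Such an $\alpha$ exists because some $y$ with $R(x,y)$ has a witness $a_0$, and $(x,y,a_0)$ occurs in the enumeration. Lemma~\ref{lemma:even-marker-verification}, respectively Lemma~\ref{lemma:odd-marker-verification}, gives $M_m(x,y^\alpha,a_0^\alpha)$, so $\sigma(x,y^\alpha,m)$ holds and $y^\alpha$ lies in the $x$-section of $R$.

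For uniqueness, suppose $\sigma(x,y,m)$ holds through some $a_0$. Then $(x,y,a_0)=(x,y^\beta,a_0^\beta)$ for some $\beta$, and $S_m$ holds at $\beta$, so $\beta\geq\alpha$ by minimality. The same lemmas refute the marker when $\beta>\alpha$. Hence $\beta=\alpha$ and $y=y^\alpha$.

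One subtlety remains. Two different triples with different $a_0$ could share the same $y$, and then $y$ is the same real. So uniqueness of $y$ is what is needed, not uniqueness of the triple, and the argument above already gives this. Finally, $\sigma(x,y,m)$ implies $R(x,y)$ in both disjuncts, and $\sigma$ has nonempty section exactly on $\operatorname{dom}(R)$. So $\sigma$ uniformizes $R$.

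The main obstacle lies in the marker lemmas rather than in this assembly: the bookkeeping has to actually see every relevant tuple. Every $\alpha<\omega_2$ and every tuple of compressed reals $c_i$ or $d_i$ in $V^*$ appears at some stage $\beta<\omega_2$, because the iteration is ccc and of size $\aleph_2$. The $\Pi^1_3$ or $\Sigma^1_3$ matrix $\psi$ evaluated at that stage must agree with $V^*$; this uses the base-level absoluteness cited at the end of Subsection~\ref{subsec:forcing-global-sigma-uniformization}. I would also check that the enumeration by the final wellorder, restricted to index $\alpha$, is already computed correctly at that stage. If it is not, I would use instead the initial segment of the wellorder from the stage (a suitable $\Delta^1_5$-definable fragment) and argue that it is end-extended. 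With these points granted, the theorem follows directly from the two marker lemmas.
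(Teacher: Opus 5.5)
Your proposal is correct and follows the paper's proof essentially step for step. You pass to the normal forms (E)/(O) and split on whether $S_m(x,0,0)$ holds. You then take the least $\alpha>0$ with $S_m(x,y^\alpha,a_0^\alpha)$, obtain the marker at $\alpha$ and its failure at every $\beta>\alpha$ from Lemmas~\ref{lemma:even-marker-verification} and~\ref{lemma:odd-marker-verification}, and rule out $\beta<\alpha$ by minimality. The bookkeeping, absoluteness and stagewise-enumeration concerns you raise belong to those marker lemmas, which the paper's proof of this theorem likewise takes as given.
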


\begin{proof}
We prove the even case; the odd case is identical, using Lemma~\ref{lemma:odd-marker-verification} in place of Lemma~\ref{lemma:even-marker-verification}.

Let $m$ code a $\Sigma^1_{2n}$ relation $\varphi_m(x,y)$ in the normal form (E), with $n\geq 2$.  Fix $x$ such that $V^*\models\exists y\,\varphi_m(x,y)$.  If
\[
V^*\models S^{\mathrm{ev}}_m(x,0,0),
\]
then the first disjunct in $\sigma_{\mathrm{even}}(x,y,m)$ selects $y=0$, and the second disjunct is blocked by $\neg S^{\mathrm{ev}}_m(x,0,0)$.

Assume therefore that $S^{\mathrm{ev}}_m(x,0,0)$ fails.  Let $\alpha>0$ be least such that
\[
V^*\models S^{\mathrm{ev}}_m(x,y^\alpha,a_0^\alpha).
\]
Such an $\alpha$ exists because $\varphi_m(x,y)$ holds for some $y$, and the list contains every pair $(y,a_0)$.  By Lemma~\ref{lemma:even-marker-verification},
\[
V^*\models M^{\mathrm{ev}}_m(x,y^\alpha,a_0^\alpha),
\]
so $\sigma_{\mathrm{even}}(x,y^\alpha,m)$ holds.

If $\beta<\alpha$, then $S^{\mathrm{ev}}_m(x,y^\beta,a_0^\beta)$ fails by the choice of $\alpha$, so the pair $(y^\beta,a_0^\beta)$ cannot satisfy the second disjunct of $\sigma_{\mathrm{even}}$.  If $\beta>\alpha$, then Lemma~\ref{lemma:even-marker-verification} gives
\[
V^*\models\neg M^{\mathrm{ev}}_m(x,y^\beta,a_0^\beta),
\]
so the pair $(y^\beta,a_0^\beta)$ again cannot satisfy the second disjunct.  Thus exactly one $y$ is selected.  Since the selected $y$ satisfies $S^{\mathrm{ev}}_m(x,y,a_0)$ for some $a_0$, it also satisfies $\varphi_m(x,y)$.

The formula $\sigma_{\mathrm{even}}(x,y,m)$ is $\Sigma^1_{2n}$, as observed above.  Hence it is a $\Sigma^1_{2n}$ uniformization of $\varphi_m$.  The odd case uses $\sigma_{\mathrm{odd}}$ and gives a $\Sigma^1_{2n+1}$ uniformization.  This proves the theorem.
\end{proof}

Combining Theorem~\ref{theorem:global-sigma-uniformization-final} with the previously established $\Sigma^1_2$ case and the $\Sigma^1_3$ base case proved in Section~\ref{sec:sigma13-uniformization}, we obtain the announced global tail uniformization over the final model.

\section{The $\Sigma^1_3$-uniformization in $W_{2}[G_{\omega_2}]$}\label{sec:sigma13-uniformization}

The higher uniformization argument leaves the $\Sigma^1_3$ base case to be checked separately.  At this first nontrivial level the coding iteration is not needed.  What is used is the standard Steel analysis of $M_1$ and of its relativizations.  We only need this analysis for the small generic extensions which occur in the construction.  Thus, throughout this section, a \emph{small generic extension of $M_1$} means an extension $M_1[G]$ by a set forcing whose size in $M_1$ is below the Woodin cardinal of $M_1$ and to which the canonical iteration strategy of $M_1$ lifts.  All intermediate models used in the construction, and in particular the final model $W_2[G_{\omega_2}]$, are of this form.

For a real $s$ in such an extension, let $M_1(s)$ denote the canonical proper class mouse over $s$ with one Woodin cardinal.  The relevant mice exist and are iterable by the lifted $M_1$ strategy.  The argument below uses a capture theorem for this canonical mouse over $s$; it does not identify $M_1(s)$ with the forcing extension generated by $s$.

\begin{definition}
A \emph{simple $s$-mouse} is a sound, $\Pi^1_2$-iterable $s$-premouse which projects to $\omega$ and is an initial segment of the canonical construction of $M_1(s)$.
\end{definition}

The assertion that a real $c$ codes a sound simple $s$-mouse is a $\Pi^1_2(s,c)$ condition.  Any two such mice compare by initial segment.  Hence the reals of $M_1(s)$ carry the usual good $\Sigma^1_3(s)$ wellorder $<^1_s$: first compare the least simple $s$-mouse containing the real, and then use the canonical wellorder of that mouse.

We shall use the following standard form of Steel's relativized capture theorem at the first mouse level; see the projective definability and comparison analysis of $M_1$ in \cite{Steel2,Steel3}.

\begin{fact}\label{fact:m1-capture-and-good-wellorder}
Let $N=M_1[G]$ be a small generic extension of $M_1$, and let $s\in\mathbb R^N$.

\begin{enumerate}
    \item If $N$ satisfies a $\Sigma^1_3(s)$ statement of the form
    \[
       \exists u\,\exists v\,\theta(s,u,v),
    \]
    where $\theta$ is $\Pi^1_2$, then there are such witnesses $u,v\in M_1(s)$.

    \item For reals from $M_1(s)$, $\Pi^1_2$ truth is computed correctly by $M_1(s)$ and is preserved to $N$.  Equivalently, if $u,v\in M_1(s)$ and $\theta$ is $\Pi^1_2$, then
    \[
       M_1(s)\models\theta(s,u,v)
       \quad\Longleftrightarrow\quad
       N\models\theta(s,u,v).
    \]

    \item The wellorder $<^1_s$ is good for $\Sigma^1_3$ definitions.  More explicitly, for every $\Pi^1_2$ formula $\theta(s,u,v)$ the relation
    \[
       \operatorname{Least}_{\theta}(s,u)
    \]
    saying that $u\in M_1(s)$ and $u$ is the $<^1_s$-least real for which there is a $v\in M_1(s)$ with $M_1(s)\models\theta(s,u,v)$ is uniformly $\Sigma^1_3(s)$.
\end{enumerate}
\end{fact}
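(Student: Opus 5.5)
We verify the three clauses separately: the second is the easiest, the first is the real content, and the third is a definability computation resting on the comparison of simple $s$-mice.

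\emph{Clause (2).} This is Shoenfield absoluteness. $M_1(s)$ is a transitive proper class model containing all ordinals. Any pair of reals $u,v\in M_1(s)$ lies in both $M_1(s)$ and $N$. Hence a $\Pi^1_2(s,u,v)$ statement has the same truth value in the two models; no mouse theory is needed here.

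\emph{Clause (1).} This is the $\Sigma^1_3$-correctness of $M_1(s)$ inside $N$, and it is the step I expect to be the main obstacle.
\begin{enumerate}
   \item Assume $N\models\theta(s,u,v)$ with $\theta\in\Pi^1_2$. Let $\delta$ be the Woodin cardinal of $M_1(s)$, and use the iterability of $M_1(s)$ given by the lifted $M_1$ strategy, which exists by the smallness assumption on the extension.
   \item Perform a Woodin genericity iteration of a countable hull of $M_1(s)$ via the extender algebra at $\delta$. This gives an iterate $j:M_1(s)\to P$ with $(u,v)$ generic over $P$ for the image of the extender algebra. The branches are chosen by $\mathcal Q$-structures, as in Lemma~\ref{lem:local-comparison-m1}.
   \item By Shoenfield, $P[u,v]\models\theta(s,u,v)$. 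So $P$ satisfies ``some condition of the extender algebra forces $\exists u\,\exists v\,\theta(s,u,v)$''.
   \item By elementarity of $j$, which fixes $s$, the same holds in $M_1(s)$. Thus $M_1(s)[h]\models\exists u\,\exists v\,\theta$ for some generic $h$ over $M_1(s)$ at $\delta$.
   \item The remaining difficulty is to pull the witnesses down into $M_1(s)$ itself. Here I would use the standard argument that $M_1(s)$ is $\Sigma^1_3$-correct in its own collapse extensions. Inside $M_1(s)$, take a countable iterable hull of a level below $\delta$. Iterability is certified internally by $\mathcal Q$-structures, since $M_1(s)$ is $1$-small above $s$. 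Then rerun the genericity iteration of this hull inside $M_1(s)$ against the witnesses. The resulting statement ``there exist an iterate and a generic with witnesses'' is $\Sigma^1_2$ in a real of $M_1(s)$, and it yields actual witnesses in $M_1(s)$ by Shoenfield inside $M_1(s)$.
\end{enumerate}
This is exactly where the Woodin cardinal and the first-level iterability analysis of \cite{Steel2} are needed. I would cite that analysis for the internal iterability claim rather than reprove it.

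\emph{Clause (3).} Here the plan is a direct definability computation.
\begin{itemize}
   \item Membership $u\in M_1(s)$ is $\Sigma^1_3(s)$: it says there exists a real coding a simple $s$-mouse containing $u$. Being a simple $s$-mouse is $\Pi^1_2$ by Steel's iterability analysis.
   \item Any two simple $s$-mice compare by initial segment; this is the relativization of Lemma~\ref{lem:local-comparison-m1}.
   \item Hence, once a code $c$ for the least simple $s$-mouse $\mathcal P$ containing $u$ is fixed, every $u'<^1_s u$ already lies in $\mathcal P$. The clause ``no $u'<^1_s u$ has a witness $v'$'' therefore becomes a quantification over the reals of $\mathcal P$, which is arithmetic in $c$.
   \item I would also take the witness $v'$ from the mouse containing $u'$. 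This is justified by the $\Sigma^1_3$-correctness of clause (1) together with Shoenfield, so the condition is a bounded search combined with $\Pi^1_2$ matrices.
\end{itemize}
Altogether, $\operatorname{Least}_\theta(s,u)$ reads: $\exists c\,\exists v\,[\,c$ codes a simple $s$-mouse containing $u,v$, $\theta(s,u,v)$ holds, and for all $u'$ below $u$ in the order coded by $c$, $\neg\exists v'\in M_1(s)\,\theta(s,u',v')\,]$. The last conjunct needs care, since it is a negated existential over $M_1(s)$. I would handle it via clause (1), replacing ``$\exists v'\in M_1(s)$'' by the true $\Sigma^1_3$ statement in $N$. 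This makes the negation $\Pi^1_3$, which threatens the complexity count. The fix I would try first is to rely on goodness: the $<^1_s$-initial segment below $u$ is coded as a single real inside a later simple mouse. Then ``no witness for any $u'$ in that segment'' is evaluated inside that mouse, using the correctness of clause (2) for $\Pi^1_2$ facts. This keeps the whole definition $\Sigma^1_3(s)$ uniformly in $\theta$.
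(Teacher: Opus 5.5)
Your clause (2) is fine. Shoenfield absoluteness between the proper class models $M_1(s)$ and $N$ is all it needs, which is simpler than the comparison analysis the paper gestures at.

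The genuine gap is step 5 of clause (1). Steps 1--4 only yield $M_1(s)\models$ ``some condition of the extender algebra at $\delta$ forces $\exists u\,\exists v\,\theta(s,u,v)$''. That algebra has size $\delta$, and $M_1(s)$ has no measurable at or above $\delta$, so Martin--Solovay absoluteness is unavailable for it. Pulling the witnesses back from such an extension is the entire content, and your sketch does not do it:
\begin{itemize}
\item A countable hull of a level below $\delta$ has no Woodin cardinal, so there is no genericity iteration to run on it.
\item The witnesses lie in $M_1(s)[h]$, not in $M_1(s)$, so $M_1(s)$ cannot iterate against them.
\item The $\Sigma^1_2$ statement you reflect by Shoenfield only produces $u',v'\in M_1(s)$ with $P'[g']\models\theta(s,u',v')$ for a countable $P'[g']$. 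This does not make $\theta(s,u',v')$ true, since $\Pi^1_2$ facts are not upward absolute from countable models. Adding ``$\theta(s,u',v')$ holds'' makes the statement $\Sigma^1_3$, which is circular.
\end{itemize}
The missing idea is to exploit smallness instead of climbing to size $\delta$. $N$ is a small generic extension of $M_1[s]$, whose universe is that of $M_1(s)$ (reorganize $M_1[s]$ as an $s$-premouse). A measurable $\kappa<\delta$ above the size of the quotient forcing then gives $\Sigma^1_3(s)$-absoluteness between $M_1(s)$ and $N$, by the Martin--Solovay argument behind Corollary~\ref{cor:t2-sigma13-absoluteness}. That is exactly clause (1).

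Clause (3) has a second gap, and here the obstruction lies in the statement itself, not just in your argument. The condition ``no $u'<^1_s u$ has a witness'' quantifies over all $v'\in M_1(s)$. A witness for some $u'$ may first appear in a much longer mouse, so evaluating the condition inside a later countable simple mouse is strictly weaker than required. Taking $v'$ from the mouse containing $u'$ is likewise unjustified.

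In fact $\operatorname{Least}_\theta$ as written is not $\Sigma^1_3$ in general. Take $\chi\in\Pi^1_2$ with $\{s\mid\exists v\,\chi(s,v)\}$ complete $\Sigma^1_3$. Let $\ell(s),\ell_2(s)$ be the first two reals in $<^1_s$; both are $\Delta^1_2(s)$, being read off the first levels of the $s$-mouse hierarchy. Let $\theta(s,u,v)$ be ``$u\neq\ell(s)$ or $\chi(s,v)$''. By (1) and (2), $\operatorname{Least}_\theta(s,\ell_2(s))$ holds iff $\neg\exists v\,\chi(s,v)$. So a uniform $\Sigma^1_3$ definition of $\operatorname{Least}_\theta$ would make a complete $\Pi^1_3$ set $\Sigma^1_3$, which is impossible.

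What is true, and what Theorem~\ref{thm:sigma13-uniformization-m1-generic} actually needs, is the version for the $<^1_s$-least \emph{pair}. Once $c$ codes a simple $s$-mouse containing $u\oplus v$, the condition ``no $w<^1_s u\oplus v$ satisfies $\theta(s,(w)_0,(w)_1)$'' is a number quantifier over the reals coded by $c$ in front of the $\Sigma^1_2$ matrix $\neg\theta$, hence $\Sigma^1_2$. So the least-pair relation is $\Sigma^1_3(s)$, and projecting out $v$ gives the uniformizing function.
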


For completeness, let us spell out why this is the right theorem to apply.  The mouse $M_1(s)$ is the canonical mouse over the parameter $s$.  The comparison theorem for $\Pi^1_2$-iterable $s$-mice identifies its countable initial segments with the segments captured by the $\Pi^1_2$ mouse condition.  Therefore a small generic extension of $M_1$ cannot create a new $\Sigma^1_3(s)$ witness without some countable initial segment of $M_1(s)$ already capturing the corresponding branch.  The same comparison analysis gives the $\Pi^1_2$ correctness used in item (2), and the usual definition of the wellorder by least simple $s$-mouse gives item (3).

\begin{theorem}\label{thm:sigma13-uniformization-m1-generic}
Every small generic extension of $M_1$ satisfies boldface $\Sigma^1_3$-uniformization.  In particular,
\[
   W_2[G_{\omega_2}]
\]
satisfies boldface $\Sigma^1_3$-uniformization.
\end{theorem}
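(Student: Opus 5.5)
The plan is the classical Moschovakis/Addison argument for uniformization from a good wellorder, relativized to the mouse $M_1(s)$ and made correct in $N$ by Fact~\ref{fact:m1-capture-and-good-wellorder}. Fix a small generic extension $N=M_1[G]$ and a boldface $\Sigma^1_3$ relation $R\subseteq\mathbb R^2$ in $N$. Choose a real parameter $p$ and a $\Pi^1_2$ formula $\theta$ with
\[
   R(x,y)\iff \exists v\,\theta(p,x,y,v).
\]
For a given $x$, put $s=\langle p,x\rangle$ and regard the pair $(y,v)$ as a single real $u=\langle y,v\rangle$. The candidate uniformizing relation is
\[
   R^*(x,y)\iff \exists v\,\bigl(\operatorname{Least}_{\theta'}(\langle p,x\rangle,\langle y,v\rangle)\bigr),
\]
where $\theta'(s,u)$ is the $\Pi^1_2$ statement that $u$ decodes a pair $(y,v)$ with $\theta(p,x,y,v)$. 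In words, $y$ is the first coordinate of the $<^1_s$-least pair in $M_1(s)$ witnessing $R(x,\cdot)$.

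The verification proceeds in three steps.

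\textbf{Definability.} By item (3) of Fact~\ref{fact:m1-capture-and-good-wellorder}, $\operatorname{Least}_{\theta'}(s,u)$ is uniformly $\Sigma^1_3(s)$. Since $s$ is recursive in $(p,x)$, the relation $R^*$ is $\Sigma^1_3(p)$.

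\textbf{Totality on the domain.} Suppose $N\models\exists y\,R(x,y)$. Then $N$ satisfies the $\Sigma^1_3(s)$ statement $\exists y\exists v\,\theta$. Item (1) of the Fact produces witnesses $y,v\in M_1(s)$. By item (2), $M_1(s)\models\theta'(s,\langle y,v\rangle)$. Hence the set of such $u$ in $M_1(s)$ is nonempty, and it has a $<^1_s$-least element $u_0=\langle y_0,v_0\rangle$, so $R^*(x,y_0)$ holds.

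\textbf{Correctness and uniqueness.} If $R^*(x,y)$ is witnessed by $u=\langle y,v\rangle$, then item (2) transfers $\theta$ back to $N$, so $R(x,y)$ holds in $N$. Since $<^1_s$ is a wellorder, the least $u$ is unique, and therefore so is its first coordinate $y$. This shows that $R^*$ uniformizes $R$. The second claim of the theorem follows once we observe, as stated at the start of this section, that $W_2[G_{\omega_2}]$ is a small generic extension of $M_1$.

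The main obstacle is not the combinatorics above but the uniformity in the parameter. The formula defining $\operatorname{Least}$ must be evaluated in $N$ and not inside $M_1(s)$. This means that "$u\in M_1(s)$ and no $<^1_s$-smaller $u'$ works" must be expressed by quantifying over countable simple $s$-mice, which are identified by the $\Pi^1_2$ condition. It also requires that "no smaller $u'$" only quantifies over the reals of a single countable mouse, so that the negated clause is arithmetic in the code of the mouse together with the correct evaluation of $\theta'$ by item (2). I would write this out explicitly. The formula reads: there exists a code $c$ for a simple $s$-mouse $\mathcal P$ with $u\in\mathcal P$ such that $\theta'(s,u)$ holds, and for every real $u'$ of $\mathcal P$ which precedes $u$ in $\mathcal P$'s order, $\neg\theta'(s,u')$ holds. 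This expression is $\Sigma^1_3$ because the bounded universal quantifier ranges over a countable set given by $c$. One must also check that every $<^1_s$-predecessor of $u$ in $M_1(s)$ already lies in $\mathcal P$, which is exactly the goodness (initial-segment closure) of the wellorder. A second point needing care is that $\neg\theta'$, a $\Sigma^1_2$ statement, is also absolute between $M_1(s)$ and $N$; this follows from item (2) or from Shoenfield absoluteness.
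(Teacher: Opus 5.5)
Your argument is correct, and its overall strategy is the paper's: take the least witness in the good order $<^1_s$ of $M_1(s)$, using capture and $\Pi^1_2$-correctness from Fact~\ref{fact:m1-capture-and-good-wellorder}. The one real difference is what gets minimized, and at that point your version is the one that works.

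The paper minimizes the first coordinate alone. Its $A^*(x,y)$ says that $y$ is the $<^1_s$-least real for which \emph{some} $z\in M_1(s)$ satisfies $\theta$, and item (3) is invoked to call this $\Sigma^1_3$. In that form, the clause ``no $<^1_s$-smaller $y'$ has a witness'' lets $z$ range over all reals of $M_1(s)$, not over one countable mouse. It is therefore a $\Pi^1_3$ condition, and this cannot be removed in general. Let a fixed real $e_0$ be a candidate exactly when $x$ lies in a $\Sigma^1_3$ set $S$ that is not $\Pi^1_3$, and let a fixed real $e_1$ lying $<^1_s$-above $e_0$ always be a candidate. Then ``$e_1$ is least'' defines the complement of $S$, which is not $\Sigma^1_3$.

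You instead minimize the pair $u=\langle y,v\rangle$ and only afterwards project away $v$, which is the classical Addison device. So you need item (3) only for a matrix with no hidden existential quantifier. Your explicit formula shows this special case really is $\Sigma^1_3$: it puts a universal quantifier over the reals of a single countable simple $s$-mouse in front of the $\Sigma^1_2$ matrix $\neg\theta'$, and uses the goodness of $<^1_s$ together with item (2).

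In short, your route costs one extra projection and gives an honest $\Sigma^1_3$ uniformization. The paper's one-coordinate minimization only yields a uniformizing relation of complexity $\Sigma^1_3\wedge\Pi^1_3$.
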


\begin{proof}
Let $N=M_1[G]$ be a small generic extension of $M_1$.  Work in $N$, and let $A\subseteq\mathbb R^2$ be a boldface $\Sigma^1_3$ relation.  Fix a real parameter $a$ and a $\Pi^1_2$ formula $\psi$ such that
\[
   A(x,y)\quad\Longleftrightarrow\quad \exists z\,\psi(a,x,y,z).
\]
For each real $x$, put $s=a\oplus x$.  Define $A^*(x,y)$ to hold iff
\[
   \operatorname{Least}_{\theta}(s,y),
\]
where
\[
   \theta(s,y,z)
\]
is the $\Pi^1_2$ formula obtained from $\psi(a,x,y,z)$ after decoding $s=a\oplus x$.
By Fact~\ref{fact:m1-capture-and-good-wellorder}(3), the relation $A^*$ is $\Sigma^1_3(a)$, hence boldface $\Sigma^1_3$.

We verify that $A^*$ uniformizes $A$.  If $A^*(x,y)$ holds, then by definition there is some $z\in M_1(a\oplus x)$ such that
\[
   M_1(a\oplus x)\models \psi(a,x,y,z).
\]
By Fact~\ref{fact:m1-capture-and-good-wellorder}(2), $N\models\psi(a,x,y,z)$.  Hence $A(x,y)$ holds.

The relation $A^*$ is single-valued because $<^1_{a\oplus x}$ is a wellorder and $A^*(x,y)$ asserts that $y$ is the $<^1_{a\oplus x}$-least real in $M_1(a\oplus x)$ for which a suitable $z$ exists.

Finally suppose that the $x$-section of $A$ is nonempty in $N$.  Then
\[
   N\models \exists y\exists z\,\psi(a,x,y,z).
\]
By Fact~\ref{fact:m1-capture-and-good-wellorder}(1), there are witnesses $y,z\in M_1(a\oplus x)$.  Therefore the set of $<^1_{a\oplus x}$-candidates is nonempty, so it has a least element.  For this least element $y_0$, Fact~\ref{fact:m1-capture-and-good-wellorder}(3) gives $A^*(x,y_0)$.

Thus $A^*$ is a boldface $\Sigma^1_3$ uniformization of $A$ in $N$.  The final model $W_2[G_{\omega_2}]$ is a small generic extension of $M_1$ by the forcings fixed in the construction, so the final assertion follows.
\end{proof}

\section{A byproduct on generalized regularity properties}

In this section we record a consequence of the $M_1$ case for generalized
notions of measurability associated with tree forcings. We use the notation of
Fischer--Friedman--Khomskii (see \cite{FischerFriedmanKhomskii}) and Friedman--Schrittesser (\cite{FriedmanSchrittesser}). Thus $B$ denotes the
random algebra, equivalently Lebesgue measurability; $C$ denotes Cohen forcing,
equivalently the Baire property; $S$ denotes Sacks measurability, equivalently
Marczewski measurability; and $M$ denotes Miller measurability. If $P$ is one
of these tree forcings and $\Gamma$ is a pointclass, we write $\Gamma(P)$ for
the assertion that every set in $\Gamma$ is $P$-measurable.

We shall use the following theorem of Brendle and L\"owe, as quoted in
\cite{FischerFriedmanKhomskii}. If $\Gamma$ is closed under continuous
preimages, then
\[
\Gamma(C)\Rightarrow \Gamma(M)
\quad\text{and}\quad
\Gamma(B)\Rightarrow \Gamma(S).
\]
They also show the implication chain
\[
\Gamma(L)\Rightarrow \Gamma(M)\Rightarrow \Gamma(S),
\]
where $L$ denotes Laver measurability. We shall only use the first two
implications above.

\begin{corollary}\label{corollary-generalized-measurability}
Assume that $M_1$ exists. Then there is a generic extension preserving one
Woodin cardinal in which
\[
\Sigma^1_3(B)+\Sigma^1_3(C)+\Sigma^1_3(S)+\Sigma^1_3(M)
\]
hold, and in which there is a $\Delta^1_4$-definable wellorder of the reals.
Consequently, for each $P\in\{B,C,S,M\}$, the model satisfies
\[
\Sigma^1_3(P)+\neg\Delta^1_4(P).
\]
Thus the construction gives the $m=1$, $m_i=3$ instance of the
Friedman--Schrittesser question 1.21 for the forcings $B,C,S$ and $M$.
\end{corollary}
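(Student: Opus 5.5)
The model will be $V^*=W_2[G_{\omega_2}]$ from Section~\ref{sec:final-iteration}, built over $M_1$. The corollary then follows from properties of $V^*$ already established, plus the Brendle--L\"owe implications. I would proceed in four steps.

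\textbf{Step 1: the Woodin cardinal survives.} The whole construction $M_1\to W_0\to W_1\to W_2\to V^*$ is a set forcing. The iteration producing $W_0$ has length $\eta$, and the later iterations have length $\omega_2^{W_0}=\eta$. So if $\eta$ is chosen below the Woodin cardinal $\delta$ of $M_1$, the total forcing has size $<\delta$, and $\delta$ stays Woodin in $V^*$ by the standard small-forcing preservation argument. This choice is also what the paper's notion of ``small generic extension'' needs in Section~\ref{sec:sigma13-uniformization}, so it costs nothing.

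\textbf{Step 2: Lebesgue measurability and the Baire property.} $\Sigma^1_3(B)$ and $\Sigma^1_3(C)$ are exactly Lemma~\ref{HjorthsLemma}, applied to boldface $\Sigma^1_3$ sets. That lemma uses $\mathsf{MA}$ from Lemma~\ref{lemma:ma-final-model} and the $T_2$-absoluteness of Corollary~\ref{cor:t2-sigma13-absoluteness}.

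\textbf{Step 3: Sacks and Miller measurability.} The pointclass $\boldsymbol{\Sigma}^1_3$ is closed under continuous preimages. Hence the Brendle--L\"owe implications give $\Sigma^1_3(C)\Rightarrow\Sigma^1_3(M)$ and $\Sigma^1_3(B)\Rightarrow\Sigma^1_3(S)$. No forcing argument is needed here.

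\textbf{Step 4: the wellorder and the failure of $\Delta^1_4(P)$.} The $\Delta^1_4$ wellorder is the one produced by the (W) stages. One sets $x_0<x_1$ iff $\operatorname{Coded}((0,x_0,x_1))$. This relation is $\Sigma^1_4$, and every pair of reals is handled at some stage by bookkeeping. By the no-accidental-coding argument (Lemma~\ref{definable well-order of reals determines real world} together with the freshness convention and Lemma~\ref{lemma:tail-preservation-ma-stages}), exactly one orientation of each pair is coded. Its complement is therefore $x_0=x_1\lor\operatorname{Coded}((0,x_1,x_0))$, which is also $\Sigma^1_4$. So the relation is $\Delta^1_4$; linearity and wellfoundedness come from comparing the $<_{M_1}$-least names, which is how the orientation at a (W) stage was fixed.

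Given this $\Delta^1_4$ wellorder, I would use the standard fact from Fischer--Friedman--Khomskii that a $\boldsymbol{\Delta}^1_n$ wellorder of the reals produces a $\boldsymbol{\Delta}^1_n$ set that is not $P$-measurable, for each of $P=B,C,S,M$. For instance, a wellorder of the plane that is simultaneously $\Sigma$- and $\Pi$-definable gives a non-measurable set via the Sierpi\'nski/Bernstein-type construction. For Sacks measurability the relevant fact is that $\Delta^1_n(S)$ implies that for every real $r$ there is a real not in $L[r]$-like inner models; this fails because $(<)$ well-orders all reals in a $\Delta^1_4$ way.

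The main obstacle is this last transfer. I would cite the FFK/Brendle--L\"owe characterizations, under which $\Delta^1_n(P)$ fails whenever there is a $\Delta^1_n$-good wellorder. I would check that our wellorder is good, or at least that its initial-segment relation is $\Delta^1_4$, using the compression maps $\pi_\alpha$ of Lemma~\ref{lemma:compression-pi-alpha}.
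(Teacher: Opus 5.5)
Steps 1--3 match the paper, and Step 1 usefully makes explicit the choice $\eta<\delta$ that the paper leaves implicit. Step 4, however, has a genuine gap exactly where the corollary has content, namely $\neg\Delta^1_4(S)$ and $\neg\Delta^1_4(M)$.

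Your argument for $S$ runs backwards. A criterion of the form ``$\Delta^1_n(S)$ implies that for every $r$ some real lies outside an $L[r]$-like model'' can refute $\Delta^1_4(S)$ only if some single such model contains all reals. A $\Delta^1_4$ wellorder gives no such model. In fact, in $V^*$ every $L[r]$ (and every $M_1(r)$) has at most $\aleph_1$ reals, while $2^{\aleph_0}=\aleph_2$ by Lemma~\ref{lemma:cardinal-equations-at-stages}. So the criterion is satisfied in $V^*$, not violated.

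Your fallback cannot be carried out either. A $\Delta^1_n$-good wellorder in the Fischer--Friedman--Khomskii sense codes each initial segment by a single real. Hence all initial segments are countable, the order type is $\omega_1$, and $\mathsf{CH}$ holds, which fails in $V^*$. So there is no good wellorder to find, and the theorems you want to cite do not apply. Adapting them to $\aleph_1$-sized initial segments via the $\pi_\alpha$ is not a routine check: the paper only asserts that the $\pi_\alpha$ exist, not that they are projectively definable, and the transfer arguments would have to be redone for uncountable initial segments.

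Nothing is offered for $M$ beyond the citation. You also cannot borrow $\neg\Delta^1_4(B)$ or $\neg\Delta^1_4(C)$ here, because the Brendle--L\"owe implications $\Gamma(B)\Rightarrow\Gamma(S)$ and $\Gamma(C)\Rightarrow\Gamma(M)$ point the wrong way for transferring failures.

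The paper does not go through goodness or any inner-model characterization for $S$ and $M$. It runs a Bernstein recursion directly along the $\Delta^1_4$ wellorder. It enumerates all $P$-conditions (perfect, resp.\ superperfect trees) and, for each, puts one fresh branch into a set $A_P$ and keeps another fresh branch out of it. Then no condition is homogeneous for $A_P$, so non-$P$-measurability is immediate, and only the $\Delta^1_4$-definability of the recursion remains. The paper asserts that definability briefly; whatever justifies it, it is not the good-wellorder theorem.

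For $B$ and $C$ your Sierpi\'nski route is acceptable, with one adjustment. Proper initial segments of the wellorder can now be uncountable, so you must use $\mathsf{MA}$ (sets of size $\aleph_1$ are null and meager) before applying Fubini and Kuratowski--Ulam. If the order type exceeds $\omega_2$, first pass to the least non-null (resp.\ non-meager) initial segment.
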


\begin{proof}
By the $n=1$ case of the main construction, the final model satisfies $\MA$,
every $\boldsymbol{\Sigma}^1_3$ set of reals is Lebesgue measurable and has
the Baire property, and there is a $\Delta^1_4$-definable wellorder of the
reals. In the notation above this says that
\[
\Sigma^1_3(B)+\Sigma^1_3(C)
\]
hold. Since $\Sigma^1_3$ is closed under continuous preimages, the
Brendle--L\"owe implications yield
\[
\Sigma^1_3(B)\Rightarrow\Sigma^1_3(S)
\quad\text{and}\quad
\Sigma^1_3(C)\Rightarrow\Sigma^1_3(M).
\]
Therefore
\[
\Sigma^1_3(B)+\Sigma^1_3(C)+\Sigma^1_3(S)+\Sigma^1_3(M)
\]
hold in the final model.

It remains to explain the failure of $\Delta^1_4(P)$ for
$P\in\{B,C,S,M\}$. We use the $\Delta^1_4$ wellorder of the reals produced by
the construction. For $P=B$ or $P=C$, this is the usual construction of a
$\Delta^1_4$ Vitali/Bernstein type counterexample to Lebesgue measurability,
respectively to the Baire property. For $P=S$ or $P=M$, the same argument is
carried out with the corresponding tree conditions.

More explicitly, fix $P\in\{S,M\}$ and use the $\Delta^1_4$ wellorder to
enumerate all $P$-conditions. Recursively, for each condition $T$, choose two
fresh branches $x_T^0,x_T^1\in [T]$, putting $x_T^0$ into a set $A_P$ and
keeping $x_T^1$ outside $A_P$. The usual Bernstein argument then shows that no
$P$-condition can be homogeneous for $A_P$: every $P$-condition has a stronger
condition whose body meets both $A_P$ and its complement. Hence $A_P$ is not
$P$-measurable. Since the recursion is performed using the constructed 
$\Delta^1_4$ wellorder, the resulting set may be chosen $\Delta^1_4$. Thus
$\neg\Delta^1_4(P)$ holds for $P\in\{S,M\}$ as well. This completes the proof.
\end{proof}

The preceding corollary should not be read as treating all tree forcings in
the Friedman--Schrittesser diagram. In particular, the Brendle--L\"owe
implications used above do not yield $\Sigma^1_3(L)$, $\Sigma^1_3(V)$ or
$\Sigma^1_3(R)$, where $L,V,R$ denote Laver, Silver and Mathias measurability,
respectively. The implication involving Laver goes in the direction
$\Gamma(L)\Rightarrow\Gamma(M)$, not conversely. Thus the corollary is limited
to $P\in\{B,C,S,M\}$.

\section{Forcing over $M_n$}

In this final section we explain how the preceding construction is lifted
from $M_1$ to the canonical inner model $M_n$ with $n$ Woodin cardinals.
The point of this section is not to repeat the coding computations of
Section 8. Those computations are entirely formal once the relevant
initial segments of the ground model are available in the decoding models.
The only change is that, over $M_n$, the assertion that a decoded premouse is
an initial segment of the ground model has higher projective complexity.

We therefore isolate the precise modification. In the construction over
$M_1$ we used the fact that, in any $\omega_1$-preserving generic extension,
the relevant countable initial segments of $M_1$ are captured by the
$\Pi^1_2$-definable class $\mathcal I$. Over $M_n$ the corresponding class is
more complex.

\begin{lemma}\label{Mn-initial-segments}
Let $M_n[G]$ be an $\omega_1$-preserving forcing extension of $M_n$ of the kind fixed in Paragraph~\ref{par:steel-mn-preservation-convention}. Then in $M_n[G]$ there is a $\Pi^1_{n+1}$-definable class $\mathcal I_n$ of countable premice satisfying the conclusions of Lemma~\ref{lem:definable-mn-initial-segments}. In particular, its elements are externally exactly the relevant countable initial segments $\mathcal J^{M_n}_\eta$ of $M_n$, and these segments are cofinal below $\omega_1$.
\end{lemma}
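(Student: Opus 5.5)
The plan is to take $\mathcal I_n$ to be exactly the class of Definition~\ref{def:mn-lower-part-approximations}, evaluated inside $M_n[G]$, and to verify the required properties one at a time. Almost everything has already been done in Lemma~\ref{lem:definable-mn-initial-segments}. The only thing the present statement asks beyond that lemma is a precise complexity bound: it claims lightface $\Pi^1_{n+1}$, whereas the earlier lemma records boldface $\boldsymbol{\Pi}^1_{n+1}$.

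\textbf{Definability.} The proof would proceed in the following order.

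First, I would fix the standard recursive coding of countable premice by reals. I would then check that each of the following clauses is arithmetic in the code, with no real parameter:
\begin{itemize}
\item being a countable passive ordinary premouse;
\item being lower-part, that is, having no Woodin cardinal, which is a first-order statement inside the coded structure;
\item being $n$-small;
\item being $\omega$-sound, with $\rho_\omega=\omega$.
\end{itemize}
Being ordinary, these premice carry no real parameter, which is what lets the whole definition stay lightface.

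Second, I would invoke Fact~\ref{fact:steel-pin-iterability}(1) for the iterability clause. This is the one place where the Fact only says boldface, so care is needed. Steel's game $\mathcal G(\mathcal M,0,n)$ is defined uniformly from the code of $\mathcal M$ alone. Its $n$ rounds contribute $n$ alternating real quantifiers over an arithmetic or $\Pi^1_1$ matrix: the trees are countable and coded by reals, and the branch and $\alpha$-goodness conditions are $\Pi^1_1$ in the codes. The result is a lightface $\Pi^1_{n+1}$ condition in the code. Combining this with the first step gives $\mathcal I_n\in\Pi^1_{n+1}$.

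\textbf{Correctness and cofinality.} Both come verbatim from the proof of Lemma~\ref{lem:definable-mn-initial-segments}. Let $\mathcal N\in\mathcal I_n$. Because $\omega_1$ is preserved, I can choose $\eta<\omega_1$ above $\operatorname{Ord}^{\mathcal N}$ such that $\mathcal J^{M_n}_\eta$ is passive, lower-part, $\omega$-sound and projects to $\omega$. By the preservation convention of Paragraph~\ref{par:steel-mn-preservation-convention}, $\mathcal J^{M_n}_\eta$ is $\Pi_n$-iterable in $M_n[G]$. Lemma~\ref{lem:local-comparison-mn} then yields $\mathcal N\trianglelefteq\mathcal J^{M_n}_\eta$, so $\mathcal N$ is an initial segment of $M_n$. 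The same preservation convention also puts all such $\mathcal J^{M_n}_\eta$ into $\mathcal I_n$. These levels are cofinal below $\omega_1$ by the fine structure of $M_n$: every countable level is collapsed by a later level projecting to $\omega$, and below the least Woodin cardinal of $M_n$ there are no Woodin cardinals to exclude.

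\textbf{Main obstacle.} I expect the hardest step to be the lightface complexity count in the second step, and in particular the parity issue at odd $n$. There one must confirm that the extra $\alpha$-goodness requirement in the last round adds only a universal ordinal quantifier. Since $\alpha$ ranges over countable ordinals, that quantifier absorbs into a real quantifier over codes of wellorders, so it does not push the condition beyond $\Pi^1_{n+1}$. As a fallback I would cite Steel's \cite[Lemma~1.7]{Steel2} for the exact lightface bound.
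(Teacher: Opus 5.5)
Your proposal is correct and follows the paper's route: the paper's proof just takes $\mathcal I_n$ from Definition~\ref{def:mn-lower-part-approximations} and invokes Lemma~\ref{lem:definable-mn-initial-segments}, together with the recovery convention of Lemma~\ref{lem:recover-mn-omega1}. Your remark that the bound is lightface, because Steel's game is defined uniformly from the code alone, fills a point the paper passes over, since Lemma~\ref{lem:definable-mn-initial-segments} only states $\boldsymbol{\Pi}^1_{n+1}$.

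There are two local slips. First, your own quantifier count does not work. Already for $n=1$ the condition has the form $\forall(\mathcal T,x)\,\bigl(x\notin\mathrm{WO}\vee\exists b\,\chi\bigr)$. The point of $\alpha$-goodness is that $\chi$ is $\Delta^1_1$ in the code $x$ for $\alpha$, whereas full well-foundedness would be $\Pi^1_1$, so just cite \cite[Lemma~1.7]{Steel2}. Second, there \emph{are} countable levels of $M_n$ satisfying ``there is a Woodin cardinal'', for instance collapses of countable hulls of levels above the least Woodin cardinal. So for cofinality use instead collapses of countable hulls of $M_n|\omega_1$, as in Lemma~\ref{lem:mn-diamond}; these think every set is countable.
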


\begin{proof}
This is precisely Lemma~\ref{lem:definable-mn-initial-segments}, using the definition of the lower-part approximation class from Definition~\ref{def:mn-lower-part-approximations}. The final compatibility statement used in the decoding arguments follows from the recovery convention in Definition~\ref{def:recover-mn-omega1} and Lemma~\ref{lem:recover-mn-omega1}: the decoded premouse is used only externally as the correct initial segment of $M_n$, so the ladder system, almost disjoint family, and Suslin-tree blocks computed from it agree with the ambient construction.
\end{proof}

We now repeat the definition of the coding predicate in the form in which it
will be used over $M_n$. Let $x\in 2^\omega$ be a real. As before, we first
force a pattern into an unused $\omega$-block
\[
\vec S^\xi=(S^\xi_k:k<\omega)
\]
of the independent sequence of Suslin trees: for each $k<\omega$ we add an
$\omega_1$-branch through $S^\xi_k$ if $x(k)=1$, and specialize $S^\xi_k$ if
$x(k)=0$. We then collect the resulting branches and specializing functions,
together with the auxiliary objects needed to reconstruct the suitable model
which computes the block $\vec S^\xi$, into a set $X\subseteq\omega_1$.
Finally, exactly as in Section 8, we replace $X$ by a set
$Y\subseteq\omega_1$ which also codes a club of countable elementary
submodels, and then almost-disjoint-code $Y$ by a real $r_Y$ using the almost
disjoint family computed from the relevant initial segment of $M_n$.

The resulting decoding formula is the following.

\begin{definition}\label{definition-psi-n}
Let $\psi_n(x,r)$ be the assertion that for every countable transitive model
$M$ of $\ZFP+``\aleph_1$ exists$"$ such that $r\in M$, the following holds.

Assume that, externally, there is some
$\mathcal J^{M_n}_{\eta}\in\mathcal I_n$ such that
\[
\omega_1^M=\omega_1^{\mathcal J^{M_n}_{\eta}}
\quad\text{and}\quad
\mathcal J^{M_n}_{\eta}\in M.
\]
Then, using the almost disjoint family computed from
$\mathcal J^{M_n}_{\eta}$, the model $M$ decodes from $r$ the following
objects:
\begin{enumerate}
    \item a transitive model $m$ of $\ZFC^-$ satisfying
    $``\aleph_1$ exists$"$;
    \item a model $a$ which $M$ believes to be suitable and which computes the
    relevant block $\vec s^\xi$ of Suslin trees;
    \item branches $\vec b$ through the trees $s^\xi_k$ for those
    $k\in x$;
    \item specializing functions $\vec f$ for the trees $s^\xi_k$ for those
    $k\notin x$;
\end{enumerate}
and for every $\ZFP+``\aleph_1$ exists$"$ model of the form
$L_\zeta[a,\vec b,\vec f]$ computed in $m$,
\[
L_\zeta[a,\vec b,\vec f]\models
\forall k<\omega\,
\bigl(k\in x\rightarrow s^\xi_k
\text{ has an }\omega_1\text{-branch}\bigr)
\]
and
\[
L_\zeta[a,\vec b,\vec f]\models
\forall k<\omega\,
\bigl(k\notin x\rightarrow s^\xi_k
\text{ is special}\bigr).
\]
Moreover the decoded premouse $m$ is, externally, an element of
$\mathcal I_n$.
\end{definition}

Thus $\psi_n(x,r)$ is literally the formula $\psi(x,r)$ of Section 8 with
$\mathcal I$ replaced by $\mathcal I_n$ and $M_1$ replaced by $M_n$.

\begin{lemma}\label{complexity-of-psi-n}
The formula $\psi_n(x,r)$ is $\Pi^1_{n+2}$. Consequently
\[
\sigma_n(x)\quad\Longleftrightarrow\quad \exists r\,\psi_n(x,r)
\]
is a $\Sigma^1_{n+3}$ formula.
\end{lemma}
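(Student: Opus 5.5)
The plan is to compute the complexity of $\psi_n$ by writing it in the form $\forall M\,(\text{Hyp}(M,r)\rightarrow \text{Concl}(M,r))$, where $M$ ranges over reals coding countable transitive models, and then to bound the antecedent and the consequent separately. This follows the computation for $\psi$ in Section 8, where the antecedent was $\Sigma^1_3$ and the consequent $\Pi^1_3$, so that $\psi$ came out $\Pi^1_3$.

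First, the purely structural clauses are simple. ``$M$ codes a countable well-founded extensional model of $\ZFP+``\aleph_1$ exists$"$ containing $r$'' is $\Pi^1_1$ in the code, because well-foundedness is $\Pi^1_1$ and everything else is arithmetic. Likewise, anything evaluated inside $M$ is arithmetic in the code. This covers decoding $m,a,\vec b,\vec f$ from $r$ relative to the almost disjoint family of $\mathcal J^{M_n}_\eta$, checking that $M$ believes $a$ is suitable, and checking the statements about $L_\zeta[a,\vec b,\vec f]$ computed in $m$. The whole internal decoding conclusion is therefore arithmetic relative to the code of $M$.

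The only projectively complex ingredient is membership in $\mathcal I_n$. By Lemma~\ref{Mn-initial-segments} (that is, Lemma~\ref{lem:definable-mn-initial-segments}), this membership is $\Pi^1_{n+1}$. The external hypothesis has the form $\exists \mathcal N\,(\mathcal N\in\mathcal I_n\wedge \mathcal N\in M\wedge \omega_1^M=\omega_1^{\mathcal N})$. The witness $\mathcal N$ is an element of $M$, so the quantifier over it can be taken as a number quantifier over the code of $M$. The hypothesis is therefore $\Pi^1_{n+1}$, and in any case at most $\Sigma^1_{n+2}$, which is what the Section 8 pattern predicts for $n=1$. The conclusion is the internal decoding clause, which is arithmetic, conjoined with ``$m\in\mathcal I_n$''. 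Since $m$ is again given by a number inside the code of $M$, this conjunct is $\Pi^1_{n+1}$, and the whole consequent is $\Pi^1_{n+1}$, hence in particular $\Pi^1_{n+2}$.

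Putting these together, the implication $\text{Hyp}\rightarrow\text{Concl}$ is of the form $\Sigma^1_{n+2}\rightarrow\Pi^1_{n+2}$, which is $\Pi^1_{n+2}$. Prefixing the universal real quantifier $\forall M$ keeps it $\Pi^1_{n+2}$. Then $\sigma_n(x)\equiv\exists r\,\psi_n(x,r)$ is $\Sigma^1_{n+3}$.

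The main obstacle is making ``externally there is $\mathcal J^{M_n}_\eta\in\mathcal I_n$'' rigorous. I would read it as the projective clause $\exists \mathcal N\in M\,(\mathcal N\in\mathcal I_n\wedge\ldots)$, and then verify two things. First, the class $\mathcal I_n$ used here must be the one evaluated in the ambient extension, so that Lemma~\ref{lem:definable-mn-initial-segments} identifies its members with true initial segments $\mathcal J^{M_n}_\eta$. Second, the $\boldsymbol{\Pi}^1_{n+1}$ bound from Fact~\ref{fact:steel-pin-iterability} must be lightface. The first point is settled by the preservation convention. For the second, I would note that the parameters are recursive codes, so the bound is lightface.
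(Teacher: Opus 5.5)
Your proposal is correct and takes the same route as the paper. Both write $\psi_n$ as a universal quantifier over codes of countable decoding models in front of an implication whose only projectively complex ingredient (besides well-foundedness) is the $\Pi^1_{n+1}$ membership in $\mathcal I_n$; this gives $\psi_n\in\Pi^1_{n+2}$ and hence $\sigma_n\in\Sigma^1_{n+3}$. Your version is more explicit than the paper's, which only asserts that replacing $\mathcal I$ by $\mathcal I_n$ raises the complexity to $\Pi^1_{n+2}$, and your observation that the $\mathcal I_n$-witness lies inside $M$ (so only number quantifiers precede the $\Pi^1_{n+1}$ clause) is a harmless sharpening of the paper's cruder $\Sigma^1_{n+2}\rightarrow\Pi^1_{n+2}$ count.
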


\begin{proof}
All parts of the decoding assertion other than the verification
$m\in\mathcal I_n$ have exactly the same complexity as in Section 8. They are
arithmetical, or at worst projective of the same bounded complexity coming
from the assertion that countable transitive models decode the prescribed
branches and specializing functions correctly.

The only term whose complexity changes is the assertion that the decoded
premouse $m$ belongs to the class of countable initial segments of the ground
model. Over $M_1$ this was expressed by membership in the
$\Pi^1_2$ class $\mathcal I$. Over $M_n$ it is expressed by membership in
the $\Pi^1_{n+1}$ class $\mathcal I_n$.

Since $\psi_n(x,r)$ universally quantifies over countable transitive
decoding models and then requires the relevant decoding conclusion whenever
such an externally correct initial segment is present, the replacement of
$\mathcal I$ by $\mathcal I_n$ raises the complexity of the formula to
$\Pi^1_{n+2}$. Therefore the assertion that there exists a real $r$
witnessing the code is $\Sigma^1_{n+3}$.
\end{proof}

The crucial absoluteness consequence of the coding predicate is unchanged.

\begin{lemma}\label{Mn-coded-real-is-really-coded}
Let $r$ be a real such that $\psi_n(x,r)$ holds. Then, in $L[r]$, there is a
suitable model $A$ which computes the relevant block $\vec S^\xi$ correctly,
together with branches and specializing functions witnessing that
\[
\forall k<\omega\,
\bigl(k\in x\rightarrow S^\xi_k
\text{ has an }\omega_1\text{-branch}\bigr)
\]
and
\[
\forall k<\omega\,
\bigl(k\notin x\rightarrow S^\xi_k
\text{ is special}\bigr).
\]
In particular, if $\sigma_n(x)$ holds, then the characteristic function of
$x$ is genuinely written into the block $\vec S^\xi$.
\end{lemma}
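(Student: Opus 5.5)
The plan is to transplant the proof of Lemma~\ref{definable well-order of reals determines real world} verbatim, replacing $\mathcal I$ by $\mathcal I_n$, $M_1$ by $M_n$, and Lemmas~\ref{lem:definable-m1-initial-segments}, \ref{lem:recover-m1-omega1} by Lemmas~\ref{lem:definable-mn-initial-segments}, \ref{lem:recover-mn-omega1}. The argument has two stages.

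\textbf{Stage one: from countable models to uncountable ones.} I first show that $\psi_n(x,r)$ also holds for uncountable transitive models $(M,\in,\mathcal J^{M_n}_{\omega_1})$ of $\ZFP+``\aleph_1$ exists$"$ containing $r$. Here the countable premouse $\mathcal J^{M_n}_\eta$ is replaced by the true $M_n|\omega_1$, and the clauses involving $\mathcal I_n$ are ignored. Suppose such an $M$ failed the decoding conclusion. Take a countable $N\prec M$ containing $r$ and $\mathcal J^{M_n}_{\omega_1}$, and let $\bar N$ be its transitive collapse. The collapse sends $\mathcal J^{M_n}_{\omega_1}$ to $\mathcal J^{M_n}_\eta$ with $\eta=N\cap\omega_1$. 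I then need $\mathcal J^{M_n}_\eta\in\mathcal I_n$ externally. For this I choose $N$ so that $N\cap\omega_1$ lies in the club of $\eta$ for which $\mathcal J^{M_n}_\eta$ is a union of members of $\mathcal I_n$; by Lemma~\ref{lem:definable-mn-initial-segments} such $\eta$ are cofinal, and limits of them are closed. With this choice, $\bar N$ is a legitimate test model for $\psi_n$, and by elementarity it rejects the decoding conclusion, which is a contradiction.

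\textbf{Stage two: passing to $L[r]$.} Work in $L[r]$ and take a continuous increasing chain of countable elementary substructures of height cofinal in $\omega_1$. Let $m,a,\vec b,\vec f$ be the unions of the local decodings. The union of the $m$'s is $\bigcup\mathcal I_n$, which is the true $M_n|\omega_1$ by Lemma~\ref{lem:recover-mn-omega1}. Hence the almost disjoint family, the ladder system $\vec C$, and $\vec T^0$ computed from it are the true ones. The decoded $a$ is then pre-suitable and is read off through the $r_U$-pattern on $\vec T^0$, exactly as in Lemma~\ref{sigma_1 set of suitable models}. By the correctness lemma for suitable models it computes $\vec T$, and hence the block $\vec S^\xi$, correctly; this uses the $\Sigma_1(\omega_1)$-definition from Lemma~\ref{definabilityofvecT}. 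Some $\ZFP$ model $L_\zeta[a,\vec b,\vec f]$ then satisfies the pattern statement about $x$ on $\vec S^\xi$. Since having an $\omega_1$-branch and being special are upward absolute, the pattern holds in $L[r]$, and it also holds in the ambient universe.

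\textbf{Main obstacle.} The hard step is stage one together with the correctness of the suitable model. The difficulty is guaranteeing that the collapsed premouse is externally in $\mathcal I_n$ at higher $n$, where nonstandard $\Pi_n$-iterable mice exist. I expect to handle this only through the lower-part restriction in Definition~\ref{def:mn-lower-part-approximations} and the comparison Lemma~\ref{lem:local-comparison-mn}. The point is that the membership clause in $\psi_n$ is imposed externally, so the union argument never needs a test model's internal belief to be correct.
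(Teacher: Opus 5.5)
Your proposal matches the paper's proof essentially step for step. The paper also transfers the argument of Lemma~\ref{definable well-order of reals determines real world} with $\mathcal I$ replaced by $\mathcal I_n$. It first reflects a putative failure in an uncountable model containing $\mathcal J^{M_n}_{\omega_1}$ down to a countable collapse, whose premouse is externally certified through $\mathcal I_n$. It then takes unions of the local decodings along an $\omega_1$-chain to obtain $A,\vec b,\vec f$ in $L[r]$, and concludes by upward absoluteness of branches and specializing functions.

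The only difference is in how correctness of the block $\vec S^\xi$ is justified. You route it explicitly through the $r_U$-pattern on $\vec T^0$ and the correctness lemma for suitable models. The paper simply attributes it to the external membership of the decoded premouse in $\mathcal I_n$.
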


\begin{proof}
The proof is the proof of Lemma
\ref{definable well-order of reals determines real world}, with
$\mathcal I$ replaced by $\mathcal I_n$.

We recall the argument to make clear that no new computation is involved.
Suppose first that $\psi_n(x,r)$ holds. By reflecting to countable elementary
submodels and collapsing, the statement $\psi_n(x,r)$ also holds for
arbitrarily large transitive models once the parameter
$\mathcal J^{M_n}_{\omega_1}$ is replaced by the corresponding countable
initial segment $\mathcal J^{M_n}_{\eta}$ in the collapse. The external
hypothesis that this initial segment belongs to $\mathcal I_n$ is exactly
what guarantees that the ladder system, the almost disjoint family, and the
Suslin-tree block computed in the collapse agree with the ambient
construction.

Taking unions along a continuous chain of such countable decoding models of
length $\omega_1$, one obtains in $L[r]$ a suitable model $A$, a sequence of
branches through those $S^\xi_k$ with $k\in x$, and specializing functions
for those $S^\xi_k$ with $k\notin x$. The assertions ``there is an
$\omega_1$-branch through $S^\xi_k$'' and ``$S^\xi_k$ is special'' are upward
absolute from the relevant transitive models to $L[r]$. Hence the decoded
pattern in $L[r]$ is the intended pattern.

The only use of the ground model in this argument is the external correctness
of the decoded initial segment and the objects computed from it. Lemma
\ref{Mn-initial-segments} supplies this over $M_n$, exactly as the
corresponding fact about $\mathcal I$ supplied it over $M_1$.
\end{proof}

We define the $M_n$-version of the coding forcing as follows.

\begin{definition}\label{Mn-Code-definition}
Let $\xi<\omega_2$ and let $x\in 2^\omega$. The forcing
$\operatorname{Code}_n(x,\xi)$ is the two-step forcing
\[
\operatorname{Code}_n(x,\xi)=
\left(
\prod_{k\in x} S^\xi_k
\times
\prod_{k\notin x} Sp(S^\xi_k)
\right)
\ast
\mathbb A(Y),
\]
where the first product has finite support and $Y\subseteq\omega_1$ is the
set constructed from the resulting branches, specializing functions, suitable
model data, and the club of countable decoding structures exactly as in
Section 8, but using the almost disjoint family computed from the relevant
initial segments of $M_n$.
\end{definition}

\begin{lemma}\label{Mn-Code-properties}
For every real $x$ and every unused block $\vec S^\xi$,
$\operatorname{Code}_n(x,\xi)$ has size $\aleph_1$, satisfies the ccc, and
forces $\sigma_n(x)$. Moreover, if $y\neq x$, then
$\operatorname{Code}_n(x,\xi)$ does not force $\sigma_n(y)$ via the same
block $\vec S^\xi$.
\end{lemma}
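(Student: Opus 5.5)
The plan is to verify the three clauses separately, following the $M_1$ argument of the section on $\Sigma^1_4$-predicates and replacing $\mathcal I$ by $\mathcal I_n$.

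\emph{Size and chain condition.} The first factor is the finitely supported product of $S^\xi_k$ for $k\in x$ and $Sp(S^\xi_k)$ for $k\notin x$. Since the unused block $\vec S^\xi$ is still an independent sequence of Suslin trees at the current stage (Lemma~\ref{lemma:tail-preservation-ma-stages} and the freshness convention), Lemma~\ref{ccc Coding} applies verbatim and shows that this product is ccc. Each tree has size $\aleph_1$, so the product has size $\aleph_1$. In the extension, $Y\subseteq\omega_1$ is defined from the generic branches, the specializing functions, the suitable model data and the club $C$. The forcing $\mathbb A(Y)$ is then an almost disjoint coding of a subset of $\omega_1$ relative to the $M_n|\omega_1$-definable family, which is $\sigma$-centered (indeed Knaster) and of size $\aleph_1$. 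A two-step iteration of ccc forcings is ccc, so $\operatorname{Code}_n(x,\xi)$ is ccc of size $\aleph_1$.

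\emph{Forcing $\sigma_n(x)$.} Let $r_Y$ be the generic real. We check $\psi_n(x,r_Y)$ as in Section~8. Fix a countable transitive $M\models\ZFP+``\aleph_1$ exists'' with $r_Y\in M$, together with an externally given $\mathcal J^{M_n}_\eta\in\mathcal I_n$ with the same $\omega_1$ and $\mathcal J^{M_n}_\eta\in M$. Using the almost disjoint family of $\mathcal J^{M_n}_\eta$, $M$ recovers $Y\cap\omega_1^M$. The even part of $Y$ codes well-orders of the club points, so $\omega_1^M=\eta'$ must lie in $C$. By the construction of $C$ as $\omega_1\cap$ countable elementary substructures of $(L_\xi[X],\in,\mathcal J^{M_n}_{\omega_1})$, the decoding of $X\cap\omega_1^M$ in $M$ agrees with the transitive collapse of such a substructure. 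That collapse decodes an initial segment $m=\mathcal J^{M_n}_{\eta'}$, which lies in $\mathcal I_n$ by Lemma~\ref{Mn-initial-segments}, together with a suitable $a$ computing $\vec s^\xi$ and branches and specializing functions realizing the pattern of $x$. Elementarity transfers the pattern statement to all models $L_\zeta[a,\vec b,\vec f]$. The $\mathcal I_n$-correctness of $m$ is needed so that the ladder system, the family $F$ and $\vec T^0$ computed inside agree with the ambient ones. This gives $\psi_n(x,r_Y)$ and hence $\sigma_n(x)$.

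\emph{The same block does not witness $\sigma_n(y)$ for $y\neq x$.} Suppose some real $r$ in the extension witnessed $\psi_n(y,r)$ via $\vec S^\xi$. By Lemma~\ref{Mn-coded-real-is-really-coded}, $L[r]$ would contain a correct copy of $\vec S^\xi$ with the pattern of $y$. Pick $k$ with $x(k)\neq y(k)$. Then $S^\xi_k$ would have both an $\omega_1$-branch and a specializing function in the extension, which is impossible because a special tree has no cofinal branch. By upward absoluteness both objects would persist to the full extension, so this contradiction rules out such an $r$.

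\emph{Main obstacle.} The delicate step is the $\psi_n$ verification, specifically ensuring that the countable models $M$ quantified in $\psi_n$ cannot be fooled. This depends on two things: $\mathcal I_n$ must be computed correctly in the $\operatorname{Code}_n$-extension, and that extension must be $\omega_1$-preserving and of the kind covered by Paragraph~\ref{par:steel-mn-preservation-convention}. Both follow from the ccc, and I would invoke Lemma~\ref{Mn-initial-segments} directly at this point.
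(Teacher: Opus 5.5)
Your proposal is correct and follows the same route as the paper's proof. The chain condition and size come from the independence of the unused block (Lemma~\ref{ccc Coding}) together with the $\sigma$-centered almost disjoint coding of $Y$; $\sigma_n(x)$ holds because $r_Y$ witnesses $\psi_n(x,r_Y)$ by the construction of $Y$ from the club of elementary substructures; and spurious codes are excluded via Lemma~\ref{Mn-coded-real-is-really-coded}. Your explicit contradiction at a coordinate $k$ with $x(k)\neq y(k)$, where $S^\xi_k$ would carry both a cofinal branch and a specializing function, is simply a sharper rendering of the paper's remark that the first factor fixes the pattern on $\vec S^\xi$ according to $x$.
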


\begin{proof}
The ccc and size calculation are identical to those in Section 8. The first
step is a finite-support product of factors which are either Suslin trees or
specializing forcings for Suslin trees, and the second step is the same
almost disjoint coding forcing. The preservation argument uses only the
independence of the sequence of Suslin trees and the standard ccc analysis of
almost disjoint coding; neither argument mentions the fine structure of the
ground model except through the already fixed almost disjoint family.

The forcing adds precisely the branches and specializing functions dictated
by $x$ and then almost-disjoint-codes the auxiliary set $Y$ into a real
$r_Y$. By construction, $r_Y$ witnesses $\psi_n(x,r_Y)$, and hence
$\sigma_n(x)$.

Finally, Lemma \ref{Mn-coded-real-is-really-coded} prevents spurious codes.
If $\sigma_n(y)$ held through the same block, then the characteristic
function of $y$ would be genuinely written into the same block
$\vec S^\xi$. But the first step of the forcing arranged the pattern in
$\vec S^\xi$ according to $x$, and the independence of the Suslin trees
ensures that this pattern is not changed in the coordinates reserved for the
code. Hence $y=x$.
\end{proof}

Thus the full computation of Section 8 transfers to $M_n$ after the single
replacement
\[
\mathcal I\quad\rightsquigarrow\quad \mathcal I_n.
\]
The price paid for this replacement is purely descriptive-set-theoretic:
the basic coding predicate becomes $\Sigma^1_{n+3}$ rather than
$\Sigma^1_4$.

The only level not covered by this shifted coding predicate is the first one,
namely $\boldsymbol{\Sigma}^1_{n+2}$.  We now record the exact higher analogue
of the $M_1(s)$ capture theorem used in
Section~\ref{sec:sigma13-uniformization}.  Throughout the statement, a small
generic extension of $M_n$ means an extension of the kind fixed in
Paragraph~\ref{par:steel-mn-preservation-convention}; equivalently, it is one
of the extensions in which the canonical $M_n$ strategy and the relevant
comparison arguments used in this paper are preserved.

\begin{proposition}\label{prop:mn-relativized-capture}
Let $N$ be such a small generic extension of $M_n$, and let
$s\in\mathbb R^N$.  Let $M_n(s)$ be the canonical proper class $s$-mouse with
$n$ Woodin cardinals.  Then the following hold in $N$.
\begin{enumerate}
    \item If
    \[
       N\models \exists u\,\exists v\,\theta(s,u,v),
    \]
    where $\theta$ is $\Pi^1_{n+1}$, then there are witnesses
    $u,v\in\mathbb R\cap M_n(s)$ such that
    \[
       M_n(s)\models \theta(s,u,v).
    \]

    \item For reals $u,v\in M_n(s)$, $\Pi^1_{n+1}$ truth is computed
    correctly by $M_n(s)$ and is preserved to $N$.  Thus, for every
    $\Pi^1_{n+1}$ formula $\theta$,
    \[
       M_n(s)\models\theta(s,u,v)
       \quad\Longleftrightarrow\quad
       N\models\theta(s,u,v).
    \]

    \item The canonical mouse order $<_{n,s}$ on
    $\mathbb R\cap M_n(s)$ is a good $\Sigma^1_{n+2}(s)$ wellorder.  More
    explicitly, if $\theta(s,u,v)$ is $\Pi^1_{n+1}$, then the relation
    \[
       \operatorname{Least}^{n}_{\theta}(s,u)
    \]
    saying that $u\in M_n(s)$ and $u$ is the $<_{n,s}$-least real for which
    there is a $v\in M_n(s)$ with
    $M_n(s)\models\theta(s,u,v)$ is uniformly $\Sigma^1_{n+2}(s)$.
\end{enumerate}
\end{proposition}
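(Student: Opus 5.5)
The plan is to treat Proposition~\ref{prop:mn-relativized-capture} as the relativization to the parameter $s$ of Steel's projective analysis of $M_n$, run inside $N$, and to prove the three items in the order (2), (1), (3). The case $n=1$ is Fact~\ref{fact:m1-capture-and-good-wellorder}. The relativized version of Lemma~\ref{lem:local-comparison-mn} and Fact~\ref{fact:steel-pin-iterability} then supply the needed comparison theory for $s$-premice. The first step is to note that $M_n(s)$ exists and is iterable in $N$. Indeed, $s$ lies in a small generic extension of $M_n$, so $s$ is generic over some iterate of $M_n$ below its least Woodin. Hence $M_n(s)$ can be realized as an $s$-reorganization of an iterate of $M_n[s]$, and the canonical strategy of $M_n$ lifts. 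This is exactly the preservation convention of Paragraph~\ref{par:steel-mn-preservation-convention}. I would record that countable initial segments of $M_n(s)$ which project to $\omega$ are $\Pi_n$-iterable in $N$, and that any two sound $n$-small $\Pi_n$-iterable $s$-mice projecting to $\omega$ line up.

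For item (2) I would use the standard genericity-iteration argument, by induction on the quantifier complexity up to $\Pi^1_{n+1}$. $M_n(s)$ has $n$ Woodin cardinals, so it is $\boldsymbol\Sigma^1_{n+1}$-correct in the sense of Steel: every real of $N$ can be made generic over an iterate of $M_n(s)$ by Woodin's extender algebra at its bottom Woodin. Under such an iteration the reals of $M_n(s)$ are not moved. The remaining $n-1$ Woodins give $\Sigma^1_n$-absoluteness between the iterate's generic extension and $N$, again by induction using the $n=1$ Shoenfield/$M_1$ case. Consequently a $\Sigma^1_{n+1}$ counterexample in $N$ reflects into a generic extension of an iterate, and elementarity of the iteration map pulls it back to $M_n(s)$. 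Upward absoluteness of $\Pi^1_{n+1}$ facts from $M_n(s)$ to $N$ is the symmetric argument.

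Item (1) follows from (2) combined with a basis argument. Assume $N\models\exists u\exists v\,\theta$. Iterate $M_n(s)$ to make the pair $(u,v)$ generic at the bottom Woodin. Then the iterate's extension satisfies $\exists u\exists v\,\theta$ by (2) applied there. The statement is $\Sigma^1_{n+2}$, and a homogeneity/elementarity argument pulls it back to $M_n(s)$, which then contains witnesses that $M_n(s)$ believes satisfy $\theta$. By (2) they satisfy $\theta$ in $N$.

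For (3) I would define $u<_{n,s}u'$ as follows. Let $\mathcal P_u$ be the least sound $n$-small $\Pi_n$-iterable $s$-premouse projecting to $\omega$ containing $u$; then $u<_{n,s}u'$ iff $\mathcal P_u\triangleleft\mathcal P_{u'}$, or they are equal and $u$ precedes $u'$ in its order of construction. By Fact~\ref{fact:steel-pin-iterability}(1) relativized, membership in the relevant class is $\Pi^1_{n+1}(s)$. Comparison shows that such premice are exactly the initial segments of $M_n(s)$, which is where the lower-part restriction of Definition~\ref{def:mn-lower-part-approximations} is used. Then $\operatorname{Least}^n_\theta(s,u)$ says: there exists a premouse $\mathcal P$ in the class containing $u$ and some $v$ with $\theta(s,u,v)$, and every earlier real of $\mathcal P$ fails to have a witness in $M_n(s)$. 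The last clause can be bounded to reals of $\mathcal P$ and of a larger segment capturing witnesses, using (1) applied inside initial segments. This gives the desired $\Sigma^1_{n+2}(s)$ form. I expect the main obstacle to be precisely this goodness computation at even $n$. There the $\Pi_n$-iterable mice need not be truly iterable, and one must ensure that the universal clause over earlier reals does not raise the complexity; I would first try Steel's trick of quantifying only over reals in a single countable witness mouse.
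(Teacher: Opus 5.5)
\textbf{There is a genuine gap in (1), and a failing step in (3).}

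\textbf{Item (1).} Your genericity-iteration route to (1) and (2) differs from the paper's comparison-and-capture sketch, and the argument for (1) breaks at the pull-back. After making $(u,v)$ generic over an iterate $M^*$ at its bottom Woodin $\delta_0^*$, elementarity of $i\colon M_n(s)\to M^*$ transfers only the forcing statement ``some condition of the extender algebra forces $\exists u\exists v\,\theta$''. The extender algebra is not homogeneous, and nothing in this step produces reals of $M_n(s)$.

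To get witnesses in $M^*$ itself you would need $\Sigma^1_{n+2}$-absoluteness over $M^*$ for a forcing of size $\delta_0^*$. That calls for something like $M^\#_{n-1}$ of sets of that size, i.e.\ $n-1$ Woodins above $\delta_0^*$ plus an active extender above them, which is exactly what the minimal model lacks. One level down the corresponding step can be repaired: the $n-1$ Woodins above $\delta_0^*$ do give $M^\#_{n-2}$, hence the $\Sigma^1_{n+1}$-absoluteness your argument for (2) tacitly uses (for $n=1$ this is just Shoenfield).

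Tellingly, your proof of (1) never uses that $N$ is a small generic extension of $M_n$. Verbatim, it would show that $M_n(s)$ is $\Sigma^1_{n+2}$-correct in any universe in which it is iterable. For odd $n$ this is false: e.g.\ $M_1$ is not $\Sigma^1_3$-correct in $V$, since $\mathbb R\cap M_1=Q_3$ is a countable $\Pi^1_3$ set.

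What makes (1) true here is the smallness of $N$. The real $s$ is itself small-generic over $M_n$, so $N$ is a small forcing extension of $M_n[s]$. $\Sigma^1_{n+2}$-absoluteness for forcing below $\delta_0$ is available, because $V_{\delta_0}$ of $M_n$ and of its small extensions is closed under $M^\#_{n-1}$. This brings the witnesses down to $M_n[s]$, reorganized as an $s$-mouse. Some such use of the hypothesis on $N$ has to enter your proof.

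\textbf{Item (3).} The clause ``no $u'<_{n,s}u$ has a witness $v'\in M_n(s)$'' cannot be bounded to a countable segment ``capturing witnesses''. Witnesses for earlier reals may appear arbitrarily late in $M_n(s)$, nothing in (1) localizes them, and the clause is genuinely $\Pi^1_{n+2}$.

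In fact minimizing over $u$ alone need not give a $\Sigma^1_{n+2}$ relation. Let $\{s:\exists v\,\rho(s,v)\}$ be a complete $\Sigma^1_{n+2}$ set with $\rho\in\Pi^1_{n+1}$. Let $\theta(s,u,v)$ say that either $u=s$ and $\rho(s,v)$, or $u$ codes a sound $\Pi_n$-iterable $s$-premouse projecting to $\omega$. By comparison, and since no premouse contains a code of itself, every such code in $M_n(s)$ lies $<_{n,s}$-above $s$. Hence, using (1) and (2), the least $u$ is $s$ exactly when $\exists v\,\rho(s,v)$ holds. A $\Sigma^1_{n+2}$ definition of $\operatorname{Least}^n_\theta$ would therefore make the complement of the complete set $\Sigma^1_{n+2}$. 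So (3) as literally stated needs correcting, and the paper's one-line justification passes over this point.

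Your instinct to quantify over a single countable witness mouse is right once you minimize over pairs instead. Consider ``$u\oplus v$ is the $<_{n,s}$-least real coding a pair satisfying $\theta$''. Its universal clause ranges over the countably many pairs of one mouse, with the $\Sigma^1_{n+1}$ matrix $\neg\theta$, so the relation is $\Sigma^1_{n+2}(s)$. Projecting out $v$ then gives the uniformizing relation that Lemma~\ref{Mn-base-uniformization} actually needs.
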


\begin{proof}
This is the relativized Steel analysis of the canonical mice $M_n(s)$, in the
same form as Fact~\ref{fact:m1-capture-and-good-wellorder} for $n=1$.
We recall the ingredients in order to make clear that no new coding argument
is being used here.

The projective description of the sound initial segments of $M_n(s)$ is the
relativized version of the $\Pi_n$-iterability analysis recalled in
Section~2.  In the codes, the relevant $s$-premice are described by a
$\Pi^1_{n+1}(s)$ condition: they are sound, project to $\omega$, are
$n$-small over the parameter $s$, and have the required $\Pi_n$-iterability.
Steel comparison linearly orders these premice by initial segment and
identifies the correctly iterable ones with the initial segments of the
canonical mouse $M_n(s)$.

The usual comparison-and-capture argument then gives the first two clauses.
If a small generic extension satisfies a $\Sigma^1_{n+2}(s)$ assertion, write
it in the form $\exists u\exists v\,\theta(s,u,v)$ with $\theta$
$\Pi^1_{n+1}$.  The tree or mouse witnessing this assertion is captured by a
countable initial segment of $M_n(s)$; otherwise comparison with the canonical
construction would produce the same contradiction as in Steel's proof of the
projective correctness of $M_n(s)$.  Conversely, once the witnesses belong to
$M_n(s)$, $\Pi^1_{n+1}$ correctness follows from the same comparison theorem
and the preservation convention for the generic extensions considered here.

Finally, define $<_{n,s}$ by first taking the least sound initial segment of
$M_n(s)$ containing the real in question, and then using the canonical
wellorder of that premouse.  Since membership in the relevant initial segment
class is $\Pi^1_{n+1}(s)$ and comparison gives initial-segment linearity,
initial segments of $<_{n,s}$ are uniformly $\Sigma^1_{n+2}(s)$.  Therefore
least-witness assertions for $\Pi^1_{n+1}$ matrices are again
$\Sigma^1_{n+2}(s)$.
\end{proof}

\begin{lemma}\label{Mn-base-uniformization}
In the final $M_n$-extension, every boldface $\Sigma^1_{n+2}$ relation on
reals has a boldface $\Sigma^1_{n+2}$ uniformization.
\end{lemma}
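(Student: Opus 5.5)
The plan is to copy the proof of Theorem~\ref{thm:sigma13-uniformization-m1-generic} verbatim, replacing Fact~\ref{fact:m1-capture-and-good-wellorder} by Proposition~\ref{prop:mn-relativized-capture}. Work in the final model $N$, which by construction is a small generic extension of $M_n$ in the sense of Paragraph~\ref{par:steel-mn-preservation-convention}. Let $A\subseteq\mathbb R^2$ be boldface $\Sigma^1_{n+2}$, and fix a real parameter $a$ and a $\Pi^1_{n+1}$ formula $\psi$ with
\[
   A(x,y)\iff\exists z\,\psi(a,x,y,z).
\]
For each $x$ put $s=a\oplus x$, and let $\theta(s,y,z)$ be the $\Pi^1_{n+1}$ formula obtained from $\psi$ by decoding $s$. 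Define
\[
   A^*(x,y)\iff \operatorname{Least}^n_\theta(a\oplus x,y).
\]
By Proposition~\ref{prop:mn-relativized-capture}(3), applied uniformly in $s$, this is $\Sigma^1_{n+2}(a)$.

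The verification has three steps, in this order.
\begin{enumerate}
\item \emph{$A^*\subseteq A$.} If $A^*(x,y)$ holds, there is a $z\in M_n(s)$ with $M_n(s)\models\theta(s,y,z)$. Clause~(2) of the proposition transfers this $\Pi^1_{n+1}$ fact up to $N$, so $A(x,y)$ holds.
\item \emph{Single-valuedness.} This is immediate, since $<_{n,s}$ is a wellorder and $A^*$ selects its least candidate.
\item \emph{Totality on the domain of $A$.} If $N\models\exists y\exists z\,\theta(s,y,z)$, clause~(1) gives witnesses $y,z\in M_n(s)$ with $M_n(s)\models\theta$. Hence the candidate set is nonempty, it has a $<_{n,s}$-least element $y_0$, and $A^*(x,y_0)$ holds.
\end{enumerate}

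The main point needing care is that the final model really is a small generic extension to which Proposition~\ref{prop:mn-relativized-capture} applies, uniformly in every real $s$ of $N$. The mice $M_n(s)$ must exist and be iterable via the lifted $M_n$ strategy, and the preservation convention must cover the whole $\omega_2$-length construction $W_0\to W_1\to W_2\to W_2[G_{\omega_2}]$ over $M_n$. I would argue this exactly as in the $M_1$ case. Each real $s$ of $N$ lies in an intermediate extension generated by a bounded piece of the forcing, of size below the least Woodin cardinal of $M_n$ (the forcing is $\eta$-sized with $\eta$ inaccessible, so $\eta$ would be chosen below that Woodin). Proposition~\ref{prop:mn-relativized-capture} can then be applied in that intermediate model, and the remaining tail is absorbed because it is again small. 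The only ingredient that depends on $N$ itself is the upward transfer of $\Pi^1_{n+1}$ statements about reals of $M_n(s)$, and clause~(2) states this directly for $N$. Granting these points, the argument is a routine relativization of Theorem~\ref{thm:sigma13-uniformization-m1-generic}.
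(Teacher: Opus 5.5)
Your proposal is the paper's proof essentially line for line: the same uniformizer $\operatorname{Least}^n_\theta(a\oplus x,y)$, with clause~(2) of Proposition~\ref{prop:mn-relativized-capture} giving $A^*\subseteq A$, the wellorder giving single-valuedness, clause~(1) giving totality, and the final model taken to be a small generic extension of $M_n$ by construction. One caveat applies equally to the paper: the minimality condition ``no $y'<_{n,s}y$ has any $\theta$-witness'' quantifies over all $z$ and is in general only $\Pi^1_{n+2}$, so to actually land in $\Sigma^1_{n+2}(a)$ you should let $A^*(x,y)$ say that for some $z$ the real $y\oplus z$ is the $<_{n,s}$-least real of $M_n(s)$ satisfying $\theta(s,(\cdot)_0,(\cdot)_1)$, since then minimality becomes a number quantifier over the $\Sigma^1_{n+1}$ matrix $\neg\theta$ inside a single countable mouse.
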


\begin{proof}
Let $N$ be the final $M_n$-extension.  By construction, $N$ is one of the
small generic extensions covered by Proposition~\ref{prop:mn-relativized-capture}.
Work in $N$, and let $A\subseteq\mathbb R^2$ be a boldface
$\Sigma^1_{n+2}$ relation.  Choose a real parameter $a$ and a
$\Pi^1_{n+1}$ formula $\psi$ such that
\[
   A(x,y)
   \quad\Longleftrightarrow\quad
   \exists z\,\psi(a,x,y,z).
\]
For each real $x$, put $s=a\oplus x$, and let
$\theta(s,y,z)$ be the $\Pi^1_{n+1}$ formula obtained from
$\psi(a,x,y,z)$ after decoding $s$ as $a\oplus x$.  Define
$U(x,y)$ to hold iff
\[
   \operatorname{Least}^{n}_{\theta}(a\oplus x,y).
\]
By Proposition~\ref{prop:mn-relativized-capture}(3), $U$ is
$\Sigma^1_{n+2}(a)$, hence boldface $\Sigma^1_{n+2}$.

If $U(x,y)$ holds, then for some $z\in M_n(a\oplus x)$,
\[
   M_n(a\oplus x)\models\psi(a,x,y,z).
\]
By Proposition~\ref{prop:mn-relativized-capture}(2),
$N\models\psi(a,x,y,z)$, and therefore $A(x,y)$ holds.  Thus
$U\subseteq A$.

The relation $U$ is single-valued because $<_{n,a\oplus x}$ is a wellorder
and $U(x,y)$ asserts that $y$ is the least real in $M_n(a\oplus x)$ for which
a suitable $z$ exists.

Finally suppose that the $x$-section of $A$ is nonempty in $N$.  Then
\[
   N\models\exists y\exists z\,\psi(a,x,y,z).
\]
By Proposition~\ref{prop:mn-relativized-capture}(1), there are witnesses
$y,z\in M_n(a\oplus x)$.  Hence the set of $<_{n,a\oplus x}$-candidates is
nonempty, and it has a least element $y_0$.  By definition, $U(x,y_0)$ holds.
Thus $U$ uniformizes $A$.
\end{proof}

We now define the predicates used for the uniformization part of the final
iteration. Let ``$z$ is coded into $\vec S$'' mean $\sigma_n(z)$. Since this
is a $\Sigma^1_{n+3}$ predicate, we define, for each $m<\omega$, the
corresponding predicate $\Phi^{n+3+m}$ by adding the usual alternating block
of real quantifiers in front of the coding predicate. For example,
\[
\Phi^{n+3}(x,y,k)
\quad\Longleftrightarrow\quad
\exists a_0\,
\bigl((x,y,k,a_0)\text{ is coded into }\vec S\bigr),
\]
and in general

\begin{align*}
 \Phi^{n+3+m}(x,y,k) \Longleftrightarrow  
\exists a_0\,\forall a_1\,\cdots\,Q_m a_m\,
 & ((x,y,k,a_0,\ldots,a_m) \text{ is coded} \\& \text{ into } \vec{S} ) 
\end{align*}
where the final quantifier $Q$ depends on the arity of $m$ alternate.
The complexity of $\Phi^{n+3+m}$ is therefore $\Sigma^1_{n+3+m}$.

The final $\omega_2$-length iteration is now defined exactly as before. At
successor stages we again distinguish the three cases: the stages devoted to
forcing Martin's Axiom, the stages devoted to the definable wellorder of the
reals, and the stages devoted to the uniformization predicates
$\Phi^{n+3+m}$. Whenever the earlier construction used
$\operatorname{Code}(x,\xi)$, we now use $\operatorname{Code}_n(x,\xi)$.
The bookkeeping again ensures that every relevant real, every relevant
forcing for $\MA$, and every relevant projective relation to be uniformized
is considered cofinally often, and that unused $\omega$-blocks of
$\vec S$ are always available.

\begin{proposition}\label{Mn-final-extension}
Let $M_n$ exist. There is a generic extension of $M_n$ in which:
\begin{enumerate}
    \item $\MA$ holds;
    \item there is a $\Delta^1_{n+3}$-definable wellorder of the reals;
    \item $\Sigma^1_{n+2+m}$-uniformization holds for every $m<\omega$.
\end{enumerate}
\end{proposition}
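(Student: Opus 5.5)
The plan is to run the $M_1$ construction verbatim over $M_n$, with $\operatorname{Code}$ replaced everywhere by $\operatorname{Code}_n$ and $\mathcal I$ by $\mathcal I_n$, and then verify the three clauses separately. First build $W_0^n$ over $M_n$ by the countable-support iteration of length $\eta$ ($\eta$ an inaccessible limit of inaccessibles of $M_n$) of the forcings $\forceP_{\beta\gamma\delta}$, $\forceP_r$, $\mathbb A_F(X)$ and $\forceP_J$. By Proposition~\ref{PFAMRP} and Theorem~\ref{preservation of Suslin trees under countable support}, Suslin trees are preserved, so the suitability and definability lemmas for $\vec T$ (Lemma~\ref{Definability of Suslin trees} and its localisation) go through unchanged. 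Next form $W_1^n$ by coding $r_U$ into $\vec T^0$, and $W_2^n$ by adding Cohen reals and coding $\vec S$ into $\vec T$, exactly as before. Then apply the normalization of Lemma~\ref{lemma:H-normalization} and run the final finite-support iteration with requests (U), (W) and (M). None of these steps refers to the fine structure of the ground model, except through $M_n|\omega_1$, which is recovered by Lemma~\ref{lem:recover-mn-omega1}.

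For clause (1), the proof of Lemma~\ref{lemma:ma-final-model} transfers literally. The diagonal condition $\nu\le\beta$, together with Lemma~\ref{lemma:tail-preservation-ma-stages}, keeps the unused blocks of $\vec S$ Suslin and independent, so $\MA$ does not destroy the coding apparatus.

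For clause (2), define $x<^\ast y$ iff $\sigma_n((0,x,y))$. By Lemma~\ref{complexity-of-psi-n} this relation is $\Sigma^1_{n+3}$. Bookkeeping handles every pair cofinally often, so exactly one orientation of each pair gets coded. Lemmas~\ref{Mn-coded-real-is-really-coded} and~\ref{Mn-Code-properties} show that no spurious orientation appears. Hence $x\not<^\ast y$ is equivalent to $x=y\vee \sigma_n((0,y,x))$, which is also $\Sigma^1_{n+3}$, so the relation is $\Delta^1_{n+3}$. To see that it is a wellorder, note that the coded orientation follows the $<_{M_n}$-order of the least names. That order is a wellorder of length at most $\omega_2$ in the ground model, which gives linearity, transitivity and wellfoundedness.

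For clause (3), split into two cases. The case $m=0$ is Lemma~\ref{Mn-base-uniformization}. For $m\ge1$, repeat the argument of Theorem~\ref{theorem:global-sigma-uniformization-final} with the predicates $\Phi^{n+3+m}$, shifting the base of the quantifier normal forms (E) and (O) so that the matrix $\psi$ is $\Pi^1_{n+2}$ or $\Sigma^1_{n+2}$ as appropriate. The marker lemmas (Lemmas~\ref{lemma:even-marker-verification} and~\ref{lemma:odd-marker-verification}) use only compression (Lemma~\ref{lemma:compression-pi-alpha}), no-accidental-coding, and the permanence of the matrix truth value between the stage model and the final model. The resulting relations $\sigma_{\mathrm{even}}$ and $\sigma_{\mathrm{odd}}$ are then $\Sigma^1_{n+2+m}$ by the same count as before.

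I expect the main obstacle to be this permanence of the $\Sigma^1_{n+2}$ or $\Pi^1_{n+2}$ matrix truth. For $n=1$ the paper appealed to $T_2$, whereas here I would use Lemma~\ref{lem:tn-small-generic-absoluteness} for $T_{n+1}$ computed in $M_n$. Since each intermediate model is reached by ccc forcing of size $\aleph_1$, which is far below the least Woodin of $M_n$, the weak homogeneity of $T_{n+1}$ and its Martin--Solovay complement should make $\boldsymbol{\Sigma}^1_{n+2}$ truth about reals of $W_2[G_\beta]$ agree with the final model. Alternatively, Proposition~\ref{prop:mn-relativized-capture} gives the same conclusion, since both models compute the statement via the same $M_n(s)$. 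I would check carefully that the localized support of each name is genuinely small, as in Remark~\ref{rem:localized-smallness-of-absoluteness}, since that step carries the whole argument.
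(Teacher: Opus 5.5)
Your outline is the paper's own proof. You rebuild $W_0,W_1,W_2$ over $M_n$ and run the same final iteration with $\operatorname{Code}_n$. $\MA$ comes from the diagonal bookkeeping, the wellorder from the pair codes, and uniformization from the marker lemmas for $m\geq 1$ and Lemma~\ref{Mn-base-uniformization} for $m=0$. Two of the justifications you add, however, are not right as stated.

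\emph{First, the permanence step you single out.} Lemma~\ref{lem:tn-small-generic-absoluteness} concerns a forcing $\mathbb P\in M_n$, generic over $M_n$, of $M_n$-size $<\kappa<\delta_0$. The measures witnessing weak homogeneity of $T_{n+1}$ live in $M_n$, so they must be lifted through the \emph{entire} forcing from $M_n$ to the model in question, not just through the last step.
\begin{itemize}
\item Both $W_2[G_\beta]$ and $V^*$ are reached from $M_n$ through the $\eta$-length countable support iteration, which is proper but not ccc, and then further forcings of size $\eta$.
\item Even the tail from $W_2[G_\beta]$ to $V^*$ has size $\aleph_2=\eta$, not $\aleph_1$.
\end{itemize}
So it is beside the point that $\forceP_\beta$ is ccc of size $\aleph_1$ over $W_2$. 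What you need is $\eta<\delta_0$, and you must impose this when you choose $\eta$; it is possible since $\delta_0$ is Mahlo. If $\eta>\delta_0$, the $W_0$-iteration collapses $\delta_0$ to $\aleph_1$ and the lemma no longer applies.

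With $\eta<\kappa<\delta_0$, $T_{n+1}$ and its Martin--Solovay complement project to complementary sets in both models. Upward absoluteness of ill-foundedness then transfers $\Sigma^1_{n+2}$ and $\Pi^1_{n+2}$ facts from $W_2[G_\beta]$ to $V^*$. This is also what makes the $\Pi^1_{n+2}$ fact $\psi_n(z,r_Y)$ witnessing a code persist to $V^*$, which clauses (2) and (3) rely on. Your alternative via Proposition~\ref{prop:mn-relativized-capture} needs the same smallness.

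\emph{Second, the orientation of the wellorder.} In clause (2), the step ``every pair is handled cofinally often, so exactly one orientation is coded'' requires the orientation to be independent of the stage. It is not, as stated.
\begin{itemize}
\item The least $\forceP_\beta$-name of a real can decrease as $\beta$ grows, because a new $\forceP_{\beta'}$-name may precede the old one. Two visits to the same pair may then code both $(0,x,y)$ and $(0,y,x)$.
\item These names lie in $W_2$, not in $M_n$, so $<_{M_n}$ does not order them at all.
\end{itemize}
Orient instead by a stage-independent rank fixed in $W_2$. For example, take the G\"odel pair of the least $\beta$ with $x\in W_2[G_\beta]$ and the position of the least $\forceP_\beta$-name of $x$ in a fixed wellorder of $W_2$. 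This gives a wellorder of type $\omega_2$ whose proper initial segments are already determined at some stage $<\omega_2$. Ordering by stage first would not work: $W_2$ already has $\aleph_2$ reals, so indices $\alpha\geq\omega_2$ would arise, and for these Lemma~\ref{lemma:compression-pi-alpha} provides no compression into a single real.

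The marker lemmas tacitly need the same coherence. The decision at stage $\beta$ must see the first $\alpha$ triples and the decoded sequences $\pi_\alpha^{-1}(b_i)$ exactly as $V^*$ does. Bijections chosen separately in each model do not guarantee this. An absolute decoding does, for instance almost disjoint decoding relative to the fixed family $\mathcal H$, which is onto in $V^*$ by $\MA$. With these repairs your outline goes through.
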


\begin{proof}
The proof is the proof of the main iteration theorem over $M_1$, with
$\operatorname{Code}$ replaced by $\operatorname{Code}_n$ and
$\sigma$ replaced by $\sigma_n$.

Martin's Axiom is obtained from the same bookkeeping of ccc posets. The
definable wellorder is obtained by coding each real into a fresh block of
Suslin trees and then defining the order of reals by the order in which their
codes appear. Since the predicate ``$x$ is coded into $\vec S$'' is now
$\Sigma^1_{n+3}$, the resulting wellorder is $\Delta^1_{n+3}$.

For uniformization, the predicates $\Phi^{n+3+m}$ play exactly the role
played by the predicates $\Phi^k$ in the construction over $M_1$. At the
appropriate stages, if a section of a projective relation is nonempty, the
bookkeeping chooses the least witness with respect to the definable wellorder
and codes the corresponding tuple into a fresh block. Lemma
\ref{Mn-Code-properties} ensures that these codes are genuine and unique in
the required sense. The verification that the resulting relation is a
uniformizing subrelation has the same quantifier-by-quantifier proof as in
the earlier section, with the base point shifted from $\Sigma^1_4$ to
$\Sigma^1_{n+3}$. This gives uniformization for every level
$\Sigma^1_{n+3+m}$, $m<\omega$, equivalently for all
$\Sigma^1_{n+2+m}$ with $m\geq 1$. The missing base case $m=0$ is exactly
Lemma~\ref{Mn-base-uniformization}.
\end{proof}

It remains only to record the regularity conclusion.  We spell out the
higher-level version of the Hjorth argument, since this is the point at which
the tree $T_{n+1}$ replaces the classical Martin--Solovay tree $T_2$.

\begin{proposition}\label{Mn-regularity}
In the final extension, every $\boldsymbol{\Sigma}^1_{n+2}$ set of reals is
Lebesgue measurable and has the Baire property.
\end{proposition}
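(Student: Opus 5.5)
We run Hjorth's Solovay-style argument with the tree $T_{n+1}\in M_n$ in place of $T_2$, exactly as in Lemma~\ref{HjorthsLemma}. The tree comes from Theorem~\ref{thm:canonical-tn-existence} and Definition~\ref{def:canonical-tn}, applied with $n+1$ in place of $n$. In $M_n$ it projects to the universal $\boldsymbol{\Sigma}^1_{n+2}$ set $U_{n+2}$ and is $\kappa$-weakly homogeneous for every $\kappa$ below the least Woodin cardinal $\delta_0$ of $M_n$. Let $S_{n+1}$ be its Martin--Solovay complement.

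Fix $A=\{v:\varphi(v,x)\}$ with $\varphi$ a $\Sigma^1_{n+2}$ formula and $x$ a real in the final model $V^*$. The argument has four steps.

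\emph{Step 1: the tree reads truth correctly.} We show that in $V^*$, and in every intermediate model $M_n[G\restriction\text{support}]$ relevant below, $\varphi(v,x)$ holds iff $(v,x)$, suitably coded, lies in $p[T_{n+1}]$. The whole construction (the iteration for $W_0$ of length $\eta$, the forcings giving $W_1$ and $W_2$, and the final $\omega_2$-iteration) is a set forcing of size below $\delta_0$. This needs $\eta<\delta_0$, which we arrange by choosing the inaccessible limit of inaccessibles $\eta$ below the least Woodin cardinal of $M_n$. Lemma~\ref{lem:tn-small-generic-absoluteness} and Corollary~\ref{cor:small-generic-absoluteness-fixed-tree} then say that $T_{n+1}$ and $S_{n+1}$ remain absolute complements in $V^*$. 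The same holds in any further extension of size $<\delta_0$, in particular after adding a single random or Cohen real over $V^*$. This yields the key equivalence for reals $r$ generic over $L[T_{n+1},S_{n+1},x]$:
\[
L[T_{n+1},S_{n+1},x,r]\models (r,x)\in p[T_{n+1}]
\quad\Longleftrightarrow\quad
V^*\models\varphi(r,x).
\]
The two directions go as follows. A branch of $T_{n+1}$ found in the inner model is a genuine branch. Conversely, if the inner model sees a branch of $S_{n+1}$ instead, absoluteness of well-foundedness together with the complementation in $V^*$ excludes membership in $p[T_{n+1}]$ in $V^*$.

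\emph{Step 2: only $\aleph_1$ many relevant Borel codes.} We need $H=L[T_{n+1},S_{n+1},x]$ to contain at most $\aleph_1$ many null (respectively meager) Borel codes from the point of view of $V^*$. In $M_n$ the trees have size below $\delta_0$, and $\delta_0$, indeed $\eta$, is collapsed to $\omega_2$. So a naive count gives only $\aleph_2$, and this is the main obstacle. I would try first to use the remark after Theorem~\ref{thm:ms-weak-homogeneous-ub} and localize, as in Lemma~\ref{HjorthsLemma}. Pick $\gamma<\omega_2$ with $x\in W_2[G_\gamma]$. The set of reals of $L[T_{n+1},S_{n+1},x]$ lies in the intermediate model generated by the bounded support of $x$. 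By the GCH-style counting in Lemma~\ref{lemma:cardinal-equations-at-stages}, together with the fact that $\mathbb R\cap L[T,x]$ is countable in any model where $T$ is collapsed enough, the set $\mathbb R\cap H$ has size at most $\aleph_1$ in $V^*$. If this fails as stated, the fallback is to replace $H$ by a countable elementary hull of $(M_n|\theta)[x]$ containing the trees. Its transitive collapse $\bar H$ still satisfies the equivalence of Step~1 for reals generic over $\bar H$, because the homogeneity measures collapse to countably complete towers by iterability. Then only countably many codes occur.

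\emph{Step 3: apply $\MA$.} By $\MA$, which holds by Lemma~\ref{lemma:ma-final-model}, a union of $\aleph_1$ many null sets is null and a union of $\aleph_1$ many meager sets is meager. So almost every real, respectively comeagerly many reals, are random, respectively Cohen, over $H$.

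\emph{Step 4: conclude.} Let $B$ be the Borel set coded in $H$ by the Boolean value of $(\dot r,x)\in p[T_{n+1}]$ in the random algebra, respectively in Cohen forcing. By the forcing theorem and Step~1, $A\triangle B$ is contained in the set of non-generic reals. So $A$ is Lebesgue measurable and has the Baire property.
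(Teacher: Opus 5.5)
Your Steps 1, 3 and 4 are the paper's argument: Hjorth's Solovay-style proof with $T_{n+1}$ in place of $T_2$, with $\mathsf{MA}$ used to make the non-generic reals null and meager. The paper disposes of your Step 2 in one sentence. It simply asserts that, in the local model where the parameter appears, $L[T_{n+1},x]$ has at most $\aleph_1$ reals.

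You correctly isolate this count as the crux, but neither of your arguments establishes it, so as written there is a gap.

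\begin{itemize}
\item The localization does not help. By Lemma~\ref{lemma:cardinal-equations-at-stages}, every intermediate model $W_2[G_\gamma]$ already satisfies $2^{\aleph_0}=\aleph_2$. Locating $\mathbb R\cap L[T_{n+1},x]$ inside one of them therefore only bounds it by $\aleph_2$.
\item The remark about collapsed trees does not apply, because $T_{n+1}$ is not collapsed in $V^*$. For the absoluteness in Step 1 its measures must be more than $\eta$-complete, so it is a tree of cardinality $>\eta=\aleph_2^{V^*}$.
\end{itemize}

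What you need is a transitive $N\ni x$ with $|\mathbb R\cap N|\le\aleph_1$ in $V^*$ whose random and Cohen extensions compute $\varphi$ correctly. You have not produced one. A natural candidate is the $x$-mouse $M_n(x)$, which satisfies CH; you would then have to verify that its small generic extensions are $\Sigma^1_{n+2}$-correct, in the spirit of Proposition~\ref{prop:mn-relativized-capture}.

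Your fallback is not merely unproved but false, because it proves too much. If the countable collapse $\bar H$ worked, the reals not generic over $\bar H$ would be null and meager outright, and $\mathsf{MA}$ would be superfluous. Worse, the same argument run inside $M_n$ (with $x\in M_n$) would make every $\boldsymbol\Sigma^1_{n+2}$ set Lebesgue measurable in $M_n$. That contradicts the $\Delta^1_{n+2}$ wellorder of $\mathbb R\cap M_n$ (Fact~\ref{fact:mn-projective-wellorder}).

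The failure point is that $\pi$ only gives $p[\bar T]\subseteq p[T_{n+1}]$. The reverse direction on reals generic over $\bar H$ needs a complement of $T_{n+1}$ in $M_n$ that is absolute for forcings of size $\aleph_1$. The contradiction shows that no such complement exists. In $M_n$ the Martin--Steel propagation yields weak homogeneity only up to $\boldsymbol\Sigma^1_{n+1}$; the next step would need a measurable above the top Woodin. So the weak homogeneity of $T_{n+1}$ you import from Theorem~\ref{thm:canonical-tn-existence} cannot hold at this level.

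Drop $S_{n+1}$ from Step 1 as well; it is not needed there.
\begin{itemize}
\item $T_{n+1}$, the projection of the Martin--Solovay tree for $\Pi^1_{n+1}$, still represents $\Sigma^1_{n+2}$ correctly in small extensions.
\item The converse direction of your key equivalence follows because the section tree $(T_{n+1})_{(r,x)}$ belongs to $L[T_{n+1},x,r]$, and ill-foundedness is absolute between transitive models.
\end{itemize}
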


\begin{proof}
Let $V^*$ denote the final extension, and let
$A\subseteq\mathbb R$ be $\boldsymbol{\Sigma}^1_{n+2}$.  Fix a real parameter
$a$ and a $\Sigma^1_{n+2}(a)$ formula $\varphi(v,a)$ defining $A$ in $V^*$.
Let $T_{n+1}\in M_n$ be the canonical Martin--Solovay--Steel tree projecting
to the universal $\boldsymbol{\Sigma}^1_{n+2}$ set.  By
Lemma~\ref{lem:tn-small-generic-absoluteness}, the $T_{n+1}$ representation
of $\Sigma^1_{n+2}$ truth is absolute to the small generic extensions used in
this construction, and to the further random or Cohen extensions used below.

Choose an intermediate stage after the parameter $a$ has appeared.  In the
corresponding local model the structure $L[T_{n+1},a]$ has at most
$\aleph_1$ many reals and hence codes at most $\aleph_1$ many Borel null sets
and at most $\aleph_1$ many Borel meager sets.  Since $V^*$ satisfies
$\MA$, the union of the null sets coded in $L[T_{n+1},a]$ is null, and the
union of the meager sets coded in $L[T_{n+1},a]$ is meager.  Thus almost every
real is random over $L[T_{n+1},a]$, and comeagerly many reals are Cohen
generic over $L[T_{n+1},a]$.

We first prove Lebesgue measurability.  In $L[T_{n+1},a]$ take the Boolean
value of the assertion $\varphi(\dot r,a)$ in the random algebra, where
$\dot r$ is the canonical random real.  This Boolean value is represented by a
Borel set $B$.  If $r$ is random over $L[T_{n+1},a]$, the forcing theorem in
$L[T_{n+1},a]$ gives
\[
   r\in B
   \quad\Longleftrightarrow\quad
   L[T_{n+1},a,r]\models \varphi(r,a),
\]
where the right-hand side is read through the $T_{n+1}$ representation of the
universal $\Sigma^1_{n+2}$ set.  By the absolute-complement property of
$T_{n+1}$ from Lemma~\ref{lem:tn-small-generic-absoluteness}, this is
equivalent to
\[
   V^*\models \varphi(r,a),
\]
for every such random real $r$.  Therefore $A\triangle B$ is contained in the
set of reals which are not random over $L[T_{n+1},a]$, a null set.  Hence
$A$ is Lebesgue measurable.

The Baire-property argument is identical, replacing the random algebra by
Cohen forcing.  The Boolean value of $\varphi(\dot c,a)$ in the Cohen algebra
is represented by a Borel set $C$, and for every Cohen real $c$ over
$L[T_{n+1},a]$ the same $T_{n+1}$ absoluteness gives
\[
   c\in C
   \quad\Longleftrightarrow\quad
   V^*\models \varphi(c,a).
\]
The exceptional set of reals which are not Cohen generic over
$L[T_{n+1},a]$ is meager, so $A\triangle C$ is meager.  Thus $A$ has the
Baire property.
\end{proof}

Combining Propositions \ref{Mn-final-extension} and \ref{Mn-regularity} gives
the desired theorem.

\begin{theorem}
Assume that $M_n$ exists. Then there is a generic extension of $M_n$ in which
$\MA$ holds, every $\boldsymbol{\Sigma}^1_{n+2}$ set is Lebesgue measurable
and has the Baire property, there is a $\Delta^1_{n+3}$-definable wellorder
of the reals, and $\Sigma^1_{n+2+m}$-uniformization holds for every
$m<\omega$.
\end{theorem}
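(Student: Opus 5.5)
The theorem combines the two propositions just established, so the plan is to exhibit one model witnessing all four clauses at once. Start from $M_n$. Build the intermediate models $W_0$, $W_1$, $W_2$ exactly as in the $M_1$ case, with $M_n|\omega_1$ replacing $M_1|\omega_1$ throughout. Lemmas~\ref{lem:recover-mn-omega1} and \ref{lem:mn-diamond} supply the ladder system $\vec C$, the almost disjoint family, and the canonical Suslin tree. Then run the final finite-support ccc iteration of length $\omega_2$, using $\operatorname{Code}_n$ in place of $\operatorname{Code}$, and let $V^*$ be the resulting extension. The key point is that this single model $V^*$ is the one to which both Proposition~\ref{Mn-final-extension} and Proposition~\ref{Mn-regularity} apply. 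Neither proposition requires an extra forcing of its own, so their conclusions hold simultaneously.

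The steps, in order, are as follows.

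\textbf{Step 1.} Check that $V^*$ is a generic extension of $M_n$ of the kind fixed in Paragraph~\ref{par:steel-mn-preservation-convention}. That paragraph requires $\omega_1$ to be preserved and the comparison and realizability arguments to go through. The reason is that every stage is proper or ccc, and the whole construction lives below the inaccessible limit $\eta$, which lies below the least Woodin cardinal of $M_n$.

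\textbf{Step 2.} Read off $\MA$, the $\Delta^1_{n+3}$ wellorder, and $\Sigma^1_{n+2+m}$-uniformization for every $m$ from Proposition~\ref{Mn-final-extension}. The base level $m=0$ comes from Lemma~\ref{Mn-base-uniformization}, and the levels $m\ge 1$ come from the shifted coding predicates $\Phi^{n+3+m}$.

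\textbf{Step 3.} Apply Proposition~\ref{Mn-regularity} in the same $V^*$, which gives Lebesgue measurability and the Baire property for $\boldsymbol{\Sigma}^1_{n+2}$ sets. That proof uses only two inputs: $\MA$ in $V^*$, and the absoluteness of the $T_{n+1}$ representation from Lemma~\ref{lem:tn-small-generic-absoluteness}. Both are available.

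The step I expect to be the main obstacle is the smallness hypothesis underlying Step 3. Lemma~\ref{lem:tn-small-generic-absoluteness} is stated for forcings of size $<\kappa<\delta_0$ over the model that carries the tree. Here that model is $M_n$ with the tree $T_{n+1}$, and the further random and Cohen extensions of $L[T_{n+1},a]$ must also be covered. I would handle this as follows. Choose $\eta$, and hence the full iteration producing $V^*$, below the least Woodin cardinal $\delta_0$ of $M_n$. Then fix $\kappa$ with $|\mathbb P|<\kappa<\delta_0$. The weak homogeneity systems $\bar\mu^{n+1}_\kappa$ then lift to $V^*$, and they also lift to any further random or Cohen extension, since those forcings are small as well. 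With this choice, Remark~\ref{rem:localized-smallness-of-absoluteness} is needed only as a safety net and not in any essential way.

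A secondary point to verify is that the wellorder really is $\Delta^1_{n+3}$ and not merely $\Sigma^1_{n+3}$. This follows because any two distinct reals are compared by a code at some stage. So $x<y$ holds if and only if $(0,y,x)$ is not coded, and by Lemma~\ref{Mn-Code-properties} this gives a $\Pi^1_{n+3}$ definition alongside the $\Sigma^1_{n+3}$ one.
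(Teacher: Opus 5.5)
Your proposal is correct and follows the paper's argument: the paper proves this theorem simply by combining Proposition~\ref{Mn-final-extension} and Proposition~\ref{Mn-regularity} for the same final extension $V^*$ of $M_n$. Your additional remarks only make explicit hypotheses the paper uses implicitly, namely taking $\eta$, and hence the whole forcing, below the least Woodin cardinal of $M_n$ so that Lemma~\ref{lem:tn-small-generic-absoluteness} applies, and the standard $\Delta^1_{n+3}$ argument via ``$x<y$ iff $x\neq y$ and $(0,y,x)$ is not coded''.
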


\section{Open Questions}

We end with a couple of natural questions whose answers would need new ideas. A first and natural question is whether the large cardinals used in our argument can be improved. 
\begin{question}
    What is the large cardinal strength of ``every $\bf{\Sigma}^1_{n+2}$-set is Lebesgue measurable, has the Baire property, there is a $\Delta_{n+3}^1$-definable wellorder of the reals and $\Sigma^1_{n+2+m}$-uniformization holds?
\end{question}
Note that by R. Solovay (\cite{solovay1970model})and S. Shelah (\cite{shelah1984can}) we only need an inaccessible to force all boldface projective sets become Lebesgue measurable and have the Baire property. The other two properties have no large cardinal strength at all. The question of whether already an inaccessible suffices to get a universe with the combination of the four properties was already asked in \cite{friedman2003universally}.

The following strengthening of the original 12th Delfino problem is still wide open (see \cite{caicedo2020fourteen}):
\begin{question}
    Assume that every projective set is Lebesgue measurable, has the Baire property and $\Pi^1_{2n+1}$-uniformization holds for every $n \in \omega$. Must $\PD$ hold then as well?
\end{question}

There are other local versions of the 12th Delfino problem which are mostly open as well:
\begin{question}
    Suppose that  $\Pi^1_{2n+1}$ sets for $n \le N$ have the uniformization property and that all $\bf{\Sigma}^1_{2N+1}$ sets are Lebesgue measurable and have the Baire property. Is $\bf{\Pi}^1_{2N}$-determinacy true? 
\end{question}
For $n=N=1$ the above has a positive answer due to J. Steel (see \cite{schindler_delfino}) but for the higher levels this is open.

\begin{question}
Is there a universe with global $\Sigma$-uniformization and where every projective set is Lebesgue measurable and has the Baire property?
\end{question}

\begin{question}
Is there a universe with global $\Sigma$-uniformization and where every projective set is Lebesgue measurable? 
\end{question}

\begin{question}
Given an inaccessible, is there a universe where each projective set is Lebesgue measurable and projective separation fails? The same question can be asked with the Baire property replacing the notion of Lebesgue measurability.
\end{question}

\begin{question}
For which further tree forcings $P$ from the Friedman--Schrittesser diagram
can one obtain, over $M_1$, a model preserving one Woodin cardinal and
satisfying
\[
\Sigma^1_3(P)+\neg\Delta^1_4(P)?
\]
In particular, can this be done for Laver, Silver or Mathias measurability?
\end{question}

\bibliographystyle{plain}
\bibliography{references}

\end{document}